
\documentclass[11pt,leqno]{article}

\overfullrule=0pt

\usepackage[english]{babel}

\usepackage{amsmath}

\usepackage{amssymb}

\usepackage{mathrsfs,upref}

\usepackage[all]{xy}
\UseComputerModernTips
\newcommand{\dsim}{\text{\smash{\lower0.9ex
\hbox{\normalsize $\sim$}}}}
\usepackage{latexsym}

\usepackage{amsfonts}

\usepackage[dvips]{graphicx}

\usepackage{mathtools,amsthm,enumerate}
\addto\captionsenglish{}
\newcommand{\earrow}
{\mathrel{\text{\smash{\lower.477ex\hbox{$\overset{\text{\normalsize \smash{\lower.55ex\hbox{$\sim\,$}}}}{\to}$}}}}}
\newcommand{\ftimes}{\mathop{\times}\limits}
\newcommand{\tens}{\mathop{\otimes}\limits}
\newcommand{\MS}{\operatorname{SS}}
\newcommand{\bD}{\boldsymbol{\mathsf{D}}}
\newcommand{\bDb}{D^b}
\newcommand{\ch}{\operatorname{Ch}}

\numberwithin{equation}{section}

\newcommand{\B}{\mathcal{B}}

\newcommand{\C}{\mathcal{C}}

\newcommand{\M}{\mathcal{M}}

\newcommand{\W}{\mathcal{W}}

\renewcommand{\mod}{\mathrm{Mod}}

\newcommand{\op}{\mathrm{Op}}

\newcommand{\CC}{\mathbb{C}}

\newcommand{\R}{\mathbb{R}}

\newcommand{\N}{\mathbb{N}}

\newcommand{\JJ}{\hat{J}}

\newcommand{\OO}{\mathcal{O}}

\newcommand{\E}{\mathcal{E}}

\newcommand{\iso}{\stackrel{\sim}{\to}}

\newcommand{\db}{\mathcal{D}\mathit{b}}

\newcommand{\OW}{\OO^\mathrm{w}}

\newcommand{\wtens}{\overset{\mathrm{w}}{\otimes}}

\newcommand{\CHZ}{{\operatorname{H}^0}}

\newcommand{\rh}{\mathit{R}\mathcal{H}\mathit{om}}

\newcommand{\supp}{\mathrm{supp}}

\newcommand{\RP}{\mathbb{R}^{{\scriptscriptstyle{+}}}}

\newcommand{\imin}[1]{#1^{-1}}

\newcommand{\lind}[1]{\underset{#1}{\underrightarrow{\lim}}}


\newcommand{\dt}[3]{{#1} \to {#2} \to {#3} \stackrel{+}{\to}}

\newcommand{\proddTM}{T^*_{M_1}\iota(M_1) \underset{X}{\times} \dots \underset{X}{\times} T^*_{M_{\ell}}\iota (M_{\ell})}

\theoremstyle{plain}
\newtheorem{teo}{Theorem}[section]
\newtheorem{cor}[teo]{Corollary}
\newtheorem{prop}[teo]{Proposition}
\newtheorem{lem}[teo]{Lemma}
\theoremstyle{definition}
\newtheorem{es}[teo]{Example}
\newtheorem{oss}[teo]{Remark}
\newtheorem{df}[teo]{Definition}
\newtheorem{con}[teo]{Condition}

\setlength{\parskip}{0cm}
\parindent 0.4cm

\oddsidemargin 1.5cm \evensidemargin 0.7cm \topmargin -0.6cm

\usepackage{latexsym}

\author{Naofumi Honda\ \ \ \ Luca Prelli\ \ \ \ Susumu Yamazaki}

\title{\bf{\sc{Multi-microlocalization and microsupport}}}

\date{}

\begin{document}

\maketitle
\ensuremath{
\fontdimen13 \textfont 2=.43em   
\fontdimen13 \scriptfont 2=0.26em 
\fontdimen17 \textfont 2=.3em 
\fontdimen17 \scriptfont 2= .25em  
\fontdimen14\textfont2= \fontdimen13\textfont2 
\fontdimen14 \scriptfont 2= \fontdimen13 \scriptfont 2
\fontdimen14 \scriptscriptfont 2= \fontdimen13 \scriptscriptfont 2
\fontdimen16\textfont2= \fontdimen17\textfont2 
\fontdimen16 \scriptfont 2= \fontdimen17 \scriptfont 2
\fontdimen16 \scriptscriptfont 2= \fontdimen17 \scriptscriptfont 2
}

\thispagestyle{empty}
\begin{abstract}
The purpose of this paper is to establish the foundations of multi-microlocalization,
in particular, to give
the fiber formula for the multi-microlocalization functor
and estimate of microsupport of a multi-microlocalized object. We also give some applications of these results.
\end{abstract}

\tableofcontents

\addcontentsline{toc}{section}{\textbf{Introduction}}

\section*{Introduction}

A microlocalized object of a sheaf $F$ along a closed submanifold $M$
is
locally described by, roughly speaking,
local cohomology groups of $F$
with support in a dual cone of the edge $M$.
As a result it can be tightly related with, via \v{C}ech cohomology groups,
boundary values of local sections of $F$
defined on open cones of the edge $M$.
It is well known that, for example, Sato's microfunctions, which are obtained by
applying the microlocalization functor
along a real analytic manifold $M$ to the sheaf of holomorphic functions,
can be regarded as boundary values of holomorphic functions locally
defined on wedges with the edge $M$.

We sometimes, in study of partial differential equations, need to
consider a boundary value of a function defined on a cone along
a family $\chi$ of several closed submanifolds.
In such a study, J.~M.~Delort \cite{De96} had introduced
{\it{simultaneous microlocalization}} along a normal crossing divisor $\chi$
which gives a boundary value of a function defined on a dual poly-sector.
{\it{Bi-microlocalization}} along submanifolds $\chi=\{M_1,\,M_2\}$
with $M_1 \subset M_2$ was also introduced by P.~Schapira and K.~Takeuchi
\cite{ST94} and \cite{Ta96} which defines a different kind of a boundary value.

On the other hand, in the paper \cite{HP},
the first and the second authors of this article established
the notion of {\it{the multi-normal cone}}
for a family $\chi$ of closed submanifolds with a suitable configuration,
and they also constructed
{\it{the multi-specialization functor}} along $\chi$.
We can observe that
cones appearing in simultaneous microlocalization and
bi-microlocalization are characterized by using the multi-normal cone
and that the both microlocalization functors coincide with
the {\it{multi-microlocalization functor}}
along $\chi$ where the latter functor is obtained by repeated application
of Sato's Fourier transformation
to the multi-specialization functor. Hence multi-microlocalization
gives us a uniform machinery for the both simultaneous microlocalization
and bi-microlocalization.
The purpose of this paper is to establish the foundations of multi-microlocalization,
in particular, to give
the fiber formula for the multi-microlocalization functor
and estimate of microsupport of a multi-microlocalized object.
We briefly explain, in what follows, these two important results and their meanings.

The most fundamental question for the multi-microlocalization functor $\mu_\chi$
along closed submanifolds $\chi = \{M_1, \dots, M_\ell\}$
is a shape of a cone on which a boundary value given by $\mu_\chi$
is defined.  The fiber formula gives us an explicit answer:
A germ of $H^k(\mu_\chi(F))$ is isomorphic to local cohomology groups
$\lind{G} H^k_G(F)$ where $G$ is a vector sum of closed cones $G_i$'s and
each $G_i$ is defined in the similar way as that in
the fiber formula of the usual microlocalization functor along $M_i$.
Therefore the multi-microlocalization functor can be understood as
a natural extension of the usual microlocalization functor.
Once we have grasped a geometrical aspect of multi-microlocalization, then
the next fundamental problem to be considered is
estimate of microsupport of $\mu_\chi(F)$
by that of $F$, for which the answer is quite simple and beautiful:
The $SS(\mu_\chi(F))$ is contained in the multi-normal cone of $SS(F)$ along $\chi^*$.
Here $\chi^*$ is a family of
Lagrangian submanifolds $\{T^*_{M_1}X, \dots, T^*_{M_\ell}X\}$.
This shows, in particular, soundness of our framework in the sense that
the sharp estimate can be achieved by a geometrical tool (the multi-normal cone)
already prepared in our framework.
These two results have many applications, and some of them will be given in the
last two sections of this paper.

\

The paper is organized as follows: We briefly recall, in Section 1, the theory of
the multi-specialization developed in \cite{HP}.
Then, in Section 2, we define the multi-microlocalization functor by repeatedly applying
Sato's Fourier transformation to the multi-specialization functor. After showing
several basic properties of the functor, we establish
a fiber formula which explicitly describes a stalk of a multi-microlocalized
object. By the fiber formula and an edge of the wedge theorem with bounds shown
in Section 4, we can construct the sheaf of microfunctions along $\chi$ that is
a natural extension of the sheaf of Sato's microfunctions.
In section 3, after some geometrical preparations, we give estimate of microsupport
of a microlocalized object, that is our main result.
Several applications of this result to $\mathcal{D}$ modules are studied in Section 5.

\section{Multi-specialization: a review}

In this section we recall some results of \cite{HP}. We first fix some notations, then we recall the notion of multi-normal deformation and the definition of the functor of multi-specialization with some basic properties.

\subsection{Notations}

Let $X$ be a real analytic manifold with $\operatorname{dim}{X} = n$, and let
$\chi = \{M_1,\dots,M_\ell\}$ be
a family of closed submanifolds in $X$ ($\ell \ge 1$).
Throughout the paper all the manifolds are always
assumed to be countable at infinity.

We set, for $N \in \chi$ and $p \in N$,
\[
	\operatorname{NR}_p(N) := \{M_j \in \chi;\,
p \in M_j,\,N \nsubseteq M_j \text{ and } M_j \nsubseteq N\}.
\]
Let us consider the following conditions for $\chi$.
\begin{itemize}
\item[H1] Each $M_j \in \chi$ is connected and the
submanifolds are mutually distinct, i.e. $M_j \neq M_{j'}$ for $j \neq {j'}$.
\item[H2] For any $N \in \chi$ and $p \in N$ with $\operatorname{NR}_p(N) \ne \emptyset$,
we have
\begin{equation}{\label{eq:geometric-condition}}
\left(
\underset{
M_j \in \operatorname{NR}_p(N)
}{\bigcap} T_pM_j\right) + T_pN = T_pX.
\end{equation}
\end{itemize}
Note that, if $\chi$ satisfies the condition H2, 
the configuration of two submanifolds
must be either 1.~or 2.~below.
\begin{enumerate}
\item Both submanifolds intersect transversely.
\item One of them contains the other.
\end{enumerate}
If $\chi$ satisfies the condition H2, then
for any $p \in X$, there exist a neighborhood $V$ of $p$ in $X$, a system
of local coordinates $(x_1,\dots,x_n)$ in $V$ and a family of subsets $\{I_j\}_{j=1}^\ell$
of the set $\{1,2,\dots,n\}$ for which the following conditions hold.
\begin{enumerate}
\item Either $I_k \subset I_j$, $I_j \subset I_k$ or $I_k \cap I_j = \emptyset$ holds
$(k,j \in \{1,2,\dots,\ell\})$.
\item A submanifold $M_j \in \chi$ with $p \in M_j$ $(j = 1,2,\dots,\ell)$
is defined by $\{x_i = 0; i \in I_j\}$ in $V$.
\end{enumerate}

We set, for $N \in \chi$,
\begin{equation}{\label{eq:def-chi}}
\iota_\chi(N) := \underset{N \varsubsetneqq M_j}{\bigcap} M_j.
\end{equation}
Here $\iota_{\chi}(N) := X$ for convention
 if there exists no $j$ with $N \varsubsetneqq M_j$.
When there is no risk of confusion, we write for short $\iota(N)$ instead of $\iota_\chi(N)$. We also assume the condition H3 below for simplicity.

\begin{itemize}
\item[H3] $M_j \ne \iota(M_j)$ for any $j \in \{1,2,\dots, \ell\}$.
\end{itemize}

In local coordinates let $I_1,\ldots ,I_\ell \subseteq \{1,\ldots,n\}$ such that $M_i=\{x_k=0\;;\;k\in I_i\}$.
Note that the family $\chi$ satisfies the conditions H1, H2 and H3 if and only if
$I_1,\dots,I_\ell$
satisfy
the corresponding conditions
\begin{equation}\label{eq:cond-H}
\begin{aligned}
	&\text{(i) either }  I_j \subsetneqq I_k,\, I_k \subsetneqq I_j \text{ or }
	I_j \cap I_k = \emptyset \text{ holds for any $j \ne k$}, \\
	&\text{(ii) }  \left(\underset{I_k \subsetneqq I_j}{\bigcup}I_k \right)
	\subsetneqq I_j
	 \text{ for any $j$}.
\end{aligned}
\end{equation}
Hence, for any $j \in \{1,2,\dots,\ell\}$,
the set
\begin{equation}{\label{eq:def-hat-I}}
\hat{I}_j := I_j \setminus \left(\underset{I_k \varsubsetneqq I_j}{\bigcup} I_k\right)
\end{equation}
is not empty by the condition H3. Further it follows from the
conditions H1, H2 and H3 that, for each $j \in \{1,\dots,\ell\}$, there exist
unique $j_1,\dots,j_p \in \{1,\dots,\ell\}$ such that
\[
I_j=\hat{I}_{j_1} \sqcup \cdots \sqcup \hat{I}_{j_p}
\]
and $I_k \subseteq I_j$ for all $k \in \{j_1,\dots,j_p\}$ (equality holds only if $k=j$). In particular
\begin{equation}{\label{eq:formula-union-hat-I}}
\underset{1 \le j \le \ell}{\bigcup} I_j = \hat{I}_1 \sqcup \cdots \sqcup \hat{I}_\ell.
\end{equation}
Set, for $i \in \{1,\dots,n\}$
\begin{equation}{\label{eq:def-J_i}}
J_i =\{j\in\{1,\ldots,\ell\}\;;\;i \in I_j\}.
\end{equation}
It follows from the proof of Proposition 1.3 of \cite{HP} that
\begin{equation}\label{eq:equality of J}
J_\alpha=J_\beta \ \ \Leftrightarrow \ \ \alpha,\beta \in \hat{I}_j
\end{equation}
for some $j \in \{1,\dots,\ell\}$.

\begin{lem}\label{lem:inclusions of J} Let $I_i \supseteq I_j$. Then
 for each $\alpha \in \hat{I}_i$ and $\beta \in \hat{I}_j$ we have $J_\alpha \subseteq J_\beta$.
\end{lem}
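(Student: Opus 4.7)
The plan is to unravel the definitions of $J_\alpha$, $\hat{I}_i$, and the trichotomy condition (i) from \eqref{eq:cond-H}, and then to chase inclusions.

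Concretely, I would take an arbitrary element $m\in J_\alpha$ and aim to show $m\in J_\beta$, i.e.\ that $\beta\in I_m$ whenever $\alpha\in I_m$. Apply the trichotomy (i) to the pair $(I_m,I_i)$: since $\alpha\in I_m\cap I_i$ the disjoint case is excluded, so either $I_m\subseteq I_i$ or $I_m\supseteq I_i$. If $I_m\subsetneqq I_i$, then $\alpha\in I_m$ would place $\alpha$ inside $\bigcup_{I_k\subsetneqq I_i}I_k$, contradicting $\alpha\in\hat{I}_i$. Hence $I_m\supseteq I_i$, and combined with the hypothesis $I_i\supseteq I_j$ we get $I_m\supseteq I_j\ni\beta$, so $m\in J_\beta$.

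The potential subtlety is the equality case $I_i=I_j$ (the hypothesis is $\supseteq$, not $\supsetneqq$): here one notes that $\hat{I}_i=\hat{I}_j$, so $\alpha,\beta$ lie in the same $\hat{I}_j$ and \eqref{eq:equality of J} gives $J_\alpha=J_\beta$ directly; but the argument above does not in fact require strict inclusion, so no separation of cases is needed. I do not expect any obstacle — the lemma is a direct bookkeeping consequence of the trichotomy and of the definition of $\hat{I}_j$ as the set of indices not already belonging to any strictly smaller $I_k$.
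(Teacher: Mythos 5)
Your proof is correct and is essentially the same as the paper's: both establish (via the trichotomy in \eqref{eq:cond-H} and the definition of $\hat{I}_i$) that $m \in J_\alpha$ forces $I_m \supseteq I_i$, then pass through the hypothesis $I_i \supseteq I_j$ to conclude $\beta \in I_m$, i.e.\ $m \in J_\beta$. The paper phrases the key step as the equivalence $k \in J_\alpha \Leftrightarrow I_k \supseteq I_i$, but only the forward implication you verify is actually used, so the two arguments coincide.
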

\begin{proof} Let $\alpha \in \hat{I}_i$ and $\beta \in \hat{I}_j$. By definition of $\hat{I}_i$ and condition H2 we have
\[
k \in J_\alpha \ \ \Leftrightarrow \ \ I_k \supseteq I_i.
\]
Since $I_i \supseteq I_j$ we have
\[
 I_k \supseteq I_j \ \ \Leftrightarrow \ \ k \in J_\beta.
\]
Then $J_\alpha \subseteq J_\beta$.
\end{proof}

Thanks to the previous result we can introduce the following notation.
Set for convenience
\begin{equation}{\label{eq:def-I_0}}
I_0 = \hat{I}_0 := \{1,\dots,n\} \setminus \bigcup_{j=1}^\ell I_j.
\end{equation}
Then, in local coordinates, we can write the coordinates
$(x_1,\dots,x_n)$ by
\begin{equation}
(x^{(0)},x^{(1)},\dots,x^{(\ell)}),
\end{equation}
where $x^{(j)}$ denotes the coordinates $(x_i)_{i \in \hat{I}_j}$ ($j=0,\dots,\ell$).
By taking \eqref{eq:equality of J} into account, we can also define, for $j \in \{0,1,\dots,\ell\}$
\begin{equation}{\label{eq:def-hat-J_j}}
\JJ_j=\{k\in\{1,\ldots,\ell\}\;;\;\hat{I}_j \subseteq I_k\}=\{k\in\{1,\ldots,\ell\}\;;\;I_j \subseteq I_k\}.
\end{equation}
Note that, with this notation, we have $\JJ_0=\emptyset$. Moreover, by \eqref{eq:equality of J} we have $J_\alpha=J_\beta=\JJ_j$ for each $j \in \{1,\dots,\ell\}$ and each $\alpha,\beta \in \hat{I}_j$. In particular, by Lemma  \ref{lem:inclusions of J}
\begin{equation}\label{eq: inclusion of JJ}
I_i \subseteq I_j \ \ \Rightarrow \ \ \JJ_j \subseteq \JJ_i.
\end{equation}

\subsection{Multi-normal deformation}

In \cite{HP} the notion of multi-normal deformation was introduced.
Here we consider a slight generalization where we replace the condition H2 with the weaker one.
Let $\chi = \{M_1,\dots,M_\ell\}$ be a family of closed submanifolds of $X$.
We say that $\chi$ is {\it{simultaneously linearizable}} on $M = M_1 \cap \dots \cap M_\ell$
if  for every $x \in M$ there exist a neighborhood $V$ of $x$ and a system of local coordinates
$(x_1, \dots, x_n)$ there for which we can find subsets $I_j$'s of $\{1, \dots, n\}$ such
that each $M_j \cap V$ is defined by equations $x_i = 0$ ($i \in I_j$). Note that
if $\chi$ satisfies the condition H2, then it is simultaneously linearizable.
Now, through the section, we assume that $\chi$ is simultaneously linearizable on $M$.

First recall the classical construction of \cite{KS90} of the normal deformation
of $X$ along $M_1$. We denote it by $\widetilde{X}_{M_1}$ and
we denote by $t_1\in\R$ the deformation parameter.
Let $\Omega_{M_1}=\{t_1>0\}$ and let us identify $\imin s(0)$ with $T_{M_1}X$.
We have the commutative diagram
\begin{equation}\label{eq:normal-deformation}
\xymatrix{T_{M_1}X \ar[r]^{s_{M_1}} \ar[d]^{\tau_{M_1}} & \widetilde{X}_{M_1} \ar[d]^{p_{M_1}} &
\Omega_{M_1} \ar[l]_{\ \ i_{\Omega_{M_1}}} \ar[dl]^{\widetilde{p}_{M_1}} \\
M \ar[r]^{i_{M_1}} & X. & }
\end{equation}
Set $\widetilde{\Omega}_{M_1}=\{(x;\,t_1)\;;\;t_1\neq 0\}$ and define
\[
\widetilde{M}_2:=\overline{(p_{M_1}|_{\widetilde{\Omega}_{M_1}})^{-1}M_2}.
\]
Then $\widetilde{M}_2$ is a closed smooth submanifold of $\widetilde{X}_{M_1}$.
\begin{oss}
 One cannot expect the smoothness of $\widetilde{M}_2$ without the simultaneously
linearizable condition. For example, let $X={\mathbb{R}^2}$ with $(x_1, x_2)$ and
let $M_1 = \{x_1 = 0\}$, $M_2 = \{x_1 - x_2^2 = 0\}$. Then in $\widetilde{X}_{M_1}$
with coordinates $(x_1,x_2;\, t_1)$ we have
\[
\widetilde{M}_2 :=\overline{\{t_1x_1 - x_2^2 = 0,\, t_1 \ne 0\}}
= \{t_1x_1 - x_2^2 = 0\}
\]
which is singular at $(0,0;\,0)$.
\end{oss}

Now we can define the normal deformation along $M_1,M_2$ as
\[
\widetilde{X}_{M_1,M_2}:=(\widetilde{X}_{M_1})^\sim_{\widetilde{M}_2}.
\]
Then we can define recursively the normal deformation along
$\chi$ as
\[
\widetilde{X}=\widetilde{X}_{M_1,\ldots,M_\ell}:=(\widetilde{X}_{M_1,\ldots,M_{\ell-1}})^\sim_{\widetilde{M}_\ell}.
\]
Set $S_\chi=\{t_1,\ldots,t_\ell=0\}$, $M=\bigcap_{i=1}^\ell M_i$ and $\Omega_\chi=\{t_1,\ldots,t_\ell>0\}$. Then we have the commutative diagram
\begin{equation}\label{eq:multi-normal-deformation}
\xymatrix{S_\chi \ar[r]^s \ar[d]^\tau & \widetilde{X} \ar[d]^p &
\Omega_\chi \ar[l]_{\ \ i_\Omega} \ar[dl]^{\widetilde{p}} \\
M \ar[r]^{i_M} & X. & }
\end{equation}

Let us consider the diagram \eqref{eq:multi-normal-deformation}.
In local coordinates let $I_1,\ldots ,I_\ell \subseteq \{1,\ldots,n\}$ such that $M_i=\{x_k=0\;;\;k\in I_i\}$. For $j \in \{0,\dots,\ell\}$ set
\[
\JJ_j=\{k\in\{1,\ldots,\ell\}\;;\;\hat{I}_j \subseteq I_k\}, \ \ \ \ t_{\JJ_j}=\prod_{k\in \JJ_j}t_k,
\]
where $t_1,\ldots,t_\ell\in\R$ and $t_{\JJ_0}=1$.
Then $p:\widetilde{X}\to X$ is defined by
\[
(x^{(0)},x^{(1)}\ldots,x^{(\ell)};\,t_1,\ldots,t_\ell) \mapsto (t_{\JJ_0}x^{(0)},t_{\JJ_1}x^{(1)},\ldots,t_{\JJ_\ell}x^{(\ell)}).
\]

\begin{df} Let $Z$ be a subset of $X$.
The multi-normal cone to $Z$ along $\chi$ is the set
$
C_\chi(Z)=\overline{\imin{\widetilde{p}}(Z)} \cap S_\chi.
$
\end{df}

\begin{lem} \label{lem: multi-normal cone} Let $p=(p^{(0)},p^{(1)},\dots,p^{(\ell)};\,0,\dots,0) \in S_\chi$ and let $Z \subset X$. The following conditions are equivalent:
\begin{enumerate}
\item $p \in C_\chi(Z)$.
\item There exist sequences $\{(c_{1,m},\dots,c_{\ell, m})\} \subset (\RP)^\ell$ and $\{(q^{(0)}_m,q^{(1)}_m,\dots,q^{(\ell)}_m)\} \subset Z$ such that $q_m^{(j)}c_{\JJ_j} \to p^{(j)}$, $j=0,1,\dots,\ell$ and $c_{m,j} \to +\infty$, $j=1,\dots,\ell$.
\end{enumerate}
\end{lem}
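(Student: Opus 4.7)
The plan is to unfold the condition $p \in C_\chi(Z) = \overline{\imin{\widetilde{p}}(Z)} \cap S_\chi$ using the explicit local formula for the multi-normal deformation projection given just above the statement, and to observe that the equivalence (1)$\Leftrightarrow$(2) is then just a reciprocal change of variables $c_{k,m} = t_{k,m}^{-1}$. Since the construction of $\widetilde{X}$, and of $S_\chi$ and $\Omega_\chi$ inside it, is compatible with the local trivializations around any point of $M = \bigcap_i M_i$, I can work in a single local chart centered at the given $p$.

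For (1)$\Rightarrow$(2), from $p \in \overline{\imin{\widetilde{p}}(Z)}$ together with $p \in S_\chi = \{t_1=\dots=t_\ell=0\}$ I pick a sequence $(y_m; t_{1,m}, \dots, t_{\ell,m}) \in \imin{\widetilde{p}}(Z) \subset \Omega_\chi$ converging to $p$, so $y_m^{(j)} \to p^{(j)}$ for $j = 0,\dots,\ell$ and $t_{k,m} \to 0^{+}$ for $k=1,\dots,\ell$ (the positivity comes from membership in $\Omega_\chi$). Setting $c_{k,m} := t_{k,m}^{-1} \in \RP$ and $q_m^{(j)} := t_{\JJ_j,m}\,y_m^{(j)}$, the explicit formula for $\widetilde{p}$ yields $(q_m^{(0)},\dots,q_m^{(\ell)}) = \widetilde{p}(y_m;t_m) \in Z$, and rearranging gives $y_m^{(j)} = q_m^{(j)}\,c_{\JJ_j,m}$, which combined with $y_m^{(j)} \to p^{(j)}$ and $c_{k,m} \to +\infty$ is exactly condition (2). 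The degenerate case $j=0$ is covered by the convention $\JJ_0 = \emptyset$, under which $c_{\JJ_0,m} = 1$ (empty product) and the requirement collapses to $q_m^{(0)} \to p^{(0)}$.

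The reverse implication (2)$\Rightarrow$(1) inverts the substitution: given sequences as in (2), define $t_{k,m} := c_{k,m}^{-1} > 0$ and $y_m^{(j)} := q_m^{(j)}\,c_{\JJ_j,m}$. Then $(y_m; t_{1,m},\dots,t_{\ell,m}) \in \Omega_\chi$, and using $t_{\JJ_j,m} = \prod_{k \in \JJ_j} t_{k,m} = c_{\JJ_j,m}^{-1}$ the local formula yields $\widetilde{p}(y_m;t_m) = (q_m^{(0)}, \dots, q_m^{(\ell)}) \in Z$, so $(y_m;t_m) \in \imin{\widetilde{p}}(Z)$. The assumptions $c_{k,m} \to +\infty$ and $q_m^{(j)} c_{\JJ_j,m} \to p^{(j)}$ translate into $t_{k,m} \to 0^{+}$ and $y_m^{(j)} \to p^{(j)}$, whence $(y_m;t_m) \to p$ in $\widetilde{X}$ and $p \in \overline{\imin{\widetilde{p}}(Z)} \cap S_\chi = C_\chi(Z)$. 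The whole argument is a bookkeeping exercise; the only mild point requiring attention is matching the product indexing $t_{\JJ_j,m} \leftrightarrow c_{\JJ_j,m}^{-1}$ and the empty-product convention for $\JJ_0$, so no real obstacle arises.
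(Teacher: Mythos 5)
Your proof is correct and takes essentially the same approach as the paper: both directions unfold the definition $C_\chi(Z)=\overline{\widetilde{p}^{-1}(Z)}\cap S_\chi$ in the local chart using the explicit formula for $\widetilde{p}$ and perform the reciprocal change of variables $c_{k,m}=t_{k,m}^{-1}$, together with the convention $c_{\JJ_0,m}=1$. If anything your write-up is slightly cleaner than the paper's, which contains a few notational slips (it writes the approximating sequence as lying in $Z$ rather than in $\widetilde{p}^{-1}(Z)\subset\Omega_\chi$, and drops the subscript $m$ in one displayed condition), but the underlying argument is identical.
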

\begin{proof}  Let us prove 1. $\Rightarrow$ 2. Since $p \in \overline{\imin{\widetilde{p}}(Z)} \cap S_\chi$ there exist sequences $\{(p^{(0)}_m,p^{(1)}_m,\dots,p^{(\ell)}_m)\} \subset Z$, $\{(t_{1, m},\dots,t_{\ell, m})\} \subset (\RP)^\ell$ such that
\[\left\{
    \begin{array}{ll}
      t_{j,m} \to 0, \ \ \ \ \hbox{$j=1,\dots,\ell$,} \\
      (p^{(0)}_m,p^{(1)}_m,\dots,p^{(\ell)}_m) \to (p^{(0)},p^{(1)},\dots,p^{(\ell)}), \\
      (p^{(0)},p^{(1)}t_{\JJ_1,m},\dots,p^{(\ell)}t_{\JJ_\ell,m}) \in Z.
    \end{array}
  \right.
\]
Set $\imin {t_{j,m}}=c_{j,m}$, $j=1,\dots,\ell$ and $p^{(j)}_mt_{\JJ_j,m}=q^{(j)}_m$, $j=0,1,\dots,\ell$. Then we have $\{(q^{(0)}_m,q^{(1)}_m,\dots,q^{(\ell)}_m)\} \subset Z$, $q_m^{(j)}c_{\JJ_j} \to p^{(j)}$, $j=0,1,\dots,\ell$ and $c_{m,j} \to +\infty$, $j=1,\dots,\ell$.

Let us prove 2. $\Rightarrow$ 1. Suppose that there exist sequences $\{(c_{1,m},\dots,c_{\ell, m})\} \subset (\RP)^\ell$ and $\{(q^{(0)}_m,q^{(1)}_m,\dots,q^{(\ell)}_m)\} \subset Z$ such that $q_m^{(j)}c_{\JJ_j} \to p^{(j)}$, $j=0,1,\dots,\ell$ and $c_{m,j} \to +\infty$, $j=1,\dots,\ell$. Define $p_m^{(j)}=q_m^{(j)}c_{\JJ_j}$, $j=0,1,\dots,\ell$ and $t_{j,m}=\imin {c_{m,j}}$. Then clearly
\[\left\{
    \begin{array}{ll}
      t_{j,m} \to 0, \ \ \ \ \hbox{$j=1,\dots,\ell$,} \\
      (p^{(0)}_m,p^{(1)}_m,\dots,p^{(\ell)}_m) \to (p^{(0)},p^{(1)},\dots,p^{(\ell)}), \\
      (p^{(0)},p^{(1)}t_{\JJ_1,m},\dots,p^{(\ell)}t_{\JJ_\ell,m}) \in Z.
    \end{array}
  \right.
\]
So $p \in C_\chi(Z)$.
\end{proof}

Let us consider the canonical map $T_{M_j}\iota(M_j) \overset{}{\to} M_j \hookrightarrow X$, $j=1,\dots,\ell$,
we write for short \[\underset{X,1\leq j\leq \ell}{\times}T_{M_j}\iota(M_j):=T_{M_1}\iota(M_1) \underset{X}{\times} T_{M_2}\iota(M_2) \underset{X}{\times} \cdots \underset{X}{\times} T_{M_\ell}\iota(M_\ell).\]
When $\chi$ satisfies the conditions H1, H2 and H3 we have
\begin{equation}
S_\chi \simeq \underset{X,1\leq j\leq \ell}{\times}T_{M_j}\iota(M_j).
\end{equation}

\begin{oss}
When $\chi$ satisfies conditions H1, H2 and H3,
the zero-section $S_\chi$ becomes a vector bundle over $M$.
However, in general, the simultaneously linearizable condition is not
enough to assure the existence of a vector bundle structure on
$S_\chi$. The important exceptional case where
$\chi$ does not satisfy H2 but $S_\chi$ has a vector bundle structure will be
studied in \S\, \ref{sec:geometry}.
\end{oss}

\begin{es} Let us see two typical examples of multi-normal deformations in the complex case. Let $X={\mathbb C}^2$ ($\simeq\R^4$ as a real manifold)
with coordinates $(z_1, z_2)$.
\begin{enumerate}
\item (Majima) Let $\chi=\{M_1,M_2\}$ with $M_1 = \{z_1 = 0\}$ and $M_2 = \{z_2 = 0\}$. Then $\chi$ satisfies H1, H2 and H3.
We have $I_1=\{1\}$, $I_2=\{2\}$, $J_1=\{1\}$, $J_2=\{2\}$ (in $\R^4$, if $z_1=(x_1,x_2)$ and $z_2=(x_3,x_4)$ we have $I_1=\{1,2\}$, $I_2=\{3,4\}$, $J_1=J_2=\{1\}$, $J_3=J_4=\{2\}$).
The map $p:\widetilde{X} \to X$ is defined by
\[
(z_1,z_2;\,t_1,t_2) \mapsto (t_1z_1,t_2z_2).
\]
Remark that the deformation is real though $X$ is complex. In particular $t_1, t_2 \in \R$.
We have $\iota(M_1)=\iota(M_2)=X$ and then the zero section $S$ of $\widetilde{X}$ is isomorphic to $T_{M_1}X \underset{X}{\times} T_{M_2}X$.

\item (Takeuchi) Let $\chi=\{M_1,M_2\}$ with $M_1 = \{0\}$ and $M_2 = \{z_2 = 0\}$. Then $\chi$ satisfies H1, H2 and H3.
We have $I_1=\{1,2\}$, $I_2=\{2\}$, $J_1=\{1\}$, $J_2=\{1,2\}$ (in $\R^4$, if $z_1=(x_1,x_2)$ and $z_2=(x_3,x_4)$ we have $I_1=\{1,2,3,4\}$, $I_2=\{3,4\}$, $J_1=J_2=\{1\}$, $J_3=J_4=\{1,2\}$).
The map $p:\widetilde{X} \to X$ is defined by
\[
(z_1,z_2;\,t_1,t_2) \mapsto (t_1z_1,t_1t_2z_2).
\]
We have $\iota(M_1)=M_2$, $\iota(M_2)=X$ and then the zero section $S$ of $\widetilde{X}$ is isomorphic to $T_{M_1}M_2 \underset{X}{\times} T_{M_2}X$.

\end{enumerate}
\end{es}

\begin{es} Let us see three typical examples of multi-normal deformations in the real case. Let $X={\mathbb R}^3$ with coordinates $(x_1, x_2, x_3)$.
\begin{enumerate}
\item (Majima)  Let $\chi=\{M_1,M_2,M_3\}$ with
$M_1 = \{x_1=0\}$, $M_2 = \{x_2 = 0\}$ and $M_3 = \{x_3 = 0\}$. Then $\chi$ satisfies H1, H2 and H3. We have $I_1=\{1\}$, $I_2=\{2\}$, $I_3=\{3\}$, $J_1=\{1\}$, $J_2=\{2\}$, $J_3=\{3\}$.
The map $p:\widetilde{X} \to X$ is defined by
\[
(x_1,x_2,x_3;\,t_1,t_2,t_3) \mapsto (t_1x_1,t_2x_2,t_3x_3).
\]
We have $\iota(M_1)=\iota(M_2)=\iota(M_3)=X$ and then the zero section $S$ of $\widetilde{X}$ is isomorphic to $T_{M_1}X \underset{X}{\times} T_{M_2}X \underset{X}{\times} T_{M_3}X$.

\item (Takeuchi)
Let $\chi=\{M_1,M_2,M_3\}$ with
$M_1 = \{0\}$, $M_2 = \{x_2 = x_3 = 0\}$ and $M_3 = \{x_3 = 0\}$. Then $\chi$ satisfies H1, H2 and H3. We have $I_1=\{1,2,3\}$, $I_2=\{2,3\}$, $I_3=\{3\}$, $J_1=\{1\}$, $J_2=\{1,2\}$, $J_3=\{1,2,3\}$.
The map $p:\widetilde{X} \to X$ is defined by
\[
(x_1,x_2,x_3;\,t_1,t_2,t_3) \mapsto (t_1x_1,t_1t_2x_2,t_1t_2t_3x_3).
\]
We have $\iota(M_1)=M_2$, $\iota(M_2)=M_3$, $\iota(M_3)=X$ and then the zero section $S$ of $\widetilde{X}$ is isomorphic to $T_{M_1}M_2 \underset{X}{\times} T_{M_2}M_3 \underset{X}{\times} T_{M_3}X$.

\item (Mixed)
Let $\chi=\{M_1,M_2,M_3\}$ with
$M_1 = \{0\}$, $M_2 = \{x_2 = 0\}$ and $M_3 = \{x_3 = 0\}$. Then $\chi$ satisfies H1, H2 and H3. We have $I_1=\{1,2,3\}$, $I_2=\{2\}$, $I_3=\{3\}$, $J_1=\{1\}$, $J_2=\{1,2\}$, $J_3=\{1,3\}$.
The map $p:\widetilde{X} \to X$ is defined by
\[
(x_1,x_2,x_3;\,t_1,t_2,t_3) \mapsto (t_1x_1,t_1t_2x_2,t_1t_3x_3).
\]
We have $\iota(M_1)=M_2 \cap M_3$, $\iota(M_2)=\iota(M_3)=X$ and then the zero section $S$ is isomorphic to $T_{M_1}(M_2 \cap M_3) \underset{X}{\times} T_{M_2}X \underset{X}{\times} T_{M_3}X$.
\end{enumerate}
\end{es}

\noindent Let $q \in \underset{1 \le j \le \ell}{\bigcap} M_j$ and
$p_j = (q;\, \xi_j)$ be a point in $T_{M_j}\iota(M_j)$ ($j=1,2,\dots,\ell$).
We set
$
p = p_1 \underset{X}{\times} \dots \underset{X}{\times} p_\ell
\in \underset{X,1\leq j \leq \ell}{\times}T_{M_j}\iota(M_j),
$
and $\tilde{p}_j = (q;\, \tilde{\xi}_j) \in T_{M_j} X$ denotes the
image of the point $p_j$ by the canonical embedding $T_{M_j} \iota (M_j) \hookrightarrow T_{M_j} X$.
We denote by $\operatorname{Cone}_{\chi, j}(p)$ ($j=1,2,\dots,\ell$)
the set of open conic cones in
$(T_{M_j}X)_q \simeq {\R}^{n - {\rm dim} M_j}$ that contain the point
$\tilde{\xi}_j \in (T_{M_j} X)_q \simeq {\R}^{n - {\rm dim} M_j}$.\\

\begin{df}
We say that an open set $G\subset (TX)_q$ is a multi-cone along $\chi$
with direction to
$p \in \left(\underset{X,1\leq j\leq \ell}{\times}T_{M_j}\iota(M_j)\right)_q$ if
$G$ is written in the form
\[
G = \underset{1\leq j \leq\ell}{\bigcap} \pi_{j,\,q}^{-1}(G_j) \qquad
G_j \in \operatorname{Cone}_{\chi,j}(p)
\]
where $\pi_{j,\,q}: (TX)_q \to (T_{M_j}X)_q$ is the canonical projection.
We denote by $\operatorname{Cone}_\chi(p)$ the set of multi-cones along $\chi$
with direction to $p$.
\end{df}

For any $q \in X$,  there exists an isomorphism
$
\psi: X \simeq (TX)_q
$
near $q$ and $\psi(q) = (q;\,0)$ that satisfies $\psi(M_j) = (TM_j)_q$
for any $j=1,\dots,\ell$.

Let $Z$ be a subset of $X$.
When $\chi$ satisfies H1, H2 and H3 we also have the following equivalence:
$p \notin C_\chi(Z)$
if and only if
there exist an open subset $\psi(q) \in U \subset (TX)_q$
and a multi-cone $G \in \operatorname{Cone}_{\chi}(\psi_*(p))$ such that
$
\psi(Z) \cap G \cap U = \emptyset
$
holds.

\begin{es} We now give two examples of multi-cones in the complex case. Let $X={\mathbb C}^2$
with coordinates $(z_1, z_2)$.
\begin{enumerate}
\item (Majima)  Let $M_1 = \{z_1 = 0\}$ and $M_2 = \{z_2 = 0\}$.
Then $\operatorname{Cone}_{\chi}(p)$ for $p = (0,0;\,1,1)$ is nothing but
the set of multi sectors along $Z_1 \cup Z_2$ with their direction to $(1,1)$.
\item (Takeuchi) Let $M_1 = \{0\}$ and $M_2 = \{z_2 = 0\}$.
For $p = (0,0;\, 1,1) \in T_{M_1}M_2 \underset{X}{\times} T_{M_2}X$,
it is easy to see that
a cofinal set of $\operatorname{Cone}_{\chi}(p)$ is, for example, given by
the family of the sets
\[
\{(\eta_1, \eta_2);\,
\vert \eta_1 \vert < \epsilon |\eta_2|,\,
\eta_2 \in S
\}_{S \ni 1,\epsilon > 0},
\]
where $S$ is a sector in $\CC$ containing the direction $1$.
\end{enumerate}
\end{es}

\begin{es} We now give three examples of multi-cones in the real case. Let $X={\mathbb R}^3$ with coordinates $(x_1, x_2, x_3)$.
\begin{enumerate}
\item (Majima) Let
$M_1 = \{x_1 = 0\}$, $M_2 = \{x_2 = 0\}$ and $M_3 = \{x_3 = 0\}$.
For $p = (0,0,0;\, 1,1,1) \in T_{M_1}X \underset{X}{\times} T_{M_2}X \underset{X}{\times} T_{M_3}X$,
it is easy to see that
$\operatorname{Cone}_{\chi}(p)=\{(\RP)^3\}$.
\item (Takeuchi)
Let
$M_1 = \{0\}$, $M_2 = \{x_2 = x_3 = 0\}$ and $M_3 = \{x_3 = 0\}$.
For $p = (0,0,0;\, 1,1,1) \in T_{M_1}M_2 \underset{X}{\times} T_{M_2}M_3 \underset{X}{\times} T_{M_3}X$,
it is easy to see that
a cofinal set of $\operatorname{Cone}_{\chi}(p)$ is, for example, given by
the family of the sets
\[
\{(\xi_1, \xi_2, \xi_3);\,
\vert \xi_2 \vert + \vert \xi_3 \vert < \epsilon \xi_1,\,
\vert \xi_3 \vert < \epsilon \xi_2,\,
\xi_3 > 0
\}_{\epsilon > 0}.
\]
\item (Mixed)
Let
$M_1 = \{0\}$, $M_2 = \{x_2 = 0\}$ and $M_3 = \{x_3 = 0\}$.
For $p = (0,0,0;\, 1,1,1) \in T_{M_1}(M_2 \cap M_3) \underset{X}{\times} T_{M_2}X \underset{X}{\times} T_{M_3}X$,
a cofinal set of $\operatorname{Cone}_{\chi}(p)$ is, for example, given by
the family of the sets
\[
\{(\xi_1, \xi_2, \xi_3);\,
\vert \xi_2 \vert + \vert \xi_3 \vert < \epsilon \xi_1,\,
\xi_2 > 0,\,
\xi_3 > 0
\}_{\epsilon > 0}.
\]
\end{enumerate}
\end{es}

This definition is also compatible with the restriction to a subfamily of $\chi$.
Namely, let $k \le \ell$ and $K=\{j_1,\dots,j_k\}$ be a subset of $\{1,2,\dots,\ell\}$.
Set $\chi_K=\{M_{j_1},\ldots,M_{j_k}\}$ and $S_{K}:=T_{M_{j_1}}\iota_\chi(M_{j_1}) \underset{X}{\times} \cdots \underset{X}{\times} T_{M_{j_k}}\iota_\chi(M_{j_k}) \underset{X}{\times} M$.
Let $Z$ be a subset of $X$. Then we have
\[
C_\chi (Z) \cap S_{K} = C_{\chi_K} (Z) \cap S_{K}.
\]
In the following we will denote with the same symbol $C_{\chi_K}(Z)$ the normal cone with respect to $\chi_K$ and its inverse image via the map $\widetilde{X} \to \widetilde{X}_{M_{j_1},\dots,M_{j_k}}$.\\

\subsection{Multi-specialization}

Let $k$ be a field and denote by $\mod(k_{X_{sa}})$ (resp. $D^b(k_{X_{sa}})$) the category (resp. bounded derived category) of sheaves on the subanalytic site $X_{sa}$.
For the theory of sheaves on subanalytic sites we refer to \cite{KS01,Pr08}. For the theory of multi-specialization we refer to \cite{HP}.\\
Let $\chi$ be a family of submanifolds satisfying H1, H2 and H3.\\

\begin{df}\label{def1.11} The multi-specialization along $\chi$ is the functor
\begin{align*}
\nu^{sa}_\chi \colon D^b(k_{X_{sa}}) & \to  D^b(k_{S_{\chi sa}}) , \\
F & \mapsto  \imin sR\varGamma_{\Omega_\chi}\imin p F.
\end{align*}
\end{df}
\begin{oss}\label{stalk-nu}
We can give a description
of the sections of the multi-specialization of
$F \in D^b(k_{X_{sa}})$: let $V$ be a conic subanalytic open subset of
$S_\chi$. Then:
\[\operatorname{H}^j(V;\nu^{sa}_MF) \simeq \lind U \operatorname{H}^j(U;F),\]
where $U$ ranges through the family of open subanalytic subsets of $X$ such that
$C_\chi(X \setminus U) \cap V=\emptyset$. Let $p=(q;\,\xi) \in \underset{X,1\leq j\leq \ell}{\times}T_{M_j}\iota(M_j)$,
let $B_\epsilon \subset (TX)_q$ be an open ball of radius $\epsilon>0$ with its center at
the origin and set
\[\operatorname{Cone}_\chi(p,\, \epsilon):=\{G \cap B_\epsilon;\, G \in \operatorname{Cone}_\chi(p)\}.\] Applying the functor $\imin \rho \colon D^b(k_{S_{\chi sa}}) \to D^b(k_{S_\chi})$ (see \cite{Pr08} for details) we can calculate the fibers at $p \in \underset{X,1\leq j\leq \ell}{\times}T_{M_j}\iota(M_j)$ which are given by
\[
(\imin \rho H^j\nu^{sa}_\chi F)_p \simeq \lind {W} \operatorname{H}^j(W;F),
\]
where $W$ ranges through the family $\operatorname{Cone}_\chi(p,\,\epsilon)$
 for $\epsilon > 0$. If there is no risk of confusion, in the rest of the paper we will use the notation
\begin{eqnarray*}
\nu_\chi:=\imin \rho \nu^{sa}_\chi:D^b(k_{X_{sa}})  \to  D^b(k_{S_\chi}) .
\end{eqnarray*}
Remark that, if $F \in \mod(k_X)$ we have $\nu_\chi R\rho_*(F) \simeq \nu_\chi (F)$, the multi-specialization of $F$ defined with the classical Grothendieck operations.
\end{oss}

\section{Multi-microlocalization}

In this section we introduce the functor of multi-microlocalization as the Fourier-Sato transform of multi-specialization. We then compute its stalks as inductive limits of section supported on convex subanalytic cones.

\subsection{Definition}\label{sec:MM}

Now we are going to apply the Fourier-Sato transform to the multi-specialization. We refer to \cite{KS90} for the classical Fourier-Sato transform and to \cite{Pr11} for its generalization to subanalytic sheaves. First, we need a general result:
Let $\tau^{}_i \colon E^{}_i \to Z$ $(1\leq i \leq \ell)$ be vector bundles  over  $Z$,  and let $E^*_i$
be the dual   bundle of $E^{}_i$.
We denote by $ {\wedge^{}_i}$ and $ {\vee^{}_i}$ the Fourier-Sato  and  the inverse  Fourier-Sato
transformations on $E^{}_i$ respectively. Moreover we denote   by $ {\wedge^{*}_i}$ and $ {\vee^{*}_i}$ the Fourier-Sato  and  the inverse  Fourier-Sato transformations on $E^*_i$ respectively.
Recall that
\[
G^{\wedge^{}_i} =(G^{\vee^{}_i})^a \otimes  \omega^{\otimes -1}_{E^*_i/Z}\,.
\]
Here $\omega^{}_{E^*_i/Z}$ is the dualizing complex and $\omega^{\otimes -1}_{E^*_i/Z}$ its dual.
Set $E:=E_1 \underset{Z}{\times} \cdots \underset{Z}{\times} E^{}_\ell$ and $E^*:=E^*_1 \underset{Z}{\times} \cdots \underset{Z}{\times} E^{*}_\ell$
for short.
Let $\tau \colon E \to  Z$ be the the canonical projection.
Set
$P'_i:= \{(\eta,\xi)\in E^{}_i \underset{Z}{\times} E^*_i;\, \langle\eta,\xi \rangle \leqslant 0\}$.
Further set
\[
P':=P'_1 \underset{Z}{\times} \cdots \underset{Z}{\times} P'_\ell, \qquad
P^+:= E \underset{Z}{\times} E^*   \setminus P',
\] and denote by
$p'_1\colon P' \to E $,
$p_2' \colon P'\to E^*$, and
$p^+_1 \colon P^+\to E^{}_i $,  $p^+_2 \colon P^+\to E^{*} $
the canonical projections
respectively.
Let $F$ and  $G$ be  a multi-conic object on  $E$ and
$E^* $ respectively. Then we set for  short $ {\wedge_E}$ (resp. $ {\vee^*_E}$) the composition of the Fourier-Sato transforms $ {\wedge_i}$ (resp. the composition of the inverse Fourier-Sato transforms $ {\vee^*_i}$) on $E_i$ for each $i \in \{1,\dots,\ell\}$.

\begin{oss}\label{!and!!}
Let $X$, $Y$ be two real analytic manifolds, and $f\colon Y \to X$  a
real analytic mapping. We have a commutative diagram
\[
\xymatrix @R=3ex{Y \ar[d]^\rho \ar[r]^f & X \ar[d]^\rho \\
Y^{}_{\rm sa} \ar[r]^f & X^{}_{\rm sa}}
\]
 For subanalytic
sheaves we can also define the functor of proper direct image $f^{}_{!!}$, and its derived functor $Rf_{!!}$\,.
Note that $Rf^{}_{!!} \circ R\rho^{}_* \not\simeq R\rho^{}_* \circ
Rf^{}_!$ in general.
As in the notation above,
let  $i \colon Z \to E$ be the zero-section embedding.  Then  we
 have
\[R \tau^{}_{!!} \circ R\rho^{}_* =  i^!\circ R\rho^{}_* = R\rho^{}_* \circ  i^! = R\rho^{}_* \circ
R\tau^{}_!\,.
\]
Hence in what follows, we identify $R \tau^{}_{!!}$ with  $R \tau^{}_{!}$.
\end{oss}

\begin{prop}\label{propF}
Let $F$ and  $G$ be  multi-conic objects on  $E$ and
$E^* $ respectively.

$(1)$ $F^{\wedge_E}$ and $G^{\vee^*_E}$
are independent of the order of the  the Fourier-Sato transformations $ {\wedge^{}_i}$
and inverse the Fourier-Sato transformations $ {\vee^*_{i}}$ respectively.

$(2)$ It  follows that
\[
G^{\vee^*_E}
 = Rp'_{1*}p^{\prime \,!}_{2} G.
\]
\end{prop}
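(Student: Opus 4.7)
The plan is to derive the explicit formula (2) by induction on $\ell$, and then to deduce (1) from the manifest symmetry of the right-hand side under permutations of the indices.

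For the base case $\ell=1$, the identity $G^{\vee^*_1}=Rp'_{1*}\,p^{\prime\,!}_{2}G$ is the classical integral-kernel description of the inverse Fourier--Sato transform of a conic sheaf on a single vector bundle, as recorded in \cite{KS90}, \S 3.7. For the induction step, set $\tilde E := E^{}_1 \underset{Z}{\times} \cdots \underset{Z}{\times} E^{}_{\ell-1}$ and factor $\vee^*_E = \vee^*_\ell \circ \vee^*_{\tilde E}$, where $\vee^*_{\tilde E}$ acts on the first $\ell-1$ factors with the additional parameter $E^*_\ell$ carried along, and $\vee^*_\ell$ acts on the last factor with $\tilde E$ carried along. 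The inductive hypothesis, applied in this relative form over the base $Z \underset{Z}{\times} E^*_\ell$, realises $\vee^*_{\tilde E}$ via the incidence set $\tilde P' := P'_1 \underset{Z}{\times} \cdots \underset{Z}{\times} P'_{\ell-1}$, while the base case realises $\vee^*_\ell$ via $P'_\ell$. Composing, we obtain a two-step transform whose geometric input is the pair $(\tilde P', P'_\ell)$; a base change in the evident Cartesian square of projections between the intermediate fibre products then assembles these into the single incidence $P' = \tilde P' \underset{Z}{\times} P'_\ell = P'_1 \underset{Z}{\times} \cdots \underset{Z}{\times} P'_\ell$, yielding the desired formula.

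Assertion (1) is then a direct consequence. The right-hand side $Rp'_{1*}\,p^{\prime\,!}_{2}G$ with $P' = P'_1 \underset{Z}{\times} \cdots \underset{Z}{\times} P'_\ell$ is invariant under any permutation of $\{1,\dots,\ell\}$, so the iterated transform $\vee^*_E$ does not depend on the order in which the $\vee^*_i$ are applied. The case of $\wedge_E$ is handled identically, either by repeating the induction with $P_i = \{\langle \eta,\xi\rangle \geq 0\}$ in place of $P'_i$, or by using the relation $(\,\cdot\,)^{\wedge_i} = ((\,\cdot\,)^{\vee_i})^a \otimes \omega^{\otimes -1}_{E^*_i/Z}$ stated before the proposition and noting that the antipodal involution and dualising twist on the factor $E_i$ (respectively $E^*_i$) commute with the Fourier--Sato transform on the factor $E_j$ (respectively $E^*_j$) for $j\neq i$.

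The main obstacle is the base change step in the inductive argument. One must verify that the projections appearing in the Cartesian diagram relating $\tilde P' \underset{Z}{\times} E^*_\ell$, $\tilde E \underset{Z}{\times} P'_\ell$ and $P' = \tilde P' \underset{Z}{\times} P'_\ell$ are sufficiently regular for $R(-)_*$ and $(-)^!$ to commute in the required direction; since these are projections between fibre products of vector bundles, smoothness over $Z$ provides the necessary base change and one has a canonical isomorphism $(p'_2)^! \simeq p^{\prime\,-1}_2 \otimes \omega_{P'/E^*}$, which must be tracked together with Remark \ref{!and!!} identifying $R\tau_{!!}$ with $R\tau_!$ to ensure the dualising twists combine correctly across the $\ell$ factors.
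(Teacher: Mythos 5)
Your proposal is correct and uses essentially the same machinery as the paper: the inverse transforms are composed, a base change across the Cartesian square of incidence projections interchanges $p'^{!}$ with $Rp'_{*}$, and the iterated fibered products assemble into $P' = P'_1 \ftimes_Z \cdots \ftimes_Z P'_\ell$. The only differences are organizational — the paper first reduces (1) to the case $\ell=2$ (adjacent transpositions generate the symmetric group) before running the same base-change computation to get (2) by induction, whereas you prove (2) directly and read (1) off the manifest symmetry of the formula; and the paper settles $\wedge_E$ by noting $\wedge^*_i = (\vee^*_i)^{-1}$ and then replacing $E^*_i$ by $E_i$, which is essentially your second alternative for that step.
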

\begin{proof}
(1)
By induction on $\ell$,   we may assume that   $\ell =2$.
We have the following commutative diagram:
\[
\xymatrix @C=3em{
 &P' \ar @/_3ex/[ld]_-{p'_1} \ar@/^4ex/[rr]^{p'_2} \ar[r]_{ p^{\prime}_{2,2} }
\ar[d]^{p^{\prime}_{1,1}}
\ar@{}[dr] | {\displaystyle\square}
& P'_1 \ftimes_Z E^*_2
\ar[r]_{p^{\prime}_{1,2}}\ar[d]^{p^{\prime}_{1,1} }& E^*
\\
E& E^{}_1 \ftimes_Z P'_2 \ar[l]^{p ^{\prime}_{2,1}} \ar[r]_{ p ^{\prime}_{2,2}} &
E^{}_1 \ftimes_Z E^*_2 &
}\]
 Then we have
\begin{equation}
\label{eq.s.4}
\begin{split}
G^{\vee ^{*}_{1} \vee ^{*}_{2}} &=
R p^{\prime}_{2,1*}\, p ^{\prime\, !}_{2,2}\,
Rp ^{\prime}_{1,1*} \,p ^{\prime\,!}_{1,2}G
= R p^{\prime}_{2,1*}\, Rp ^{\prime}_{1,1*} \, p ^{\prime\, !}_{2,2}\,
p ^{\prime\,!}_{1,2} G
\\
&
 = Rp'_{1*}p^{\prime \,!}_{2} G.
\end{split}
\end{equation}
For the same reason, we obtain
\[
G^{\vee ^{*}_{1} \vee ^{*}_{2}} =
  Rp'_{1*}p^{\prime \,!}_{2} G
= G^{\vee ^{*}_{2} \vee ^{*}_{1}}.
\]
Therefore  we have
\[
\wedge^{*}_{1} \wedge^{*}_{2}= ( \vee ^{*}_{2}\vee ^{*}_{1})^{-1}= (\vee ^{*}_{1} \vee ^{*}_{2})^{-1}
= \wedge^{*}_{2} \wedge^{*}_{1}.
\]
Replacing $(E^*_1,E^*_2)$ with $(E^{}_1,E^{}_2)$,  we obtain
\[
\wedge^{}_{1} \wedge^{}_{2}= \wedge^{}_{2} \wedge^{}_{1}.
\]

(2) For  $\ell =2$,  the  result follows by \eqref{eq.s.4}.
Next, assume that $\ell >2$. Set $E':= \underset{Z, 2\leq i\leq\ell}{\times}E^{}_i$,
$E'{}^*:= \underset{Z, 2\leq i\leq\ell}{\times}E'{}^*_i$, $P'_{E'}:= \underset{Z, 2\leq i\leq\ell}{\times}P'_i$,
and $ {\vee^*_{E'}}$ the composition of $ {\vee^*_i}$) for each $i \in \{2,\dots,\ell\}$.
We have the following commutative diagram:
\[
\xymatrix @C=3em{
 &P' \ar @/_3ex/[ld]_-{p'_1} \ar@/^4ex/[rr]^{p'_2} \ar[r]_{ p^{\prime}_{E',2} } \ar[d]^{p^{\prime}_{1,1}}
\ar@{}[dr] | {\displaystyle\square}
& P'_1 \ftimes_Z E'{}^*
\ar[r]_{p^{\prime}_{1,2}}\ar[d]^{p^{\prime}_{1,1} }& E^*
\\
E& E^{}_1 \ftimes_Z P'_{E'} \ar[l]^{p ^{\prime}_{E',1}} \ar[r]_{ p ^{\prime}_{E',2}} &
E^{}_1 \ftimes_Z E'{}^*&
}\]
 Then by induction hypothesis,   we have
\[
(G^{\vee ^{*}_{1}}) ^{\vee ^{*}_{E'}} =
R p^{\prime}_{E',1*}\, p ^{\prime\, !}_{E',2}\,
Rp ^{\prime}_{1,1*} \,p ^{\prime\,!}_{1,2}G
= R p^{\prime}_{E',1*}\, Rp ^{\prime}_{1,1*} \, p ^{\prime\, !}_{E',2}\,
p ^{\prime\,!}_{1,2} G
 = Rp'_{1*}p^{\prime \,!}_{2} G.
\]
Therefore, the induction proceeds.
 \end{proof}
We shall need some notation. For a subset $K=\{i_1,\dots,i_k\} \subseteq \{1,\dots,\ell\}$,
set  $\chi_K:=\{M_i;\; i \in K\}$,
$
S_{i}:=T_{M_{i}}\iota(M_{i}) \underset{X}{\times} M$ ($j=1,\dots,\ell$) and
\[
S_{K}: =T_{M_{i_1}}\iota(M_{i_1}) \underset{X}{\times} \cdots \underset{X}{\times} T_{M_{i_k}}\iota(M_{i_k})
=S_{i_1}\ftimes_M \cdots \ftimes_M S_{i_k}\,.
\]
Let $S^*_{K}$ be the dual of $S_{K}$:
\[
S^*_{K}: =T^*_{M_{i_1}}\iota(M_{i_1}) \underset{X}{\times} \cdots \underset{X}{\times} T^*_{M_{i_k}}\iota(M_{i_k})
=S^*_{i_1}\ftimes_M \cdots \ftimes_M S^*_{i_k}\,.
\]
 Given $C_{i_j} \subseteq S_{i_j}$, $j=1,\dots,k$, we set for short $C_K:=C_{i_1} \underset{X}{\times} \cdots \underset{X}{\times} C_{i_k} \subset S_K$. Define $ {\wedge_K}$ as the composition of the Fourier-Sato transformation $ {\wedge_{i_k}}$ on $S_{i_k}$ for each $i_k \in K$.

Let $I,J \subseteq \{1,\dots,\ell\}$ be such that $I \sqcup J = \{1,\dots,\ell\}$. We still denote by $\pi \colon
S_I \underset{M}{\times} S_J^* \to M$ the projection. We define the functor $\nu^{\rm sa}_{\chi_I}\mu^{\rm sa}_{\chi_J}$ by
\[
\nu^{\rm sa}_{\chi_I}\mu^{\rm sa}_{\chi_J} \colon \bDb(k_{X_{\rm sa}})
\ni F  \mapsto  \nu^{\rm sa}_\chi (F)^{\wedge_J} \in  \bDb(k_{(S_I \ftimes_M S_J^*)_{\rm sa}}).
\]
By Proposition \ref{propF}, this is well defined; that is, this definition does not depend on
 the order of the Fourier-Sato transformations.
Composing with the functor $\rho^{-1}$, we set for short
\[
\nu_{\chi_I}\mu_{\chi_J}:= \rho^{-1} \nu^{\rm sa}_{\chi_I}\mu^{\rm sa}_{\chi_J}
\colon  \bDb(k_{X_{\rm sa}})  \to  \bDb(k_{S_I \ftimes_M S_J^*}).
\]
When $I=\emptyset$ we obtain the functor of multi-microlocalization: Set $ \wedge :=
 {\wedge_{\{1,\dots,\ell\}}}$ for short.
\begin{df} The multi-microlocalization along $\chi$ is the functor
\[
\mu^{\rm sa}_\chi \colon \bDb(k_{X_{\rm sa}}) \ni F  \mapsto  \nu^{\rm sa}_\chi(F)^\wedge \in  \bDb(k_{S^*_{\chi \rm sa}}).
\]
\end{df}
As above, we set for short
\[
\mu_\chi :=\imin \rho \mu^{\rm sa}_\chi \colon  \bDb(k_{X_{\rm sa}}) \to  \bDb(k_{S^*_{\chi}}).
\]

\subsection{Stalks}{\label{sec:fiber-formula}}

Let $X$ be a real analytic manifold and consider a family of submanifolds $\chi=\{M_1,\dots,M_\ell\}$ satisfying H1, H2 and H3.
Let $S=T_{M_1}\iota(M_1) \underset{X}{\times} \cdots \underset{X}{\times} T_{M_\ell}\iota(M_\ell)$. Locally $p \in S$ is given by $p=p_1 \times \cdots \times p_\ell = (q;\,\xi^{(1)},\dots,\xi^{(\ell)})$, with $\xi^{(k)} \in T_{M_k}\iota(M_k)$. Set $M=\bigcap_{j=1}^\ell M_j$.
Let $\tau_j:T_{M_j}\iota(M_j) \hookrightarrow T_{M_j}X$
denote the canonical injection and let
$\pi_j: S \to T_{M_j}\iota(M_j)$ be the canonical projection.

\begin{lem} \label{lem:sections A} Let $F \in D^b(k_{X_{sa}})$.
\begin{itemize}
\item[(i)] Let $A$ be a multi-conic closed subanalytic subset of $S$.
Then $\operatorname{H}^k_A(S;\nu^{sa}_\chi F) \simeq \lind {Z,U} \operatorname{H}^k_Z(U;F)$, where $U$ ranges through
the family of open subanalytic neighborhoods of $M$ and $Z$ is a closed subanalytic subset such that $C_\chi(Z) \subset A$.
\item[(ii)] Suppose that $A=\bigcap_{j=1}^\ell \imin {\pi_j}A_j$ with
$A_j$ being a closed conic subanalytic subset
in $T_{M_j}\iota(M_j) \underset{M_j}{\times} M$.
Then $\operatorname{H}^k_A(S;\nu^{sa}_\chi F) \simeq \lind {Z,U} \operatorname{H}^k_Z(U;F)$,
where $U$ ranges through the family of open subanalytic neighborhoods of $M$ and
$Z=Z_1 \cap \cdots \cap Z_\ell$ with each $Z_j$ being a closed subanalytic subset in $X$ and
$C_{M_j}(Z_j) \subset (T_{M_j}X \setminus \tau_j(T_{M_j}\iota(M_j) \setminus A_j))$.
\end{itemize}
\end{lem}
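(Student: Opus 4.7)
For part (i), the plan is to start from the excision triangle
\[R\Gamma_A(S;\nu^{sa}_\chi F) \to R\Gamma(S;\nu^{sa}_\chi F) \to R\Gamma(S\setminus A;\nu^{sa}_\chi F) \xrightarrow{+1}\]
and to rewrite the last two terms as filtered colimits via Remark \ref{stalk-nu}: one has $R\Gamma(S;\nu^{sa}_\chi F) \simeq \lind{U} R\Gamma(U;F)$ indexed by open subanalytic neighbourhoods $U$ of $M$, and $R\Gamma(S\setminus A;\nu^{sa}_\chi F) \simeq \lind{V} R\Gamma(V;F)$ indexed by open subanalytic $V$ with $C_\chi(X\setminus V) \subset A$. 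I reindex by pairs $(U, Z)$ where $U \supset M$ is an open subanalytic neighbourhood and $Z \subset X$ is closed subanalytic with $C_\chi(Z) \subset A$, setting $V:=U\setminus Z$; the admissibility $C_\chi(X\setminus(U\setminus Z))=C_\chi(X\setminus U)\cup C_\chi(Z)\subset A$ follows from the evident additivity $C_\chi(Y_1 \cup Y_2) = C_\chi(Y_1) \cup C_\chi(Y_2)$ (immediate from Lemma \ref{lem: multi-normal cone}), and this subfamily is cofinal. Since filtered colimits are exact the cone commutes with the colimit, and the excision triangle for $(U,Z)$ identifies the cone of $R\Gamma(U;F) \to R\Gamma(U\setminus Z;F)$ with $R\Gamma_Z(U;F)[1]$, giving (i).

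For part (ii), I reduce to (i) via the pointwise identity
\[ \pi_j(C_\chi(Z)) \;=\; \tau_j^{-1}(C_{M_j}(Z)) \qquad (j = 1,\dots,\ell), \]
valid for every closed subanalytic $Z \subset X$ (restricting base points to $M$). Granted this, the condition $C_\chi(Z) \subset A = \bigcap_j \pi_j^{-1}(A_j)$ is equivalent, for each $j$, to $\tau_j^{-1}(C_{M_j}(Z))\subset A_j$, i.e., to $C_{M_j}(Z)\subset T_{M_j}X \setminus \tau_j(T_{M_j}\iota(M_j) \setminus A_j)$. Thus any $Z$ in the (i) indexing family automatically satisfies the (ii) conditions via the trivial decomposition $Z_j:=Z$, and conversely any intersection $Z=\bigcap_j Z_j$ fulfilling the (ii) conditions lies in the (i) family, by the monotonicity $C_{M_j}(Z) \subset C_{M_j}(Z_j)$ together with the same identity applied to $Z$. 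The two indexing families therefore coincide, so (ii) follows from (i).

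The main obstacle is the identity $\pi_j(C_\chi(Z)) = \tau_j^{-1}(C_{M_j}(Z))$, which I would establish by sequence arguments using Lemma \ref{lem: multi-normal cone}. The inclusion $\subseteq$ is a direct scaling: given $p=(q;\xi^{(1)},\dots,\xi^{(\ell)})\in C_\chi(Z)$ with sequences $q^m \in Z$ and $c_{k,m}\to +\infty$, set $c_m:=c_{\JJ_j,m}$; for $i\in \hat I_j$ one has $q^m_i c_m \to \xi^{(j)}_i$, whereas for $i\in \hat I_r$ with $I_r\subsetneq I_j$ the divergent ratio $c_{\JJ_r,m}/c_{\JJ_j,m}=\prod_{k\in \JJ_r\setminus \JJ_j}c_{k,m}$ forces $q^m_i c_m\to 0$, so $q^m_{I_j}c_m\to\tau_j(\xi^{(j)})$, showing $\tau_j(\xi^{(j)})\in C_{M_j}(Z)$. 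The reverse inclusion is the delicate part: given $v\in T_{M_j}\iota(M_j)$ with $\tau_j(v)\in C_{M_j}(Z)$ realized by $q^m\in Z$ and $c_m\to +\infty$, one must construct scalings $c_{k,m}\to +\infty$ for all $k$ such that $\prod_{k\in \JJ_j}c_{k,m}\sim c_m$ and each $q^m_{\hat I_r}c_{\JJ_r,m}$ admits a finite limit. My plan is to distribute $c_m$ by $c_{k,m}:=c_m^{\alpha_k}$ for $k\in \JJ_j$ with $\sum_{k\in \JJ_j}\alpha_k=1$ and $\alpha_k>0$, to pick slowly growing sequences $c_{k,m}\to +\infty$ for $k\notin \JJ_j$, and to pass to a diagonal subsequence arranging the finitely many required limits simultaneously; the several independent scaling parameters inherent in the definition of $C_\chi$ provide exactly the freedom needed to recover $v$ as $\pi_j(p)$ for some $p\in C_\chi(Z)$.
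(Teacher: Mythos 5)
Your part (i) is essentially the paper's argument: excision triangle, the section formula for $\nu^{sa}_\chi F$ on $S$ and on $S\setminus A$, reindexing by complements $Z := X\setminus W$, and passing the filtered colimit through (the paper phrases this last step via the five lemma, where you take the cone — the same thing).

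For (ii) you take a genuinely different route. The paper does not reduce (ii) to (i); it starts from an open $W$ with $C_\chi(X\setminus W)\cap(S\setminus A)=\emptyset$, builds a decomposition $W=\bigcup_j W_j$ with $C_\chi(X\setminus W_j)\cap\pi_j^{-1}(S_j\setminus A_j)=\emptyset$ (this is its substantive geometric step, using Proposition~4.6 of \cite{HP} and shrinking $W$), and only then applies the one-factor translations from Lemma~4.2 and Corollary~4.3 of \cite{HP} to each $W_j$ separately. Your argument instead shows that the indexing families of closed sets $Z$ in (i) and in (ii) literally coincide, via the projection identity $\pi_j(C_\chi(Z))=\tau_j^{-1}(C_{M_j}(Z))$ over $M$, so that the trivial decomposition $Z_j:=Z$ always serves. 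Granting the identity, this is a genuine simplification: the construction of the $W_j$'s is bypassed entirely, and what you rely on instead is nothing more than the $K=\{j\}$ case of the restriction-compatibility formula $C_\chi(Z)\cap S_K=C_{\chi_K}(Z)\cap S_K$ already recorded in \S1.2 (and imported from \cite{HP}), together with the trivial inclusion $C_\chi(Z)\cap S_{\{j\}}\subseteq\pi_j(C_\chi(Z))$ and your scaling computation for the other inclusion.

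That said, the direct proof you sketch for the reverse inclusion has a real gap. For $r$ with $I_r\supsetneq I_j$ one has $\JJ_r\subseteq\JJ_j\setminus\{j\}$, so $c_{\JJ_r,m}$ involves only the factors $c_{k,m}$ with $k\in\JJ_j\setminus\{j\}$; with your fixed-exponent choice $c_{k,m}=c_m^{\alpha_k}$, $\alpha_k>0$, the product $c_{\JJ_r,m}=c_m^{\sum_{k\in\JJ_r}\alpha_k}$ is a \emph{fixed} positive power of $c_m$. But for such $r$ the coordinate $q_m^{(r)}$ is only known to tend to $0$ (it is a base coordinate of $M_j$ converging to a point of $M$), with no rate control; so $q_m^{(r)}c_{\JJ_r,m}$ can diverge to $+\infty$ along \emph{every} subsequence (take $q_m^{(r)}$ of order $1/\log c_m$), and the proposed diagonal subsequence does not save the construction. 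The fix is to let the splitting adapt to the observed decay: choose a sequence $\lambda_m\to\infty$ with
\[
\lambda_m^{2\ell}\ \le\ \min\Bigl(\min_{I_r\subsetneq I_j}\bigl|q_m^{(r)}c_m\bigr|^{-1},\ \min_{\hat{I}_r\cap I_j=\emptyset}\bigl|q_m^{(r)}\bigr|^{-1}\Bigr)
\qquad\text{and}\qquad
\lambda_m^{\#\JJ_j}\le c_m
\]
(possible since all these quantities tend to $\infty$), set $c_{k,m}:=\lambda_m$ for $k\neq j$ and $c_{j,m}:=c_m/\lambda_m^{\#\JJ_j-1}$. Then $c_{\JJ_j,m}=c_m$; for $I_r\subsetneq I_j$ one has $|q_m^{(r)}c_{\JJ_r,m}|\le|q_m^{(r)}c_m|\,\lambda_m^{\ell}\le\lambda_m^{-\ell}\to 0$; and for the remaining $r\neq j$ the set $\JJ_r$ avoids $j$, so $c_{\JJ_r,m}\le\lambda_m^\ell$ and again $|q_m^{(r)}c_{\JJ_r,m}|\le\lambda_m^{-\ell}\to 0$. (Alternatively, use curve selection for the subanalytic $Z$ to obtain a polynomial-rate realizing sequence, which would salvage fixed exponents.) But the most economical path is simply to cite the restriction-compatibility formula rather than re-derive it.
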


\begin{proof}  (i) We have the exact sequence
\[
\dots \to \operatorname{H}^k_A(S;\nu^{sa}_\chi(F)) \to \operatorname{H}^k(S;\nu^{sa}_\chi F) \to \operatorname{H}^k(S \setminus A;\nu^{sa}_\chi F) \to \dots
\]
We have $\operatorname{H}^k(S;\nu^{sa}_\chi F) \simeq \lind U \operatorname{H}^k(U;F)$, where $U$ ranges through the family of subanalytic neighborhoods of $M$. Moreover
\[
\operatorname{H}^k(S \setminus A;\nu^{sa}_\chi F) \simeq \lind W \operatorname{H}^k(W;F),
\]
where $W \in \op(X_{sa})$ is such that $C_\chi(X \setminus W) \cap (S \setminus A) = \emptyset$. Setting $Z=X \setminus W$ we obtain
\[
\operatorname{H}^k(S \setminus A;\nu^{sa}_\chi F) \simeq \lind {U,Z} \operatorname{H}^k(U \setminus Z;F),
\]
where $U$ ranges through the family of subanalytic neighborhoods of $M$ and $Z$ is closed subanalytic such that $C_\chi(Z) \subset A$. Then the result follows thanks to the five lemma applied to the exact sequence
\[
\dots \to \lind {U,Z} \operatorname{H}^k_Z(U;F) \to \lind U \operatorname{H}^k(U;F) \to \lind {U,Z} \operatorname{H}^k(U \setminus Z;F) \to \dots
\]
where $U$ ranges through the family of subanalytic neighborhoods of $M$ and $Z$ is closed subanalytic such that $C_\chi(Z) \subset A$.\\

(ii) Let $A=\bigcap_{j=1}^\ell \imin {\pi_j}A_j$ with $A_j \subset T_{M_j}\iota(M_j) \times_{M_j} M$.
Set for short $S_j:=T_{M_j}\iota(M_j) \times_{M_j} M$.
Then we have $S \setminus A = \bigcup_{j=1}^\ell \imin {\pi_j}(S_j \setminus A_j)$.
Let $W \in \op(X_{sa})$ be such that $C_\chi(X \setminus W) \cap (S \setminus A) = \emptyset$.
Then $W=\bigcup_{j=1}^\ell W_j$ with $C_\chi(X \setminus W_j) \cap \imin {\pi_j}(S_j \setminus A_j)=\emptyset$.
Let us find $W_1,\dots,W_\ell$. Let $\widetilde{W}_j$ be an open neighborhood of $\imin {\pi_j}(S_j \setminus A_j)$
in the multi-normal deformation $\widetilde{X}$ of $X$.
Then by Proposition  4.6 of \cite{HP} we have $C_\chi(X \setminus \widetilde{p}(\widetilde{W}_j \cap \Omega)) \cap \imin {\pi_j}(S_j \setminus A_j) = \emptyset$. Set $W_j=\widetilde{p}(\widetilde{W}_j \cap \Omega) \cap W$. Up to shrink $W$ we may suppose $W=\bigcup_{j=1}^\ell W_j$.  We have
\begin{eqnarray*}
C_\chi(X \setminus W_j) \cap \imin {\pi_j}(S_j \setminus A_j) = \emptyset  & \Leftrightarrow & C_{\chi}(X \setminus W_j) \cap (S_j \setminus A_j) = \emptyset \\
& \Leftrightarrow & C_{M_j}(X \setminus W_j) \cap \tau_j(S_j \setminus A_j) = \emptyset,
\end{eqnarray*}
where, for the second condition, $S_j \setminus A_j$ is  regarded as a subset of
\[
\{ (q;\,\xi^{(1)},\dots,\xi^{(\ell)}) \in S;\, q \in M,\, \xi^{(k)} = 0 \,(k \ne j) \} \subset S.
\]
And
the first equivalence follows from Lemma 4.2 of \cite{HP} and
the second one from Corollary 4.3 of \cite{HP}.
Setting $Z=X \setminus W$ and $Z_j = X \setminus W_j$,
we obtain $Z=\bigcap_{j=1}^\ell Z_j$ with $C_{M_j}(Z_j) \cap \tau_j(S_j \setminus A_j) = \emptyset$ and the result follows.
\end{proof}
\begin{oss}
We correct typo:
By Lemma 6.1 in \cite{HP},  for any  $F \in \bD^{\rm b}(\Bbbk_{X_{\rm sa}})$ there is a natural isomorphism
\[
\nu^{sa}_\chi (F)= s^{-1}R\varGamma_{\varOmega_\chi} (p^{-1} F) \simeq s^!(p^{!} F)_{\varOmega_\chi}\,.
\]
\end{oss}
\begin{prop}
Assume that  $\chi$ satisfies the conditions \textup{H1}, \textup{H2} and \textup{H3}.
Let $\tau \colon S_\chi \simeq \underset{X,1\leq j\leq \ell}{\times}T_{M_j}\iota(M_j) \to  M$ be the the canonical projection,
 where $M:= \bigcap_{j=1}^\ell M_j$\,. Then
\begin{align}
 \nu^{sa}_\chi (F) |_M &\simeq R\tau_{*}\nu^{sa}_\chi (F) \simeq F|_M\,,
\\
 R\varGamma_M(\nu^{sa}_\chi (F)) &\simeq
R\tau_{!} \nu^{sa}_\chi (F) \simeq R\varGamma_M(F).
\end{align}
\end{prop}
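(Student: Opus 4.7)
The statement consists of two parallel chains of isomorphisms, one for the pushforward along $\tau$ and one for the proper pushforward. My strategy is to treat both lines by the same two-step scheme: the outer isomorphism in each line will come from a general conicity/vector-bundle identity for the multi-conic subanalytic sheaf $\nu^{sa}_\chi(F)$, while the inner isomorphism will be extracted from the explicit stalk descriptions of Remark \ref{stalk-nu} and Lemma \ref{lem:sections A}.

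For the outer isomorphisms, I first observe that under H1, H2 and H3 the space $S_\chi$ is honestly the vector bundle $\tau\colon \underset{X,1\le j\le \ell}{\times}T_{M_j}\iota(M_j)\to M$, and that $\nu^{sa}_\chi(F)=s^{-1}R\Gamma_{\Omega_\chi}p^{-1}F$ is multi-conic on this bundle. Indeed, in the local coordinates recalled in the excerpt, the rescaling $(x^{(0)},\ldots,x^{(\ell)};\,t_1,\ldots,t_\ell)\mapsto(x^{(0)},c_{\JJ_1}^{-1}x^{(1)},\ldots,c_{\JJ_\ell}^{-1}x^{(\ell)};\,c_1t_1,\ldots,c_\ell t_\ell)$ preserves $p$ and $\Omega_\chi$, and restricts on $S_\chi$ to the fiberwise $(\RP)^\ell$-dilation of $\tau$. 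The multi-parameter, subanalytic analogue of the standard vector-bundle identities for conic sheaves (see \cite{KS90} for the classical case and \cite{Pr08} for the subanalytic counterpart, iterated on each factor) then yields
$$\nu^{sa}_\chi(F)|_M\simeq R\tau_*\nu^{sa}_\chi(F),\qquad R\Gamma_M(\nu^{sa}_\chi F)\simeq R\tau_!\nu^{sa}_\chi(F).$$

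For the inner isomorphisms I compute stalks at a point $(q;0)\in M\subset S_\chi$ lying on the zero section. For the first chain, Remark \ref{stalk-nu} gives $(H^k\nu^{sa}_\chi F)_{(q;0)}\simeq\lind{W}H^k(W;F)$ with $W$ ranging over $\operatorname{Cone}_\chi((q;0),\epsilon)$; but any open $\RP$-invariant subset of $(T_{M_j}X)_q$ containing the zero vector is necessarily all of $(T_{M_j}X)_q$, so the only multi-cone at $(q;0)$ is $(TX)_q$ itself, and the cofinal family degenerates to the system of subanalytic open neighborhoods of $q$ in $X$. This gives $\nu^{sa}_\chi(F)|_M\simeq F|_M$. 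For the second chain, Lemma \ref{lem:sections A}(i) applied with $A$ equal to the zero section presents $(H^k R\Gamma_M\nu^{sa}_\chi F)_q$ as $\lind{U,Z}H^k_Z(U;F)$ with $Z$ closed subanalytic satisfying $C_\chi(Z)\subset M$; by Lemma 4.2 of \cite{HP} this is locally equivalent to $Z\subset M$ near $q$, so the inductive limit simplifies to $\lind{U}H^k_M(U;F)\simeq (H^k R\Gamma_M F)_q$.

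The main obstacle will be Step 1: the multi-parameter conic vector-bundle identities in the subanalytic setting are not literally proved in \cite{KS90} or \cite{Pr08}, since those cover only a single $\RP$-scaling. I expect to handle this by induction on $\ell$ via the iterative construction $\widetilde{X}_{M_1,\ldots,M_\ell}=(\widetilde{X}_{M_1,\ldots,M_{\ell-1}})^\sim_{\widetilde{M}_\ell}$, checking at each stage that the new $\RP$-scaling on the added parameter $t_\ell$ is compatible with the vector bundle structure already produced on the intermediate deformation.
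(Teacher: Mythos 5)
Your proposal is correct and follows essentially the same strategy as the paper's proof: the outer identifications in each chain are the conic vector-bundle identities ($R\tau_*\simeq k^{-1}$ and $R\tau_!\simeq k^!$ for the zero-section embedding $k$, which the paper invokes silently by writing $R\tau_!\nu^{sa}_\chi(F)=k^!\nu^{sa}_\chi(F)$), and the remaining identifications are checked against the section/stalk descriptions of Remark \ref{stalk-nu} and Lemma \ref{lem:sections A}. The paper packages this more compactly by first exhibiting canonical morphisms through the composite functors $k^{-1}s^{-1}p^{-1}F$ and $k^!s^!(p^!F)_{\Omega_\chi}\to k^!s^!p^!F=R\Gamma_M(F)$ and then citing the stalk formulae, whereas you compare the two sides of each isomorphism directly; the content is the same. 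Your concern about a multi-parameter conic identity is moot: being $(\RP)^\ell$-monodromic implies being monodromic for the diagonal $\RP$-action, and that single scaling already realizes $\tau\colon S_\chi\to M$ as a vector bundle, so the usual one-parameter identities of \cite{KS90} and their subanalytic analogues apply without any induction.
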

\begin{proof}
Let $k \colon M \to S_\chi$ be the zero-section embedding, and $i \colon M \to X$ the canonical embedding. Then
\allowdisplaybreaks
\begin{align*}
 F|_M &= k^{-1}s^{-1}p^{-1}F \to k^{-1}s^{-1}Ri_{\Omega_\chi *}i_{\Omega_\chi}^{\,-1}p^{-1}(F)
\\
&  \simeq
k^{-1} s^{-1}R\varGamma_{\Omega_\chi}(p^{-1}F) \simeq \nu^{sa}_\chi (F) |_M \,,
\\
R\tau_{!} \nu^{sa}_\chi (F) &= k^{!} \nu^{sa}_\chi (F)\simeq
k^{!}   s^!(p^{!} F)_{\varOmega_\chi} \to k^{!}  s^! p^{!}F = R\varGamma_M(F).
\end{align*}
These morphisms are isomorphisms by the stalk formulae.
\end{proof}

Set $S^* := \proddTM$.
Let $V=V_1 \underset{X}{\times} \dots \underset{X}{\times} V_\ell$
be a multi-conic open subanalytic subset in $S^*$, and let $\pi: S^* \to M$ denote the canonical projection.
We set, for short, $V^\circ:=V_1^\circ \underset{X}{\times} \dots \underset{X}{\times} V_\ell^\circ$ the multi-polar cone in $S$.

\begin{lem} \label{lem:sections Vcirc}
	Let $V=V_1 \underset{X}{\times} \dots \underset{X}{\times} V_\ell$ be
a multi-conic open subanalytic subset in $S^*$ such that
$V \cap \pi^{-1}(q)$ is convex in $S^*_q$ for $q \in \pi(V)$.
Then
$\operatorname{H}^k(V; \mu^{sa}_\chi F) \simeq \lind {Z,\,U} \operatorname{H}^k_Z(U;F)$,
where $U$ ranges through the family of open subanalytic subsets in $X$ with $U \cap M = \pi(V)$
and $Z=Z_1 \cap \cdots \cap Z_\ell$ with $C_{M_j}(Z_j) \subset (T_{M_j}X \setminus \tau_j({\rm Int}(V_j^{\circ a})))$.
Here $(\cdot)^a$ denotes the inverse image of the antipodal map.
\end{lem}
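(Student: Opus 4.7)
The plan is to combine the Fourier--Sato exchange formula with Lemma \ref{lem:sections A} (ii): first translate sections of $\mu^{sa}_\chi F = (\nu^{sa}_\chi F)^\wedge$ over the convex multi-cone $V$ into sections of $\nu^{sa}_\chi F$ with support in a closed multi-conic subanalytic subset $A \subset S$, and then invoke Lemma \ref{lem:sections A} (ii) to identify the result as the desired inductive limit of local cohomology groups in $X$.

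First, by the definition of $\mu^{sa}_\chi$ and by Proposition \ref{propF}, the iterated Fourier--Sato transform $\wedge$ from $S$ to $S^*$ is independent of the order of the one-variable transforms $\wedge_j\colon D^b(k_{S_{j,\mathrm{sa}}}) \to D^b(k_{S^*_{j,\mathrm{sa}}})$. The hypothesis that $V \cap \pi^{-1}(q)$ is convex for each $q \in \pi(V)$ forces each factor $V_j$ to have convex fibres over $\pi_j(V_j) \subseteq M$, so the single-variable Fourier--Sato exchange applies factor by factor. For each $j$, I would use Proposition \ref{propF} (2) and the kernel $P'_j = \{\langle\eta,\xi\rangle \leq 0\}$, together with the conic invariance of multi-conic subanalytic sheaves, to establish
\[
R\Gamma(V_j; G^{\wedge_j}) \simeq R\Gamma_{\pi_j^{-1}(S_j \setminus \operatorname{Int}(V_j^{\circ a}))}(S; G)
\]
for any multi-conic subanalytic $G$ on $S$. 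Iterating over $j=1,\dots,\ell$ (legitimate by Proposition \ref{propF} (1)) yields
\[
R\Gamma(V; \mu^{sa}_\chi F) \simeq R\Gamma_A(S; \nu^{sa}_\chi F), \qquad A := \bigcap_{j=1}^\ell \pi_j^{-1}\bigl(S_j \setminus \operatorname{Int}(V_j^{\circ a})\bigr),
\]
and $A$ is a closed multi-conic subanalytic subset of $S$ of exactly the form required by Lemma \ref{lem:sections A} (ii).

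Next, I would apply Lemma \ref{lem:sections A} (ii) with $A_j := S_j \setminus \operatorname{Int}(V_j^{\circ a})$. Since $S_j \setminus A_j = \operatorname{Int}(V_j^{\circ a})$, the lemma gives
\[
H^k_A(S; \nu^{sa}_\chi F) \simeq \lind{Z,U} H^k_Z(U; F),
\]
with $U$ an open subanalytic neighbourhood of $M$ and $Z = Z_1 \cap \cdots \cap Z_\ell$ subject to $C_{M_j}(Z_j) \subseteq T_{M_j}X \setminus \tau_j(\operatorname{Int}(V_j^{\circ a}))$, which is precisely the condition in the statement. To pass from $U$ being a neighbourhood of all of $M$ to the sharper requirement $U \cap M = \pi(V)$, I would restrict the whole construction to an open subanalytic neighbourhood $\tilde{X} \subseteq X$ of $\pi(V)$ chosen so that $\tilde{X} \cap M = \pi(V)$: since both Fourier--Sato and $R\Gamma_A$ commute with restriction to $\pi^{-1}(\pi(V)) \subseteq S$, applying Lemma \ref{lem:sections A} (ii) on $\tilde{X}$ produces exactly the open sets $U \subseteq \tilde{X}$ with $U \cap M = \pi(V)$, giving a cofinal subfamily.

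The hardest step will be verifying the one-variable Fourier--Sato exchange above in the subanalytic setting with the closed complement $S_j \setminus \operatorname{Int}(V_j^{\circ a})$ (rather than simply with $V_j^{\circ a}$). This requires combining the kernel description of Proposition \ref{propF} (2) with the conic invariance of $\nu^{sa}_\chi F$ so that the conic structure absorbs the difference between $V_j^{\circ a}$ and $S_j \setminus \operatorname{Int}(V_j^{\circ a})$, which is carried on $\partial V_j^{\circ a}$. Once this identification is in hand, the iteration over $j$ and the final invocation of Lemma \ref{lem:sections A} (ii) are formal.
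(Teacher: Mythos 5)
Your plan mirrors the paper's own two-sentence proof: use the Fourier--Sato exchange to convert $\operatorname{H}^k(V;\mu^{sa}_\chi F)$ into local cohomology $\operatorname{H}^k_{A}(S;\nu^{sa}_\chi F)$ for a closed multi-conic subset $A$, then apply Lemma~\ref{lem:sections A}~(ii) and handle the restriction $U\cap M = \pi(V)$. The structure is right, and your use of Proposition~\ref{propF} for order-independence and the cofinality trick for $U$ are both fine. But the one-variable exchange formula you state is not the Fourier--Sato exchange. For a convex open cone $V_j\subset S_j^*$ the exchange (cf.\ Prop.~3.7.12 of \cite{KS90}, carried over to the subanalytic setting) places the support in the \emph{closed convex cone} $\pi_j^{-1}(V_j^{\circ a})$, where $V_j^{\circ a}=\{\eta\in S_j;\;\langle\eta,\xi\rangle\le 0\ \text{for all } \xi\in V_j\}$; the paper writes the resulting multi-polar cone as $V^\circ$. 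You instead claim support in $\pi_j^{-1}(S_j\setminus\operatorname{Int}(V_j^{\circ a}))$, and you justify this by saying the discrepancy between $V_j^{\circ a}$ and $S_j\setminus\operatorname{Int}(V_j^{\circ a})$ ``is carried on $\partial V_j^{\circ a}$.'' That is not so: these two sets meet exactly in $\partial V_j^{\circ a}$ and jointly cover $S_j$, so their symmetric difference is $S_j\setminus\partial V_j^{\circ a}$, essentially the whole fibre. They coincide only when $V_j^{\circ a}$ is a closed half-space, i.e.\ only in the stalk situation of Corollary~\ref{cor:fibers} where $V_j$ degenerates to a ray; for a genuine full-dimensional open convex cone $V_j$, local cohomology with support in $V_j^{\circ a}$ and with support in $S_j\setminus\operatorname{Int}(V_j^{\circ a})$ are different functors, and multi-conic invariance does not identify them.

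What has happened is that you reverse-engineered the support set from the $A_j$ that Lemma~\ref{lem:sections A}~(ii) would need in order to reproduce verbatim the stated condition $C_{M_j}(Z_j)\subset T_{M_j}X\setminus\tau_j(\operatorname{Int}(V_j^{\circ a}))$ on the $Z_j$, rather than deriving the support from the transform itself. With the correct $A_j=V_j^{\circ a}$, Lemma~\ref{lem:sections A}~(ii) gives the a priori stronger condition $C_{M_j}(Z_j)\cap\tau_j(S_j\setminus V_j^{\circ a})=\emptyset$; passing from the family of $Z$'s satisfying that to the family in the statement requires a cofinality argument in the inductive limit, a point both your write-up and the paper's terse proof leave implicit. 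Asserting a false exchange formula is not a substitute for that argument — it creates a genuine gap at precisely the step you flagged as the hardest one.
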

\begin{proof}  Since the fiber of $V_j$ is convex for each $j$ the Fourier-Sato transform gives $\operatorname{H}^k(V;\mu^{sa}_\chi F) \simeq \operatorname{H}^k_{V^\circ}(S;\nu^{sa}_\chi F)$.
Then the result follows from Lemma \ref{lem:sections A} (ii).
\end{proof}

Let
$
p=p_1 \times \dots \times p_\ell = (q;\, \xi^{(1)},\dots,\xi^{(\ell)}) \in S^*
$.
For any $p_k \in T^*_{M_k}\iota(M_k)$, we define the subset in $(T_{M_k}X)_q$
\begin{equation}{\label{eq:def-point_polar_cone}}
p_k^{\#} := (T_{M_k}X)_q \setminus \tau_k(p_k^{\circ a}).
\end{equation}
Here the subset $p_k^{\circ a}$ in $(T_{M_k}\iota(M_k))_q$ denotes the antipodal
polar set of the point $p_k$, i.e.,
$
p_k^{\circ a} = \{\eta \in (T_{M_k}\iota(M_k))_q;\, \langle\eta, \xi^{(k)} \rangle \le 0\}
$.
Note that $p_k^{\#}$ is an open subset. Set for short $\mu_\chi:=\imin \rho \mu^{sa}_\chi$. As a consequence of Lemma \ref{lem:sections Vcirc} we have

\begin{cor}\label{cor:fibers} Let
$p=p_1 \times \dots \times p_\ell = (q;\, \xi^{(1)},\dots,\xi^{(\ell)}) \in S^*$,
and
let $F \in D^b(k_{X_{sa}})$. Then $H^k(\mu_\chi F)_p \simeq \lind {Z,U} \operatorname{H}^k_Z(U;F)$,
where $U \in \op(X_{sa})$ ranges through the family of open subanalytic neighborhoods of $q$
and $Z$ runs through a family of closed sets in the form
$Z_1 \cap Z_2 \cap \dots \cap Z_{\ell}$
with each $Z_k$ being closed subanalytic in $X$ and
$ C_{M_k}(Z_k)_q \subset p_k^{\#} \cup \{0\} $
 $(k=1,2,\dots,\ell)$.
\end{cor}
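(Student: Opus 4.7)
The plan is to apply Lemma \ref{lem:sections Vcirc} and pass to the stalk at $p$. First, by the standard description of stalks of subanalytic sheaves (analogous to the formula for $\nu_\chi$ in Remark \ref{stalk-nu}), I would write
\[
H^k(\mu_\chi F)_p \simeq \lind{V} H^k(V;\mu^{sa}_\chi F),
\]
where $V$ runs through a fundamental system of open multi-conic subanalytic neighborhoods of $p$ in $S^*$. Since $\mu^{sa}_\chi F$ is multi-conic, I may restrict to the cofinal subfamily of $V$'s of the product form $V = V_1 \underset{X}{\times} \cdots \underset{X}{\times} V_\ell$ with each $V_j$ an open convex conic subanalytic neighborhood of $p_j$ in $T^*_{M_j}\iota(M_j)$; then $V$ has convex fibers over $M$, so Lemma \ref{lem:sections Vcirc} applies to each such $V$.

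Substituting the formula from Lemma \ref{lem:sections Vcirc} and interchanging filtered direct limits, I would obtain
\[
H^k(\mu_\chi F)_p \simeq \lind{V,Z,U} H^k_Z(U;F),
\]
where the triple $(V,Z,U)$ ranges over: $V$ as above, $U$ open subanalytic with $U \cap M = \pi(V)$, and $Z = Z_1 \cap \cdots \cap Z_\ell$ with each $Z_k$ closed subanalytic and $C_{M_j}(Z_j) \subset T_{M_j}X \setminus \tau_j(\mathrm{Int}(V_j^{\circ a}))$ for every $j$.

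The remaining step is a cofinality argument that rewrites this indexing in the form stated in the corollary. Since $\pi(V)$ shrinks to $\{q\}$ as $V \to p$, the constraint $U \cap M = \pi(V)$ can be replaced cofinally by ``$U$ is an open subanalytic neighborhood of $q$''. For the condition on each $Z_k$, I would use that, as $V_k$ ranges over open convex conic neighborhoods of $p_k$, the open convex cones $\mathrm{Int}(V_k^{\circ a}) \subset (T_{M_k}\iota(M_k))_q$ form an increasing family exhausting the interior of $p_k^{\circ a}$; combined with the definition \eqref{eq:def-point_polar_cone} of $p_k^\#$, this shows that the existence of some $V_k$ with $C_{M_k}(Z_k) \cap \tau_k(\mathrm{Int}(V_k^{\circ a})) = \emptyset$ is cofinally equivalent, at the germ at $q$, to $C_{M_k}(Z_k)_q \subset p_k^\# \cup \{0\}$. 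The main obstacle is this last cone-duality step, which must be carried out carefully on each factor and is essentially a componentwise adaptation of the standard polar-cone argument underlying the classical fiber formula for Sato's microlocalization.
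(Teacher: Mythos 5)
Your argument fleshes out exactly what the paper leaves implicit — the corollary is stated there simply as ``a consequence of Lemma~\ref{lem:sections Vcirc}'', and your chain (stalk as filtered colimit over product-type conic neighborhoods $V$ of $p$, application of Lemma~\ref{lem:sections Vcirc} to each such $V$, interchange of colimits, then the cofinality/polar-duality step identifying the condition on $Z_k$ at the germ at $q$) is the natural realization of that one-liner and is correct. The final cofinality step you flag is indeed the only nontrivial point, and the mechanism you describe — $\operatorname{Int}(V_k^{\circ a})$ increases to $\operatorname{Int}(p_k^{\circ a})$ as $V_k$ shrinks to the ray through $p_k$, so the lemma's condition over all $V_k$ matches the corollary's pointwise condition $C_{M_k}(Z_k)_q \subset p_k^{\#}\cup\{0\}$ after shrinking $U$ and $Z_k$ — is the standard componentwise polar-cone argument, as you say.
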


Now we are going to find a stalk formula for multi-microlocalization given by a limit of sections with support (locally) contained in closed convex cones. As the problem is local,
we may assume that $X=\mathbb{R}^n$ and $q=0$ with coordinates $(x_1,\dots,x_n)$,
and that there exists a subset $I_k$
$(k=1,2,\dots,\ell)$
in $\{1,2,\dots,n\}$
with the conditions (\ref{eq:cond-H}) such that each submanifold
$M_k$ is given by
$
\left\{x = (x_1,\dots,x_n) \in \mathbb{R}^n;\, x_i=0\, (i \in I_k)\right\}.
$
Recall that $\hat{I}_k$ was defined by (\ref{eq:def-hat-I}) and that we set
$M = \cap_k M_k$ and $n_k=\sharp\hat{I}_k$. Then locally we have
\[
X = M \times \left(N_1 \times N_2 \times \dots \times N_\ell\right) = M \times N,
\]
where $N_k$ is ${\mathbb R}^{n_k}$ with coordinates $x^{(k)} = (x_i)_{i \in \hat{I}_k}$.
Set, for $k \in \{1,\dots,\ell\}$,
\begin{equation}{\label{eq:def-prec}}
\begin{aligned}
J_{\prec k}&:=\{j \in \{1,\ldots,\ell\},\;I_j \subsetneqq I_k\},\\
J_{\succ k}&:=\{j \in \{1,\ldots,\ell\},\;I_j \supsetneqq I_k\}, \\
J_{\nparallel\, k}&:=\{j \in \{1,\ldots,\ell\},\;I_j \cap I_k = \emptyset\}.
\end{aligned}
\end{equation}
Clearly we have
\begin{equation}
k \in J_{\prec j} \Leftrightarrow I_k \subsetneqq I_j \Leftrightarrow j \in J_{\succ k},
\end{equation}
and, by the conditions H1, H2 and H3, we also have
\begin{equation}
J_{\prec k} \sqcup \{k\} \sqcup J_{\succ k} \sqcup
J_{\nparallel\, k}
= \{1,2,\dots,\ell\}.
\end{equation}
Let $p=p_1 \times \dots \times p_\ell =
(q;\, \xi^{(1)},\dots,\xi^{(\ell)}) \in \proddTM$
and consider the following conic subset in $N$
\begin{equation} \label{gamma k}
\gamma_k :=
\left\{ (x^{(j)})_{j=1,\dots,\ell} \in N;\,
\begin{array}{ll}
	x^{(j)}=0, & j \in J_{\prec k} \sqcup J_{\nparallel\, k} \\
	x^{(j)} \in  \mathbb{R}^{n_j},& j \in J_{\succ k} \\
	\langle x^{(j)}, \xi^{(k)} \rangle > 0, & j=k
\end{array}
\right\}.
\end{equation}
Note that, if $\xi^{(k)} = 0$, then $\gamma_k$ is empty.

\begin{es} We now compute $\gamma_k$ of \eqref{gamma k} on the complex case in the following two typical situations. Let $X={\mathbb C}^2$
with coordinates $(z_1, z_2)$.
\begin{enumerate}
\item (Majima)  Let $M_1 = \{z_1 = 0\}$ and $M_2 = \{z_2 = 0\}$.
Then
\begin{eqnarray*}
\gamma_1 & = & \{(z_1,0);\,\operatorname{Re}\langle z_1,\eta_1 \rangle >0\}, \\
\gamma_2 & = & \{(0,z_2);\,\operatorname{Re}\langle z_2,\eta_2 \rangle >0\}.
\end{eqnarray*}
\item (Takeuchi) Let $M_1 = \{0\}$ and $M_2 = \{z_2 = 0\}$.
Then
\begin{eqnarray*}
\gamma_1 & = & \{(z_1,0);\,\operatorname{Re}\langle z_1,\eta_1 \rangle >0\}, \\
\gamma_2 & = & \{(z_1,z_2);\,\operatorname{Re}\langle z_2,\eta_2 \rangle >0\}.
\end{eqnarray*}
\end{enumerate}
\end{es}

\begin{es} We now compute $\gamma_k$ of \eqref{gamma k} on the real case in three typical situations. Let $X={\mathbb R}^3$ with coordinates $(x_1, x_2, x_3)$.
\begin{enumerate}
\item (Majima) Let $M_1 = \{x_1 = 0\}$, $M_2 = \{x_2 = 0\}$ and $M_3 = \{x_3 = 0\}$.
Then
\begin{eqnarray*}
\gamma_1 & = & \{(x_1,0,0);\,\langle x_1,\xi_1 \rangle >0\}, \\
\gamma_2 & = & \{(0,x_2,0);\,\langle x_2,\xi_2 \rangle >0\}, \\
\gamma_3 & = & \{(0,0,x_3);\,\langle x_3,\xi_3 \rangle >0\}.
\end{eqnarray*}
\item (Takeuchi)
Let $M_1 = \{0\}$, $M_2 = \{x_2 = x_3 = 0\}$ and $M_3 = \{x_3 = 0\}$.
Then
\begin{eqnarray*}
\gamma_1 & = & \{(x_1,0,0);\,\langle x_1,\xi_1 \rangle >0\}, \\
\gamma_2 & = & \{(x_1,x_2,0);\,\langle x_2,\xi_2 \rangle >0\}, \\
\gamma_3 & = & \{(x_1,x_2,x_3);\,\langle x_3,\xi_3 \rangle >0\}.
\end{eqnarray*}
\item (Mixed)
Let $M_1 = \{0\}$, $M_2 = \{x_2 = 0\}$ and $M_3 = \{x_3 = 0\}$.
Then
\begin{eqnarray*}
\gamma_1 & = & \{(x_1,0,0);\,\langle x_1,\xi_1 \rangle >0\}, \\
\gamma_2 & = & \{(x_1,x_2,0);\,\langle x_2,\xi_2 \rangle >0\}, \\
\gamma_3 & = & \{(x_1,0,x_3);\,\langle x_3,\xi_3 \rangle >0\}.
\end{eqnarray*}
\end{enumerate}
\end{es}

\begin{teo} \label{th:stalk formula}
Let $p=p_1 \times \dots \times p_\ell = (q;\, \xi^{(1)},\dots,\xi^{(\ell)}) \in S^*$,
and let $F \in D^b(k_{X_{sa}})$. Then we have
\begin{equation}\label{eq:th-stalk-formula}
H^k(\mu_\chi F)_p \simeq \lind {G,U} \operatorname{H}^k_G(U;F).
\end{equation}
Here $U$ is an open subanalytic neighborhood of $q$ in $X$ and
$G$ is a closed subanalytic subset in the form
$M \times \left(\displaystyle\sum_{k=1}^\ell G_k \right)$
with $G_k$ being a closed subanalytic convex cone in $N$
satisfying $G_k \setminus\{0\} \subset \gamma_k$, where $\gamma_k$ is defined in \eqref{gamma k}.
\end{teo}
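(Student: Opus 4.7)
My plan is to derive the theorem from Corollary~\ref{cor:fibers} by a cofinality argument. That Corollary already supplies
\[
H^k(\mu_\chi F)_p \simeq \lind{(Z,U)} H^k_Z(U;F),
\]
where $Z = Z_1\cap\dots\cap Z_\ell$ with each $Z_k$ closed subanalytic satisfying $C_{M_k}(Z_k)_q \subset p_k^{\#}\cup\{0\}$, and $U$ ranges over open subanalytic neighborhoods of $q$. I want to show that the sub-family of pairs $(G,U)$ with $G = M\times \sum_{k=1}^\ell G_k$ of the shape stated in the theorem is cofinal in this indexing system; then the isomorphism \eqref{eq:th-stalk-formula} follows automatically.

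The first step is admissibility: every such $G$ is itself of the Corollary form, by setting $Z_k:=G$ for every $k$, which reduces the verification to $C_{M_k}(G)_q\subset p_k^{\#}\cup\{0\}$ for each $k$. I would pick a nonzero $\eta\in C_{M_k}(G)_q$ and, using Lemma~\ref{lem: multi-normal cone}, realize it as $\eta = \lim_n c_n(g_n-m_n)^{I_k}$ with $g_n\in G$, $m_n\in M_k$, $c_n\to +\infty$. Decomposing $g_n = m_n^M + \sum_{j}y_{j,n}$ with $y_{j,n}\in G_j\subset\overline{\gamma_j}$, the vanishing $y_{j,n}^{(j')}=0$ for $j'\in J_{\prec j}\cup J_{\nparallel\, j}$ combined with $\langle y_{k,n}^{(k)},\xi^{(k)}\rangle\ge 0$ yields, after examining the $\hat I_k$- and $\hat I_{k'}$-components of $c_n g_n$ for $k'\in J_{\prec k}$, that either some $\eta^{(k')}$ is nonzero or $\langle \eta^{(k)},\xi^{(k)}\rangle>0$; both cases place $\eta$ in $p_k^{\#}$.

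The second step is the actual cofinality: given any $(Z,U)$ from the Corollary, I must build an admissible $(G,U')$ with $U'\subset U$, $q\in U'$, and $Z\cap U'\subset G$. Subanalyticity together with $C_{M_k}(Z_k)_q\subset p_k^{\#}\cup\{0\}$ gives, by a standard curve-selection argument, closed subanalytic convex cones $K_k\subset T_{M_k}X$ with $K_k\setminus\{0\}\subset p_k^{\#}$ and subanalytic neighborhoods $U_k$ of $q$ such that $Z_k\cap U_k\subset M_k+K_k$. Shrinking to $U'\subset U\cap\bigcap_k U_k$, the task becomes containing $\bigcap_k (M_k+K_k)$ in some $M\times \sum_k G_k$ of the required form near $q$. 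I would produce the $G_k$'s inductively along the poset $(I_k,\subseteq)$, starting from the maximal indices, using the decomposition
\[
p_k^{\#} = \{\eta : \eta^{(k')}\neq 0\text{ for some } k'\in J_{\prec k}\}\cup\{\eta : \langle \eta^{(k)},\xi^{(k)}\rangle>0\}
\]
to route the $\hat I_k$-directional contribution of each $K_j$ into $G_k$.

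The hard part will be this inductive geometric construction. The combinatorics of $(I_k,\hat I_k,J_{\prec k},J_{\succ k})$ must be tracked, and, crucially, each $K_k$ has to be enlarged to a cone lying strictly inside $\gamma_k\cup\{0\}$ (so that the resulting $G_k$'s satisfy the strict properness condition $G_k\setminus\{0\}\subset\gamma_k$) while the total containment $\bigcap_k(M_k+K_k)\subset M\times\sum_k G_k$ must still hold in one common neighborhood $U'$ of $q$.
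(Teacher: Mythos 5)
Your overall plan---to derive the theorem from Corollary~\ref{cor:fibers} by a two-sided cofinality argument---is the right framework and matches the paper's. The two directions you isolate (admissibility of the $(G,U)$'s, and that the $(G,U)$'s are cofinal among the $(Z,U)$'s) are exactly the two halves of the paper's proof. The difficulty is that the substantive step is deferred, and the reduction you propose for it contains an error.

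The claim that one can find \emph{convex} closed subanalytic cones $K_k\subset T_{M_k}X$ with $K_k\setminus\{0\}\subset p_k^{\#}$ and $Z_k\cap U_k\subset M_k+K_k$ fails whenever $J_{\prec k}\neq\emptyset$: there $p_k^{\#}$ is the complement of the closed set $\{\eta^{(j)}=0\ (j\in J_{\prec k}),\ \langle\eta^{(k)},\xi^{(k)}\rangle\le 0\}$, and a normal cone contained in $p_k^{\#}\cup\{0\}$ need not sit inside any convex cone still contained in $p_k^{\#}\cup\{0\}$. Concretely, for $M_1=\{0\}\subset M_2=\{x_2=0\}$ in $\mathbb{R}^2$ and $\xi^{(1)}=0$, the closed subanalytic cone $Z_1=\{|x_1|\le|x_2|\}$ has $C_{M_1}(Z_1)_q=Z_1\subset p_1^{\#}\cup\{0\}=\{\eta_2\neq 0\}\cup\{0\}$, yet any convex cone containing $(1,1)$ and $(1,-1)$ contains $(1,0)\notin p_1^{\#}$. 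The paper never collapses $Z_k$ to a single cone; it proves $Z_k\subset Z_{k,\Gamma}\cup Z_{k,\epsilon}$ with $Z_{k,\Gamma}=\{x^{(k)}\in\Gamma_k\}$ and $Z_{k,\epsilon}=\{\epsilon|x^{(k)}|\le\sum_{j\in J_{\prec k}}|x^{(j)}|\}$, then explicitly constructs $V_k=\{x^{(k)}\in T_k,\ \delta|\langle x^{(k)},\xi^{(k)}\rangle|\ge\sum_{j\in J_{\succ k}}|x^{(j)}|\}$ and estimates $\langle x,y\rangle\ge0$ for $x\in K_\sigma:=\bigcap_k Z_{k,\sigma_k}$, $y\in V=\bigcap_k V_k$, using the complementary inequalities \eqref{eq:xxx-0} and \eqref{eq:xxx-1} across different indices and a choice $\delta\ll\epsilon$. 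That interaction between the ``$J_{\prec k}$-sector'' of one index and the ``$J_{\succ k}$-funnel'' of another is precisely what is lost if one first bounds each $Z_k$ separately; the containment $\bigcap_k(M_k+K_k)\subset M\times\sum_k G_k$ you want does not follow from per-index bounds alone, and this is where the real work lies. On the admissibility side, your sketch also omits the needed quantitative input: to control cancellation in $y_n=\sum_j y_{j,n}$ you must use the uniform properness of each $G_j$ (e.g.\ $\langle x^{(j)},\xi^{(j)}\rangle\ge c_j|x|$ on $G_j$, obtained by compactness from $G_j\setminus\{0\}\subset\gamma_j$) together with an induction along the poset $(I_{j'},\subseteq)$ to conclude $c_n y_{j',n}\to 0$; the paper instead reduces to $G=V^\circ$ for $V$ as above and argues by contradiction there.
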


\begin{proof}  As a point of the base manifold $M$
is irrelevant in the subsequent arguments,
we may assume $M = \{0\}$ for simplicity.
We also assume $q=0$ and $|\xi^{(k)}| \le 1$ for $k=1,2,\dots,\ell$.

We first prove that, for any $Z=Z_1 \cap \cdots \cap Z_\ell$ with $Z_k$ being closed in $X$ and
$C_{M_k}(Z_k)_q \subset p_k^{\#} \cup \{0\}$ ($k=1,2,\dots,\ell$),
there exists $G$ described in the theorem with $G \supset Z$.
As $G_k$ is convex and it contains the origin, we have
\[
G_1^\circ \cap \dots \cap G_{\ell}^\circ
= \left(G_1 + \dots + G_{\ell}\right)^{\circ}
\]
and
\[
\left(G_1^\circ \cap \dots \cap G_{\ell}^\circ\right)^\circ
= \overline{G_1 + \dots + G_{\ell}}
\]
where $G_k^\circ$ designates the usual polar cone of $G_k$ in $X$ as a vector space.
We shall find a closed convex cone $V$ such that $V=V_1 \cap \cdots \cap V_\ell$
with $V_k$ convex for each $k=1,\dots,\ell$ and
$V_k^\circ \setminus \{0\} \subset \gamma_k$ satisfying $V^\circ \supseteq Z$.
Furthermore
we choose $V_k$ so that every $V_k^\circ$ is proper with respect to
the same direction $\widetilde{\xi} \ne 0$,
i.e., $V^\circ \setminus \{0\} \subset \{x \in X;\,
\langle x, \widetilde{\xi} \rangle > 0\}$.
In this way
\[
V^\circ=(V_1^{\circ\circ} \cap \cdots \cap V_\ell^{\circ\circ})^\circ=
\overline{V_1^\circ+\cdots+V_\ell^\circ}
= V_1^\circ+\cdots+V_\ell^\circ
\]
and setting $G_k:=V_k^\circ$ we obtain the claim.
Here the last equality follows from the fact that every $V^\circ$ is closed
and properly contained in the same half space in $X$.

It follows from the definition of $Z_k$ that there exists
$\epsilon > 0$ and a closed convex cone $\Gamma_k \subset N_k$
with $\Gamma_k\setminus \{0\} \subset \{\langle x^{(k)},\, \xi^{(k)} \rangle > 0\}$
which satisfies
\[
Z_k \subset \{x \in X;\, x^{(k)} \in \Gamma_k\} \cup
\{x \in X;\, \epsilon|x^{(k)}| \le \sum_{j \in J_{\prec k}}|x^{(j)}|\}.
\]
Note that, for $k$ with $\xi^{(k)}=0$, we always take $\Gamma_k = \{0\}$.
The existence of such an $\epsilon$ and a $\Gamma_k$ is shown in the following way.
We set
\[
N' := \underset{j \in J_{\prec k}}{\times} N_j,\qquad
N'' := \underset{j \in J_{\succ k} \cup J_{\nparallel\, k}}{\times} N_j,
\]
for which we have $X = N' \times N_k \times N''$
with coordinates $(x',\, x^{(k)}, x'')$. Note that
$M_k = \{0\}_{N' \times N_k} \times N''$ holds.
We also define a closed subset
$D$ in $N_k$ by $\{x^{(k)} \in N_k;\, \langle x^{(k)},\, \xi^{(k)} \rangle \le 0\}$.
Then $C_{M_k}(Z_k)_q \subset p_k^{\#} \cup \{0\}$ implies that,
for any $\theta \in D \setminus \{0\}$, there exist an open cone
$Q_\theta$ in  $N' \times N_k$ with direction
$(0_{N'},\, \theta)$ and
an open neighborhood $U_\theta$ of $q$ in $X$ satisfying
\[
(Q_\theta \times N'') \cap U_\theta \subset X \setminus Z_k.
\]
Then, as $\{0\}_{N'} \times D$ is a closed conic subset in $N' \times N_k$,
we can find a finite subset $\Theta$ in $D \setminus \{0\}$ such that
\[
\begin{aligned}
	&\{0\}_{N'} \times (D\setminus \{0\}) \subset
	\underset{\theta \in \Theta}{\bigcup} Q_\theta, \\
	&\left(\left(\underset{\theta \in \Theta}{\bigcup} Q_\theta\right) \times N''\right)
	 \cap \left(\underset{\theta \in \Theta}{\bigcap} U_\theta\right)
\subset X \setminus Z_k.
\end{aligned}
\]
Then, by taking $U$ in
(\ref{eq:th-stalk-formula})
sufficiently small so that $U \subset \displaystyle\underset{\theta \in \Theta}{\bigcap} U_\theta$,
we may assume, from the beginning,
\[
\left(\underset{\theta \in \Theta}{\bigcup} Q_\theta\right) \times N''
\subset X \setminus Z_k.
\]
As $\displaystyle\underset{\theta \in \Theta}{\bigcup} Q_\theta$ is an open conic neighborhood
of $\{0\}_{N'} \times (D\setminus \{0\})$ in $N' \times N_k$, there exist
an open cone $T$ in $N_k$ with $D \setminus \{0\} \subset T$ and $\epsilon > 0$
satisfying
\[
\left\{
(x',\, x^{(k)}) \in N' \times N_k;\,
x^{(k)} \in T,\, \sum_{j \in J_{\prec k}} |x^{(j)}| < \epsilon |x^{(k)}|\right\}
\subset \underset{\theta \in \Theta}{\cup} Q_\theta.
\]
Hence we have
\[
\left\{
(x',\, x^{(k)}) \in N' \times N_k;\,
x^{(k)} \in T,\, \sum_{j \in J_{\prec k}} |x^{(j)}| < \epsilon |x^{(k)}|\right\}
\times N'' \subset X \setminus Z_k,
\]
which is equivalent to saying that
\[
\left\{x \in X;\, x^{(k)} \in (N_k \setminus T)\right\} \cup
\left\{x \in X;\, \sum_{j \in J_{\prec k}} |x^{(j)}| \ge \epsilon |x^{(k)}|\right\}
\supset Z_k.
\]
This shows the existence of $\epsilon > 0$ and $\Gamma_k := N_k \setminus T$.

Now we set
\[
\begin{aligned}
Z_{k,\,\Gamma} &:= \left\{x \in X;\, x^{(k)} \in \Gamma_k\right\} \\
Z_{k,\,\epsilon} &:= \left\{x \in X;\, \epsilon|x^{(k)}| \le \sum_{j \in J_{\prec k}}|x^{(j)}|\right\}.
\end{aligned}
\]
Note that, for $k \in \{1,2,\dots,\ell\}$ with $\hat{I}_k=I_k$, we have
$Z_k \subset Z_{k,\,\Gamma}$ and no $Z_{k,\,\epsilon}$ appears.

Define $V=V_1 \cap \cdots \cap V_\ell$. Here each $V_k$ is given by, if $\xi^{(k)} \ne 0$,
\[
\left\{x\in X;\, x^{(k)} \in T_k,\,
\delta |\langle x^{(k)}, \xi^{(k)} \rangle| \ge \sum_{j \in J_{\succ k}}|x^{(j)}|\right\}
\]
where $\delta > 0$ and $T_k$ is a proper closed convex cone in $N_k$ with
$T_k \subset \{x^{(k)} \in N_k;\, \langle x^{(k)},\, \xi^{(k)} \rangle \ge 0\}$ and
$\xi^{(k)} \in \operatorname{Int}_{N_k} T_k$.
And if $\xi^{(k)} = 0$, then $V_k$ is the whole $X$.
Note that $V_k$ is a convex set in any case, i.e., $V_k^{\circ\circ} = V_k$.
Then such a $V$ satisfies the desired properties. Indeed it is easy to see
$V_k^\circ \setminus \{0\} \subset \gamma_k$. We will show that
\[
V^\circ \supseteq \bigcap_k (Z_{k,\,\Gamma} \cup Z_{k,\,\epsilon}) \supseteq Z.
\]
Here we emphasize that the inequality appearing in $Z_{k,\epsilon}$
\begin{equation}{\label{eq:xxx-0}}
	\epsilon|x^{(k)}| \le \sum_{j \in J_{\prec k}}|x^{(j)}|
\end{equation}
and that in $V_k$ for $k$ with $\xi^{(k)} \ne 0$
\begin{equation}{\label{eq:xxx-1}}
	\delta |\langle x^{(k)},\, \xi^{(k)} \rangle| \ge \sum_{j \in J_{\succ k}}|x^{(j)}|
\end{equation}
play an important role below.

Let $\sigma = \sigma_1 \sigma_2 \dots \sigma_{\ell}$ be an $\ell$-length sequence where
$\sigma_k$ is either the symbols $\Gamma$ or $\epsilon$ $(k=1,2,\dots,\ell)$, and let us define
\[
K_\sigma := Z_{1,\,\sigma_1} \cap Z_{2,\, \sigma_2}
\cap \dots \cap Z_{\ell,\, \sigma_{\ell}}.
\]
Then we have
\[
\underset{k}{\bigcap}(Z_{k,\,\Gamma} \cup Z_{k,\,\epsilon})
=
\underset{\sigma}\cup K_\sigma.
\]
We now show that, for each sequence $\sigma$, we obtain $V^\circ \supset K_\sigma$
if we take $T_k$ ($k=1,2,\dots,\ell$) and $\delta > 0$ sufficiently small.

Set
\[
J_\Gamma(\sigma) := \{j \in \{1,2,\dots,\ell\};\, \sigma_j = \Gamma\}
\]
and
\[
J_\epsilon(\sigma) := \{j \in \{1,2,\dots,\ell\};\, \sigma_j = \epsilon\}.
\]
Note that, for $k \in \{1,2,\dots,\ell\}$ with $\hat{I}_k = I_k$, we have
$k \in J_\Gamma(\sigma)$ and $k \notin J_\epsilon(\sigma)$, which implies,
in particular, $J_\Gamma(\sigma)$ is non-empty.
As both $V_k$ and $\Gamma_k$ are proper cones with its direction $\xi^{(k)}$ in $N_k$
if $\xi^{(k)} \ne 0$, and as $\Gamma_k = \{0\}$ if $\xi^{(k)} = 0$,
there exists a constant $M > 0$ such that
\[
M|x^{(j)}||y^{(j)}|\, \le\quad \langle x^{(j)},\, y^{(j)} \rangle \quad \le\, |x^{(j)}||y^{(j)}|
\]
holds for $j \in J_\Gamma(\sigma)$ and
$x = (x^{(1)},\dots,x^{(\ell)}) \in K_\sigma$ and
$y = (y^{(1)},\dots,y^{(\ell)}) \in V$. Furthermore, by (\ref{eq:xxx-0}),
there exists a constant $N>0$ such that, for any $j \in J_\epsilon(\sigma)$,
we have
\[
|x^{(j)}| \le \frac{N}{\epsilon^N} \sum_{\alpha \in J_\Gamma(\sigma) \cap J_{\prec j}} |x^{(\alpha)}|
\]
for $x = (x^{(1)},\dots,x^{(\ell)}) \in K_\sigma$.
By noticing these facts, we obtain, for $x = (x^{(1)},\dots,x^{(\ell)}) \in K_\sigma$ and $y = (y^{(1)},\dots,y^{(\ell)}) \in V$.
\[
\begin{aligned}
	\langle x,y\rangle  &= \sum_{j} \langle x^{(j)},y^{(j)}\rangle  \quad =
	\sum_{j \in J_\Gamma(\sigma)} \langle x^{(j)},y^{(j)}\rangle  +
	\sum_{j \in J_\epsilon(\sigma)} \langle x^{(j)},y^{(j)}\rangle  \\
	&\ge M \sum_{j \in J_\Gamma(\sigma)} |x^{(j)}||y^{(j)}| -
	      \sum_{j \in J_\epsilon(\sigma)} |x^{(j)}||y^{(j)}| \\
	&\ge M \sum_{j \in J_\Gamma(\sigma)} |x^{(j)}||y^{(j)}| -
	\frac{N}{\epsilon^N}
	      \sum_{j \in J_\epsilon(\sigma)}
	      \left(\sum_{\alpha \in J_\Gamma(\sigma) \cap J_{\prec j}} |x^{(\alpha)}|\right)|y^{(j)}|.
\end{aligned}
\]
Here, as $\Gamma_\alpha = \{0\}$ for $\alpha$ with $\xi^{(\alpha)} = 0$,
we have $|x^{(\alpha)}| = 0$ for such an $\alpha \in J_\Gamma(\sigma)$
and the last term in the above inequalities is equal to
\begin{equation}{\label{eq:last-estimate-xxx}}
M \sum_{j \in J_\Gamma(\sigma)} |x^{(j)}||y^{(j)}| -
\frac{N}{\epsilon^N}
      \sum_{j \in J_\epsilon(\sigma)}
\left(\sum_{\alpha \in J_\Gamma(\sigma) \cap J_{\prec j},\, \xi^{(\alpha)} \ne 0}
|x^{(\alpha)}|\right)|y^{(j)}|.
\end{equation}
It follows from  (\ref{eq:xxx-1}) that we get
$\delta|y^{(\alpha)}| \ge \delta |\langle y^{(\alpha)},\, \xi^{(\alpha)} \rangle |
\ge |y^{(j)}|$ for $\alpha$ with $\xi^{(\alpha)} \ne 0$ and for $j \in J_{\succ \alpha}$
($\Leftrightarrow \alpha \in J_{\prec j}$). Hence
the (\ref{eq:last-estimate-xxx}) is lower bounded by
\[
\begin{aligned}
	& M \sum_{j \in J_\Gamma(\sigma)} |x^{(j)}||y^{(j)}| -
	\frac{\delta N}{\epsilon^N}
	      \sum_{j \in J_\epsilon(\sigma)}
	      \sum_{\alpha \in J_\Gamma(\sigma) \cap J_{\prec j},\, \xi^{(\alpha)} \ne 0}
	      |x^{(\alpha)}||y^{(\alpha)}| \\
	 & \ge M \sum_{j \in J_\Gamma(\sigma)} |x^{(j)}||y^{(j)}| -
	\frac{\delta N \#J_\epsilon(\sigma)}{\epsilon^N}
              \sum_{\alpha \in J_\Gamma(\sigma)} |x^{(\alpha)}||y^{(\alpha)}| \\
         & = \left(M - \frac{\delta N \#J_\epsilon(\sigma)}{\epsilon^N}\right)
              \sum_{\alpha \in J_\Gamma(\sigma)} |x^{(\alpha)}||y^{(\alpha)}|.
\end{aligned}
\]
Note that the set $J_\Gamma(\sigma)$ is non-empty as noted above.
Hence, if we take $\delta$ sufficiently small, $\langle x,y \rangle$ takes non-negative values
for $x \in K_\sigma$ and $y \in V$,
which implies $K_\sigma \subset V^\circ$.
Hence we have shown the existence of $G$ described in the theorem with $Z \subset G$.

Now we show that $G$ described in the theorem satisfies
$C_{M_k}(G)_q \subset p^{\#}_k \cup \{0\}$ for any $k$.
We may assume $G = V^\circ$ where $V$ was defined in the first part of the proof.
Suppose that there exists a non-zero vector $\eta \in (T_{M_k}X)_q = N' \times N_k$
such that
\[
0 \ne \eta \in C_{M_k}(V^\circ)_q \cap \tau_k(p^{\circ a}_k) .
\]
Note that $\eta \in \tau_k(p^{\circ a}_k)$ implies the existence of $0 \ne \eta^{(k)} \in N_k$ such that
$\eta = (0_{N'},\, \eta^{(k)})$ and $\langle \eta^{(k)},\, \xi^{(k)} \rangle \le 0$. Set, for any $\epsilon > 0$,
\[
Q_\epsilon := \left\{x = (x',x^{(k)},x'') \in X;\,
\begin{aligned}
&|x'| < \epsilon \langle \eta^{(k)},\, x^{(k)}\rangle,\, |x''| < \epsilon \\
&|x^{(k)}| < \epsilon,\, x^{(k)} \in Q^{(k)}_\epsilon
\end{aligned}
\right\},
\]
where $\{Q^{(k)}_\epsilon\}_{\epsilon>0}$ is a family of open cone neighborhoods of
the direction $\eta^{(k)}$ in $N_k$.
Then $\eta \in C_{M_k}(V^\circ)_q$ means $Q_\epsilon \cap V^\circ \ne \emptyset$ for
any $\epsilon > 0$.  By noticing $\langle \eta^{(k)},\, \xi^{(k)} \rangle \le 0$, it follows from
the definition of $V$ that there exists a vector $v = (v', v^{(k)}, 0_{N''}) \in V$ such that
we can find a positive constant $C > 0$ with
\[
\langle x^{(k)},\, v^{(k)} \rangle < - C |x^{(k)}| \quad (x^{(k)} \in Q^{(k)}_\epsilon)
\]
for any sufficiently small $\epsilon > 0$. Hence we have, for
$x = (x', x^{(k)}, x'') \in Q_\epsilon$,
\[
\langle x, v \rangle
=
\langle x', v' \rangle +
\langle x^{(k)}, v^{(k)} \rangle
\le (\epsilon |\eta^{(k)}| |v'| - C) |x^{(k)}|.
\]
As a result,  if we take a sufficiently small $\epsilon > 0$, we have $\langle x, v \rangle < 0$ for any
$x \in Q_\epsilon$, and thus, we get $Q_\epsilon \cap V^\circ = \emptyset$ which contradicts
$Q_\epsilon \cap V^\circ \ne \emptyset$. Therefore we have obtained the conclusion.
This completes the proof.
\end{proof}

\begin{oss} In the case $\ell=2$ with $M_1 \subset M_2 \subset X$ we obtain the stalk formula computed in \cite{Ta96}.
\end{oss}

Now let us consider the mixed cases between specialization and microlocalization. We shall need some notations.
Given a subset $K=\{i_1,\dots,i_k\} \subseteq \{1,\dots,\ell\}$,
let $\chi_K=\{M_i,\; i \in K\}$, set $S_{i_j}=T_{M_{i_j}}\iota(M_{i_j}) \underset{M_{i_j}}{\times} M$ ($j=1,\dots,k$)
and $S_K=T_{M_{i_1}}\iota(M_{i_1}) \underset{X}{\times} \cdots \underset{X}{\times} T_{M_{i_k}}\iota(M_{i_k})$ and let $S^*_K$ be its dual. Given $C_{i_j} \subseteq S_{i_j}$, $j=1,\dots,k$, we set for short $C_K:=C_{i_1} \underset{X}{\times} \cdots \underset{X}{\times} C_{i_k} \subset S_K$. Define $ {\wedge_K}$ as the composition of the Fourier-Sato transforms $ {\wedge_{i_k}}$ on $S_{i_k}$ for each $i_k \in K$.

Let $I,J \subseteq \{1,\dots,\ell\}$ be such that $I \sqcup J = \{1,\dots,\ell\}$.

\begin{lem} \label{lem:mixed sections Vcirc}
Let $V=V_I \underset{X}{\times} V_J$ be
a multi-conic open subanalytic subset in $S_I \underset{X}{\times} S_J^*$ such that
$V \cap \pi^{-1}(q)$ is convex for $q \in \pi(V)$.
Then
$\operatorname{H}^k(V; \nu^{sa}_{\chi_I}\mu^{sa}_{\chi_J} F) \simeq \lind {Z,\,U} \operatorname{H}^k_Z(U;F)$,
where $U$ ranges through the family of open subanalytic subsets in $X$ with $C_{\chi_I}(X \setminus U) \cap \bigcup_{i \in I} \imin {\pi_i}(V_i) = \emptyset$
and $Z=\bigcap_{j \in J}Z_j$ with $C_{M_j}(Z_j) \subset (T_{M_j}X \setminus \tau_j({\rm Int}(V_j^{\circ a})))$.
Here $(\cdot)^a$ denotes the inverse image of the antipodal map.
\end{lem}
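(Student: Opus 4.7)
The plan is to combine the Fourier--Sato reduction from Lemma \ref{lem:sections Vcirc} with the open--closed decomposition argument from Lemma \ref{lem:sections A}, treating the $I$- and $J$-directions independently.

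First, since the fibers of $V_J = V_{j_1} \underset{X}{\times} \cdots \underset{X}{\times} V_{j_k}$ are convex and $\nu^{sa}_\chi F$ is multi-conic, the Fourier--Sato transform applied only in the $J$-directions (well-defined independent of order by Proposition \ref{propF}) yields
\[
H^k(V_I \underset{X}{\times} V_J;\; \nu^{sa}_{\chi_I}\mu^{sa}_{\chi_J}F) \;\simeq\; H^k_{V_I \underset{X}{\times} V_J^\circ}\bigl(V_I \underset{X}{\times} S_J;\; \nu^{sa}_\chi F\bigr),
\]
where $V_J^\circ = \bigcap_{j \in J} \pi_j^{-1}(V_j^\circ) \subset S_J$ is the multi-polar cone. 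This is the exact Fourier--Sato step used at the start of the proof of Lemma \ref{lem:sections Vcirc}, carried out only in the $J$-slot.

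Next, feed this into the long exact sequence of local cohomology and use Remark \ref{stalk-nu} to compute the non-support terms. The ambient term
\[
H^k(V_I \underset{X}{\times} S_J;\; \nu^{sa}_\chi F) \;\simeq\; \lind{U} H^k(U; F)
\]
runs over open subanalytic $U \subset X$ satisfying $C_\chi(X \setminus U) \cap (V_I \underset{X}{\times} S_J) = \emptyset$. Since $C_\chi(X \setminus U)$ is conic in each factor (Lemma \ref{lem: multi-normal cone}), this collapses to $C_\chi(X \setminus U) \cap (V_I \underset{X}{\times} \{0_J\}) = \emptyset$, which by the subfamily compatibility $C_\chi(Z) \cap S_K = C_{\chi_K}(Z) \cap S_K$ is precisely $C_{\chi_I}(X \setminus U) \cap \bigcup_{i \in I}\pi_i^{-1}(V_i) = \emptyset$, the condition on $U$ stated in the lemma. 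For the complement $V_I \underset{X}{\times} (S_J \setminus V_J^\circ) = \bigcup_{j \in J} V_I \underset{X}{\times} \pi_j^{-1}(S_j \setminus V_j^\circ)$, the $I$-direction still gives the same condition on $U$, while each $j \in J$ is treated exactly as in the proof of Lemma \ref{lem:sections A}(ii): Proposition 4.6, Lemma 4.2 and Corollary 4.3 of \cite{HP} yield closed subanalytic sets $Z_j \subset X$ with $C_{M_j}(Z_j) \cap \tau_j(\operatorname{Int}(V_j^{\circ a})) = \emptyset$, realizing an open cover of the shape $W = U \cap \bigcap_{j \in J}(X \setminus Z_j)$.

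Setting $Z := \bigcap_{j \in J} Z_j$, a five-lemma applied to the long exact sequences for $\lind{Z,U} H^k_Z(U; F)$ and for $\nu^{sa}_\chi F$ then delivers the claimed formula. The main technical obstacle is the mixed bookkeeping in step two: one must separate cleanly the $I$-part (an open, purely $U$-side condition) from the $J$-part (a convex-closed support condition $V_J^\circ$ producing the $Z_j$'s) and verify that they assemble correctly. Once this separation is carried out along the lines of Lemma \ref{lem:sections A}(ii), no new geometric input is required.
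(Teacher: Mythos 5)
Your overall strategy agrees with the paper's at the top level: apply the partial Fourier--Sato transform in the $J$-directions to reduce to $\nu^{sa}_\chi F$, then compute via a distinguished triangle. But the specific triangle you choose is genuinely different, and it is here that the argument develops gaps. The paper decomposes $R\Gamma_{S_I\times V_J^\circ}\nu^{sa}_\chi F$ via
\[
R\Gamma_{(S_I\setminus V_I)\times V_J^\circ}\nu^{sa}_\chi F \to R\Gamma_{S_I\times V_J^\circ}\nu^{sa}_\chi F \to R\Gamma_{V_I\times V_J^\circ}\nu^{sa}_\chi F \xrightarrow{+1},
\]
slicing in the $I$-direction while keeping the $V_J^\circ$-support fixed; the two non-target pieces are local cohomology with support in closed multi-conic sets and fall squarely under Lemma~\ref{lem:sections Vcirc} and Lemma~\ref{lem:sections A}, which were proved precisely for that situation. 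You slice in the $J$-direction with the $V_I$-restriction fixed, so your two non-target pieces are sections over the \emph{open} sets $V_I\times S_J$ and $V_I\times(S_J\setminus V_J^\circ)$, to be handled via Remark~\ref{stalk-nu}.

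This costs you the structural payoff that the paper engineered. Your complement $V_I\times(S_J\setminus V_J^\circ) = V_I \cap \bigcup_{j\in J}\pi_j^{-1}(S_j\setminus V_j^\circ)$ is a mixed intersection--union open set; the cover construction in the proof of Lemma~\ref{lem:sections A}(ii) is stated for $S\setminus A$ with $A=\bigcap_j\pi_j^{-1}(A_j)$, i.e.\ a pure union $\bigcup_j\pi_j^{-1}(S_j\setminus A_j)$, and does not apply verbatim once you intersect with $V_I$. You say this is ``treated exactly as in'' that proof, but this is the part that needs to be redone, and it is not carried out. Further, your claim that $C_\chi(X\setminus U)\cap(V_I\times S_J)=\emptyset$ ``collapses'' to $C_\chi(X\setminus U)\cap(V_I\times\{0_J\})=\emptyset$ by conicity is not immediate: by Lemma~\ref{lem: multi-normal cone} the $(\RP)^\ell$-action preserving $C_\chi$ acts through the products $c_{\JJ_j}$, and the sets $\JJ_j$ overlap, so sending the $J$-fibers to $0$ while holding all $I$-fibers requires a nontrivial joint re-scaling of the $c_j$'s (it is true, e.g.\ by replacing $c_{j,m}$ by suitable fractional powers, but you need to say so). Finally, even granting the collapse, it yields $C_{\chi_I}(X\setminus U)\cap\bigcap_{i\in I}\pi_i^{-1}(V_i)=\emptyset$, which is weaker than the stated condition involving $\bigcup_{i\in I}\pi_i^{-1}(V_i)$; the paper reaches the $\bigcup$-form precisely by running the $I$-slice through the closed sets $Z_i$ with $C_\chi(Z_i)\cap\pi_i^{-1}(V_i)=\emptyset$ and forming $W=\bigcup_{i\in I}(X\setminus Z_i)$, a step your triangle does not produce for you.
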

\begin{proof}  We write $\times$ instead of $\underset{X}{\times}$ for short. Since $V$ is convex the Fourier-Sato transform gives $\operatorname{H}^k(V;\nu^{sa}_{\chi_I}\mu^{sa}_{\chi_J} F) \simeq \operatorname{H}^k_{V_I \times V_J^\circ}(S;\nu^{sa}_\chi F)$.
Consider the distinguished triangle
\begin{equation}\label{eq:mixed dt}
\dt{R\varGamma_{(S_I \setminus V_I) \times V_J^\circ}\nu^{sa}_\chi F}{R\varGamma_{S_I \times V_J^\circ}\nu^{sa}_\chi F}{R\varGamma_{V_I \times V_J^\circ}\nu^{sa}_\chi F}
\end{equation}
By Lemma \ref{lem:sections Vcirc} we have $\operatorname{H}^k_{S_I \times V_J^\circ}(S;\nu^{sa}_\chi F) \simeq \lind {Z,\,U} \operatorname{H}^k_Z(U;F)$,
where $U$ ranges through the family of open subanalytic subsets of $X$ such that $U \cap M=\pi(V)$
and $Z=\bigcap_{j \in J}Z_j$ with $C_{M_j}(Z_j) \subset (T_{M_j}X \setminus \tau_j({\rm Int}(V_j^{\circ a})))$. By Lemma \ref{lem:sections A} we also have $\operatorname{H}^k_{(S_I \setminus V_I) \times V_J^\circ}(S;\nu^{sa}_\chi F) \simeq \lind {Z,\,U} \operatorname{H}^k_Z(U;F)$,
where $U$ ranges through the family of open subanalytic subsets of $X$ such that $U \cap M=\pi(V)$
and $Z=\bigcap_{j = 1}^\ell Z_j$ with $C_{M_j}(Z_j) \subset (T_{M_j}X \setminus \tau_j({\rm Int}(V_j^{\circ a})))$ if $j \in J$ and $C_{M_j}(Z_j) \subset (T_{M_j}X \setminus \tau_j(V_j))$ if $j \in I$. Thanks to the long exact sequence associated to \eqref{eq:mixed dt} we obtain $\operatorname{H}^k_{V_I \times V_J^\circ}(S;\nu^{sa}_\chi F) \simeq \lind {Z,\,U} \operatorname{H}^k_Z(U \cap W;F)$,
where $U$ ranges through the family of open subanalytic subsets of $X$ such that $U \cap M=\pi(V)$, $Z=\bigcap_{j \in J} Z_j$ with $C_{M_j}(Z_j) \subset (T_{M_j}X \setminus \tau_j({\rm Int}(V_j^{\circ a})))$ and $W= \bigcup_{i \in I}(X \setminus Z_i)$ such that $C_{M_i}(Z_i) \subset (T_{M_i}X \setminus \tau_i(V_i))$. Then the result follows since, as in Lemma \ref{lem:sections A}
\[
C_{M_i}(Z_i) \subset (T_{M_i}X \setminus \tau_i(V_i)) \Leftrightarrow C_\chi(Z_i) \cap \imin {\pi_i}(V_i) = \emptyset.
\]
\end{proof}

Let
$
p=p_1 \times \dots \times p_\ell = (q;\, \xi^{(1)},\dots,\xi^{(\ell)})=(q;\,\xi_I,\xi_J) \in S_I \underset{X}{\times} S_J^*
$. Locally we may identify $S_J$ with its dual.
 Set for short $\nu_{\chi_I}\mu_{\chi_J}:=\imin \rho \nu^{sa}_{\chi_I}\mu^{sa}_{\chi_J}$. As a consequence of Lemma \ref{lem:mixed sections Vcirc} we have

\begin{cor}\label{cor:mixed fibers} Let
$p=p_1 \times \dots \times p_\ell = (q;\, \xi^{(1)},\dots,\xi^{(\ell)}) \in S_I \underset{X}{\times}S_J^*$, and
let $F \in D^b(k_{X_{sa}})$. Then $H^k(\nu_{\chi_I}\mu_{\chi_J} F)_p \simeq \lind {Z,W_\epsilon} \operatorname{H}^k_Z( W_\epsilon;F)$,
where $W_\epsilon=W \cap B_\epsilon$, with $W \in \operatorname{Cone}_\chi(q;\,\xi_I,0_J)$, $B_\epsilon$ is an open ball containing $q$ of radius $\epsilon >0$
and $Z$ runs through a family of closed sets in the form
$Z_1 \cap Z_2 \cap \dots \cap Z_{\ell}$
with each $Z_k$ being closed subanalytic in $X$ and
$ C_{M_k}(Z_k)_q \subset p_k^{\#} \cup \{0\} $
 $(k=1,2,\dots,\ell)$.
\end{cor}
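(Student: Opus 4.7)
The plan is to imitate the derivation of Corollary \ref{cor:fibers} from Lemma \ref{lem:sections Vcirc}, now invoking the mixed Lemma \ref{lem:mixed sections Vcirc} in place of the purely microlocal one and carefully reorganizing the resulting colimits to separate the microlocal data (carried by $Z$) from the specialization data (now carried by the cone $W_\epsilon$). First I would write
\[
H^k(\nu_{\chi_I}\mu_{\chi_J} F)_p \simeq \lind{V} H^k(V;\nu^{sa}_{\chi_I}\mu^{sa}_{\chi_J} F),
\]
where $V$ runs over a cofinal family of multi-conic subanalytic open neighborhoods of $p$ in $S_I \times_X S_J^*$. Restricting to $V = V_I \times_X V_J$ with convex $V_J$-fibers (cofinal by the vector bundle structure of $S_J^*$), Lemma \ref{lem:mixed sections Vcirc} rewrites each $H^k(V;\nu^{sa}_{\chi_I}\mu^{sa}_{\chi_J} F)$ as $\lind{Z,U} H^k_Z(U;F)$, with $Z = \bigcap_{j \in J} Z_j$ satisfying $C_{M_j}(Z_j) \subset T_{M_j}X \setminus \tau_j(\operatorname{Int}(V_j^{\circ a}))$ and $U$ satisfying $C_{\chi_I}(X \setminus U) \cap \bigcup_{i \in I} \pi_i^{-1}(V_i) = \emptyset$. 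Swapping the colimits produces a single colimit indexed by $(V_I,V_J,Z,U)$.

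The next step is the cofinality analysis. On the $J$-side, as $V_j$ shrinks to the ray through $\xi_j$ the closed cone $V_j^{\circ a}$ grows to $p_j^{\circ a}$, so $T_{M_j}X \setminus \tau_j(\operatorname{Int}(V_j^{\circ a}))$ becomes cofinal with $p_j^\# \cup \{0\}$; hence the $Z_j$'s of the colimit become cofinal with closed subanalytic sets satisfying $C_{M_j}(Z_j)_q \subset p_j^\# \cup \{0\}$. On the $I$-side, using the decomposition $U = \bigcup_{i \in I} U_i$ with $C_{M_i}(X \setminus U_i) \cap \tau_i(V_i) = \emptyset$ (as in the proof of Lemma \ref{lem:sections A}(ii)) together with Proposition 4.6 of \cite{HP}, I would show that as $V_I$ shrinks a cofinal subfamily of admissible $U$'s is given by $W_\epsilon = W \cap B_\epsilon$ with $W \in \operatorname{Cone}_\chi(q;\xi_I,0_J)$ and $\epsilon > 0$.

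Finally, to bring the formula into the stated form $Z = \bigcap_{k=1}^\ell Z_k$, for each $i \in I$ I would augment by a closed subanalytic convex cone $Z_i \subset X$ with $C_{M_i}(Z_i)_q \subset p_i^\# \cup \{0\}$ and with $W_\epsilon \subset Z_i$; such $Z_i$ exist because the $i$-th projection of the multi-cone $W$ is an open cone around $\xi_i$. Since $W_\epsilon \subset Z_i$ for every $i \in I$, excision yields $H^k_{\bigcap_{j \in J} Z_j}(W_\epsilon;F) \simeq H^k_{\bigcap_{k=1}^\ell Z_k}(W_\epsilon;F)$, which gives the claim. The main obstacle I expect is the cofinality step on the $I$-side, namely showing that a cofinal subfamily of union-type admissible $U = \bigcup_i U_i$ is given by intersection-type multi-cones $W \in \operatorname{Cone}_\chi(q;\xi_I,0_J)$ intersected with a small ball; this requires geometric estimates in the multi-normal deformation analogous to those carried out in the proof of Theorem \ref{th:stalk formula}.
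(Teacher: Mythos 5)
Your proof is correct and follows essentially the same approach as the paper's, which is quite terse: it simply asserts that the admissible open sets $U$ from Lemma \ref{lem:mixed sections Vcirc} all contain some $W\cap B_\epsilon$ with $W\in\operatorname{Cone}_\chi(q;\xi_I,0_J)$, and handles your final augmentation step by directly choosing $W=\bigcap_{j=1}^\ell W_j$ with $C_{M_j}(\overline{W_j})_q\subset p_j^\#\cup\{0\}$ for $j\in I$ and $W_j=X$ for $j\in J$, so that $Z_i:=\overline{W_i}$ plays the role of your augmenting cone. Your step 5 re-derives the $I$-side cofinality via the decomposition $U=\bigcup_i U_i$, whereas the paper just invokes the stalk description of multi-specialization (Remark \ref{stalk-nu}); both routes reach the same conclusion, so no genuine gap.
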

\begin{proof}  The result follows since for any subanalytic conic neighborhood $V$ of $(q;\,\xi_I,0_J)$, any $U \in \op(X_{sa})$ such that $C_\chi(X \setminus U) \cap V = \emptyset$ contains $W \cap B_\epsilon$, $q \in B_\epsilon$, $\epsilon >0$, $W \in \operatorname{Cone}(q;\,\xi_I,0_J)$. Moreover, by definition of multi-cone we may assume $W=\bigcap_{j=1}^\ell W_j$ such that $ C_{M_j}(\overline{W_j})_q \subset p_j^{\#} \cup \{0\} $ if $j \in I$ and $W_j=X$ if $j \in J$.
\end{proof}

As in Theorem \ref{th:stalk formula} we can find a cofinal family to the family of closed subsets defining the stalk formula in Corollary \ref{cor:mixed fibers} which (locally) consists of convex cones and we can formulate the stalk formula in the mixed case.

\begin{teo} \label{th:mixed stalk formula}
Let $p=p_1 \times \dots \times p_\ell = (q;\, \xi^{(1)},\dots,\xi^{(\ell)}) \in S_I \underset{X}{\times} S_J^*$,
and let $F \in D^b(k_{X_{sa}})$. Then we have
\begin{equation}\label{eq:th-mixed-stalk-formula}
H^k(\nu_{\chi_I}\mu_{\chi_J} F)_p \simeq \lind {G,W_\epsilon} \operatorname{H}^k_G(W_\epsilon;F).
\end{equation}
Here $W_\epsilon=W \cap B_\epsilon$, with $W \in \operatorname{Cone}_\chi(q;\,\xi_I,0_J)$, $B_\epsilon$ is an open ball of radius $\epsilon >0$ containing $q$
 and a closed subanalytic subset
 $G=M \times \left(\displaystyle\sum_{k=1}^\ell G_k \right)$
with $G_k$ being a closed subanalytic convex cone in $N$
satisfying $G_k \setminus\{0\} \subset \gamma_k$, where $\gamma_k$ is defined in \eqref{gamma k}.
\end{teo}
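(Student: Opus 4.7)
The plan is to combine Corollary \ref{cor:mixed fibers}, which already expresses $H^k(\nu_{\chi_I}\mu_{\chi_J} F)_p$ as an inductive limit $\lind{Z,\,W_\epsilon} \operatorname{H}^k_Z(W_\epsilon;F)$ over closed sets $Z = Z_1 \cap \dots \cap Z_\ell$ with $C_{M_k}(Z_k)_q \subset p_k^{\#} \cup \{0\}$ and over the open sets $W_\epsilon = W\cap B_\epsilon$ with $W\in\operatorname{Cone}_\chi(q;\,\xi_I, 0_J)$, with a cofinality argument lifted directly from the proof of Theorem \ref{th:stalk formula}. Since the cones $\gamma_k$, as well as the shape of $G = M\times \sum_k G_k$, involve only the directions $\xi^{(k)}$ (i.e.\ information attached to each individual $M_k$) and not whether $k\in I$ or $k\in J$, the two cofinality problems for closed sets and open sets decouple.

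First, I would show that the family of closed subanalytic $G = M\times \sum_{k=1}^\ell G_k$ with $G_k$ convex and $G_k\setminus\{0\}\subset \gamma_k$ is cofinal (majorising) inside the family of $Z$ appearing in Corollary \ref{cor:mixed fibers}. This is exactly the ``$Z\subset G$'' construction carried out in Theorem \ref{th:stalk formula}: for each $k$, the hypothesis $C_{M_k}(Z_k)_q\subset p_k^{\#}\cup\{0\}$ produces $\epsilon>0$ and a closed convex cone $\Gamma_k\subset N_k$ with $\Gamma_k\setminus\{0\}\subset\{\langle x^{(k)},\xi^{(k)}\rangle>0\}$ such that
\[
Z_k \subset \{x^{(k)}\in\Gamma_k\} \cup \Bigl\{\epsilon|x^{(k)}|\le \sum_{j\in J_{\prec k}}|x^{(j)}|\Bigr\};
\]
then one builds $V = V_1\cap\dots\cap V_\ell$ from proper cones $T_k\ni\xi^{(k)}$ and a small parameter $\delta>0$, so that $V^\circ = \sum_k V_k^\circ$ contains $Z$ and has the required form (because all $V_k^\circ$ are proper with a common direction). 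The estimates at the end of the proof of Theorem \ref{th:stalk formula}, which bound $\langle x,y\rangle$ from below on $K_\sigma\times V$, carry over verbatim.

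Second, I would check the converse cofinality: every $G$ of the stated form can itself be written as $G = G^{(1)}\cap\dots\cap G^{(\ell)}$ (taking $G^{(k)} = \{x;\,x^{(k)}\in G_k\text{ component}\}$ in suitable coordinates) with $C_{M_k}(G)_q\subset p_k^{\#}\cup\{0\}$, which is exactly the last paragraph of the proof of Theorem \ref{th:stalk formula}: assume $0\ne \eta\in C_{M_k}(V^\circ)_q\cap\tau_k(p_k^{\circ a})$, construct the test cones $Q_\epsilon$, and derive a contradiction by exhibiting $v\in V$ with $\langle x,v\rangle<0$ on $Q_\epsilon$. This places every $(G,W_\epsilon)$ inside the inductive system of Corollary \ref{cor:mixed fibers}.

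Finally, the open-set side requires no new argument: in Corollary \ref{cor:mixed fibers} the neighborhoods are already of the form $W_\epsilon = W\cap B_\epsilon$ with $W\in\operatorname{Cone}_\chi(q;\,\xi_I,0_J)$, which is the family claimed in \eqref{eq:th-mixed-stalk-formula}. The mild point to verify, which I expect to be the only real obstacle, is that when both pieces are varied together one obtains a genuine pro-system: shrinking $W$ and $\epsilon$ on the open side does not conflict with enlarging $G$ on the closed side, because the conditions defining $\gamma_k$ (hence $G_k$) involve only the $J$-part of the covector $(\xi_I,\xi_J)$ and only the $k$-th normal direction, while the multi-cone $W$ constrains only the $I$-directions. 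Combining the two cofinalities then yields the isomorphism \eqref{eq:th-mixed-stalk-formula}.
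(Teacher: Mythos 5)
Your proposal matches the paper's own (very terse) argument: immediately before stating Theorem \ref{th:mixed stalk formula} the authors say only that ``as in Theorem \ref{th:stalk formula} we can find a cofinal family to the family of closed subsets defining the stalk formula in Corollary \ref{cor:mixed fibers} which (locally) consists of convex cones,'' and your plan spells out exactly this: apply Corollary \ref{cor:mixed fibers} to get the limit over $(Z,W_\epsilon)$, then transfer the two cofinality arguments from the proof of Theorem \ref{th:stalk formula} (building $V=V_1\cap\dots\cap V_\ell$ with $V^\circ\supset Z$ of the desired form, and conversely verifying $C_{M_k}(G)_q\subset p_k^{\#}\cup\{0\}$ for the constructed $G$), noting that the open-side family $W_\epsilon$ is unchanged. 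One small inaccuracy worth flagging: in your converse-cofinality step you suggest decomposing $G=M\times\sum_k G_k$ as an intersection $G^{(1)}\cap\dots\cap G^{(\ell)}$ with $G^{(k)}$ constraining only the $x^{(k)}$-coordinate, but a sum of cones is not of that product form; the correct move, which is what the last paragraph of the proof of Theorem \ref{th:stalk formula} actually does, is to take each $Z_k:=G$ and show directly that $C_{M_k}(G)_q\subset p_k^{\#}\cup\{0\}$ for every $k$.
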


\section{Multi-microlocalization and microsupport}

In this section we give an estimate of the microsupport of multi-microlocalization. The main point is to find a suitable ambient space: this is done (via Hamiltonian isomorphism) by identifying $T^*S_\chi$ with the normal deformation of $T^*X$ with respect to a suitable family of submanifolds $\chi^*$.

\subsection{Geometry}{\label{sec:geometry}}

Let $X$ be a real analytic manifold and consider a family of submanifolds $\chi=\{M^{}_1,\dots,M^{}_\ell\}$ satisfying H1, H2 and H3.
We consider the conormal bundle $T^*X$ with local coordinates $(x^{(0)},x^{(1)},\dots,x^{(\ell)};\,\xi^{(0)},\xi^{(1)},\dots,\xi^{(\ell)})$,
where  $x^{(j)}=(x^{}_{j^{}_1},\dots,x^{}_{j^{}_p})$ with $\hat{I}_j=\{j^{}_1,\dots,j^{}_p\}$ etc.
We use the notations  in \S\, \ref{sec:MM}; for example, we set
$S_{i}:=T_{M_{i}}\iota(M_{i}) \ftimes_X M$.
Let $I,J \subseteq \{1,\dots,\ell\}$ be such that $I \sqcup J = \{1,\dots,\ell\}$.
Recall that
\begin{align*}
S^{}_{\chi}&= S^{}_1 \ftimes_X \cdots \ftimes_X  S^{}_{\ell},
\\
 S^*_{\chi}&=S^{*}_1 \ftimes_X \cdots \ftimes_X  S^{*}_{\ell},
\\
 S^{}_{I} \ftimes_M  S^{*}_{J}
&= (\ftimes_{M, i\in I} S^{}_i )\ftimes_M (\ftimes_{M, j\in J} S^{*}_j)\,.
\end{align*}
Then we consider a mapping
\begin{multline*}
H^{}_{IJ} \colon T^*S^{}_{\chi} \ni (x^{(0)},x^{(1)},\dots,x^{(\ell)};\,
\xi^{(0)},\xi^{(1)},\dots,\xi^{(\ell)})
\\\mapsto (x^{(0)},(x^{(i)})_{i \in I},(\xi^{(j)})_{j\in J};\,
\eta^{(0)},(\xi^{(i)})_{i\in I}, (-x^{(j)})_{j \in J}) \in  T^*(S^{}_{I} \ftimes_M  S^{*}_{J}).
\end{multline*}
Note that $H^{}_{IJ} $ is induced by the Hamiltonian isomorphisms $T^*S^{}_J \iso T^*S^*_J$.
\begin{prop} \label{prop: Hij}
$H^{}_{IJ}$ gives a
bundle isomorphism over $M$\textup{;} that is, $H^{}_{IJ}$ does not depend on the choice of local coordinates.
\end{prop}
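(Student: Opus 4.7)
The plan is to identify $H_{IJ}$ as a fiber product over $M$ of intrinsic maps, each of which is either the identity or a canonical Hamiltonian isomorphism attached to a vector bundle. Since $\chi$ satisfies H1, H2, H3, each $S_i = T_{M_i}\iota(M_i) \ftimes_{M_i} M$ is a real vector bundle over $M$, and $S_\chi \simeq S_1 \ftimes_M \cdots \ftimes_M S_\ell$ is their Whitney sum. Similarly, $S_I \ftimes_M S_J^*$ is the Whitney sum over $M$ of the $S_i$ for $i \in I$ and the dual bundles $S_j^*$ for $j \in J$.

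The first step is to recall the following classical fact: for any real vector bundle $\tau \colon E \to Z$, there is a canonical Hamiltonian (symplectic) isomorphism $H_E \colon T^*E \iso T^*E^*$ over $Z$, independent of any trivialization. One intrinsic construction is via the Lagrangian correspondence in $T^*(E \ftimes_Z E^*)$ associated with the natural pairing $E \ftimes_Z E^* \to \R$; an equivalent approach uses the fact that $T^*E$ and $T^*E^*$ are both canonically identified with the cotangent space of $E \oplus E^*$ along different Lagrangian fibrations. In local coordinates $(z,x;\,\zeta,\xi)$ on $T^*E$ and $(z,\xi;\,\zeta,-x)$ on $T^*E^*$, this map is $(z,x;\,\zeta,\xi) \mapsto (z,\xi;\,\zeta,-x)$, which matches precisely the formula defining $H_{IJ}$ on each $J$-factor.

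Next I would establish the coordinate-free fiber-product decomposition: for a Whitney sum $E = E_1 \oplus_Z \cdots \oplus_Z E_\ell$ over $Z$, the natural projections $E \to E_k$ (restriction to each summand) induce a canonical identification
\[
T^*E \simeq T^*E_1 \ftimes_{T^*Z} \cdots \ftimes_{T^*Z} T^*E_\ell,
\]
along the zero section of the base, obtained by splitting the tautological $1$-form according to the direct sum decomposition. Applying this with $Z = M$, $E = S_\chi$, and $E_i = S_i$, and similarly for $S_I \ftimes_M S_J^*$, the map $H_{IJ}$ is seen to be exactly the fiber product (over $T^*M$) of the identity on each $T^*S_i$ ($i \in I$) and the canonical Hamiltonian isomorphism $H_{S_j} \colon T^*S_j \iso T^*S_j^*$ on each $j \in J$. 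Since every factor is intrinsic and base-preserving over $M$, their fiber product $H_{IJ}$ is a well-defined bundle isomorphism over $M$, independent of the local coordinates.

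The main technical point, and the step I expect to be slightly delicate, is the second one: writing the Whitney-sum decomposition of $T^*S_\chi$ in a genuinely coordinate-free way, so that one can genuinely conclude that the coordinate formula defining $H_{IJ}$ is the coordinate expression of a canonical construction. Once this decomposition is pinned down, the intrinsic nature of $H_E \colon T^*E \iso T^*E^*$ is classical (see for instance Kashiwara–Schapira, and our \textbf{Remark \ref{!and!!}}-style use of Hamiltonian flips), and the proposition follows immediately by assembling the pieces.
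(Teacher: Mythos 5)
Your guiding idea is sound and can be made to work, but the second step as written contains a genuine error that invalidates the assembly.  The first step is fine: for a single real vector bundle $\tau\colon E \to Z$, the flip $(z,x;\zeta,\xi)\mapsto(z,\xi;\zeta,-x)$ is indeed an intrinsic isomorphism $T^*E \iso T^*E^*$; checking this amounts to the identity $\partial({}^{\rm t}A^{-1}) = -{}^{\rm t}A^{-1}(\partial{}^{\rm t}A)\,{}^{\rm t}A^{-1}$, which is literally the same identity the paper invokes for $J_j^{-1}$.  The problem is your claimed canonical identification $T^*E \simeq T^*E_1 \ftimes_{T^*Z}\cdots\ftimes_{T^*Z}T^*E_\ell$ for a Whitney sum.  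This fiber product is not even well-posed: there is no natural bundle map $T^*E_i \to T^*Z$.  The candidate projection $(z,x_i;\zeta,\xi_i)\mapsto(z;\zeta)$ is precisely the coordinate-dependent operation you are trying to avoid --- under a frame change $y_i = A_i(z)x_i$ the base cotangent variable transforms as $\zeta = {}^{\rm t}\psi'\,\omega + {}^{\rm t}x_i\,(\partial\,{}^{\rm t}A_i/\partial z)\,\eta_i$, so it mixes with the fiber conormal and the projection does not descend to $T^*Z$.  (The phrase ``along the zero section'' points to the canonical splitting $T^*E|_Z \simeq T^*Z \oplus E^*$, but that is only defined over $Z\subset E$ and says nothing about $H_{IJ}$ away from the zero section.)  Consequently your ``fiber product of flips'' is not a construction you can carry out coordinate-freely as stated.

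The fix is to iterate the single-bundle flip by changing the base rather than fibering over $T^*M$.  For $j\in J$, view $S_\chi = S_j \oplus_M S'$ (with $S' = \ftimes_{M,k\neq j}S_k$) as a vector bundle over the manifold $S'$ with fiber $S_j$ (that is, the pullback bundle); its dual over $S'$ is $S_j^*\oplus_M S'$.  The canonical flip $T^*(S_j\oplus_M S') \iso T^*(S_j^*\oplus_M S')$ is then intrinsic (the base diffeomorphisms of $S'$ are arbitrary, and the fiber transition functions $A_j(z)$ are linear), and composing these flips for $j$ running over $J$, each time updating the base, reproduces exactly the coordinate formula for $H_{IJ}$; the flips commute, so the order is irrelevant, as also remarked after Proposition \ref{propF}.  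Once phrased this way your argument is correct and gives a more conceptual packaging than the paper's: the paper simply writes out the full Jacobian of $H_{IJ}$ in one pass and verifies the $\xi^{(0)}$-components agree, whereas your route isolates the single-factor flip as the only thing that needs checking.  Both ultimately rest on the same matrix identity $\partial J_j^{-1} = -J_j^{-1}(\partial J_j)J_j^{-1}$, so there is no computational saving, but the reorganized argument makes the intrinsic nature of $H_{IJ}$ more transparent.
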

\begin{proof}
Let $\varphi \colon X \to X$ be
a local coordinate transformation near any  $x \in X$.
We may assume that $X = {\mathbb R}^n$
with coordinates  $x=(x^{(0)},x^{(1)}\dots,x^{(\ell)})$, where  $M$ is given by   $(x^{(0)},0,\dots,0)$,
 and $\varphi$ is given by
\[
y^{(j)} = \varphi^{(j)}(x^{(0)},x^{(1)},\dots,x^{(\ell)}) \quad (j=0,1,\dots,\ell).
\]
Here $\varphi^{(j)}(x)=(\varphi^{}_{j^{}_1}(x),\dots,\varphi^{}_{j^{}_{p(j)}}(x))$
with $\hat{I}_j=\{j^{}_1,\dots,j^{}_{p(j)}\}$. This induces a coordinate transformation
\[
T^*X \ni (x;\,\xi) \mapsto  (y;\,\eta) \in T^*X
\]
defined by
\[\left\{
    \begin{aligned}
      y^{(j)}& =\varphi^{(j)}(x), \\
      \xi^{(j)} &=\sum_{i=0}^\ell {}^{\rm t}\!\bigl[\dfrac{\partial \varphi^{(i)}}{\partial x^{(j)}}(x)\bigr]\eta^{(i)}, \\
    \end{aligned}
  \right.
\]
where
\[
\frac{\partial \varphi^{(i)}}{\partial x^{(j)}}(x) = \begin{bmatrix}
\dfrac{\partial \varphi^{}_{i^{}_1}}{\partial x^{}_{j^{}_1}}(x) &\cdots & \dfrac{\partial \varphi^{}_{i^{}_1}}{\partial x^{}_{j^{}_{p(j)}}}(x)
\\
\vdots & \ddots &  \vdots \\
\dfrac{\partial \varphi^{}_{i^{}_{p(i)}}}{\partial x^{}_{j^{}_1}}(x) &\cdots &
 \dfrac{\partial \varphi^{}_{i^{}_{p(i)}}}{\partial x^{}_{j^{}_{p(j)}}}(x)
\end{bmatrix}
\]
is a $p(i) \times p(j)$-matrix, and ${}^{\rm t}$ means the transpose of a matrix.
 Set $J^{}_{j}(x^{(0)}):= \dfrac{\partial \varphi^{(j)}}{\partial x^{(j)}}(x^{(0)},0)$ for short.
 Then  the  coordinate transformation
\[
(x^{(0)},x^{(1)},\dots, x^{(\ell)}) \mapsto (y^{(0)},y^{(1)},\dots, y^{(\ell)})
\]
on  $S^{}_{\chi}$ is given by
\begin{equation}
\label{coordS}
\left\{
    \begin{aligned}
      y^{(0)}&=\varphi^{(0)}(x^{(0)},0), \\
    y^{(j)}&=J^{}_{j}(x^{(0)})\,x^{(j)} \quad (j=1,\dots, \ell).
    \end{aligned}
  \right.
\end{equation}
The  Jacobian matrix of \eqref{coordS}  is
\[
\begin{bmatrix} J^{}_{0}(x^{(0)}) & 0& \cdots & 0\\[2ex]
\dfrac{\partial J^{}_{1}}{\partial x^{(0)}}(x^{(0)})\,x^{(1)}& J^{}_{1}(x^{(0)})
& \cdots & 0\\
\vdots&\vdots & \ddots & \vdots\\
\dfrac{\partial J^{}_{\ell}(x^{(0)})}{\partial x^{(0)}}(x^{(0)})\,x^{(\ell)}& 0 & \cdots
& J^{}_{\ell}(x^{(0)})
\end{bmatrix}.
\]
Thus the the coordinate transformation
\begin{multline*}
(x^{(0)},x^{(1)},\dots,x^{(\ell)};\,
\xi^{(0)},\xi^{(1)},\dots,\xi^{(\ell)})
\\\mapsto (y^{(0)},y^{(1)},\dots,y^{(\ell)};\,
\eta^{(0)},\eta^{(1)},\dots,\eta^{(\ell)})
\end{multline*}
on $T^*S^{}_\chi$ is given by  \eqref{coordS} and
\begin{equation}
\label{coordS1}\left\{
    \begin{aligned}
    \xi^{(0)}&={}^{\rm t}\!J^{}_{0}(x^{(0)})\, \eta^{(0)}+\sum_{i=1}^{\ell}{}^{\rm t}x^{(i)}
 \dfrac{\partial \,{}^{\rm t}\!J^{}_{i}}{\partial x^{(0)}}(x^{(0)})\,\eta^{(i)}, \\
   \xi^{(j)}&= {}^{\rm t}\! J^{}_{j}(x^{(0)}) \,\eta^{(j)} \quad (j=1,\dots,  \ell).
    \end{aligned}
  \right.
\end{equation}
Next, consider the  the coordinate transformation on $S^{}_{I} \ftimes_M  S^{*}_{J}$.
After a permutation, we may assume that $I=\{1,\dots,p\}$, $J=\{p+1,\dots,\ell\}$.
Then the coordinate transformation
\[
(x^{(0)},(x^{(i)})_{i=1}^p,(\xi^{(j)})_{j=p+1}^{\ell}) \mapsto (y^{(0)},(y^{(i)})_{i=1}^p,(\eta^{(j)})_{j=p+1}^{\ell})
\]
on  $S^{}_{I} \ftimes_M  S^{*}_{J}$ is given by
\begin{equation}
\label{coordS*}
\left\{
    \begin{aligned}
      y^{(0)}&=\varphi^{(0)}(x^{(0)},0), \\
    y^{(i)}&=J^{}_{i}(x^{(0)})\,x^{(i)} \quad (i=1,\dots, p),
\\
     \eta^{(j)}&={}^{\rm  t}\! J^{\,-1}_{j}(x^{(0)})\,\xi^{(j)} \quad (j=p+1,\dots,  \ell).
    \end{aligned}
  \right.
\end{equation}
The Jacobian matrix of  \eqref{coordS*}  is
\[
\setlength\arraycolsep{.3ex}
\left[\begin{array}{ccccccc} J^{}_{0}(x^{(0)}) & 0& \cdots &0 &0 &\cdots &0
\\[2ex]
\dfrac{\partial J^{}_{1}}{\partial x^{(0)}}(x^{(0)})\,x^{(1)}& J^{}_{1}(x^{(0)})
& \cdots &0 & 0&\cdots & 0 \\
\vdots&\vdots & \ddots &\vdots & \hspace{-3ex}\vdots && \vdots
\\
\dfrac{\partial J^{}_{p}}{\partial x^{(0)}}(x^{(0)})\,x^{(p)}& 0 & \cdots
& J^{}_{p}(x^{(0)}) &0&\cdots &0
\\[2ex]
\dfrac{\partial\, {}^{\rm t}\!
J^{\,-1}_{p+1}}{\partial x^{(0)}}(x^{(0)})\,\xi^{(p+1)}&0& \cdots&0 & {}^{\rm t}\! J^{\,-1}_{p+1}(x^{(0)})
& \cdots & 0\\
\vdots&\vdots &  & \vdots && \ddots& \vdots\\
\dfrac{\partial\, {}^{\rm t}\!
J^{\,-1}_{\ell}}{\partial x^{(0)}}(x^{(0)})\,\xi^{(\ell)}& 0 & \cdots
& 0 & 0& \cdots&{}^{\rm t}\! J^{\,-1}_{\ell}(x^{(0)})
\end{array}
\right].
\]
Thus the the coordinate transformation
\begin{multline*}
(x^{(0)},(x^{(i)})_{i=1}^p,(\xi^{(j)})_{j=p+1}^{\ell};\, \xi^{(0)},(\xi^{(i)})_{i=1}^p,(-x^{(j)})_{j=p+1}^{\ell})
\\*
\mapsto
(y^{(0)},(y^{(i)})_{i=1}^p,(\eta^{(j)})_{j=p+1}^{\ell};\, \eta^{(0)},(\eta^{(i)})_{i=1}^p,(-y^{(j)})_{j=p+1}^{\ell})
\end{multline*}
on $T^*S^{*}_\chi$ is given by \eqref{coordS*} and
\[
\left\{
    \begin{aligned}
       \xi^{(0)}={}&{}^{\rm t}\!J^{}_{0}(x^{(0)})\, \eta^{(0)}
+\sum_{i=1}^{p}{}^{\rm t}x^{(i)}
 \dfrac{\partial \,{}^{\rm t}\!J^{}_{i}}{\partial x^{(0)}}(x^{(0)})\,\eta^{(i)}
-\smashoperator{\sum_{j=p+1}^{\ell}}
 {}^{\rm t} \xi^{(j)}\dfrac{\partial
J^{\,-1}_{j}}{\partial x^{(0)}}(x^{(0)})\,
y^{(j)},
\\
 \xi^{(i)} ={}&{}^{\rm  t}\! J^{}_{i}(x^{(0)})\,\eta^{(i)} \quad (i=1,\dots,  p),
\\
    x^{(j)}={}& J^{\,-1}_{j}(x^{(0)})\,y^{(j)} \quad (j=p+1,\dots, \ell).
    \end{aligned}
  \right.
\]
Since
\[
\dfrac{\partial
J^{\,-1}_{j}}{\partial x^{(0)}}(x^{(0)}) = -J^{\,-1}_{j}(x^{(0)}) \dfrac{\partial
J^{}_{j}}{\partial x^{(0)}}(x^{(0)})  \,J^{\,-1}_{j}(x^{(0)}),
\]
we have
\begin{align*}
- ({}^{\rm t} \xi^{(j)}&\dfrac{\partial
J^{\,-1}_{j}}{\partial x^{(0)}}(x^{(0)})\,y^{(j)})^{}_k
 =  ({}^{\rm t} \xi^{(j)}J^{\,-1}_{j}(x^{(0)}) \dfrac{\partial J^{}_{j}}{\partial x^{(0)}}(x^{(0)})  \,J^{\,-1}_{j}(x^{(0)})\,y^{(j)})^{}_k
\\
& = ({}^{\rm t} \eta^{(j)} \dfrac{\partial J^{}_{j}}{\partial x^{(0)}}(x^{(0)})\, x^{(j)})^{}_{k}=
\smashoperator{\sum_{\mu,\nu\in \hat{I}^{}_j}} \eta^{(j)}_\mu  \Bigl(\dfrac{\partial  J^{}_{j}}{\partial x^{(0)}_k}(x^{(0)})\Bigr)_{\mu,\nu}\, x^{(j)}_\nu
\\
&= ({}^{\rm t}x^{(j)}
 \dfrac{\partial \,{}^{\rm t}\!J^{}_{j}}{\partial x^{(0)}}(x^{(0)}) \,\eta^{(j)})^{}_k\,.
\end{align*}
Therefore
\[
-\smashoperator{\sum_{j=p+1}^{\ell}}
 {}^{\rm t} \xi^{(j)}\dfrac{\partial
J^{\,-1}_{p}}{\partial x^{(0)}}(x^{(0)})\,
y^{(j)}=\smashoperator{\sum_{j=p+1}^{\ell}}
{}^{\rm t}x^{(j)}
 \dfrac{\partial \,{}^{\rm t}\!J^{}_{j}}{\partial x^{(0)}}(x^{(0)}) \,\eta^{(j)}.
\]
Thus  we can  prove that
\begin{equation}\label{Hij}
H^{}_{IJ} \colon T^*S^{}_{\chi} \iso   T^*(S^{}_{I} \ftimes_M  S^{*}_{J}).
\qedhere
\end{equation}
 \end{proof}
Hence, using  Proposition 5.5.5 of \cite{KS90} repeatedly, we obtain:
\begin{prop}
Let $I,J \subseteq \{1,\dots,\ell\}$ be such that $I \sqcup J = \{1,\dots,\ell\}$.
Then, under the identification by \eqref{Hij}, for any $F \in \bDb(k^{}_X)$  it follows that
\[\xymatrix @C=1em @R=3ex{
 T^*S_{\chi} \ar@{}[d]_{\bigcup} \ar@{=}[r]& T^*(S^{}_{I} \ftimes_M  S^{*}_{J}) \ar@{}[d]_{\bigcup}
\\
\MS (\nu^{}_\chi (F)) \ar@{=}[r] &\MS (\nu^{}_{\chi_I}\mu^{}_{\chi_J} (F)).
}\]
In particular,  it follows that
\[\xymatrix @C=1em @R=2ex{
 T^*S_{\chi} \ar@{}[d]_{\bigcup} \ar@{=}[r]& T^*S^{*}_{\chi} \ar@{}[d]_{\bigcup}
\\
\MS (\nu^{}_\chi (F)) \ar@{=}[r] &\MS (\mu^{}_{\chi} (F)).
}\]
\end{prop}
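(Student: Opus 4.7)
The goal is to prove that, after identifying $T^*S_\chi$ with $T^*(S_I \ftimes_M S_J^*)$ via the Hamiltonian isomorphism $H_{IJ}$ constructed in Proposition~\ref{prop: Hij}, the microsupports of $\nu_\chi(F)$ and $\nu_{\chi_I}\mu_{\chi_J}(F)$ coincide as subsets of this common space. The plan is to reduce everything to repeated application of Proposition~5.5.5 of \cite{KS90}, which states that for a conic object on a vector bundle $\tau\colon E\to Z$, the Fourier–Sato transform $\wedge$ preserves the microsupport under the canonical Hamiltonian isomorphism $T^*E\iso T^*E^*$ given by $(x;\,\xi)\leftrightarrow(\xi;\,-x)$ on the fibers.

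The strategy is induction on $\#J$. When $J=\emptyset$ the statement is tautological since $\nu_{\chi_I}\mu_{\chi_J}=\nu_\chi$ and $H_{IJ}$ is the identity. For the inductive step, pick some $j_0\in J$ and set $J':=J\setminus\{j_0\}$, $I':=I\sqcup\{j_0\}$. By Proposition~\ref{propF}(1) the Fourier–Sato transforms on distinct factors commute, so we can write
\[
\nu_{\chi_I}\mu_{\chi_J}(F) \simeq \bigl(\nu_{\chi_{I'}}\mu_{\chi_{J'}}(F)\bigr)^{\wedge_{j_0}},
\]
where the last $\wedge_{j_0}$ is taken fiberwise over the remaining factors. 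The sheaf $\nu_{\chi_{I'}}\mu_{\chi_{J'}}(F)$ is conic along the $\R^+$-action on the $j_0$-th factor (this is inherited from the conicity of $\nu_\chi$ proven in \cite{HP}, and is stable under the Fourier–Sato transforms on the other factors, which only act fiberwise). Proposition~5.5.5 of \cite{KS90}, applied to the vector bundle $S_{j_0}\to M$ after pulling back to the ambient base $S_{I'}\ftimes_M S^*_{J'}$, then yields the coincidence of microsupports under the Hamiltonian swap on the $j_0$-factor, which is precisely the factor of $H_{IJ}$ that differs from $H_{I'J'}$.

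To make this rigorous we need a relative version of KS90's 5.5.5 valid over the parameter space $S_{I'}\ftimes_M S^*_{J'}$ rather than just a point. This is the main step requiring care: one either invokes the fact that microsupport is a local invariant and 5.5.5 is fiberwise compatible, or one uses that the full Hamiltonian isomorphism on $T^*(S_{I'}\ftimes_M S^*_{J'}\ftimes_M S_{j_0})$ factors as the identity on the $T^*(S_{I'}\ftimes_M S^*_{J'})$-directions times the fiberwise swap on the $S_{j_0}$-factor, thanks to Proposition~\ref{prop: Hij}, which already verified that $H_{IJ}$ is a well-defined bundle isomorphism over $M$. Combining the inductive hypothesis $\MS(\nu_\chi(F))=\MS(\nu_{\chi_{I'}}\mu_{\chi_{J'}}(F))$ under $H_{I'J'}$ with the equality $\MS(\nu_{\chi_{I'}}\mu_{\chi_{J'}}(F))=\MS(\nu_{\chi_I}\mu_{\chi_J}(F))$ under the residual fiberwise Hamiltonian isomorphism completes the induction.

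The main obstacle is bookkeeping: one must check that the composition of the step-by-step Hamiltonian swaps agrees with the global $H_{IJ}$ written down in \eqref{Hij}. This is exactly what the coordinate calculation in the proof of Proposition~\ref{prop: Hij} establishes, since the relevant $J_j(x^{(0)})$-dependence of the transformation is the same whether the $j$-th factor is treated as $S_j$ or $S_j^*$. The particular case $I=\emptyset$ then gives $\MS(\nu_\chi(F))=\MS(\mu_\chi(F))$ under the identification $T^*S_\chi\simeq T^*S^*_\chi$, as claimed.
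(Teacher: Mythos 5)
Your proposal is correct and follows the same strategy as the paper: the paper's proof is a one-line remark that the result "follows by repeated application of Proposition 5.5.5 of \cite{KS90}" under the identification established in Proposition~\ref{prop: Hij}, and you are simply spelling out the induction on $\#J$ that this implicitly uses. Your worry about a "relative version" of KS90 5.5.5 is not a real gap, since that proposition is already stated for vector bundles over an arbitrary base manifold, and the $j_0$-th factor can be treated as a vector bundle over the product of all the remaining factors, which is exactly the fiberwise application you describe.
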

Next,  we study the relation between the normal deformations
of $T^*X$ with respect to $\chi^*:=\{T^*_{M_1}X,\dots,T^*_{M_\ell}X\}$  and  of $X$ with respect to $\chi$.
We denote by $\widetilde{T^*X}^{}_{\chi^*}:=\widetilde{T^*X}_{T^*_{M_1}X,\dots,T^*_{M_\ell}X}$ the normal deformation of $T^*X$
with respect to $\chi^*$ and by $S_{\chi^*}$ its zero-section.
 Set $x:=(x^{(0)},x^{(1)},\dots,x^{(\ell)})$, $\xi:=(\xi^{(0)},\xi^{(1)},\dots,\xi^{(\ell)})$ and $t:=(t^{}_1,\dots,t^{}_\ell)$.
We have a mapping
\[
\widetilde{T^*X}^{}_{\chi^*} \ni(x;\,\xi;\,t)  \mapsto  (\mu_x(x;\,t);\,\mu_\xi(\xi;\,t)) \in  T^*X
\]
defined by
\begin{align*}
\mu_x(x;\,t) & :=  (t_{\JJ_0}x^{(0)},t_{\JJ_1}x^{(1)},\dots,t_{\JJ_\ell}x^{(\ell)}) , \\
\mu_\xi(\xi;\,t) & :=  (t_{\JJ^c_0}\xi^{(0)},t_{\JJ^c_1}\xi^{(1)},\dots,t_{\JJ^c_\ell}\xi^{(\ell)}) ,
\end{align*}
where $\JJ^c_j :=\{1,\dots,\ell\} \setminus \JJ_j$ ($j=0,1,\dots,\ell$).
In particular $\JJ^c_0=\{1,\dots,\ell\}$ since $\JJ^{}_0=\emptyset$. In particular $t_{\JJ_0}=1$ and
$t_{\JJ^c_0}= t^{}_1 \cdots t^{}_\ell$.

\begin{teo} \label{thm: geometry}
As vector  bundles,
there exist the following canonical isomorphism\textup{:}
\[
S_{\chi^*} \simeq T^*S^{}_\chi \simeq T^*S^*_\chi .
\]
\end{teo}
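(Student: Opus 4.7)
The plan is to handle the two isomorphisms separately.

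The second isomorphism $T^*S_\chi \simeq T^*S^*_\chi$ is immediate from Proposition~\ref{prop: Hij} applied with $(I,J) = (\emptyset,\{1,\ldots,\ell\})$: it yields the Hamiltonian isomorphism $H^{}_{\emptyset,\{1,\ldots,\ell\}}\colon T^*S_\chi \iso T^*S^*_\chi$ canonically (i.e., independently of the choice of local coordinates).

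For the first isomorphism $S_{\chi^*} \simeq T^*S_\chi$, I would work in local coordinates. In the symplectic coordinates $(x^{(0)},\ldots,x^{(\ell)};\,\xi^{(0)},\ldots,\xi^{(\ell)})$ on $T^*X$ lifted from adapted coordinates on $X$, each $T^*_{M_j}X$ is cut out by the linear equations $x^{(k)}=0$ for $I_k \subseteq I_j$ together with $\xi^{(k)}=0$ for $I_k \not\subseteq I_j$; hence $\chi^*$ is simultaneously linearizable, even though it does not satisfy H2 in general. The multi-normal deformation $\widetilde{T^*X}_{\chi^*}$ is therefore defined, with projection $p\colon(x,\xi,t) \mapsto (\mu_x(x,t),\mu_\xi(\xi,t))$ as in the statement. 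Restricting to the zero section $S_{\chi^*} = \{t_1 = \cdots = t_\ell = 0\}$ yields a manifold with local coordinates $(x^{(0)},\ldots,x^{(\ell)};\,\xi^{(0)},\ldots,\xi^{(\ell)})$, which are identical to local coordinates on $T^*S_\chi$ (the $x$'s giving a point of $S_\chi$, the $\xi$'s a cotangent vector). I would define the proposed isomorphism by the identity on these coordinates; it is manifestly a bundle map over $M = \bigcap_j M_j$.

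The substantive step is to check that this local definition is independent of the choice of adapted coordinates. Under a local coordinate change $\varphi\colon X \to X$ preserving each $M_j$, the symplectic lift $\Phi(x,\xi) = (\varphi(x),(D\varphi(x))^{-T}\xi)$ descends to $\widetilde{T^*X}_{\chi^*}$ by solving $p \circ (\text{new coords}) = \Phi \circ p$ with the parameters $t$ preserved; one then lets $t \to 0$. For the $y$-coordinates, the relation $\varphi(M_k) = M_k$ forces $\partial\varphi^{(k)}/\partial x^{(j)}|_M = 0$ whenever $j \notin J_{\prec k} \cup \{k\}$, which kills all potentially divergent contributions (ratios $t_{\JJ_j}/t_{\JJ_k}$ with $j \in J_{\prec k}$) in the limit and leaves precisely \eqref{coordS}. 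For the $\eta$-coordinates, Taylor-expanding $(D\varphi(\mu_x))^{-T}$ about $M$ gives the diagonal contribution $\eta^{(k)} = J_k^{-T}(x^{(0)})\xi^{(k)}$ for $k\ge 1$, while the off-diagonal block $[(D\varphi(\mu_x))^{-T}]^{(0,j)}$ vanishes at $M$ and must therefore be expanded to first order, combining with the divergent factor $t_{\JJ^c_j}/t_{\JJ^c_0} = 1/t_{\JJ_j}$. Among the resulting cross-terms only $i=j$ survives, because $t_{\JJ_i}/t_{\JJ_j}$ has a finite nonzero limit exactly when $\JJ_i = \JJ_j$, equivalently $i = j$ by H1; and a direct computation shows that this surviving term reproduces the correction in \eqref{coordS1}.

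The hard part is this last piece of bookkeeping—identifying which Taylor coefficients of $(D\varphi)^{-T}$ survive the combinatorial $t$-factors, and verifying that they combine to give the correction term of \eqref{coordS1} exactly. The computation runs parallel to the proof of Proposition~\ref{prop: Hij}, and generalizes to the multi-index setting the classical Hamiltonian isomorphism $T_{T^*_M X}T^*X \simeq T^*T_M X$ corresponding to $\ell = 1$.
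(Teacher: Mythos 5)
Your proof is correct and follows essentially the same route as the paper: derive $T^*S_\chi \simeq T^*S^*_\chi$ from Proposition~\ref{prop: Hij}, and establish $S_{\chi^*} \simeq T^*S_\chi$ by computing the transition functions of $S_{\chi^*}$ under a coordinate change $\varphi$ via Taylor expansion about the submanifolds (with the $t$-factors killing all but the surviving terms) and matching them with \eqref{coordS} and \eqref{coordS1}. The only cosmetic difference is that you parametrize the symplectic lift as $(D\varphi)^{-T}$ acting on $\xi$ (new in terms of old), whereas the paper writes ${}^{\rm t}[\partial\varphi/\partial x]$ acting on $\eta$ (old in terms of new); the bookkeeping of which $t_{\JJ}$-ratios survive is the same.
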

\begin{proof}
Let $\varphi \colon X \to X$ be
a local coordinate transformation near any  $x \in X$, and
retain the notation of the proof of Proposition \ref{prop: Hij}.
The coordinate transformation $(x;\,\xi;\,t) \mapsto (y;\,\eta;\,t)$ on $\widetilde{T^*X}^{}_{\chi^*}
\setminus S^{}_{\chi^*}$ is given by
\[\left\{
    \begin{aligned}
      y^{(j)}&=\dfrac{1}{t^{}_{\JJ_j}}\, \varphi^{(j)}(t^{}_{\JJ} x), \\
      \xi^{(j)}&=\displaystyle\sum_{i=0}^\ell {}^{\rm t}[\dfrac{\partial \varphi^{(i)}}{\partial x^{(j)}}(t^{}_{\JJ} x)]{t^{}_{J^c_i} \over t^{}_{J^c_j}}\eta^{(i)},
    \end{aligned}
  \right.
\]
where $t^{}_{\JJ} x:=\mu_x(x;\,t) =  (t_{\JJ_0}x^{(0)},t_{\JJ_1}x^{(1)},\dots,t_{\JJ_\ell}x^{(\ell)})$.
Let us consider the coordinate transformation on $S^{}_{\chi^*}$.
We write for short $t \to 0$ instead of $(t^{}_1,\dots,t^{}_\ell) \to (0,\dots,0)$.
 Set $J^{}_{j}(x^{(0)}):= \dfrac{\partial \varphi^{(j)}}{\partial x^{(j)}}(x^{(0)},0)$ for short.
Then, by Proposition 1.5 of \cite{HP} on $S_{\chi^*}$
\[\left\{
    \begin{aligned}
y^{(0)} & =  \varphi^{(0)}(x^{(0)},0), \\
y^{(j)} & = J^{}_{j}(x^{(0)})\,x^{(j)}, \quad (j=1,\dots, \ell),\\
    \end{aligned}
  \right.
\]
that means $y^{}_k=\sum_{p \in \hat{I}^{}_k}\dfrac{\partial \varphi^{}_k}{\partial x_p}(x^{(0)},0)\,x^{}_p$ for all $k \in \hat{I}_j$.
Concerning the variable $\xi^{(0)}$, as in the proof of Proposition \ref{prop: Hij}  we get
\[
\xi^{(0)}={}^{\rm t}[\dfrac{\partial \varphi^{(0)}}{\partial x^{(0)}}(t_{\JJ} x)]
\eta^{(0)} + \sum_{i=1}^\ell{}^{\rm t}[\dfrac{\partial \varphi^{(i)}}{\partial x^{(0)}}(t_{\JJ} x)]
\dfrac{t_{\JJ^c_i}}{t_{\JJ^c_0}}\, \eta^{(i)}.
\]
Let $M^{}_i=\{x_k=0;\;k\in I^{}_i\}$ and $I^{}_i=\hat{I}_{j^{}_1} \sqcup \cdots \sqcup \hat{I}^{}_{j^{}_p}$.
By expanding ${}^{\rm t}[\dfrac{\partial \varphi^{(i)}}{\partial x^{(0)}}(t_{\JJ} x)]$ along the submanifold $M^{}_i$, we obtain
\begin{align*}
{}^{\rm t}[\dfrac{\partial \varphi^{(i)}}{\partial x^{(0)}}(t_{\JJ} x)] \dfrac{t_{\JJ^c_i}}{t_{\JJ^c_0}}\, \eta^{(i)} ={} &
{}^{\rm t}[\dfrac{\partial \varphi^{(i)}}{\partial x^{(0)}}(t_{\JJ} x)]\Big|_{M^{}_i} \,\dfrac{t_{\JJ^c_i}}{ t_{\JJ^c_0}} \eta^{(i)}
\\*
&+ \sum_{k \in I^{}_i} x^{}_k{}^{\rm t}[\dfrac{\partial^2 \varphi^{(i)}}{\partial x^{}_k \partial x^{(0)} }(t_{\JJ} x)]\Big|_{M^{}_i}
\dfrac{t_{\JJ^c_i}t_{\JJ_k}}{ t_{\JJ^c_0}} \eta^{(i)}+\cdots
\end{align*}
Since $\varphi^{(i)}(t_{\JJ} x)\big|_{M_i}=0$ and $\hat{I}^{}_0 \cap I^{}_i = \emptyset$
we have $\dfrac{\partial \varphi^{(i)}}{\partial x^{(0)}}(t_{\JJ} x)\Big|_{M^{}_i}=0$. Moreover
\[\left\{
    \begin{aligned}
      \dfrac{t_{\JJ^c_i}t_{\JJ_i} }{ t_{\JJ^c_0}}&=1, \\
      \dfrac{t_{\JJ^c_i}t_{\JJ_k}}{t_{\JJ^c_0}} &\to 0, \quad (k \neq i), \\
    \end{aligned}
  \right.
\]
when $t \to 0$. This is because $\JJ_k \subsetneqq \JJ_i$ when $k \in \{j_1,\dots,j_p\}$, $k \neq i$ by Lemma \ref{lem:inclusions of J} and \eqref{eq:equality of J}. In a similar way the higher order terms vanish when $t \to 0$. Hence on $S_{\chi^*}$
\allowdisplaybreaks
\begin{align*}
\xi^{(0)} & ={}^{\rm t}[\dfrac{\partial \varphi^{(0)}}{ \partial x^{(0)}}(x^{(0)},0)]\,\eta^{(0)} + \sum_{i=1}^\ell
{}^{\rm t}x^{(i)}\,{}^{\rm t}[
\dfrac{\partial^2 \varphi^{(i)} }{\partial x^{(i)} \partial x^{(0)}}(x^{(0)},0)]\eta^{(i)}
\\
& ={}^{\rm t}\!J^{}_{0}(x^{(0)})\, \eta^{(0)}+\sum_{i=1}^{\ell}{}^{\rm t}x^{(i)}
 \dfrac{\partial \,{}^{\rm t}\!J^{}_{i}}{\partial x^{(0)}}(x^{(0)})\,\eta^{(i)}.
\end{align*}
Concerning the variable $\xi^{(j)}$ ($j \neq 0$), we get
\[
\xi^{(j)}=\sum_{i=0}^\ell {}^{\rm t}[\dfrac{\partial \varphi^{(i)}}{\partial x^{(j)}}(t_{\JJ} x)]\,\dfrac{t_{\JJ^c_i}}{ t_{\JJ^c_j}}\, \eta^{(i)}.
\]
\begin{enumerate}[(i)]
\item  If $\JJ^c_j \subsetneqq \JJ^c_i  \Leftrightarrow  \JJ^{}_i \subsetneqq \JJ^{}_j$
we have $\dfrac{t_{\JJ^c_i}}{ t_{\JJ^c_j}}\to 0$ ($t \to 0$).
\item
If  $\JJ^{}_i \supsetneqq \JJ^{}_j$ or $\JJ^{}_i \cap \JJ^{}_j = \emptyset$, we have $\hat{I}^{}_j \cap I^{}_i = \emptyset$.
\end{enumerate}
By expanding $\dfrac{\partial \varphi^{(i)}}{\partial x^{(j)}}(t^{}_{\JJ} x)$ along the submanifold $M^{}_i$, we obtain
\begin{align*}
{}^{\rm t}[\dfrac{\partial \varphi^{(i)}}{\partial x^{(j)}}(t^{}_{\JJ} x)]\,\dfrac{t_{\JJ^c_i}}{t_{\JJ^c_j}} \eta^{(i)}
 = {}& {}^{\rm t}[\dfrac{\partial \varphi^{(i)}}{\partial x^{(j)}}(t^{}_{\JJ} x)]\Big|_{M^{}_i}\,\dfrac{t_{\JJ^c_i}}{t_{\JJ^c_j}} \,\eta^{(i)}
\\*
&
 + \sum_{k \in I_i}x^{}_k{}^{\rm t}[\dfrac{\partial^2 \varphi^{(i)}}{ \partial x^{}_{k}\partial x^{(j)} }(t^{}_{\JJ} x)]\Bigr|_{M^{}_i}
\dfrac{t_{\JJ^c_i}t_{\JJ_k}}{t_{\JJ^c_j}} \eta^{(i)}+\cdots
\end{align*}
Since $\varphi^{(i)}(t_{\JJ} x)\big|^{}_{M^{}_i}=0$ and $\hat{I}^{}_j \cap I^{}_i = \emptyset$
 we have $\dfrac{\partial \varphi^{(i)}}{\partial x^{(j)}}(t_{\JJ} x)\Big|^{}_{M^{}_i}=0$. Moreover
\[
\dfrac{t_{\JJ^c_i}t_{\JJ_k}}{ t_{\JJ^c_j}} \to 0
\]
when $t \to 0$ since $\hat{I}^{}_k \subseteq I^{}_i  \Rightarrow  \JJ^{}_k \supseteq \JJ^{}_i$ by Lemma \ref{lem:inclusions of J}
and $\JJ^c_i \cup \JJ^{}_k = \{1,\dots,\ell\} \supsetneqq \JJ^c_j$ when $j \neq 0$. Hence on $S_{\chi^*}$
\[
\xi^{(j)}= {}^{\rm t}[\frac{\partial \varphi^{(j)}}{ \partial x^{(j)}}(x^{(0)},0)]\,\eta^{(j)}=
{}^{\rm t}\! J^{}_{j}(x^{(0)}) \,\eta^{(j)} .
\]
Summarizing, the coordinate transformation on $S_{\chi^*}$ is given by
\begin{align*}&\left\{
    \begin{aligned}
      y^{(0)}&=\varphi^{(0)}(x^{(0)},0), \\
    y^{(i)}&=J^{}_{i}(x^{(0)})\,x^{(i)} \quad (1 \leqslant i \leqslant \ell),
    \end{aligned}
  \right.
\\
&\left\{
    \begin{aligned}
    \xi^{(0)}&={}^{\rm t}\!J^{}_{0}(x^{(0)})\, \eta^{(0)}+\sum_{i=1}^{\ell}{}^{\rm t}x^{(i)}
 \dfrac{\partial \,{}^{\rm t}\!J^{}_{i}}{\partial x^{(0)}}(x^{(0)})\,\eta^{(i)}, \\
   \xi^{(i)}&= {}^{\rm t}\! J^{}_{i}(x^{(0)}) \,\eta^{(i)} \quad (1 \leqslant i \leqslant \ell).
    \end{aligned}
  \right.
\end{align*}
This is nothing but \eqref{coordS}, \eqref{coordS1}.
 \end{proof}

 \begin{es} Let $X={\mathbb C}^2$
with coordinates $(z_1, z_2)$ and consider $T^*X$ with coordinates $(z;\,\eta)=(z_1,z_2;\,\eta_1,\eta_2)$. Set $t=(t_1,t_2) \in (\RP)^2$.
\begin{enumerate}
\item (Majima)  Let $M_1 = \{z_1 = 0\}$ and $M_2 = \{z_2 = 0\}$.
Then $\chi^*=\{T^*_{M_1}X,T^*_{M_2}X\}$ and we have a map
\begin{eqnarray*}
\widetilde{T^*X} & \to & T^*X, \\
(z;\,\eta;\,t) & \mapsto & (\mu_z(z;\,t);\,\mu_\eta(\eta;\,t)),
\end{eqnarray*}
which is defined by
\begin{eqnarray*}
\mu_z(z;\,t) & = & (t_1z_1,t_2z_2) , \\
\mu_\eta(\eta;\,t) & = & (t_2\eta_1,t_1\eta_2).
\end{eqnarray*}
By Theorem \ref{thm: geometry} we have $S_\chi^* \simeq T^*(T_{M_1}X \underset{X}{\times}T_{M_2}X) \simeq T^*(T^*_{M_1}X \underset{X}{\times}T^*_{M_2}X)$.

\item (Takeuchi) Let $M_1 = \{0\}$ and $M_2 = \{z_2 = 0\}$.
Then $\chi^*=\{T^*_{M_1}X,T^*_{M_2}X\}$ and we have a map
\begin{eqnarray*}
\widetilde{T^*X} & \to & T^*X, \\
(z;\,\eta;\,t) & \mapsto & (\mu_z(z;\,t);\,\mu_\eta(\eta;\,t)),
\end{eqnarray*}
which is defined by
\begin{eqnarray*}
\mu_z(z;\,t) & = & (t_1z_1,t_1t_2z_2) , \\
\mu_\eta(\eta;\,t) & = & (t_2\eta_1,\eta_2).
\end{eqnarray*}
By Theorem \ref{thm: geometry} we have $S_\chi^* \simeq T^*(T_{M_1}M_2 \underset{X}{\times}T_{M_2}X) \simeq T^*(T^*_{M_1}M_2 \underset{X}{\times}T^*_{M_2}X)$.

\end{enumerate}
\end{es}

\begin{es} Let $X={\mathbb R}^3$ with coordinates $(x_1, x_2, x_3)$ and consider $T^*X$ with coordinates $(x;\,\xi)=(x_1,x_2,x_3;\,\xi_1,\xi_2,\xi_3)$. Set $t=(t_1,t_2,t_3) \in (\RP)^3$.
\begin{enumerate}
\item (Majima) Let $M_1 = \{x_1 = 0\}$, $M_2 = \{x_2 = 0\}$ and $M_3=\{x_3=0\}$.
Then $\chi^*=\{T^*_{M_1}X,T^*_{M_2}X,T^*_{M_3}X\}$ and we have a map
\begin{eqnarray*}
\widetilde{T^*X} & \to & T^*X, \\
(x;\,\xi;\,t) & \mapsto & (\mu_x(x;\,t);\,\mu_\xi(\xi;\,t)),
\end{eqnarray*}
which is defined by
\begin{eqnarray*}
\mu_x(x;\,t) & = & (t_1x_1,t_2x_2,t_3x_3) , \\
\mu_\xi(\xi;\,t) & = & (t_2t_3\xi_1,t_1t_3\xi_2,t_2t_3\xi_3).
\end{eqnarray*}
By Theorem \ref{thm: geometry} we have $S_\chi^* \simeq T^*(T_{M_1}X \underset{X}{\times}T_{M_2}X \underset{X}{\times}T_{M_2}X) \simeq T^*(T^*_{M_1}X \underset{X}{\times}T^*_{M_2}X \underset{X}{\times}T^*_{M_3}X)$.

\item (Takeuchi) Let $M_1 = \{0\}$, $M_2 = \{x_2 = x_3 = 0\}$ and $M_3 = \{x_3 = 0\}$.
Then $\chi^*=\{T^*_{M_1}X,T^*_{M_2}X,T^*_{M_3}X\}$ and we have a map
\begin{eqnarray*}
\widetilde{T^*X} & \to & T^*X, \\
(x;\,\xi;\,t) & \mapsto & (\mu_x(x;\,t);\,\mu_\xi(\xi;\,t)),
\end{eqnarray*}
which is defined by
\begin{eqnarray*}
\mu_x(x;\,t) & = & (t_1x_1,t_1t_2x_2,t_1t_2t_3x_3) , \\
\mu_\xi(\xi;\,t) & = & (t_2t_3\xi_1,t_3\xi_2,\xi_3).
\end{eqnarray*}
By Theorem \ref{thm: geometry} we have $S_\chi^* \simeq T^*(T_{M_1}M_2 \underset{X}{\times}T_{M_2}M_3 \underset{X}{\times}T_{M_2}X) \simeq T^*(T^*_{M_1}M_2 \underset{X}{\times}T^*_{M_2}M_3 \underset{X}{\times}T^*_{M_3}X)$.

\item (Mixed)
Let $M_1 = \{0\}$, $M_2 = \{x_2 = 0\}$ and $M_3 = \{x_3 = 0\}$.
Then $\chi^*=\{T^*_{M_1}X,T^*_{M_2}X,T^*_{M_3}X\}$ and we have a map
\begin{eqnarray*}
\widetilde{T^*X} & \to & T^*X, \\
(x;\,\xi;\,t) & \mapsto & (\mu_x(x;\,t);\,\mu_\xi(\xi;\,t)),
\end{eqnarray*}
which is defined by
\begin{eqnarray*}
\mu_x(x;\,t) & = & (t_1x_1,t_1t_2x_2,t_1t_3x_3) , \\
\mu_\xi(\xi;\,t) & = & (t_2t_3\xi_1,t_3\xi_2,t_2\xi_3).
\end{eqnarray*}
By Theorem \ref{thm: geometry} we have $S_\chi^* \simeq T^*(T_{M_1}(M_2 \cap M_3) \underset{X}{\times}T_{M_2}X \underset{X}{\times}T_{M_2}X) \simeq T^*(T^*_{M_1}(M_2 \cap M_3) \underset{X}{\times}T^*_{M_2}X \underset{X}{\times}T^*_{M_3}X)$.

\end{enumerate}
\end{es}

\subsection{Estimate of microsupport}

In this section we shall prove  an estimate for the microsupport of the multi-specialization and multi-microlocalization of a sheaf on $X$. We refer to \cite{KS90} for the theory of microsupport of sheaves.

\begin{teo} \label{teo: estimate cone} Let $F \in \bDb(k_X)$. Then
\[
\MS(\nu_\chi (F))=\MS (\mu^{}_\chi (F)) \subseteq C_{\chi^*}(\MS(F)).
\]
\end{teo}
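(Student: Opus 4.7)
The first equality $\MS(\nu_\chi(F)) = \MS(\mu_\chi(F))$ is already established by the proposition immediately preceding the theorem: under the Hamiltonian identification of Theorem \ref{thm: geometry} the two microsupports coincide, since the Fourier-Sato transform preserves microsupport (Proposition 5.5.5 of \cite{KS90}). Thus the task reduces to proving the inclusion
\[
\MS(\nu_\chi(F)) \subseteq C_{\chi^*}(\MS(F))
\]
as subsets of $S_{\chi^*} \simeq T^*S_\chi$.

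My plan is to proceed by induction on the length $\ell$ of $\chi$. The base case $\ell = 1$ is exactly the classical Kashiwara--Schapira estimate $\MS(\nu_M F) \subseteq C_{T^*_M X}(\MS(F))$ (Proposition 5.4.14 of \cite{KS90}), transported by the standard Hamiltonian isomorphism $T^*(T_M X) \simeq T_{T^*_M X}(T^*X)$. This matches the formulation of the theorem, since for $\ell=1$ one has $\chi^* = \{T^*_M X\}$ and $C_{\chi^*}$ reduces to the ordinary normal cone along the conormal bundle.

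For the inductive step, I would write $\chi = \chi' \sqcup \{M_\ell\}$ with $\chi' = \{M_1, \dots, M_{\ell-1}\}$ and exploit the recursive construction
\[
\widetilde{X} = (\widetilde{X}_{\chi'})^\sim_{\widetilde{M}_\ell}
\]
of the multi-normal deformation. This factorisation allows one to decompose $\nu_\chi^{sa}(F)$ as a specialization along $\widetilde{M}_\ell$ of the intermediate ambient sheaf $R\varGamma_{\Omega_{\chi'}} p_{\chi'}^{-1} F$ on $\widetilde{X}_{\chi'}$. The inductive hypothesis supplies the estimate $\MS(\nu_{\chi'}(F)) \subseteq C_{(\chi')^*}(\MS(F))$ for the restriction of this ambient sheaf to $S_{\chi'}$, and the single-submanifold estimate applied to the further specialization along $\widetilde{M}_\ell$ then yields
\[
\MS(\nu_\chi(F)) \subseteq C_{T^*_{\widetilde{M}_\ell} \widetilde{X}_{\chi'}} \bigl(C_{(\chi')^*}(\MS(F))\bigr).
\]

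The main obstacle — and the genuine geometric content of the theorem — is identifying this iterated normal cone with $C_{\chi^*}(\MS(F))$ under the chain of isomorphisms of Theorem \ref{thm: geometry}. I would verify this using the sequential characterisation of the multi-normal cone in Lemma \ref{lem: multi-normal cone}: a sequence in $\MS(F)$ witnessing a limit in $C_{\chi^*}(\MS(F))$ with parameters $(c_{1,m},\dots,c_{\ell,m})$ can be decomposed according to the nested system $\{\JJ_j\}$, via the explicit formulas for $\mu_x$ and $\mu_\xi$, into successive witnesses for the two cones in the iteration, and conversely. The bookkeeping is controlled by Lemma \ref{lem:inclusions of J} together with the limit identities $\dfrac{t_{\JJ^c_i} t_{\JJ_k}}{t_{\JJ^c_j}} \to 0$ that already appear in the proof of Theorem \ref{thm: geometry}. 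Once this identification is in place, the inclusion for $\nu_\chi$ follows, and combined with the first equality it establishes the statement for $\mu_\chi$ as well.
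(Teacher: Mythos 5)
There is a genuine gap in the inductive step. Your inductive hypothesis controls only $\MS(\nu_{\chi'}(F))$, i.e.\ the microsupport of the restriction of the ambient object $G := Rj_{\Omega_{\chi'}*}\,\widetilde{p}_{\chi'}^{\,-1}F$ to the zero section $S_{\chi'} \subset \widetilde{X}_{\chi'}$. But the single-submanifold estimate you want to apply next, namely $\MS(\nu_{\widetilde{M}_\ell}(G)) \subseteq C_{T^*_{\widetilde{M}_\ell}\widetilde{X}_{\chi'}}(\MS(G))$, requires an estimate of $\MS(G)$ as a subset of $T^*\widetilde{X}_{\chi'}$ in a full neighbourhood of $\widetilde{M}_\ell$ — and $\widetilde{M}_\ell$ is a submanifold of $\widetilde{X}_{\chi'}$ that is not contained in $S_{\chi'}$, so the inductive hypothesis says nothing about it. To close the induction you would need a strengthened inductive statement that estimates $\MS\bigl(Rj_{\Omega_{\chi'}*}\,\widetilde{p}_{\chi'}^{\,-1}F\bigr)$ on all of $\widetilde{X}_{\chi'}$, not just over the zero section; once you make that the inductive claim, you have essentially converted the argument into the paper's direct one. (A secondary issue is the assertion that $\nu^{sa}_\chi(F)$ equals the specialization of $G$ along $\widetilde{M}_\ell$ restricted to $S_\chi$: this uses a base-change identity $p_\ell^{-1}Rj_{\Omega_{\chi'}*} \simeq Rj_{p_\ell^{-1}\Omega_{\chi'}*}p_\ell^{-1}$ which is not automatic because $p_\ell$ is not submersive along the zero section; one can salvage it by working on $\Omega_\ell$ where $p_\ell$ is smooth, but this needs to be argued.)

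For comparison, the paper proves the inclusion by the direct, non-inductive route of Theorem~\ref{thm: estimate sequences}. One estimates $\MS$ of the full object $Rj_{\Omega_\chi*}\widetilde{p}^{\,-1}F$ on $\widetilde{X}$ in one shot, via Theorem 6.3.1 of \cite{KS90} ($\MS(Rj_{\Omega_\chi*}\widetilde{p}^{\,-1}F) \subseteq \MS(\widetilde{p}^{\,-1}F)\,\widehat{+}\,N^*(\Omega_\chi)$) together with the explicit computation of $\MS(\widetilde{p}^{\,-1}F)$ from Proposition 5.4.5, and only then applies the closed-embedding estimate for $s^{-1}$ (Lemma 6.2.1 and Proposition 6.2.4(iii) of \cite{KS90}). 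The sequential bookkeeping through Lemma~\ref{lem: multi-normal cone} is the same in spirit as what you describe, but it is carried out on the single deformation $\widetilde{X}$ with all $\ell$ parameters $t_1,\dots,t_\ell$ simultaneously, rather than layer by layer. That is precisely what avoids needing an off-zero-section microsupport estimate at each stage.
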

Since the problem is local, we may assume that $X=\R^n$ with coordinates $(x_1,\dots,x_n)$.
Let $I_k$ $(k=1,2,\dots,\ell)$ in $\{1,2,\dots,n\}$ such that each submanifold $M_k$ is given by
\[
\left\{x = (x_1,\dots,x_n) \in \mathbb{R}^n;\, x_i=0\, (i \in I_k)\right\}.
\]
Then  Theorem \ref{teo: estimate cone} follows from
 Lemma \ref{lem: multi-normal cone} and the following theorem:
\begin{teo} \label{thm: estimate sequences} Let $F \in \bDb(k_X)$ and take a point
\[
p_0 =(x^{(0)}_0,x^{(1)}_0,\dots,x^{(\ell)}_0;\,\xi^{(0)}_0,\xi^{(1)}_0,\dots,\xi^{(\ell)}_0) \in T^*S_\chi.
\]
Assume that $p_0 \in \MS(\nu_\chi (F))$. Then there exist sequences
\begin{align*}
\{(c_{1,k},\dots,c_{\ell,k})\}_{k=1}^\infty
& \subset (\RP)^\ell,
\\
\{(x^{(0)}_{k},x^{(1)}_k,\dots,x^{(\ell)}_{ k};\,\xi^{(0)}_{k},\xi^{(1)}_k,\dots,\xi^{(\ell)}_{k})\}_{k=1}^\infty
& \subset \MS(F),
\end{align*}
 such that
\[\left\{
    \begin{aligned}
     \lim_{k\to \infty} & c_{j,k} = \infty, \qquad (j=1,\dots,\ell), \\
     \lim_{k\to \infty} &(x^{(0)}_{k},x^{(1)}_k c_{\JJ_1,k},\dots,x^{(\ell)}_{k}c_{\JJ_\ell,k}
;\,\xi^{(0)}_k c^{}_k,\xi^{(1)}_{k}c_{\JJ_1^c,k},\dots,\xi^{(\ell)}_{ k}c_{\JJ_\ell^c,k})
\\
& = (x^{(0)}_0,x^{(1)}_0,\dots,x^{(\ell)}_0;\,\xi^{(0)}_0,\xi^{(1)}_0,\dots,\xi^{(\ell)}_0) ,
    \end{aligned}
  \right.
\]
where $c^{}_k :=\prod\limits_{j=1}^\ell c_{j,k}$, $\JJ^c_j:=\{1,\dots,\ell\}\setminus\JJ_j$,
and $c_{J,k} :=\prod\limits_{j \in J}c_{j,k}$ for any $J \subseteq \{1,\dots,\ell\}$.
\end{teo}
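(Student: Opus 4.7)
The plan is to track the microsupport of $\nu^{}_\chi F$ through its decomposition $\nu^{\mathrm{sa}}_\chi F \simeq s^{-1}R\varGamma^{}_{\Omega_\chi}p^{-1}F$, applying the standard operational bounds on microsupport from \cite{KS90}, and then to repackage the resulting limit information as explicit sequences in $\MS(F)$. The geometric heart of the argument is Theorem \ref{thm: geometry}: the identification $T^*S^{}_\chi \simeq S^{}_{\chi^*}$ matches, via the Hamiltonian isomorphism, the cotangent pullback $(dp)^*$ on $\tilde X$ with the combined scalings $\mu^{}_x$ and $\mu^{}_\xi$ of the deformation $\widetilde{T^*X}^{}_{\chi^*} \to T^*X$, which is exactly what produces the asymmetric weights $c^{}_{\JJ^{}_j,k}$ (on the $x$-side) and $c^{}_{\JJ^{c}_j,k}$ (on the $\xi$-side) appearing in the statement.

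First, on $\Omega_\chi$ the morphism $p$ is smooth, so Proposition~5.4.4 of \cite{KS90} gives $\MS(p^{-1}F)|_{\Omega_\chi} \subseteq (dp)^*\MS(F)$; a direct computation using $p^{(j)}(x;t) = t^{}_{\JJ^{}_j}x^{(j)}$ yields $\eta^{(j)} = t^{}_{\JJ^{}_j}\xi^{(j)}$ together with certain $\tau^{}_k$-components dual to $dt^{}_k$. Next, iterating Proposition~5.3.2 of \cite{KS90} over $\Omega_\chi = \bigcap_{k=1}^{\ell}\{t^{}_k > 0\}$ shows that $R\varGamma^{}_{\Omega_\chi}$ only modifies the microsupport by $dt^{}_k$-directions on the boundary $\{t^{}_k = 0\}$. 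Finally, Proposition~5.4.5 of \cite{KS90} applied to the closed embedding $s\colon S^{}_\chi \hookrightarrow \tilde X$ yields that $\MS(\nu^{}_\chi F)$ lies in the image of $\MS(R\varGamma^{}_{\Omega_\chi}p^{-1}F)|_{S^{}_\chi}$ under the projection $\rho^{}_s\colon T^*\tilde X|_{S^{}_\chi} \to T^*S^{}_\chi$ that forgets the $dt^{}_k$-coordinates, so all boundary corrections vanish. Consequently any $p^{}_0 \in \MS(\nu^{}_\chi F)$ is approached in $T^*S^{}_\chi$ by $\rho^{}_s$-projections of points $(\tilde q^{}_k, \eta^{}_k) \in \MS(p^{-1}F)$ with $\tilde q^{}_k = (x^{}_k; t^{}_k) \in \Omega_\chi$, $t^{}_{j,k} \to 0$, and $\eta^{}_k = (dp^{}_{\tilde q^{}_k})^*\xi^{}_k$ for suitable $\xi^{}_k \in \MS(F)^{}_{p(\tilde q^{}_k)}$.

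Setting $c^{}_{j,k} := 1/t^{}_{j,k} \to +\infty$ and $c^{}_k := \prod_{j=1}^{\ell}c^{}_{j,k}$, the position components are immediate: $p(\tilde q^{}_k)^{(j)} = x^{(j)}_k/c^{}_{\JJ^{}_j,k}$, whence $p(\tilde q^{}_k)^{(j)}\,c^{}_{\JJ^{}_j,k} = x^{(j)}_k \to x^{(j)}_0$. For the covector components, $\eta^{(j)}_k = \xi^{(j)}_k/c^{}_{\JJ^{}_j,k}$ combined with $\eta^{(j)}_k \to \xi^{(j)}_0$ gives $\xi^{(j)}_k/c^{}_{\JJ^{}_j,k} \to \xi^{(j)}_0$; applying conicity of $\MS(F)$ to rescale $\xi^{}_k \mapsto \xi^{}_k/c^{}_k$ produces a sequence still in $\MS(F)$ whose $\xi^{(j)}$-coordinates satisfy $\xi^{(j)}_k\,c^{}_{\JJ^{c}_j,k} \to \xi^{(j)}_0$, via the identity $c^{}_{\JJ^{}_j,k}\,c^{}_{\JJ^{c}_j,k} = c^{}_k$ which is precisely the compatibility between $\mu^{}_x$ and $\mu^{}_\xi$ encoded in Theorem \ref{thm: geometry}. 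The main obstacle is verifying this Hamiltonian compatibility and tracking the $\tau^{}_k$-components precisely: one must check that the full $(dp)^*$-covector (including the $\tau^{}_k$ parts) lifts to a point of $\MS(p^{-1}F)$ compatible with the $dt^{}_k$-corrections from $R\varGamma^{}_{\Omega_\chi}$; once $\rho^{}_s$ is applied these disappear, and the remainder reduces to coordinate bookkeeping made transparent by Theorem \ref{thm: geometry}.
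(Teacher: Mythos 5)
Your proposal follows essentially the same route as the paper's proof: decompose $\nu^{}_\chi F = s^{-1}R\varGamma^{}_{\Omega_\chi}p^{-1}F$, push $\MS$ through the submersion $\widetilde{p}|^{}_{\Omega_\chi}$ via Prop.\ 5.4.5 of \cite{KS90}, observe that the $R\varGamma^{}_{\Omega_\chi}$ contribution lies in $dt$-directions only, extract sequences, and finally rescale by $c^{}_k$ using conicity of $\MS(F)$ to swap $c^{}_{\JJ^{}_j,k}$ for $c^{}_{\JJ^{c}_j,k}$ in the covector coordinates---all of which matches the paper and the arithmetic checks out. The one imprecision worth flagging is that the closed embedding $s$ is \emph{not} non-characteristic for $\MS(R\varGamma^{}_{\Omega_\chi}p^{-1}F)$, so Prop.\ 5.4.5 does not apply to it: the statement that $\MS(\nu^{}_\chi F)$ is ``the image under $\rho^{}_s$'' should instead be the normal-cone/sequence estimate of Lemma 6.2.1~(ii) and Prop.\ 6.2.4~(iii) of \cite{KS90}, and the $R\varGamma^{}_{\Omega_\chi}$ bound is Theorem 6.3.1 (via $\widehat{+}\,N^*(\Omega)$) combined with Remark 6.2.8~(ii) rather than an iteration of Prop.\ 5.3.2---but your subsequent sequence description is exactly what these results deliver, so there is no gap, only a citation correction.
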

\begin{proof}
Let $(x;\,t)=(x^{(0)},x^{(1)},\dots,x^{(\ell)};\,t_1,\dots,t_\ell)$ be the coordinates in $\widetilde{X}$. It follows from the estimate of the microsupport of the inverse image of a closed embedding (Lemma 6.2.1 (ii) and Proposition 6.2.4 (iii) of \cite{KS90}) that there exists a sequence
\[
\{(x^{(0)}_k,x^{(1)}_k,\dots,x^{(\ell)}_k;\,t^{}_{1,k},\dots,t_{\ell, k};\,\xi^{(0)}_k,\xi^{(1)}_k,\dots,\xi^{(\ell)}_k;\,
\tau_{1,k},\dots,\tau_{\ell, k})\}_{k=1}^\infty
\]
in $\MS(Rj_{\Omega *}\widetilde{p}^{\,-1}F)$ such that for any $j=1,\dots,\ell$
\[
\left\{
    \begin{aligned}
        \lim_{k\to \infty}& x^{(j)}_k =  x^{(j)}_0, \quad
         \lim_{k\to \infty}\xi^{(j)}_k  = \xi^{(j)}_0, \quad
      \lim_{k\to \infty}   t_{j,k}   =0, \\
     \lim_{k\to \infty}  &  |(t_{1,k},\dots,t_{\ell,k})|\cdot|(\tau_{1,k},\dots,\tau_{\ell,k})|= 0.
    \end{aligned}
  \right.
\]
By Theorem 6.3.1 of \cite{KS90} we have
\[
\MS(Rj_{\Omega *}\widetilde{p}^{\,-1}F) \subseteq \MS(\widetilde{p}^{\,-1}F) \mathop{\widehat{+} }N^*(\Omega).
\]
By Proposition 5.4.5 of \cite{KS90}, we have
\allowdisplaybreaks
\begin{align*}
\MS(\widetilde{p}^{\,-1}&F) = \widetilde{p}^{}_d(\widetilde{p}^{\,-1}_\pi(\MS(F))\\*
 =  \{(&x^{(0)},\dfrac{x^{(1)} }{ t_{\JJ_1}},\dots,\dfrac{x^{(\ell)}}{t_{\JJ_\ell}};\,
t_1,\dots,t_\ell;\,\xi^{(0)},t_{\JJ_1}\xi^{(1)},\dots,t_{\JJ_\ell}\xi^{(\ell)};\,\tau_1,\dots,\tau_\ell);\\*
&  t_j>0\; (j=1,\dots,\ell),\, (x^{(0)},x^{(1)},\dots,x^{(\ell)};\,\xi^{(0)},\xi^{(1)},\dots,\xi^{(\ell)}) \in \MS(F)\},
\end{align*}
where we did not calculate the terms in the variables $\tau_j$ ($j=1,\dots,\ell$) since we are not going to use them.
Thanks to Remark 6.2.8 (ii) of \cite{KS90} and the fact that $N^*(\Omega)\subset
 \{(x;\,t;\,0;\,\tau)\}$, for each $k \in \N$ we get sequences
\begin{align*}
\{(x^{(0)}_{k,m},x^{(1)}_{k,m},\dots,x^{(\ell)}_{k,m};\,\xi^{(0)}_{k,m},\xi^{(1)}_{k,m},\dots,\xi^{(\ell)}_{k,m})\}_{m=1}^\infty
&\subset \MS(F),
\\
\{(t_{1,k,m},\dots,t_{\ell, k,m})\}& \subset (\RP)^\ell,
\end{align*} such that
\begin{align*}
\lim_{m\to \infty}&(x^{(0)}_{k,m},{x^{(1)}_{k,m} \over t_{\JJ_1,k,m}},\dots,{x^{(\ell)}_{k,m} \over t_{\JJ_\ell,k,m}};\,t_{1,k,m},
\dots,t_{\ell,k,m})
\\ & =
(x^{(0)}_k,x^{(1)}_k,\dots,x^{(\ell)}_k;\,t_{1,k},\dots,t_{\ell,k}), \\
\lim_{m\to \infty}&    (\xi^{(0)}_{k,m},t_{\JJ_1,k,m},\xi^{(1)}_{k,m},\dots,t_{\JJ_\ell,k,m}\xi^{(\ell)}_{k,m}) =
 (\xi^{(0)}_k,\xi^{(1)}_k,\dots,\xi^{(\ell)}_k).
  \end{align*}
Then extracting a subsequence, we can find
\[
\{(x^{(0)}_{k},x^{(1)}_{k},\dots,x^{(\ell)}_{k};\,\xi^{(0)}_{k},\xi^{(1)}_{k},\dots,\xi^{(\ell)}_{k})\}_{k=1}^\infty \subset \MS(F)
\]
and
$\{(t_{1,k},\dots,t_{\ell, k})\}_{k=1}^\infty \subset (\RP)^\ell$ such that
\[
\left\{
    \begin{aligned}
\lim_{k\to \infty}(x^{(0)}_{k},{x^{(1)}_{k} \over t_{\JJ_1,k}},\dots,{x^{(\ell)}_{k} \over t_{\JJ_\ell,k}})
&=  (x^{(0)},x^{(1)},\dots,x^{(\ell)}), \\
  \lim_{k\to \infty}  (\xi^{(0)}_{k},t_{\JJ_1,k}\xi^{(1)}_{k},\dots,t_{\JJ_\ell,k}\xi^{(\ell)}_{k}) &= (\xi^{(0)},\xi^{(1)},\dots,\xi^{(\ell)}),
\\
\lim_{k\to \infty}      (t_{1,k},\dots,t_{\ell,k}) &= (0,\dots,0).
    \end{aligned}
  \right.
\]
Since $\MS(F)$ is conic, we have
\[
(x^{(0)}_k,x^{(1)}_k,\dots,x^{(\ell)}_k;\,t_k\xi^{(0)}_k,t_k\xi^{(1)}_k,\dots,t_k\xi^{(\ell)}_k) \in \MS(F),
\]
where $t_k:=\prod\limits_{j=1}^\ell t_{j,k}$. Setting $c_{j,k} :=\dfrac{1}{t_{j,k}}$ ($j=1,\dots,\ell$)
we obtain the desired result. \end{proof}

\begin{oss} Theorem \ref{teo: estimate cone} extends the estimate of microsupport computed in \cite{KT01}.
\end{oss}

\section{Microfunctions along $\chi$}
In this section, we establish a vanishing theorem of cohomology groups for
multi-microlocalization and introduce multi-microlocalized objects along $\chi$
which are natural extensions of sheaves of microfunctions and holomorphic ones.

\subsection{The edge of the wedge theorem with bounds}
We say that an open subset $\Omega \subset \mathbb{C}^n$ is an analytic open polyhedron
if there exist holomorphic functions
$f_1,\dots, f_\ell$ on $\mathbb C^n$
satisfying
\[
\Omega = \{z \in \mathbb C^n; |f_k(z)| < 1\,\, (k=1,2,\dots,\ell)\}.
\]
In the same way,  a closed subset $K \subset \mathbb{C}^n$ is said to be an analytic closed
polyhedron if $K$ has the form
\[
K = \{z \in \mathbb C^n; |f_k(z)| \le 1\,\, (k=1,2,\dots,\ell)\}.
\]
for some holomorphic functions $f_1,\dots, f_\ell$ on $\mathbb{C}^n$.
We first establish the edge of the wedge theorem for $\OW$.
Set $X:= \mathbb{C}^n \times \mathbb{C}^m$.

\begin{teo}\label{th:vanishing-main}
Let $\Omega$ and $\omega$ in $\mathbb C^n$ be relatively compact analytic open polyhedra,
and let $K$ be a closed analytic polyhedron or a closed convex subanalytic subset
in $\mathbb C^m$. Then we have
\[
\operatorname{H}^k(X;\, \mathbb{C}_{(\Omega\setminus \omega) \times K} \wtens \mathcal{O}_X) = 0
\qquad (k \ne n).
\]
\end{teo}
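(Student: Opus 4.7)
The plan is to reduce the vanishing on the shell $(\Omega\setminus\omega)\times K$ to a pair of concentration results on the two open polyhedra separately by means of a standard short exact sequence. Since $\omega\cap\Omega$ is again a relatively compact analytic open polyhedron and $\Omega\setminus\omega=\Omega\setminus(\omega\cap\Omega)$, I may assume $\omega\subseteq\Omega$; then the short exact sequence
\[
0\to\CC_{\omega\times K}\to\CC_{\Omega\times K}\to\CC_{(\Omega\setminus\omega)\times K}\to 0,
\]
after application of $-\wtens\OO_X$ and taking global cohomology, produces a long exact sequence that will localise the right-hand cohomology in degree $n$, provided that the cohomology of the left and middle terms is already known to be concentrated in that single degree.

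The core step I plan to carry out is therefore the following key lemma: for any relatively compact open analytic polyhedron $U\subset\CC^n$ and $K$ as in the hypothesis,
\[
H^k(X;\CC_{U\times K}\wtens\OO_X)=0\quad\text{for all }k\ne n.
\]
My approach is through a K\"unneth-type identification
\[
R\Gamma(X;\CC_{U\times K}\wtens\OO_X)\simeq R\Gamma(\CC^n;\CC_U\wtens\OO_{\CC^n})\ltens R\Gamma(\CC^m;\CC_K\wtens\OO_{\CC^m}),
\]
reducing the matter to two concentration statements. For the closed factor $K$, the Whitney analogue of Cartan's Theorem~B yields concentration in degree $0$, since $K$ is either a closed analytic polyhedron or a closed convex subanalytic set. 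For the open factor $U$, a Serre--Martineau-type duality identifies $R\Gamma(\CC^n;\CC_U\wtens\OO)$, up to the dualising shift, with the dual of $R\Gamma(\CC^n;\tho(\CC_{\overline U},\OO))$, and the Stein character of $U$ places this cohomology in degree $n$; alternatively, one works directly with the Dolbeault resolution by Whitney $(0,q)$-forms and invokes H\"ormander $L^2$ estimates to control the growth near $\partial U$.

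With the key lemma in hand, the long exact sequence immediately forces $H^k(X;\CC_{(\Omega\setminus\omega)\times K}\wtens\OO_X)=0$ for $k\notin\{n-1,n\}$. To rule out degree $n-1$ as well, I have to check that the restriction morphism $H^n(X;\CC_{\omega\times K}\wtens\OO_X)\to H^n(X;\CC_{\Omega\times K}\wtens\OO_X)$ is injective; dually, this becomes the density of the restriction of holomorphic top-forms from $\Omega\times K$ to $\omega\times K$, which follows from analytic continuation on the connected Stein manifold $\Omega$ applied on the first factor, together with Stein approximation along $K$ on the second.

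The hardest part of the plan will be establishing the concentration of $R\Gamma(\CC^n;\CC_U\wtens\OO)$ in the top degree $n$ for the open polyhedron $U$: this is a Whitney/subanalytic incarnation of Serre--Martineau duality, and carrying it out rigorously requires either an abstract duality theorem inside the subanalytic-sheaf framework or a hands-on $\bar\partial$-argument combined with H\"ormander estimates to control Whitney growth at the boundary $\partial U$. Once this concentration and the K\"unneth isomorphism are in place, the reduction to $\omega\subseteq\Omega$ and the final density step are routine.
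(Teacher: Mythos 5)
Your plan is structurally coherent (reduce to $\omega\subseteq\Omega$, invoke a long exact sequence, and prove concentration for $\Omega\times K$ and $\omega\times K$ together with injectivity at degree $n$), but it contains a genuine gap: the ``key lemma'' you propose --- concentration of $\operatorname{H}^k(X;\,\mathbb{C}_{U\times K}\wtens\mathcal{O}_X)$ in degree $n$ for an \emph{arbitrary} relatively compact open analytic polyhedron $U$ --- is, taking $\omega=\emptyset$, already a special case of the theorem itself, and is the hard content; neither of your two sketched routes to it is carried through. Serre--Martineau duality for $\wtens$-cohomology on general subanalytic opens is not an off-the-shelf statement, and the Dolbeault/H\"ormander route is worse than it looks: analytic polyhedra have corners, so $\bar\partial$-Neumann boundary regularity (needed to promote $L^2$ solutions to Whitney jets with control of all derivatives at $\partial U$) is not available from H\"ormander estimates. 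In the same vein, your injectivity step at degree $n$ relies implicitly on connectedness of $\Omega$ (not assumed for an open polyhedron) and on a density argument in a Whitney topology that you would still have to justify.

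The paper's proof avoids both of these difficulties by never attempting the general-polyhedron concentration directly. Concentration in degree $n$ is proved only when $\Omega$ and $\omega$ are \emph{products of disks}, where it follows by tensoring one-dimensional computations (Lemma \ref{lem:martieaus_lemma} and the preceding lemma, via the tensor product formula from \cite{KS96}); the Martineau-type injectivity is also established only for products of disks, using a hands-on Cauchy-integral argument that manifestly produces and compares Whitney jets. General polyhedra are then handled by Oka's device: embed $X\hookrightarrow\widetilde{X}=\mathbb{C}^{n+\ell+m}$ via $\varphi(z,w)=(z,f_1(z),\dots,f_\ell(z),w)$ so that $(\Omega\setminus\omega)\times K$ is the preimage of a shell between products of polydiscs, apply the already-established shell vanishing in $\widetilde X$, and then divide out by the right $\mathcal{D}_{\widetilde X}$-module $\mathcal{M}=\mathcal{D}_{\widetilde X}/(\tau_i-f_i)\mathcal{D}_{\widetilde X}$; the free resolution of length $\ell$ of $\mathcal{M}$ produces exactly the degree shift $n+\ell\rightsquigarrow n$. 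This sidesteps $\bar\partial$-regularity on polyhedra entirely. If you wish to salvage your plan, you should replace the unproved key lemma by this polydisc reduction, which is precisely what makes the concentration tractable.
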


\noindent To show the theorem,  we need several lemmas. In what follows, we always assume that
$K$ is a closed analytic polyhedron or a closed convex subanalytic subset in $\mathbb{C}^m$.
We note that, by the result of A.~Dufresnoy \cite{Du79},
we have
\[
\operatorname{H}^k(\mathbb{C}^m;\, \mathbb C_K \wtens \mathcal{O}_{\mathbb{C}^m}) = 0
\quad (k \ne 0)
\]
for the both $K$.
\begin{lem}
Assume that $\Omega_i$ $(i=1,2,\dots,n)$ is a non-empty open disk in $\mathbb{C}$.
Set $\Omega := \Omega_1 \times \dots \times \Omega_n$. Then
$
\mathrm{R}\Gamma(X;\, \mathbb C_{\Omega \times K} \wtens \mathcal{O}_X)
$
is concentrated in degree $n$.
\end{lem}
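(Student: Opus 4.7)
The strategy is to factor $\mathbb{C}_{\Omega \times K} \simeq \mathbb{C}_{\Omega_1} \boxtimes \cdots \boxtimes \mathbb{C}_{\Omega_n} \boxtimes \mathbb{C}_K$ and then apply a Künneth-type formula for $\wtens \mathcal{O}_X$, reducing the problem to $n$ one-variable computations together with the Dufresnoy input for the $K$-factor. More precisely, using the decomposition $X = \mathbb{C}^n \times \mathbb{C}^m$ with the canonical isomorphism $(F_1 \boxtimes F_2) \wtens \mathcal{O}_{X_1 \times X_2} \simeq (F_1 \wtens \mathcal{O}_{X_1}) \boxtimes (F_2 \wtens \mathcal{O}_{X_2})$ for $\mathbb{R}$-constructible $F_i$, combined with the Künneth formula for $R\Gamma$ on products of Stein manifolds, I would establish
\[
R\Gamma(X; \mathbb{C}_{\Omega \times K} \wtens \mathcal{O}_X) \simeq \Bigl(\bigotimes_{i=1}^n R\Gamma(\mathbb{C}; \mathbb{C}_{\Omega_i} \wtens \mathcal{O}_\mathbb{C})\Bigr) \otimes R\Gamma(\mathbb{C}^m; \mathbb{C}_K \wtens \mathcal{O}_{\mathbb{C}^m}).
\]
By the cited theorem of Dufresnoy, the last factor is concentrated in degree $0$ for both admissible shapes of $K$, so everything reduces to showing that, for each open disk $\Omega_i \subset \mathbb{C}$, the complex $R\Gamma(\mathbb{C}; \mathbb{C}_{\Omega_i} \wtens \mathcal{O}_\mathbb{C})$ lives only in degree $1$.

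For that one-variable assertion I would use the Dolbeault resolution $\mathcal{O}_\mathbb{C} \simeq [C^\infty_\mathbb{C} \xrightarrow{\bar\partial} \Omega^{0,1}_\mathbb{C}]$. Applying $\mathbb{C}_{\Omega_i} \wtens (\cdot)$, whose derived version preserves this quasi-isomorphism, yields
\[
\mathbb{C}_{\Omega_i} \wtens \mathcal{O}_\mathbb{C} \simeq \bigl[\mathbb{C}_{\Omega_i} \wtens C^\infty_\mathbb{C} \xrightarrow{\bar\partial} \mathbb{C}_{\Omega_i} \wtens \Omega^{0,1}_\mathbb{C}\bigr].
\]
Both terms are $C^\infty_\mathbb{C}$-modules, hence fine and in particular $\Gamma$-acyclic, so $R\Gamma$ is represented by the two-term complex of global sections and a priori is concentrated in degrees $0$ and $1$. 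The decisive point is that $H^0$ vanishes: an element of $H^0$ is an entire holomorphic function on $\mathbb{C}$ that is Whitney-flat on the closed set $\mathbb{C} \setminus \Omega_i$; since this set has non-empty interior, the function (with all its derivatives) vanishes there and therefore identically on $\mathbb{C}$ by analytic continuation. Combining this with the Künneth decomposition produces a complex concentrated in total degree $n\cdot 1 + 0 = n$.

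The main obstacle is the Künneth-type identification for the Whitney tensor product, which is not purely formal: one must verify that $\wtens$ interacts correctly with external products of $\mathbb{R}$-constructible sheaves and then combine this with the usual Künneth statement on Stein manifolds while keeping track of the completed topological tensor products implicit in $\mathcal{O}_{X_1 \times X_2}$. A minor companion task is to justify that $\mathbb{C}_{\Omega_i} \wtens (\cdot)$ commutes with the Dolbeault quasi-isomorphism; this should follow from standard exactness properties of $\wtens$ on locally closed $\mathbb{R}$-constructible subsets together with the fineness of the Dolbeault sheaves.
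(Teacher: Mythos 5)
Your overall strategy mirrors the paper's (compute the one-variable complexes, then combine via a topological K\"unneth formula, cited as Proposition~5.3 of \cite{KS96}), so the architecture is sound. Your one-variable argument, however, takes a different route: you use the Dolbeault resolution $\mathbb{C}_{\Omega_i}\wtens\mathcal{O}_{\mathbb{C}} \simeq [\mathbb{C}_{\Omega_i}\wtens C^\infty_{\mathbb{C}} \to \mathbb{C}_{\Omega_i}\wtens \Omega^{0,1}_{\mathbb{C}}]$ and flatness of a holomorphic function on the nonempty-interior set $\mathbb{C}\setminus\Omega_i$ to kill $H^0$, whereas the paper uses the triangle $\mathbb{C}_{\Omega_i}\to\mathbb{C}_{\mathbb{C}}\to\mathbb{C}_{\mathbb{C}\setminus\Omega_i}$, Dufresnoy's theorem for the closed convex set $\mathbb{C}\setminus\Omega_i$, and the resulting long exact sequence. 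Both routes give $H^k=0$ for $k\neq 1$ cleanly, and yours is arguably more elementary.

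The genuine gap is exactly the one you flag but do not close. To invoke the tensor product formula of \cite{KS96} and conclude concentration in degree $n$, one must know that each one-variable object $R\Gamma(\mathbb{C};\,\mathbb{C}_{\Omega_i}\wtens\mathcal{O}_{\mathbb{C}})$ is isomorphic in $D^b(FN)$ to the FN space $H^1$ placed in degree $1$ — not merely that its algebraic cohomology vanishes outside degree $1$. This requires $H^1$ to actually be an FN space, i.e., the image of $\overline{\partial}$ (equivalently, of the restriction map $\rho$ in the paper's exact sequence) to be closed; otherwise the quotient is not Hausdorff, and the passage from the two-term Dolbeault complex to $H^1[-1]$ is not a legitimate isomorphism in the topological derived category, so the completed external tensor product cannot be read off degree by degree. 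The paper supplies precisely this missing input: it shows $\rho$ has closed range via the maximum modulus principle, which is what lets it identify $R\Gamma(\mathbb{C};\,\mathbb{C}_{\Omega_i}\wtens\mathcal{O}_{\mathbb{C}})\simeq H^1[-1]$ in $D^b(FN)$ before applying Proposition~5.3. Your Dolbeault argument gives a complex of FN spaces with the right algebraic cohomology, but does not by itself establish closed range, so your "main obstacle" paragraph is pointing at a real hole rather than at a routine verification.
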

\begin{proof}
We have
$\operatorname{H}^k(\mathbb{C};\, \mathbb{C}_{\mathbb{C} \setminus \Omega_i} \wtens \mathcal O_{\mathbb{C}}) =0$ for
$k \ne 0$, from which we get
\[
\operatorname{H}^k(\mathbb{C};\, \mathbb{C}_{\Omega_i} \wtens \mathcal O_{\mathbb{C}}) = 0 \qquad (k \ne 1)
\]
and the exact sequence
\[
0
\to \operatorname{H}^0(\mathbb{C};\, \mathbb{C}_{\mathbb{C}} \wtens \mathcal O_{\mathbb{C}})
\overset{\rho}{\to}
\operatorname{H}^0(\mathbb{C};\, \mathbb{C}_{\mathbb{C} \setminus \Omega_i} \wtens \mathcal O_{\mathbb{C}})
\to \operatorname{H}^1(\mathbb{C};\, \mathbb{C}_{\Omega_i} \wtens \mathcal O_{\mathbb{C}}) \to 0
\]
As $\rho$ has a closed range by the maximum modulus principle for a holomorphic function,
the cohomology group
$
\operatorname{H}^1(\mathbb{C};\, \mathbb{C}_{\Omega_i} \wtens \mathcal O_{\mathbb{C}})
$
becomes a FN space, and hence, we have an isomorphism in $D^b(FN)$
\begin{equation}\label{eq:qi-one-dimension}
	\mathrm{R}\Gamma(\mathbb{C};\, \mathbb{C}_{\Omega_i} \wtens \mathcal O_{\mathbb{C}}) \simeq
\operatorname{H}^1(\mathbb{C};\, \mathbb{C}_{\Omega_i} \wtens \mathcal O_{\mathbb{C}}) [-1].
\end{equation}
Further, we have
\[
\operatorname{H}^k(\mathbb{C}^m;\, \mathbb C_K \wtens
\mathcal{O}_{\mathbb{C}^m}) = 0\qquad (k \ne 0),
\]
and $\operatorname{H}^0(\mathbb{C}^m;\, \mathbb C_K \wtens \mathcal{O}_{\mathbb{C}^m}) $ is a FN space to which
$\mathrm{R}\Gamma(\mathbb{C}^m;\, \mathbb C_K \wtens
\mathcal{O}_{\mathbb{C}^m})$
is isomorphic in $D^b(FN)$.
Hence the claim of the lemma follows from the tensor product formula of Proposition 5.3
\cite{KS96}.
\end{proof}

The following lemma is a key in the proof of the theorem.
\begin{lem}[Martineau]\label{lem:martieaus_lemma}
Let $\Omega_i$ and $\omega_i$ $(i=1,2,\dots,n)$ be non-empty open disks in $\mathbb{C}$
with $\omega_i \subset \Omega_i$.
Set $\Omega=\Omega_1 \times \dots \times \Omega_n$ and
$\omega=\omega_1 \times \dots \times \omega_n$. Then
the canonical morphism associated with inclusion of sets
\[
\iota: \operatorname{H}^n(X;\, \mathbb C_{\omega \times K} \wtens \mathcal{O}_X) \to
\operatorname{H}^n(X;\, \mathbb C_{\Omega \times K} \wtens \mathcal{O}_X)
\]
is injective.
\end{lem}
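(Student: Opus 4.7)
My plan is to reduce the injectivity statement to the one-dimensional case via the Künneth-type tensor product formula (Proposition 5.3 of \cite{KS96}) already exploited in the preceding lemma, and then to settle the one-dimensional case by an analytic-continuation argument that crucially uses boundedness of $\Omega_i$.

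First, I would apply the preceding lemma with $\omega$ and with $\Omega$ in place of the polydisc to conclude that both $\mathrm{R}\Gamma(X;\, \mathbb{C}_{\omega \times K} \wtens \mathcal{O}_X)$ and $\mathrm{R}\Gamma(X;\, \mathbb{C}_{\Omega \times K} \wtens \mathcal{O}_X)$ are concentrated in degree $n$. Combining \eqref{eq:qi-one-dimension} factor-wise with the Dufresnoy vanishing for $K$ and Proposition 5.3 of \cite{KS96}, this yields in $D^b(FN)$ an identification
\[
\operatorname{H}^n(X;\, \mathbb{C}_{\omega \times K} \wtens \mathcal{O}_X) \simeq \bigotimes_{i=1}^n \operatorname{H}^1(\mathbb{C};\, \mathbb{C}_{\omega_i} \wtens \mathcal{O}_{\mathbb{C}}) \,\hat{\otimes}\, \operatorname{H}^0(\mathbb{C}^m;\, \mathbb{C}_K \wtens \mathcal{O}_{\mathbb{C}^m}),
\]
and the analogue for $\Omega$. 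Under these identifications $\iota$ becomes the completed tensor product of the one-dimensional maps $\alpha_i \colon \operatorname{H}^1(\mathbb{C};\, \mathbb{C}_{\omega_i} \wtens \mathcal{O}_{\mathbb{C}}) \to \operatorname{H}^1(\mathbb{C};\, \mathbb{C}_{\Omega_i} \wtens \mathcal{O}_{\mathbb{C}})$ with the identity on $\operatorname{H}^0(\mathbb{C}^m;\, \mathbb{C}_K \wtens \mathcal{O}_{\mathbb{C}^m})$. Since all these factors are nuclear FN, the completed projective tensor product against them preserves injectivity, so it is enough to prove that each $\alpha_i$ is injective.

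To handle the one-dimensional case, I would use the exact sequence $0 \to \mathbb{C}_{\omega_i} \to \mathbb{C}_{\mathbb{C}} \to \mathbb{C}_{\mathbb{C} \setminus \omega_i} \to 0$ tensored with $\mathcal{O}^{\mathrm{w}}_{\mathbb{C}}$, together with $\operatorname{H}^k(\mathbb{C};\, \mathcal{O}_{\mathbb{C}}) = 0$ for $k\neq 0$, to identify $\operatorname{H}^1(\mathbb{C};\, \mathbb{C}_{\omega_i} \wtens \mathcal{O}_{\mathbb{C}})$ with $\operatorname{H}^0(\mathbb{C};\, \mathbb{C}_{\mathbb{C} \setminus \omega_i} \wtens \mathcal{O}_{\mathbb{C}})/\mathcal{O}(\mathbb{C})$, and likewise for $\Omega_i$; under these identifications $\alpha_i$ is induced by restriction from the closed set $\mathbb{C} \setminus \omega_i$ to its subset $\mathbb{C} \setminus \Omega_i$. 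Given a class $[f] \in \ker \alpha_i$ represented by a Whitney holomorphic function $f$ on $\mathbb{C} \setminus \omega_i$, there exists $g \in \mathcal{O}(\mathbb{C})$ with $(f - g)|_{\mathbb{C} \setminus \Omega_i} = 0$. Since $\Omega_i$ is a bounded disc, $\mathbb{C} \setminus \overline{\Omega_i}$ is a non-empty open subset of the connected open set $\mathbb{C} \setminus \overline{\omega_i}$, so analytic continuation forces $f \equiv g$ on $\mathbb{C} \setminus \overline{\omega_i}$; as $\mathbb{C} \setminus \omega_i$ is the closure of its interior, this equality propagates to the whole Whitney section, giving $[f] = 0$.

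The hard part will be making the Künneth factorization of $\iota$ fully rigorous inside $D^b(FN)$, in particular extracting the desired injectivity from the exactness of the completed tensor product against the nuclear factors; once this reduction is in place, the one-dimensional case is the transparent analytic-continuation argument above, where boundedness of $\Omega_i$ (making $\mathbb{C} \setminus \overline{\Omega_i}$ non-empty) and connectedness of $\mathbb{C} \setminus \overline{\omega_i}$ are the essential geometric ingredients.
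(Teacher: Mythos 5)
Your approach is correct but takes a genuinely different route from the paper's. The paper argues directly in $n$ variables: it represents a class in $\operatorname{H}^n(X;\mathbb{C}_{\omega\times K}\wtens\mathcal{O}_X)$ by a holomorphic Whitney jet $F(z,w)$ on $V_\omega\times K$, replaces it by the Cauchy integral
\[
G(z,w)=\frac{1}{(2\pi\sqrt{-1})^n}\int_{\partial\omega_1\times\cdots\times\partial\omega_n}\frac{F(\zeta,w)}{(\zeta_1-z_1)\cdots(\zeta_n-z_n)}\,d\zeta,
\]
shows via Cauchy's formula that $F$ and $G$ determine the same class, and then, assuming $\iota(F)=0$, deforms the contour to $\partial\Omega_1\times\cdots\times\partial\Omega_n$ and invokes unique continuation in the $z$-variables to conclude $G\equiv0$. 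You instead push the K\"unneth isomorphism of the previous lemma one step further so that $\iota$ factors as $\alpha_1\widehat{\otimes}\cdots\widehat{\otimes}\alpha_n\widehat{\otimes}\mathrm{id}$, and then do all the function theory in one variable, where injectivity of each $\alpha_i$ follows at once from analytic continuation on the connected set $\mathbb{C}\setminus\overline{\omega_i}$ starting from the non-empty open set $\mathbb{C}\setminus\overline{\Omega_i}$. This localizes the analysis and makes transparent exactly where boundedness of $\Omega_i$ and connectedness enter. The price is the step you rightly single out as the hard part: the $\alpha_i$ are continuous injections of nuclear Fr\'echet spaces which, by the Remark following the lemma, have non-closed image, hence are \emph{not} topological embeddings; so one cannot simply quote that $\widehat{\otimes}_\epsilon$ preserves strict monomorphisms. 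One really needs nuclearity: for nuclear Fr\'echet $E$, $E\widehat{\otimes}_\pi A=E\widehat{\otimes}_\epsilon A$ embeds into the separately continuous bilinear forms on $E'_\sigma\times A'_\sigma$, so if $u$ goes to zero under $\mathrm{id}_E\widehat{\otimes}\alpha$ then $u(\phi,\cdot)$ vanishes on the weak-$*$-dense range of the transpose of $\alpha$, hence vanishes identically, giving $u=0$. With this lemma the reduction is legitimate and your one-variable argument closes the proof. You obtain a shorter, more modular argument, at the cost of a functional-analytic input that the paper sidesteps by carrying out the explicit multi-variable Cauchy computation.
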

\begin{oss}
 The above morphism does not have a closed range, and hence,
the cohomology group
$\operatorname{H}^n(X;\, \mathbb C_{(\Omega \setminus \omega) \times K} \wtens \mathcal{O}_X)$ is
not an FN space in general.
\end{oss}
\begin{proof}
We apply the arguments in the proof of Theorem 4.1.6 \cite{Ka88} to our Whitney case.
Let $(z,\,w)$ be the coordinates of $X = \mathbb{C}_z^n \times \mathbb{C}^m_w$.
Set, for $k=1,\dots, n$ and for a subset $\alpha = \{i_1,\dots,i_\ell\}$ in the set $\{1,2,\dots,n\}$,
\[
\begin{aligned}
V_\omega^{(k)} &:= \mathbb{C}_{z_1} \times \dots \times \mathbb{C}_{z_{k-1}} \times
(\mathbb{C}_{z_k} \setminus \omega_k)
\times \mathbb{C}_{z_{k+1}} \times \dots \times \mathbb{C}_n,\\
V_\omega^{(\alpha)} &:= V_\omega^{(i_1)} \cap \dots \cap V_\omega^{(i_\ell)}
\text{ and } V_\omega^{(\emptyset)} := \mathbb{C}^n.
\end{aligned}
\]
We also define $V_\Omega^{(k)}$ and $V^{(\alpha)}_\Omega$ in the same way by replacing $\omega$ with
$\Omega$. Then
$\mathbb{C}_{\omega_k}$ is
isomorphic to the complex, in $D_{\mathbb{R}-c}^b(\mathbb{C})$,
\[
\mathcal{L}_{\omega_k}: 0 \longrightarrow \mathbb{C}_{\mathbb{C}} \longrightarrow
\mathbb{C}_{\mathbb{C} \setminus \omega_k}
\longrightarrow 0.
\]
Similarly we can define the complex $\mathcal{L}_{\Omega_k}$ which is isomorphic to
$\mathbb{C}_{\Omega_k}$  in $D_{\mathbb{R}-c}^b(\mathbb{C})$.
Then the canonical sheaf morphism
$\mathbb{C}_{\mathbb{C} \setminus \omega_k} \to
\mathbb{C}_{\mathbb{C} \setminus \Omega_k}$ induces the morphism of complexes
$\mathcal{L}_{\omega_k} \to \mathcal{L}_{\Omega_k}$, which is  nothing but an extension
of the canonical sheaf morphism $\mathbb{C}_{\omega_k} \to \mathbb{C}_{\Omega_k}$ to the complexes.
Now we have
\[
\mathbb{C}_{\omega \times K} =
\mathbb{C}_{\omega_1}
\underset{\mathbb{C}}{\boxtimes} \cdots \underset{\mathbb{C}}{\boxtimes}
\mathbb{C}_{\omega_n}
\underset{\mathbb{C}}{\boxtimes} \mathbb{C}_K
\simeq
\mathcal{L}_{\omega_1}
\underset{\mathbb{C}}{\boxtimes} \cdots \underset{\mathbb{C}}{\boxtimes}
\mathcal{L}_{\omega_n}
\underset{\mathbb{C}}{\boxtimes} \mathbb{C}_K,
\]
and the last complex is isomorphic to the complex
\[
\mathcal{L}_\omega: 0 \to \underset{|\alpha| = 0} {\oplus}\mathbb{C}_{V_\omega^{(\alpha)} \times K}
\to
\underset{|\alpha| = 1} {\oplus}\mathbb{C}_{V_\omega^{(\alpha)} \times K} \to
\cdots \to
\underset{|\alpha| = n} {\oplus}\mathbb{C}_{V_\omega^{(\alpha)} \times K} \to 0,
\]
where $|\alpha|$ denotes the number of elements of a set $\alpha$.
By the same reasoning, $\mathbb{C}_{\Omega \times K}$ is isomorphic to the complex
\[
\mathcal{L}_\Omega: 0 \to \underset{|\alpha| = 0} {\oplus}\mathbb{C}_{V_\Omega^{(\alpha)} \times K} \to
\underset{|\alpha| = 1} {\oplus}\mathbb{C}_{V_\Omega^{(\alpha)} \times K} \to
\cdots \to
\underset{|\alpha| = n} {\oplus}\mathbb{C}_{V_\Omega^{(\alpha)} \times K} \to 0,
\]
and the canonical sheaf morphism $\mathbb{C}_{V_\omega^{(\alpha)}} \to \mathbb{C}_{V_\Omega^{(\alpha)}}$
induces the one of complexes from $\mathcal{L}_\omega$ to $\mathcal{L}_\Omega$.
This morphism is an extension of the sheaf morphism
$\mathbb{C}_{\omega \times K} \to \mathbb{C}_{\Omega \times K}$ to the complexes.
It follows from the result in \cite{Du79} that we get,
for any $\alpha \subset \{1,2,\dots,n\}$ and for $k \ne 0$,
\[
\operatorname{H}^k(\mathbb{C}^n;\, \mathbb C_{V^{(\alpha)}_\omega} \wtens
\mathcal{O}_{\mathbb{C}^n}) =
\operatorname{H}^k(\mathbb{C}^n;\, \mathbb C_{V^{(\alpha)}_\Omega} \wtens
\mathcal{O}_{\mathbb{C}^n}) =
\operatorname{H}^k(\mathbb{C}^m;\, \mathbb C_{K} \wtens \mathcal{O}_{\mathbb{C}^m}) = 0,
\]
and hence, we obtain
\[
\operatorname{H}^k(X;\, \mathbb C_{V^{(\alpha)}_\omega \times K} \wtens \mathcal{O}_X) =
\operatorname{H}^k(X;\, \mathbb C_{V^{(\alpha)}_\Omega \times K} \wtens \mathcal{O}_X) = 0\quad (k \ne 0).
\]
By these observations, we can conclude that the canonical morphism $\iota$ coincides with
\[
\dfrac{\CHZ(X;\, \mathbb C_{V_\omega \times K} \wtens \mathcal{O}_X)}
{\underset{|\alpha|=n-1}{\oplus}\, \CHZ(X;\, \mathbb C_{V^{(\alpha)}_\omega \times K} \wtens \mathcal{O}_X)}
\to
\dfrac{\CHZ(X;\, \mathbb C_{V_\Omega \times K} \wtens \mathcal{O}_X)}
{\underset{|\alpha| = n-1}{\oplus}\, \CHZ(X;\, \mathbb C_{V^{(\alpha)}_\Omega \times K} \wtens \mathcal{O}_X)},
\]
where the morphism of these cohomology groups is given by the natural restriction and
we set $V_\omega := V_\omega^{(\{1,2,\dots,n\})}$
and $V_\Omega := V_\Omega^{(\{1,2,\dots,n\})}$ for simplicity.

Let us show injectivity of the above $\iota$. We first note that
$\CHZ(X;\, \mathbb C_{V_\omega \times K} \wtens \mathcal{O}_X)$ consists of holomorphic Whitney
jets on $V_\omega \times K$, that is, a holomorphic Whitney jet  is
a family $F(z,w) = \{f_\beta(z,w)\}_{\beta \in \mathbb{Z}_{\ge 0}^{2m}}$
of continuous functions on $V_\omega \times K$ satisfying the conditions:
\begin{enumerate}
\item
Every $f_\beta(z,w)$ is a holomorphic function of $z$ in $V_\omega^\circ$ for each $w \in K$.
Furthermore $\partial_z^\tau f_\beta(z,w)$ ($\tau \in \mathbb{Z}_{\ge 0}^n$) is continuous in
$V_\omega^\circ \times K$ and continuously extends to $V_\omega \times K$.
\item For any $\tau \in \mathbb{Z}_{\ge 0}^n$,
the jet $F_\tau(z,w) := \{\partial_z^\tau f_\beta(z,w)\}_{\beta \in \mathbb{Z}_{\ge 0}^{2m}}$
satisfies Whitney's remainder term estimate with respect to variables $\operatorname{Re}w$  and
$\operatorname{Im}w$ (i.e., with respect to indices $\beta \in \mathbb{Z}_{\ge 0}^{2m}$)
which is locally uniform with respect to $z \in V_\omega$.
It also satisfies $\overline{\partial}_{w_k} F = 0$ for $1 \le k \le m$.
\end{enumerate}
Let $F(z,w) = \{f_\beta(z,w)\}_\beta \in \CHZ(X;\, \mathbb C_{V_\omega \times K} \wtens \mathcal{O}_X)$
be a holomorphic Whitney jet on $V_\omega \times K$.
Let us define
\[
G(z,w) := \frac{1}{(2\pi\sqrt{-1})^n}\int_{\partial \omega_1 \times \dots \times \partial \omega_n}
\frac{F(\zeta,w)}{(\zeta_1 - z_1)\dots(\zeta_n - z_n)} d\zeta,
\]
where $\partial \omega_k$ is the boundary of $\omega_k$ with the clockwise orientation.
Then, by repetition of the integration by parts, we can  easily confirm that $G(z,w)$ satisfies
the conditions 1.~and 2.~above, and hence, we obtain
$G(z,w) \in \CHZ(X;\, \mathbb C_{V_\omega} \wtens \mathcal{O}_X)$.
Let $D$ be a sufficiently large disk in $\mathbb{C}$ satisfying $z \in \operatorname{int} D^n$.
It follows from Cauchy's integral formula that we get
\[
F(z,w) = \frac{1}{(2\pi\sqrt{-1})^n}\int_{(\partial \omega_1 - \partial D) \times \dots \times
(\partial \omega_n - \partial D)}
\frac{F(\zeta,w)}{(\zeta_1 - z_1)\dots(\zeta_n - z_n)} d\zeta.
\]
Hence $F(z,w) - G(z,w)$ is a sum of integrals of the form
\[
\frac{1}{(2\pi\sqrt{-1})^n}\int_{\gamma_1 \times \dots \times \gamma_n}
\frac{F(\zeta,w)}{(\zeta_1 - z_1)\dots(\zeta_n - z_n)} d\zeta,
\]
where $\gamma_k$ is either $\partial \omega_k$ or $-\partial D$.
Furthermore $\gamma_k = -\partial D$ holds at least one of $k$'s in
the above $\gamma_1\times \dots \times \gamma_n$.
Hence
these integrals are zero in $\operatorname{H}^n(X;\, \mathbb C_{\omega \times K} \wtens \mathcal{O}_X)$.
As a conclusion, the holomorphic Whitney jets $F(z,w)$ and $G(z,w)$ determine
the same cohomology class in
$\operatorname{H}^n(X;\, \mathbb C_{\omega \times K} \wtens \mathcal{O}_X)$.

Now assume $\iota(F(z,w)) = 0$. Then, by deforming the path of the integration to
$\partial \Omega_1 \times \dots \times \partial \Omega_n$,
we have $G(z,w) = 0$ on $V_\Omega \times K$
since $F(z,w)$ belongs to
${\underset{|\alpha|=n-1}{\oplus}\, \CHZ(X;\, \mathbb C_{V^{(\alpha)}_\Omega \times K} \wtens \mathcal{O}_X)}$,
i.e., a sum of holomorphic Whitney jets which are holomorphic on the entire $\mathbb{C}$
with respect to some variable $z_k$.
Hence $G(z,w)$ = 0 on $V_\omega \times K$
follows from the unique continuation property of $G(z,w)$ with respect to the variables $z$.
This show the injectivity of $\iota$.
\end{proof}

As an immediate consequence of the above lemma, we obtain the following.
\begin{lem}\label{lem:vanishing-product}
Let $\Omega$ and $\omega$ be the same as those given in the previous lemma. Then we have
\[
\operatorname{H}^k(X;\, \mathbb C_{(\Omega \setminus \omega) \times K} \wtens \mathcal{O}_X) = 0
\qquad (k < n).
\]
\end{lem}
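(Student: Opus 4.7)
The plan is to deduce the stated vanishing from Martineau's lemma via the long exact sequence attached to the nested inclusion $\omega \subset \Omega$. Since $\omega$ and $\Omega$ are open with $\omega \subset \Omega$, the extensions by zero of the constant sheaves fit in a short exact sequence
\[
0 \to \mathbb{C}_{\omega \times K} \to \mathbb{C}_{\Omega \times K} \to \mathbb{C}_{(\Omega \setminus \omega) \times K} \to 0
\]
of $\mathbb{R}$-constructible sheaves on $X$. Applying the triangulated functor $R\Gamma(X;\, \cdot \wtens \mathcal{O}_X)$ produces the associated long exact cohomology sequence; this is where the whole argument will take place.

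By the first lemma of this subsection, both $R\Gamma(X;\, \mathbb{C}_{\omega \times K} \wtens \mathcal{O}_X)$ and $R\Gamma(X;\, \mathbb{C}_{\Omega \times K} \wtens \mathcal{O}_X)$ are concentrated in degree $n$. Consequently, for every $k \leq n-2$ both the term preceding and the term following $\operatorname{H}^k(X;\, \mathbb{C}_{(\Omega \setminus \omega) \times K} \wtens \mathcal{O}_X)$ in the long exact sequence vanish, so the middle group vanishes as well. In the remaining degree $k = n-1$ the relevant segment of the long exact sequence reduces to
\[
0 \to \operatorname{H}^{n-1}(X;\, \mathbb{C}_{(\Omega \setminus \omega) \times K} \wtens \mathcal{O}_X) \to \operatorname{H}^n(X;\, \mathbb{C}_{\omega \times K} \wtens \mathcal{O}_X) \overset{\iota}{\longrightarrow} \operatorname{H}^n(X;\, \mathbb{C}_{\Omega \times K} \wtens \mathcal{O}_X),
\]
where the right-hand arrow coincides with the map $\iota$ from Martineau's lemma, since both are induced by the inclusion of supports $\omega \times K \hookrightarrow \Omega \times K$. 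The injectivity of $\iota$ established there then forces $\operatorname{H}^{n-1}(X;\, \mathbb{C}_{(\Omega \setminus \omega) \times K} \wtens \mathcal{O}_X) = 0$, completing the proof for all $k < n$.

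The argument is essentially mechanical once the preceding two lemmas are in hand; there is no serious obstacle. The one small point worth checking is that $\cdot \wtens \mathcal{O}_X$ is triangulated on the $\mathbb{R}$-constructible sheaves involved, so that the short exact sequence displayed above truly yields a distinguished triangle after Whitney-tensoring with $\mathcal{O}_X$. This is a standard property of the Whitney tensor product in the Kashiwara–Schapira framework, so no genuine difficulty arises.
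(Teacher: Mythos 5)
Your argument is exactly the paper's: both invoke the short exact sequence $0 \to \mathbb{C}_{\omega \times K} \to \mathbb{C}_{\Omega \times K} \to \mathbb{C}_{(\Omega\setminus\omega)\times K} \to 0$, apply $\cdot\wtens\mathcal{O}_X$, and read off the vanishing from the long exact sequence using the concentration lemma and Martineau's injectivity. The paper states this more tersely, but the content is identical, and your check that the arrow in degree $n$ agrees with $\iota$ is the correct (if implicit) point.
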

\begin{proof}

From the previous two lemmas and the following long exact sequence, the result easily follows.
\[
\begin{aligned}
&\to \operatorname{H}^k(X;\, \mathbb C_{ \omega \times K} \wtens \mathcal{O}_X)
\to \operatorname{H}^k(X;\, \mathbb C_{\Omega \times K} \wtens \mathcal{O}_X) \\
&\qquad \to \operatorname{H}^k(X;\, \mathbb C_{(\Omega \setminus \omega) \times K}
\wtens \mathcal{O}_X)
\to \operatorname{H}^{k+1}(X;\, \mathbb C_{ \omega \times K} \wtens \mathcal{O}_X) \to.
\end{aligned}
\]
\end{proof}

\noindent \textbf{Proof of Theorem \ref{th:vanishing-main}.}\ \ First we show the claim
\[
\operatorname{H}^k(X;\, \mathbb C_{(\Omega\setminus \omega) \times K} \wtens \mathcal{O}_X) = 0
\quad (k > n).
\]
We have the isomorphism in $D^b(FN)$
\[
\mathrm{R}\Gamma(\mathbb{C}^m;\, \mathbb C_{K} \wtens \mathcal{O}_{\mathbb{C}^m})
\simeq
\CHZ(\mathbb{C}^m;\, \mathbb C_{K} \wtens \mathcal{O}_{\mathbb{C}^m}) .
\]
Furthermore, in $D^b(FN)$, it follows from the definition that the object
$\mathrm{R}\Gamma(\mathbb{C}^n;\, \mathbb{C}_{\Omega\setminus \omega} \wtens \mathcal{O}_{\mathbb{C}^n}) $
is isomorphic to the complex of FN spaces of length $n$
\[
\mathcal{L}:\,\, 0 \to \Gamma(\mathbb{C}^n;\,
\mathbb{C}_{\Omega\setminus \omega} \wtens \mathcal{C}_{\mathbb{R}^{2n}}^{\infty, (0,0)})
\overset{\overline{\partial}}
\to
\dots
\overset{\overline{\partial}}{\to}
\Gamma(\mathbb{C}^n;\, \mathbb{C}_{\Omega\setminus \omega} \wtens
\mathcal{C}_{\mathbb{R}^{2n}}^{\infty,(0,n)})
\to 0.
\]
Then, by Proposition 5.3 of \cite{KS96}, the object
$\mathrm{R}\Gamma(X;\, \mathbb C_{(\Omega\setminus \omega) \times K} \wtens \mathcal{O}_X)$
is isomorphic to the complex
$
\mathcal{L}\, \underset{\mathbb{C}}{\widehat{\boxtimes}}\,
\operatorname{H}^{0}(\mathbb{C}^m;\,
\mathbb{C}_K \wtens \mathcal{O}_{\mathbb{C}^{m}})
$
of FN spaces of length $n$. Hence we have obtained the claim.

Now we show, for $k < n$,  the $k$-th cohomology group vanishes.
We may assume $\omega \subset \Omega$.
Then there are holomorphic functions $f_1$, $\dots$, $f_{\ell'}$, $\dots$, $f_{\ell}$ on $\mathbb{C}^n$
such that
\[
\Omega = \{z \in \mathbb{C}^n;\, |f_1(z)| < 1,\, \dots, |f_{\ell'}(z)| < 1\}
\]
and
\[
\omega = \{z \in \mathbb{C}^n;\, |f_1(z)| < 1,\, \dots, |f_{\ell'}(z)| < 1, \dots, |f_{\ell}(z)| < 1\}.
\]
As $\Omega$ is relative compact, there exists $R > 0$ such that
\[
\Omega \subset \{z \in \mathbb{C}^n;\,|z_1|<R,\dots, |z_n|<R\},\quad
|f_k(\Omega)| < R\quad  (k=1,2,\dots,\ell).
\]

We now apply the well-known method due to Oka to the following situation.
Let us consider the closed embedding
$\varphi: X=\mathbb{C}_{z,w}^{n+m} \to \mathbb{C}_{z,\tau, w}^{n+\ell+m}$ defined by
\[
\varphi(z,w) = (z, f_1(z), \,\dots, \,f_{\ell'}(z),\,\dots, \,f_{\ell}(z), w).
\]
Set $\widetilde{X} := \mathbb{C}_{z,\tau,w}^{n+\ell + m} =
\mathbb{C}^{n + \ell}_{z,\tau} \times \mathbb{C}_w^m$.
We also define $\widetilde{\Omega}$ and $\widetilde{\omega}$ in $\mathbb{C}_{z,\tau}^{n + \ell}$ by
\[
\widetilde{\Omega} := \left\{(z,\tau) \in \mathbb{C}^{n+\ell};\,
\begin{aligned}
&|z_1|<R,\dots,|z_n| <R,\,  |\tau_1| < 1, \dots,|\tau_{\ell'}| <1,\, \\
&|\tau_{\ell'+1}| < R,\,\dots,\, |\tau_{\ell}| < R
\end{aligned}
\right\}
\]
and
\[
\widetilde{\omega} := \left\{(z,\tau) \in \mathbb{C}^{n+\ell};\,
\begin{aligned}
&|z_1|<R,\dots,|z_n| <R,\, |\tau_1| < 1, \dots,|\tau_{\ell'}| <1,\, \\
&|\tau_{\ell'+1}| < 1,\,\dots,\, |\tau_{\ell}| < 1
\end{aligned}
\right\}.
\]
Noticing
$(\Omega \setminus \omega) \times K= \varphi^{-1}
((\widetilde{\Omega} \setminus \widetilde{\omega}) \times K)$,
by Theorem 5.8 (ii) in \cite{KS96}, we have
\[
\mathrm{R}\Gamma(X;\, \mathbb C_{(\Omega \setminus \omega) \times K} \wtens {\mathcal O}_{X}) =
\mathrm{R}\Gamma(X;\, \underline{\varphi}^{-1}
(\mathbb C_{(\widetilde{\Omega} \setminus \widetilde{\omega}) \times K}
\wtens {\mathcal O}_{ \widetilde{X}})).
\]
Let $\mathcal M$ be the right $\mathcal{D}_{\widetilde{X}}$ module
$
{\mathcal D}_{\widetilde{X}}/
(\tau_1 -f_1(z), \dots, \tau_\ell - f_\ell(z)) {\mathcal D}_{\widetilde{X}}
$.
Then, as $\mathcal M$ is defined globally on $\widetilde{X}$ and its support is
contained in the graph of $\varphi$, we have
\[
\mathrm{R}\Gamma(X;\,
\underline{\varphi}^{-1}(\mathbb C_{(\widetilde{\Omega} \setminus \widetilde{\omega})\times K} \wtens
{\mathcal O}_{\widetilde{X}}))
\simeq
\mathrm{R}\Gamma(\widetilde{X};\, \mathcal M \underset{\mathcal D_{\widetilde{X}}}
{\overset{L}{\otimes}}
(\mathbb C_{(\widetilde{\Omega} \setminus \widetilde{\omega})\times K} \wtens {\mathcal O}_{\widetilde{X}})).
\]
Define the $\mathfrak{D}:=\Gamma(\widetilde{X};\, \mathcal{D}_{\widetilde{X}})$-module
$
\mathfrak{M}
$
by
$
\mathfrak{D} /
(\tau_1 -f_1(z), \dots, \tau_\ell - f_\ell(z)) \mathfrak{D}
$.
Then we obtain
\[
\mathrm{R}\Gamma(\widetilde{X};\, \mathcal{M}
\underset{\mathcal D_{\widetilde{X} }}{\overset{L}{\otimes}}
(\mathbb C_{(\widetilde{\Omega} \setminus \widetilde{\omega})\times K} \wtens
{\mathcal O}_{ \widetilde{X}}))
\simeq
\mathfrak{M} \underset{\mathfrak{D}}{\overset{L}{\otimes}}
\mathrm{R}\Gamma(\widetilde{X};\,
\mathbb C_{(\widetilde{\Omega} \setminus \widetilde{\omega})\times K}
\wtens {\mathcal O}_{ \widetilde{X}}).
\]
Since we have
$\operatorname{H}^k(\widetilde{X};\,
\mathbb C_{(\widetilde{\Omega} \setminus \widetilde{\omega})\times K}
\wtens {\mathcal O}_{ \widetilde{X}}) = 0$
for $k < n + \ell$ by Lemma \ref{lem:vanishing-product}, and since $\mathfrak{M}$ has a free
$\mathfrak{D}$
resolution of the length $\ell$, we obtain, for $k < (n+\ell) - \ell = n$,
\[
\operatorname{H}^k(
\mathfrak{M} \underset{\mathfrak{D}}{\overset{L}{\otimes}}
\mathrm{R}\Gamma(\widetilde{X};\,
\mathbb C_{(\widetilde{\Omega} \setminus \widetilde{\omega}) \times K}
\wtens {\mathcal O}_{ \widetilde{X}})) = 0.
\]
This completes the proof.
\nopagebreak \phantom{} \hfill $\square$ \\

\begin{cor}\label{cor:vanishing-main}
Let $\Omega$, $\omega$ and $K$ be the same as those in Theorem \ref{th:vanishing-main}, and
let $g: \mathbb{C}^n \to \mathbb{C}^d$ be a holomorphic map and $L$ a closed analytic
polyhedron in $\mathbb{C}^d$. Then we have,  for $k < n - d$,
\[
\operatorname{H}^k(X;\, \mathbb{C}_{((\Omega\setminus \omega) \cap g^{-1}(L) )\times K} \wtens \mathcal{O}_X) = 0.
\]
\end{cor}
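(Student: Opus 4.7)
The plan is to mimic the Oka-style argument used in the proof of Theorem \ref{th:vanishing-main}, this time using the graph of $g$ to trade the constraint $g^{-1}(L)$ for an extra set of polyhedral factors, at the cost of a $\mathcal{D}$-module with a free resolution of length $d$.

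Concretely, I set $\widetilde X := \mathbb{C}^n_z \times \mathbb{C}^d_\tau \times \mathbb{C}^m_w$ and consider the closed embedding $\varphi \colon X \to \widetilde X$ defined by $\varphi(z, w) := (z, g(z), w)$. Since
\[
((\Omega \setminus \omega) \cap g^{-1}(L)) \times K = \varphi^{-1}\bigl((\Omega \setminus \omega) \times L \times K\bigr),
\]
Theorem 5.8(ii) of \cite{KS96}, combined with the Koszul resolution of the graph ideal $(\tau_1 - g_1(z), \dots, \tau_d - g_d(z))$, produces, exactly as in the proof of Theorem \ref{th:vanishing-main}, an isomorphism
\[
\mathrm{R}\Gamma\bigl(X;\, \mathbb{C}_{((\Omega\setminus\omega) \cap g^{-1}(L)) \times K} \wtens \mathcal{O}_X\bigr) \simeq \mathfrak{M} \overset{L}{\otimes}_{\mathfrak{D}} \mathrm{R}\Gamma\bigl(\widetilde X;\, \mathbb{C}_{(\Omega\setminus\omega) \times (L \times K)} \wtens \mathcal{O}_{\widetilde X}\bigr),
\]
where $\mathfrak{D} := \Gamma(\widetilde X;\, \mathcal{D}_{\widetilde X})$ and the $\mathfrak{D}$-module $\mathfrak{M} := \mathfrak{D}/(\tau_1 - g_1(z), \dots, \tau_d - g_d(z))\mathfrak{D}$ has projective dimension at most $d$.

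I would then apply Theorem \ref{th:vanishing-main} on $\widetilde X = \mathbb{C}^n \times \mathbb{C}^{d+m}$, now with $L \times K \subset \mathbb{C}^{d+m}$ playing the role of $K$, to conclude that the right-hand $\mathrm{R}\Gamma$ is concentrated in degree $n$. A reading of the proof of Theorem \ref{th:vanishing-main} shows that $K$ enters it only through Dufresnoy's vanishing $\mathrm{H}^k(\mathbb{C}^m;\, \mathbb{C}_K \wtens \mathcal{O}) = 0$ for $k \ne 0$ and through the fact that the resulting $\mathrm{H}^0$ is an FN space to which the tensor product formula of Proposition 5.3 of \cite{KS96} applies. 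Both properties are inherited by $L \times K$ in $\mathbb{C}^{d+m}$ via Dufresnoy's theorem for the analytic polyhedron $L$ together with a K\"unneth-type argument, so the proof of Theorem \ref{th:vanishing-main} extends with no further change.

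Combining the two steps, the derived tensor product lives in cohomological degrees $[n-d,\, n]$, which yields the announced vanishing for $k < n - d$. The main subtlety I anticipate is the extension of Theorem \ref{th:vanishing-main} to the factor $L \times K$, which is not literally of one of the two admissible forms for $K$ stated there; this is resolved by noting that the Dufresnoy vanishing plus FN structure, both stable under product with closed analytic polyhedra, are all that the original proof actually uses.
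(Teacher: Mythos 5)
Your proof is correct, and it is a genuine (though closely related) refactoring of the paper's own argument. The paper re-runs the Oka reduction from scratch: it takes the embedding $\varphi\colon X \to \mathbb{C}^{n+\ell+d+m}$, $\varphi(z,w) = (z, f_1(z), \dots, f_\ell(z), g(z), w)$ with $f_1,\dots,f_\ell$ the defining functions of $\Omega$ and $\omega$, so that the set pulls back from $(\widetilde\Omega\setminus\widetilde\omega) \times L \times K$ with $\widetilde\Omega,\widetilde\omega$ poly-disks, then applies Lemma~\ref{lem:vanishing-product} directly (giving vanishing for $k < n+\ell$) and tensors against a Koszul resolution of length $\ell+d$. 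You embed only along $g$ into $\mathbb{C}^{n+d+m}$ and invoke Theorem~\ref{th:vanishing-main} as a black box with $\mathbb{C}^n$ and $\mathbb{C}^{d+m}$ as the two factors, so the Koszul length drops to $d$; the arithmetic works out the same because Theorem~\ref{th:vanishing-main} has already internalized the Oka reduction for $\Omega\setminus\omega$. Your route is more modular, and as a bonus it gives a two-sided concentration in degrees $[n-d,n]$, while the paper's invocation of Lemma~\ref{lem:vanishing-product} only yields the lower vanishing $k<n-d$ stated. The subtlety you flag --- that $L\times K$ is not literally of one of the two allowed forms when $K$ is a nonpolyhedral closed convex subanalytic set --- is a real observation, and the paper's proof quietly makes the identical extension when it feeds $L\times K$ to Lemma~\ref{lem:vanishing-product}. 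You are right that the only input needed about the closed factor is Dufresnoy's vanishing $\operatorname{H}^k(\mathbb{C}^{d+m};\mathbb{C}_{L\times K}\wtens\mathcal{O})=0$ for $k\ne 0$ together with the FN structure on $\operatorname{H}^0$, both of which survive products with closed analytic polyhedra via the tensor-product formula of Proposition~5.3 of \cite{KS96}; making this explicit, as you did, tightens the argument relative to the paper's exposition.
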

\begin{proof}
The proof goes in the same way as that for Theorem \ref{th:vanishing-main}.
In this case,  we apply Oka's method to the closed embedding
$\varphi: \mathbb{C}^{n+m} \to \mathbb{C}^{n+\ell+d+m} =: \widetilde{X}$ defined by
\[
\varphi(z,w) := (z,\,f_1(z), \,\dots,\, f_\ell(z),\, g(z),\, w).
\]
Then $((\Omega\setminus \omega) \cap g^{-1}(L)) \times K =
\varphi^{-1}((\widetilde{\Omega}\setminus\widetilde{\omega}) \times L \times K)$ holds.
It follows from Lemma \ref{lem:vanishing-product} that we obtain,  for $k < n + \ell$,
\[
\operatorname{H}^k(\widetilde{X};\,
\mathbb{C}_{(\tilde{\Omega}\setminus\tilde{\omega}) \times L \times K} \wtens
\mathcal{O}_{\widetilde{X}})
= 0.
\]
Hence,  by noticing that the module $\mathfrak{M}$ has a free resolution of
length $\ell + d$, we have the conclusion.
\end{proof}

The following theorem is well-known, which can be also proved by the same method as that in the proof of
Theorem \ref{th:vanishing-main}.
\begin{teo}
	Let $K_1$ and $K_2$ be analytic compact polyhedra in $\mathbb{C}^n$
	and let $U$ be a Stein open subanalytic subset in $\mathbb{C}^m$. Then we have
	\[
	\operatorname{H}^k_{(K_1 \setminus K_2) \times U}
        (X;\, \mathcal{F}) = 0 \quad (k \ne n),
	\]
	where $\mathcal{F}$ is $\mathcal{O}^t_{X_{sa}}$ or $\mathcal{O}_X$.
\end{teo}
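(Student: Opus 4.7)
The plan is to follow the scheme of Theorem~\ref{th:vanishing-main} with the Whitney tensor product $\wtens$ replaced by the local cohomology functor $\Gamma_Z$. The vanishing in degrees $k > n$ is automatic: both $\mathcal{O}_X$ and $\mathcal{O}^t_{X_{\rm sa}}$ admit Dolbeault-type resolutions of length $n$ (the latter via the tempered $\overline{\partial}$-complex), so $R\Gamma_Z\mathcal{F}$ has cohomological amplitude at most $n$ for any closed subset~$Z$, regardless of which $\mathcal F$ we pick.

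For $k < n$, I would first treat the case where $K_1$ and $K_2$ are compact closed polydisks in $\mathbb{C}^n$. By Künneth with the Stein factor $U\subset\mathbb{C}^m$ (using $H^k_U(\mathbb{C}^m;\mathcal{F})=0$ for $k\ne 0$), it suffices to show, for polydisks $K_2 \subset K_1$ in $\mathbb{C}^n$, that
\[
H^k_{K_i}(\mathbb{C}^n;\,\mathcal{F}) = 0\ \ (k\ne n), \qquad
H^n_{K_2}(\mathbb{C}^n;\,\mathcal{F}) \hookrightarrow H^n_{K_1}(\mathbb{C}^n;\,\mathcal{F}).
\]
The vanishing is the classical Čech computation using the cover of $\mathbb{C}^n\setminus K_i$ by the $n$ outer half-spaces $\{|z_j|>r_j^{(i)}\}$. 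The injectivity, the analogue of Martineau's Lemma~\ref{lem:martieaus_lemma}, follows because in the same Čech description the map is restriction of holomorphic (resp.\ tempered holomorphic) cocycles from one polyannulus to a smaller one, hence injective by unique continuation; the long exact sequence of the pair $(K_1,K_2)$ then yields the result. The general analytic polyhedron case is reduced to this one exactly as in Theorem~\ref{th:vanishing-main}: write $K_1 = \{|f_1|\le1,\ldots,|f_{\ell'}|\le1\}$ and $K_2 = \{|f_1|\le1,\ldots,|f_{\ell}|\le1\}$, embed $\varphi\colon X \hookrightarrow \widetilde X := \mathbb{C}^{n+\ell+m}$ by $\varphi(z,w)=(z,f_1(z),\ldots,f_\ell(z),w)$, so that $(K_1\setminus K_2)\times U = \varphi^{-1}((\widetilde K_1\setminus\widetilde K_2)\times U)$ for suitable polydisks $\widetilde K_j \subset \mathbb{C}^{n+\ell}$, and use Theorem~5.8(ii) of \cite{KS96} (in the tempered case) together with the free $\mathcal{D}_{\widetilde X}$-resolution of length $\ell$ of $\mathcal{M} = \mathcal{D}_{\widetilde X}/(\tau_1-f_1,\ldots,\tau_\ell-f_\ell)\mathcal{D}_{\widetilde X}$ to transfer the polydisk vanishing on $\widetilde X$ in degrees $\ne n+\ell$ to the desired vanishing on $X$ in degrees $\ne n$.

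The step I expect to be the most delicate is the Martineau-type injectivity in the polydisk base case for $\mathcal{O}^t_{X_{\rm sa}}$: one must verify that the Čech / unique-continuation argument carries over to the tempered setting. This should follow from the injectivity of the natural morphism $\mathcal{O}^t_{X_{\rm sa}}\to R\rho_*\mathcal{O}_X$ on sections over subanalytic open subsets, combined with the compatibility of local cohomology with $R\rho_*$ for closed subanalytic supports. Apart from this verification, every remaining step is formal: the long exact sequence of the pair together with the $\mathcal{D}$-module pushforward computation, as in Theorem~\ref{th:vanishing-main}.
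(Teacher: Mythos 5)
The paper offers no proof of this statement, remarking only that it is well known and follows by the method of Theorem~\ref{th:vanishing-main}; your proposal is precisely the expected transposition of that method (Čech computation and Martineau injectivity for polydisks, K\"unneth against the Stein factor, Oka's embedding together with the length-$\ell$ free $\mathcal{D}$-module resolution) from the Whitney tensor product to local cohomology, so the two approaches agree.

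One slip worth correcting: $X=\mathbb{C}^n\times\mathbb{C}^m$ has complex dimension $n+m$, so the Dolbeault (and tempered $\overline{\partial}$) resolution has length $n+m$, not $n$, and the assertion that $R\Gamma_Z\mathcal{F}$ has cohomological amplitude at most $n$ ``for any closed subset $Z$'' is false as stated (also, $(K_1\setminus K_2)\times U$ is only locally closed, not closed). This does not damage the argument, because the separate amplitude step is unnecessary: once you have the concentration $\operatorname{H}^k_{K_i\times U}(X;\mathcal{F})=0$ for $k\ne n$ and the Martineau-type injectivity $\operatorname{H}^n_{K_2\times U}\hookrightarrow \operatorname{H}^n_{K_1\times U}$, the long exact sequence of the pair already forces $\operatorname{H}^k_{(K_1\setminus K_2)\times U}(X;\mathcal{F})=0$ for all $k\ne n$, degrees $k>n$ included. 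Alternatively, first apply K\"unneth against the degree-zero Stein factor $U$ and only then invoke the length-$n$ resolution on the $\mathbb{C}^n$ factor alone.
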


\subsection{Some geometrical preparations}
Let $X$ be $\mathbb{R}^n$ with coordinates $(x_1,\dots,x_n)$ and $G$
a closed conic subset in $X$.
Recall that $G$ is said to be a proper cone with respect to direction $\xi \ne 0$
if $G \setminus \{0\} \subset \{x \in X;\, \langle x, \xi\rangle > 0\}$ holds.
Note that the cone $G = \{0\}$ is proper with respect to all the directions $\xi \ne 0$.
We also say that $G$ is a linear cone if
there exist vectors
$\xi_1$, $\dots$, $\xi_k$ such that
\[
G = \underset{1 \le i \le k}{\bigcap} \{x \in X;\, \langle x,\, \xi_i \rangle \ge 0\}.
\]
If $G$ is a proper cone, then $G$ is linear if and only if
there exist vectors
$\xi_1$, $\dots$, $\xi_{k'}$ such that
\[
G = \mathbb{R}_{\ge 0} \xi_1 + \dots + \mathbb{R}_{\ge 0} \xi_{k'}
\,(=
\overline{\mathbb{R}_{\ge 0} \xi_1 + \dots + \mathbb{R}_{\ge 0} \xi_{k'}}).
\]
From the above equivalence the following lemma immediately follows.
\begin{lem}{\label{lem:cone-sum-proper}}
Let $G_j$ be closed subsets in $X$ ($j=1,\dots,\ell$).
Assume that $G_j$'s are linear proper cones with respect to
the same direction $\xi \ne 0$.
Then $G_1 + \dots + G_\ell$ is a closed
linear proper cone with respect to the direction $\xi$.
\end{lem}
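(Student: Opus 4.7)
The plan is to apply the equivalence stated just before the lemma: for a proper cone, being linear is equivalent to being the positive hull of finitely many vectors. So I would first choose, for each $j$, vectors $\xi_{j,1},\dots,\xi_{j,k_j}$ with $G_j = \mathbb{R}_{\ge 0}\xi_{j,1} + \cdots + \mathbb{R}_{\ge 0}\xi_{j,k_j}$; the properness hypothesis automatically gives $\langle \xi_{j,i},\xi\rangle > 0$ for every $j,i$, since $\xi_{j,i}\in G_j\setminus\{0\}$. The sum $G := G_1+\cdots+G_\ell$ is then the positive hull of the concatenated family $\{\xi_{j,i}\}_{j,i}$, which would be the required presentation of $G$ as a linear cone — modulo verifying that $G$ is proper and that $G$ is closed.

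Properness will be immediate from the common direction $\xi$: any $x = \sum c_{j,i}\xi_{j,i}$ with $c_{j,i}\ge 0$ satisfies $\langle x,\xi\rangle = \sum c_{j,i}\langle \xi_{j,i},\xi\rangle\ge 0$, and vanishing of the right-hand side forces all $c_{j,i}=0$ and hence $x=0$, so $G\setminus\{0\}\subset\{\langle \cdot,\xi\rangle > 0\}$. For closedness my plan is a standard boundedness argument: take a sequence $x_n\to x$ in $G$ with $x_n = \sum c_{j,i,n}\xi_{j,i}$, $c_{j,i,n}\ge 0$, and use the identity $\langle x_n,\xi\rangle = \sum c_{j,i,n}\langle \xi_{j,i},\xi\rangle$ together with the uniform positivity $\min_{j,i}\langle \xi_{j,i},\xi\rangle > 0$ to conclude that the coefficients $c_{j,i,n}$ are uniformly bounded; then Bolzano–Weierstrass lets one extract a convergent subsequence whose limit expresses $x$ as a nonnegative combination of the $\xi_{j,i}$, so $x\in G$.

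I do not anticipate serious obstacles: the statement is essentially the classical fact that a finitely generated convex cone contained in an open half-space is automatically closed and polyhedral. The one conceptual point worth emphasizing is that the common direction $\xi$ of properness is precisely what makes the closedness of the sum automatic; without such a hypothesis, even the sum of two closed convex cones in $X$ may fail to be closed, so the properness assumption is genuinely used and is not cosmetic.
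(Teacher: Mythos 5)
Your argument is correct and is essentially the paper's intended reasoning: the paper just declares that the lemma ``immediately follows'' from the preceding equivalence (a proper cone is linear iff it is the positive hull of finitely many vectors, the parenthetical $(=\overline{\cdots})$ encoding the automatic closedness), and your proof simply makes that reduction explicit --- concatenate generating families, use the common direction $\xi$ both to get properness and to bound coefficients for the closedness argument. The only micro-point to watch is that when some $G_j=\{0\}$ the generating family should be taken empty (so that every chosen generator genuinely lies in $G_j\setminus\{0\}$), but this is trivial and does not affect the argument.
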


\

Now let us back to the situation of the fiber formula in \S\, \ref{sec:fiber-formula}.
Let $I_j$ be subsets in $\{1,\dots,n\}$ satisfying the conditions
(\ref{eq:cond-H}).  Recall that $x^{(j)}$ denotes the coordinates $x_i$'s with $i \in \widehat{I}_j$
and $\xi^{(j)}$ designates the dual coordinates of $x^{(j)}$ ($j=1,\dots,\ell$).
We also denote by $x^{(0)}$ the coordinates $(x_i)$ with
$i \in \{1,\dots,n\} \setminus I$ where we set $I := \cup_j I_j$.
Then the coordinates of $S^*_{\chi}$ is given by
$
(x^{(0)};\, \xi^{(1)},\, \dots,\, \xi^{(\ell)}).
$

Let $p = (q;\,\xi) = (x^{(0)};\, \xi^{(1)},\dots, \xi^{(\ell)})$ be a point in $S^*_{\chi}$.
Recall also the definitions of $J_{\prec j}$, $J_{\succ j}$ and $\gamma_j$
given in the fiber formula
which are often used in subsequent arguments.
Now we define the subset $J^*(\xi)$ of $\{1,\dots,\ell\}$ by
\begin{equation}
\{j \in \{1,\dots,\ell\};\, \text{ $\xi^{(\alpha)} = 0$ for any $\alpha \in J_{\preceq j}$}\}.
\end{equation}
Here $J_{\preceq j}$ denotes $J_{\prec j} \cup \{j\}$.
In what follows, we suppose $I = \{1,\dots,n\}$ for simplicity. Hence
the coordinates of $S^*_{\chi}$ is given by $(\xi^{(1)},\, \dots,\, \xi^{(\ell)})$.
Let $\xi= (\xi^{(1)},\, \dots,\, \xi^{(\ell)})$ be a point in $S^*_{\chi}$.
Set
\begin{equation}
L(\xi) := \underset{j \in J^*(\xi)}{\bigcap}\{x \in \mathbb{R}^n;\, x^{(j)} = 0\}.
\end{equation}

\begin{lem}\label{lem:linearity_of_G}
Let $G_j$ $(j=1,\dots,\ell)$ be a closed linear cone with
$G_j \setminus \{0\} \subset \gamma_j$ which has
an interior point in $\gamma_j$
if $\gamma_j$ is non-empty $(j=1,\dots,\ell)$.
Set
$
G := G_1 + \dots + G_\ell \subset \mathbb{R}^n
$.
Then we have the followings.
\begin{enumerate}
\item $G$ is a closed linear cone which is proper with respect to
	some direction $\xi_G$.
\item We have $G \subset L(\xi)$ and
	$G = \overline{\operatorname{Int}_{L(\xi)}(G)}$. In particular,
\[
\rh_{\mathbb{C}_X}\left(\mathbb{C}_G,\, \mathbb{C}_{X}\right)
= \mathbb{C}_{\operatorname{Int}_{L(\xi)}(G)}[-\operatorname{codim}_{\mathbb{R}}(L(\xi))]
\]
holds where $\operatorname{Int}_{L(\xi)}(G)$ denotes the set of interior points
of $G$ in $L(\xi)$.
\end{enumerate}
\end{lem}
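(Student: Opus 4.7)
My plan is to verify the three sub-claims -- properness, the inclusion $G\subset L(\xi)$, and the closure-of-interior identity -- and then deduce the $\rh$ formula formally. For (1), only the indices $j$ with $\xi^{(j)}\ne 0$ matter, since otherwise $\gamma_j=\emptyset$ forces $G_j=\{0\}$. For such $j$, the strict inequality in the definition of $\gamma_j$ makes the linear functional $\tilde\xi^{(j)}(x):=\langle x^{(j)},\xi^{(j)}\rangle$ on $\mathbb{R}^n$ strictly positive on $G_j\setminus\{0\}$, so each $G_j$ is a closed linear proper cone in direction $\tilde\xi^{(j)}$. To fuse these into one direction $\xi_G$, I pick a linear extension of the strict partial order $k\succ j\iff I_k\supsetneq I_j$ on $\{1,\dots,\ell\}$ in which $k\in J_{\succ j}$ precedes $j$, and choose weights $\epsilon_j>0$ inductively along this order. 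For $x\in G_j\setminus\{0\}$, only the summands with $k\in\{j\}\cup J_{\succ j}$ can contribute to $\xi_G(x)=\sum_k\epsilon_k\tilde\xi^{(k)}(x)$, and the ratios $|x^{(k)}|/\langle x^{(j)},\xi^{(j)}\rangle$ are uniformly bounded on the compact set $G_j\cap S^{n-1}$ (the denominator is continuous and strictly positive there, since $G_j\cap\{x^{(j)}=0\}=\{0\}$ by the strict inequality in $\gamma_j$). Choosing $\epsilon_j$ large enough relative to $\{\epsilon_k:k\in J_{\succ j}\}$ therefore makes $\xi_G(x)>0$ on $G_j\setminus\{0\}$, and Lemma \ref{lem:cone-sum-proper} then yields that $G=\sum_j G_j$ is closed, linear, and proper in direction $\xi_G$.

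For $G\subset L(\xi)$, I verify that for each $j\in J^*(\xi)$ and each $k$, $x^{(j)}=0$ on $G_k$ by case analysis on the order relation between $j$ and $k$. If $j\in J_{\prec k}\cup J_{\nparallel k}$, the vanishing comes from the defining equations of $\gamma_k$. If $j=k$, then $\xi^{(k)}=0$ since $k\in J_{\preceq k}$, so $\gamma_k=\emptyset$ and $G_k=\{0\}$. If $j\in J_{\succ k}$, then $k\in J_{\prec j}\subseteq J_{\preceq j}$, and the definition of $J^*(\xi)$ forces $\xi^{(k)}=0$, again collapsing $G_k$ to $\{0\}$. These cases exhaust all possibilities under H1--H3, so $G_k\subset L(\xi)$ for every $k$ and thus $G\subset L(\xi)$.

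For $G=\overline{\operatorname{Int}_{L(\xi)}(G)}$, I interpret the hypothesis that $G_k$ has an interior point in $\gamma_k$ (when $\gamma_k\ne\emptyset$) as asserting that $G_k$ is full-dimensional in the affine hull $L_k:=\{x^{(i)}=0,\ i\in J_{\prec k}\cup J_{\nparallel k}\}$ of $\gamma_k$, so $\operatorname{Span}(G_k)=L_k$. Summing, $\operatorname{Span}(G)=\sum_{k:\xi^{(k)}\ne 0}L_k$. Each such $L_k$ lies in $L(\xi)$ by the previous case analysis; conversely, for any $\alpha\notin J^*(\xi)$ the definition of $J^*(\xi)$ supplies $k\in J_{\preceq\alpha}$ with $\xi^{(k)}\ne 0$, and then $\alpha\in\{k\}\cup J_{\succ k}$, which is precisely the set of free coordinates of $L_k$. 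Hence $\operatorname{Span}(G)=L(\xi)$, so the non-empty closed convex cone $G\subset L(\xi)$ has non-empty interior in the linear space $L(\xi)$, and the standard convex-geometric fact that a closed convex set equals the closure of its relative interior gives the desired identity.

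Finally, for the $\rh$ formula I use the closed embedding $i\colon L(\xi)\hookrightarrow X$ and $d:=\operatorname{codim}_{\mathbb{R}}(L(\xi))$. Writing $\mathbb{C}_G=i_*\mathbb{C}_G$, adjunction gives $\rh_{\mathbb{C}_X}(\mathbb{C}_G,\mathbb{C}_X)\simeq i_*\rh_{\mathbb{C}_{L(\xi)}}(\mathbb{C}_G,i^!\mathbb{C}_X)$, and $i^!\mathbb{C}_X\simeq\mathbb{C}_{L(\xi)}[-d]$ reduces the computation to identifying $R\Gamma_G\mathbb{C}_{L(\xi)}$ with $\mathbb{C}_{\operatorname{Int}_{L(\xi)}(G)}$. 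This follows from the distinguished triangle $R\Gamma_G\mathbb{C}_{L(\xi)}\to\mathbb{C}_{L(\xi)}\to Rj_*\mathbb{C}_{L(\xi)\setminus G}$ (with $j$ the complementary open inclusion) by a stalk check: at exterior points of $G$ both right-hand terms are zero; at interior points the right-hand map vanishes and the left term recovers the constant stalk; at boundary points of $G$ in $L(\xi)$ the local contractibility of $L(\xi)\setminus G$ via a supporting hyperplane of the convex cone $G$ makes the restriction map an isomorphism, so the left term vanishes. The main technical obstacle is the inductive construction of $\xi_G$ in (1), requiring a careful matching of the acyclic cascade structure of $J_{\succ\cdot}$ with the properness of each $G_j$ in direction $\tilde\xi^{(j)}$; the remaining steps reduce to clean case analysis and standard convex-sheaf theory.
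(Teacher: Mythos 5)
Your proof is correct and shares the paper's overall architecture: show each $G_j$ is proper in a common direction $\xi_G$, conclude closedness and properness of $G$ from Lemma~\ref{lem:cone-sum-proper}, reduce $G\subset L(\xi)$ to the combinatorics of $J_{\prec\cdot}$, $J_{\succ\cdot}$, $J_{\nparallel\cdot}$, and obtain the $\rh$ formula from convexity. The genuine divergence is in how $\xi_G$ is built. The paper places $G_j$ inside $V_j^\circ$, the polars of the cones $V_j$ from the proof of Theorem~\ref{th:stalk formula}, and sets $\xi_G=(\sigma^{\#J_{\succ 1}}\xi^{(1)},\dots,\sigma^{\#J_{\succ\ell}}\xi^{(\ell)})$ for a single large parameter $\sigma$, exploiting $\#J_{\succ\alpha}<\#J_{\succ j}$ for $\alpha\in J_{\succ j}$ to get $\xi_G\in\operatorname{Int}V_j$ for every $j$. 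You instead choose the weights $\epsilon_j$ inductively along a topological sort of the partial order on indices, using compactness of $G_j\cap S^{n-1}$ to bound the ratios $|x^{(k)}|/\langle x^{(j)},\xi^{(j)}\rangle$ for $k\in J_{\succ j}$; this is self-contained and avoids re-invoking the auxiliary $V_j$ cones, at the cost of carrying an induction rather than a single scaling parameter. Your case analysis for $G\subset L(\xi)$ and for $\operatorname{Span}(G)=L(\xi)$ in part (2) fills in details the paper leaves implicit, and it is correct. The final stalk computation for $\rh_{\mathbb{C}_X}(\mathbb{C}_G,\mathbb{C}_X)$ is standard for a closed proper convex cone with non-empty relative interior; your supporting-hyperplane argument at boundary points is the right idea, though it would be cleaner to simply invoke Proposition~3.4.6 of \cite{KS90}, which handles exactly this situation.
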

\begin{proof}
Let us show the claim 1.~of the lemma.
Recall the definition of the cone $V_j$ which appeared
in the proof of Theorem \ref{th:stalk formula}.
If $\xi^{(j)} \ne 0$, then $V_j \subset \mathbb{R}^n$ has the form
\[
\{x = (x^{(k)}) \in X;\, x^{(j)} \in T_j,\,
\delta |\langle x^{(j)}, \xi^{(j)} \rangle| \ge \sum_{k \in J_{\succ j}}|x^{(k)}|\}
\]
where $\delta > 0$ and $T_j \subset {\mathbb{R}}^{n_j}$ is a proper closed convex cone with
\[
T_j \subset \{x^{(j)} \in \mathbb{R}^{n_j};\,\langle x^{(j)},\, \xi^{(j)} \rangle \ge 0\} \text{ and }
\xi^{(j)} \in \operatorname{Int}_{\mathbb{R}^{n_j}} T_j.
\]
If $\xi^{(j)} = 0$, then we set $V_j := \mathbb{R}^n$.
It follows from the definitions of $G_j$ and $\gamma_j$ that
$G_j$ is contained in the polar cone $V_j^\circ$ for an appropriate $V_j$.

Let $\#J_{\succ j}$ denote the number of elements in $J_{\succ j}$.
For $\sigma > 0$, we determine the positive real number $\sigma_j$ by $\sigma^{\#J_{\succ j}}$.
Now we define the vector by
\begin{equation}
\xi_G := (\sigma_1 \xi^{(1)},\, \sigma_2 \xi^{(2)},\, \dots,\, \sigma_\ell \xi^{(\ell)}).
\end{equation}
Then,  as $\#J_{\succ \alpha} < \#J_{\succ j}$ holds for $\alpha \in J_{\succ j}$,
it follows the definition of $V_j$ that the vector $\xi_G$ belongs to the interior of $V_j$ for any $j$
by taking $\sigma$ sufficiently large. Hence, since $G_j$ is contained $V_j^\circ$, we see that
$G_j \setminus \{0\}$ is contained in $\{x \in \mathbb{R}^n;\, \langle x, \xi_G \rangle > 0\}$.
Then the first claim is a consequence of Lemma \ref{lem:cone-sum-proper}.

Next we show the claim 2.~of the lemma. By the definition of $\gamma_j$, we
can easily see that $G \subset L(\xi)$ and $G$ has an interior point in $L(\xi)$.
Then, as $G$ is convex, the rest of the claim follows from these.
\end{proof}

\subsection{The result for some family of real analytic submanifolds}
Let $X=\mathbb{C}^n$ with coordinates
$
(z_1 = x_1 + \sqrt{-1}y_1,\, \dots,\, z_n = x_n + \sqrt{-1}y_n)
$
and $\zeta_i = \xi_i + \sqrt{-1}\eta_i$ ($i=1,\dots,n$) the
dual variable of $z_i = x_i + \sqrt{-1}y_i$.
Let $I_j$ ($j=1,2,\dots,\ell$) be a subset of $\{1,\,2,\,\dots,\, n\}$ which
satisfies the conditions (\ref{eq:cond-H}),  and set $I_0$ by (\ref{eq:def-I_0}).
Let $I_{\mathbb{R}}$ be a subset of $I := \underset{1 \le j \le \ell}{\cup}I_j$ and
$I_{\mathbb{C}} := I \setminus I_{\mathbb{R}}$.
Define, for $i \in I$, the function $q_i(z)$ in $X$ by
\[
q_i(z) :=
\left\{
\begin{array}{ll}
	z_i \qquad& (i \in I_{\mathbb{C}}),\\
	\sqrt{-1}\operatorname{Im} z_i \qquad & (i \in I_{\mathbb{R}}).
\end{array}
\right.
\]
Then we define the closed real analytic submanifolds
\[
N_j := \{z \in X;\, q_i(z) = 0,\, i \in I_j\} \qquad (j=1,\dots,\ell),
\]
and set
\[
\chi := \{N_1, \dots, N_\ell\}, \qquad
N = N_1 \cap \dots \cap N_\ell.
\]

In what follows, we regard the function $q_i$ as the complex coordinate variable
$z_i$ if $i \in I_{\mathbb{C}}$ and as the imaginary coordinate variable $\sqrt{-1}y_i$
if $i \in I_{\mathbb{R}}$. In the same way, $p_i$ is regraded as
the dual variable of $q_i$, that is, $p_i$ denotes $\zeta_i$ if $i \in I_{\mathbb{C}}$
and $\sqrt{-1}\eta_i$ if $i \in I_{\mathbb{R}}$.
As usual convention, we write by $q^{(j)}$ (resp. $p^{(j)}$) the coordinates $q_i$'s (resp. $p_i$'s) with
$i \in \widehat{I}_j$.
Under these conventions,  the coordinates of $S^*_{\chi}$ are given by
\[
(q^{(0)};\, p^{(1)},\,\dots,\, p^{(\ell)}),
\]
where $q^{(0)}$ denotes the set of the coordinate variables $z_i$'s
($i \in I_0$) and $x_i$'s ($i \in I_{\mathbb{R}}$).
Let $\theta_* = (q_*;p_*) = (q_*^{(0)};\, p_*^{(1)},\,\dots,\, p_*^{(\ell)}) \in S^*_{\chi}$.
Recall the definition of $J^*(\theta_*)$, that is,
\[
\begin{aligned}
J^*(\theta_*) &:= \{j \in \{1,\dots,\ell\};\, p_*^{(\alpha)} = 0 \text{ for all $\alpha \in J_{\preceq j}$}\} \\
   & =
   \{j \in \{1,\dots,\ell\};\, p_*^{(\alpha)} = 0
   \text{ for all $\alpha$ with $N_j \subset N_\alpha$}\}. \\
\end{aligned}
\]
We set
\begin{equation}
I^*(\theta_*) := \underset{j \in J^*(\theta_*)}{\bigcup} \widehat{I}_j \,\, \subset \{1,\dots,n\}.
\end{equation}
Then we define the integer $N(\theta_*)$ by
\begin{equation}
N(\theta_*) = \#I + \# (I^*(\theta_*) \cap I_{\mathbb{C}}),
\end{equation}
where $\#$ denotes the number of elements in a set.
Note that $\#I$ is equal to $\operatorname{Codim}_{\mathbb{C}}N$, i.e.,
the complex codimension of the maximal complex linear subspace contained in $N$.
\begin{teo}\label{th:vanishing-for-mm}
We have
\[
\operatorname{H}^k(\mu_\chi(\OW_{X_{sa}}))_{\theta_*} = 0\qquad (k \ne N(\theta_*)).
\]
\end{teo}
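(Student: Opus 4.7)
The plan is to reduce the computation, via the stalk formula (Theorem~\ref{th:stalk formula}), to an application of the edge-of-the-wedge Corollary~\ref{cor:vanishing-main}, using the geometric duality in Lemma~\ref{lem:linearity_of_G}. Working locally at $q=\pi(\theta_*)$ and, as in the proof of Theorem~\ref{th:stalk formula}, assuming $M=\{q\}$ and $I=\{1,\dots,n\}$, the stalk formula gives
\[
H^k(\mu_\chi(\OW_{X_{sa}}))_{\theta_*}\simeq\lind{G,U}H^k_G(U;\OW),
\]
with $U$ a Stein open polydisc around $q$ and, cofinally by Lemma~\ref{lem:linearity_of_G}, $G=\sum_{k=1}^{\ell}G_k$ a closed linear proper cone contained in $L(\theta_*)$ with $G=\overline{\operatorname{Int}_{L(\theta_*)}G}$. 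Set $d:=\operatorname{codim}_{\R}L(\theta_*)=2\#(I^{*}\cap I_{\CC})+\#(I^{*}\cap I_{\R})$.

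The crucial duality identity is
\[
R\Gamma_{G}\,\OW\simeq \rh(\CC_G,\CC_X)\wtens\OO_X\simeq \CC_{\operatorname{Int}_{L(\theta_*)}G}\wtens\OO_X\,[-d],
\]
obtained by commuting $\rh$ with $\wtens$ for the constructible sheaf $\CC_G$ (using $\OW=\CC_X\wtens\OO_X$) and then applying Lemma~\ref{lem:linearity_of_G}(2). Taking $R\Gamma(U;\cdot)$ reduces the theorem to the vanishing $H^{j}(U;\CC_{\operatorname{Int}_{L(\theta_*)}G}\wtens\OO_X)=0$ for $j\ne N(\theta_*)-d=\#(I\setminus I^{*})$.

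To establish this, decompose $\CC^{n}=\CC^{I\setminus(I^{*}\cap I_{\R})}\times\CC^{I^{*}\cap I_{\R}}$ and realize $\operatorname{Int}_{L(\theta_*)}G\cap U$ in the form $(\Omega\cap g^{-1}(\{0\}))\times K$: $\Omega\subset\CC^{I\setminus(I^{*}\cap I_{\R})}$ is an open analytic polyhedron encoding the multi-sector shape of $\operatorname{Int}G$ (each defining real half-plane $\{\operatorname{Re}\langle z,v\rangle>0\}$ rewritten as the analytic open polyhedron $\{|e^{-\langle z,v\rangle}|<1\}$), $g\colon\CC^{I\setminus(I^{*}\cap I_{\R})}\to\CC^{I^{*}\cap I_{\CC}}$ is the holomorphic projection with $L_{\mathrm{poly}}=\{0\}$ (a closed analytic polyhedron), and $K=\R^{I^{*}\cap I_{\R}}\cap\{|z|\le\varepsilon\}$ is the closed convex subanalytic set enforcing $y_i=0$ for $i\in I^{*}\cap I_{\R}$. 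Corollary~\ref{cor:vanishing-main} then yields the required concentration in degree
\[
\#(I\setminus(I^{*}\cap I_{\R}))-\#(I^{*}\cap I_{\CC})=\#(I\setminus I^{*}).
\]

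The main obstacles are (a) rigorously justifying the commutation of $\rh$ with the Whitney tensor product $\wtens$ for the constructible sheaf $\CC_G$, a projection-formula-type statement which is standard but delicate in the subanalytic-sheaf framework, and (b) carefully matching $\operatorname{Int}_{L(\theta_*)}G\cap U$ with the form required by Corollary~\ref{cor:vanishing-main}; step (b) rests on expressing the linear proper cone $G$ as a finite intersection of closed real half-spaces, each of which is an analytic closed polyhedron via the identity $\{\operatorname{Re}\langle z,v\rangle\ge 0\}=\{|e^{-\langle z,v\rangle}|\le 1\}$, and on invoking Lemma~\ref{lem:linearity_of_G} to ensure that the resulting $\Omega$ exhibits a multi-sector structure compatible with the product decomposition above.
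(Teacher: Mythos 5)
Your first two steps match the paper's own route: you pass from $\operatorname{H}^k_G(U;\OW)$ to $\operatorname{H}^{k-d}\bigl(\mathbb{C}_{\operatorname{Int}_{L(\theta_*)}G}\wtens\OO_X\bigr)$ using the compatibility of $\rh$ with $\wtens$ and Lemma~\ref{lem:linearity_of_G}(2), and the degree bookkeeping $d=\operatorname{codim}_{\R}L(\theta_*)=m_2'$ and $N(\theta_*)-d=\#(I\setminus I^*)=m_1$ is exactly what the paper computes. The divergence comes at the final step, and there you have a genuine gap.

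Corollary~\ref{cor:vanishing-main} gives only the \emph{low-degree} vanishing $\operatorname{H}^k=0$ for $k<n-d$; it says nothing for $k>n-d$. Its proof is Oka's trick plus a free resolution of $\mathfrak{M}$ of length $\ell+d$, which pushes a complex concentrated in one degree $N$ into the range $[N-(\ell+d),N]$ and therefore cannot produce an upper bound at the critical degree. In Theorem~\ref{th:vanishing-main} the upper bound $k\le n$ comes from the Whitney--Dolbeault complex of length equal to the complex dimension of the ambient space, and that bound is exactly $n$, matching the lower bound. In your setup the ambient space for the $g^{-1}(L)$ factor is $\CC^{n_1}$ with $n_1=\#(I\setminus I^*)+\#(I^*\cap I_{\CC})$, so the Dolbeault bound gives only $k\le n_1$, which is strictly larger than the target degree $\#(I\setminus I^*)$ when $I^*\cap I_{\CC}\ne\emptyset$. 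So your invocation of Corollary~\ref{cor:vanishing-main} as yielding ``concentration'' in a single degree is unsupported; you have half the statement. The paper avoids this entirely by moving the complex constraint coming from $I^*\cap I_{\CC}$ into the closed convex factor $K=(Z\cap\overline{U_2})\times\overline{U_3}$ (whose Whitney cohomology is concentrated in degree $0$ by Dufresnoy's theorem) and keeping the ambient space of the open/edge part equal to $\CC^{m_1}$, so Theorem~\ref{th:vanishing-main} applies directly and gives concentration exactly at $m_1$. A minor additional issue is that $\{0\}\subset\CC^{d'}$ ($d'>0$) is not an analytic closed polyhedron in the sense of $\{|f_k|\le 1\}$; it is only a closed convex subanalytic set, so even the low-degree statement of Corollary~\ref{cor:vanishing-main} does not apply with $L=\{0\}$ as written.

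There is a second, related gap. Even if one had concentration for a fixed locally closed set, the stalk is a limit over shrinking polydiscs $U$ and cones $G$, and the sets you feed into the corollary have the form (open cone) $\cap\,\overline{U}$, i.e.\ $\Omega\cap\overline{U}$, which is not of the form $\Omega\setminus\omega$. The paper handles this with the Kaneko-type rearrangement (Theorem~2.2.2 of [KKK]): it introduces the auxiliary holomorphic function $\varphi(z')=\langle z',\xi_{\widetilde{G}}\rangle+\sqrt{-1}\sum z_{i_k}^2$, defines $\omega$ from its imaginary part, and shows that $W(G,U',U_2,U_3)\subset(\Omega\setminus\omega)\times K\subset W(G,U_1,U_2,U_3)$ so that the two inductive limits agree. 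Without this cofinality argument, the concentration of each term (or of the limit) is not established. To repair the proposal along the lines of the paper, you would need to both (a) absorb the complex constraints into $K$ so Theorem~\ref{th:vanishing-main} applies on $\CC^{m_1}$, and (b) supply the Kaneko-type cofinality step relating the limit over $U$ to a limit over the edge-parameter $\epsilon$.
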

\begin{proof}
We may assume $q_* = 0$. Furthermore, by a complex rotation of each variable $z_i$
($i \in I_{\mathbb{C}}$),
we also assume that $p_*^{(j)}$
is purely imaginary or zero for $j=1,\dots,\ell$. Set
\[
J := \{1,\dots,\ell\},\quad
J' := J \setminus J^*(\theta_*),\quad J'' :=J^*(\theta_*).
\]
Recall $n_j = \#\widehat{I}_j$ and set
\[
m_1 := \sum_{j \in J'} n_j, \quad
m_2 := \sum_{j \in J''} n_j, \quad m_3 = n - m_1 - m_2.
\]
Note that, because of $\sqcup_{j \in J} \widehat{I}_j = I$,  we get $m_1 + m_2 = \# I$.
Then we have $X = \mathbb{C}^{m_1}_{z'} \times
\mathbb{C}^{m_2}_{z''} \times \mathbb{C}^{m_3}_{z'''}$ where
the coordinates $(z',z'',z''')$ are given by
\[
(z',\, z'',\, z''') = ( \{z^{(j)}\}_{j \in J'},\, \{z^{(j)}\}_{j \in J''},\, z^{(0)}).
\]
Here $z^{(j)}$ (resp. $z^{(0)}$) denotes, as usual convention,
the coordinates $z_i$'s with $i \in \widehat{I}_j$
(resp. $i \in I_0$).
We also set
\[
(q',\, q'') = (\{q^{(j)}\}_{j \in J'},\, \{q^{(j)}\}_{j \in J''})
\]
and
\[
\begin{aligned}
(p',\, p'') &= (\{p^{(j)}\}_{j \in J'},\, \{p^{(j)}\}_{j \in J''}),\,   \\
(p'_*,\, p''_*) &= (\{p_*^{(j)}\}_{j \in J'},\, \{p_*^{(j)}\}_{j \in J''}).
\end{aligned}
\]
Note that, by the definition of $J^*(\theta_*)$, it follows that $p''_* = 0$ and $p'_* \ne 0$. Furthermore
$p'_*$ is purely imaginary by the assumption.
Then define the partially complex linear subspaces $L \subset \mathbb{C}^{m_1}_{z'}$ and $Z \subset \mathbb{C}^{m_2}_{z''}$
by
\[
\begin{aligned}
L &:= \text{The linear subspace of $\mathbb{C}^{m_1}_{z'}$ spanned by the vectors $dq'$,} \\
Z &:= \{z'' \in \mathbb{C}^{m_2}_{z''};\, q'' = 0\}.
\end{aligned}
\]
Clearly the coordinates of $L$ are given by $q'$.
We denote by $L^\perp$ the orthogonal linear subspace of $L$ in $\mathbb{R}^{2m_1} = \mathbb{C}^{m_1}$.
Then we have $\mathbb{C}^{m_1} = L^\perp \times L$ and
\[
L^\perp \subset \mathbb{R}^{m_1} = \mathbb{R}^{m_1} \times \{0\}
\subset \mathbb{R}^{m_1} \times \sqrt{-1} \mathbb{R}^{m_1} = \mathbb{C}^{m_1}
\]
as $q_i$ is either $z_i$ or $\sqrt{-1}y_i$.
Now let us define the closed set $\gamma_j$ ($j \in J'$) in $L$ which corresponds
to the one in the fiber formula.
\begin{equation}
\gamma_j := \left\{q' = (q^{(k)})_{k \in J'} \in L;\,
\begin{array}{ll}
	q^{(k)}=0 &(k \in J' \text{ with } k \in J_{\prec j} \sqcup J_{\nparallel\, j}) \\
	\operatorname{Re}\langle q^{(k)}, p^{(k)}_* \rangle > 0 & (k=j)
\end{array}
\right\}.
\end{equation}
It follows from the fiber formula that we have
\[
\operatorname{H}^k(\mu_\chi(\OW_{X_{sa}}))_{\theta_*} =
\lind{G,\,U_1,\,U_2,\,U_3}
\operatorname{H}^k_{(L^{\perp} \times G) \times
Z \times \mathbb{C}^{m_3}}
\left(U_1 \times U_2 \times U_3;\, \OW_{X_{sa}}\right).
\]
Here $U_i$ runs through a family of open convex subanalytic neighborhoods
of the origin in $\mathbb{C}^{m_i}$
($i=1,2,3$) and $G \subset L$
runs through a family of closed subanalytic cones of the form
$
G := \sum_{j \in J'} G_j
$
with a subanalytic closed cone $G_j \subset L$ satisfying $G_j \setminus \{0\} \subset \gamma_j$.
Here we may assume that each $G_j$ is a proper linear cone in $L$ and it
has an interior point in $\gamma_j$.
Hence, by Lemma \ref{lem:linearity_of_G},
the $G$ is a closed proper linear cone in $L$ with some direction.
Furthermore, the $L^{\perp} \times G$ and
$L^{\perp} \times \operatorname{Int}_L(G)$ are analytic polyhedra
in $\mathbb{C}^{m_1}_{z'}$ and
the latter one is a non-empty open cone.
By the claim 2.~of the same lemma, we have
\[
\operatorname{H}^k_{(L^\perp \times G) \times Z
\times \mathbb{C}^{m_3}}\left(U_1 \times U_2 \times U_3;\, \OW_{X_{sa}}\right)
= \operatorname{H}^{k- m'_2}
\left(X;\, \mathbb{C}_{W(G,U_1,U_2,U_3)} \wtens \mathcal{O}_X\right),
\]
where
\begin{equation}{\label{eq:def_m_2_dash}}
m'_2 := m_2 + \# (I^*(\theta_*) \cap I_{\mathbb{C}})
\end{equation}
and
\[
W(G,U_1,U_2,U_3) :=
( (L^\perp \times \operatorname{Int}_L(G)) \cap \overline{U_1})
\times (Z \cap \overline{U_2})
\times \overline{U_3}.
\]
From now on, we also use as coordinates variables of $\mathbb{C}^{m_1}_{z'}$
\[
z' = (z_{i_1}, \dots, z_{i_{m_1}}) = (x_{i_1} + \sqrt{-1}y_{i_1}, \dots, x_{i_{m_1}} + \sqrt{-1}y_{i_{m_1}}).
\]
Furthermore, we identify
$\mathbb{R}^{m_1}$ and $\sqrt{-1}\mathbb{R}^{m_1}$
with the linear
subspaces
$\mathbb{R}^{m_1} \times \sqrt{-1}\{0\}$ and
$\{0\} \times \sqrt{-1}\mathbb{R}^{m_1}$
in $\mathbb{C}^{m_1} = \mathbb{R}^{m_1} \times \sqrt{-1}\mathbb{R}^{m_1}$ respectively.
Since $p_*' \ne 0$ and it is purely imaginary as we have already noted,
the cone
\[
\widetilde{G}:= \dfrac{1}{\sqrt{-1}}(G \cap \sqrt{-1}\mathbb{R}^{m_1})
\]
is proper and linear in $\mathbb{R}^{m_1}$.
Furthermore, as $p_*'$ is purely imaginary, we may assume
\[
G \subset (L \cap \mathbb{R}^{m_1}) \times \sqrt{-1}\widetilde{G} \subset L.
\]
By Lemma \ref{lem:linearity_of_G}, we choose
a vector $\xi_{\widetilde{G}} \in \mathbb{R}^{m_1}$
with $|\xi_{\widetilde{G}}| = 1$ satisfying
\[
\widetilde{G} \subset \{x' \in \mathbb{R}^{m_1};\, \langle x',  \xi_{\widetilde{G}} \rangle > 0\} \cup \{0\}.
\]
Let us define the holomorphic function on $\mathbb{C}^{m_1}_{z'}$ by
\[
\varphi(z') :=  \langle z', \xi_{\widetilde{G}}\rangle
+ \sqrt{-1} \displaystyle\sum_{1 \le k \le m_1} z_{i_k}^2.
\]
Let $a>0$ and $\epsilon$ be sufficiently small positive real numbers,  and we set
\[
\begin{aligned}
	K &:=  (Z \cap \overline{U_2}) \times \overline{U_3} \subset
\mathbb{C}^{m_2} \times \mathbb{C}^{m_3},\\
\Omega &:= (L^\perp \times \operatorname{Int}_L(G)) \cap
\left\{z' \in \mathbb{C}^{m_1};\,
|z_{i_k}| < a\, (1 \le k \le m_1) \right\}, \\
\omega &:= (L^\perp \times \operatorname{Int}_L(G)) \cap
\left\{z' \in \mathbb{C}^{m_1};\,
\operatorname{Im} \varphi(z') > \epsilon,\,
|z_{i_k}| < a\, (1 \le k \le m_1) \right\}. \\
\end{aligned}
\]
We take an open poly-disks with center at the origin as $U_i$ ($i=1,2,3$).
Then $K$ is a convex closed analytic polyhedron in $\mathbb{C}^{m_2} \times \mathbb{C}^{m_3}$, and
$\Omega$ and $\omega$ are relatively compact
and analytic open polyhedra in $\mathbb{C}^{m_1}$.
Hence it follows from Theorem \ref{th:vanishing-main} that we have
\begin{equation}{\label{eq:vanishing_omega_Omega}}
\operatorname{H}^k(X;\, \mathbb{C}_{(\Omega \setminus \omega) \times K}\wtens \mathcal{O}_X)
= 0\qquad (k \ne m_1).
\end{equation}
Now, by applying the same reasoning as that in the proof of Theorem 2.2.2 \cite{KKK},  we
obtain, for each $G$ and a sufficiently small $a > 0$,
\begin{equation}{\label{eq:limit_G_omega_equive}}
\begin{aligned}
&\lind{U_1,\,U_2,\,U_3}
\operatorname{H}^{k- m'_2}
\left(X;\, \mathbb{C}_{W(G,U_1,U_2,U_3)} \wtens \mathcal{O}_X\right) \\
&\qquad=
\lind{\epsilon > 0,\, U_2,\,U_3}
\operatorname{H}^{k - m'_2}(X;\, \mathbb{C}_{(\Omega \setminus \omega) \times K}\wtens
\mathcal{O}_X).
\end{aligned}
\end{equation}
As a matter of fact,
as $\widetilde{G}$ is proper with respect to the direction $\xi_{\widetilde{G}}$,  we get
\[
\widetilde{G} \subset
\{y' \in \mathbb{R}^{m_1};
| y' - \langle y', \xi_{\widetilde{G}} \rangle \xi_{\widetilde{G}}| \le
 \sigma \langle y', \xi_{\widetilde{G}} \rangle \}
\]
for some $\sigma > 0$.
Hence there exists $a_\sigma > 0$ depending only on $\sigma$ such that
\[
\sum_{1 \le k \le m_1} y_{i_k}^2 \le \dfrac{1}{2} \langle y', \xi_{\widetilde{G}} \rangle
\quad (|y_{i_1}|,\dots,|y_{i_m}| \le a_\sigma,\,\, y' \in \widetilde{G}).
\]
Then, by noticing
\[
\operatorname{Im} \varphi(z') =
 \langle y', \xi_{\widetilde{G}}\rangle
+ \displaystyle\sum_{1 \le k \le m_1} ({x_{i_k}}^2 - {y_{i_k}}^2),
\]
we obtain
\[
\dfrac{1}{2} \langle y', \xi_{\widetilde{G}}\rangle
+ \displaystyle\sum_{1 \le k \le m_1} x_{i_k}^2 \le \epsilon \quad
(0 < a \le a_\sigma,\, z' \in \Omega \setminus \omega).
\]
Therefore, if we take $\epsilon > 0$ sufficiently small,  by noticing that
$\widetilde{G}$ is proper with respect to the direction $\xi_{\widetilde{G}}$,
the subset $W(G,U_1,U_2,U_3)$ contains
$(\Omega \setminus \omega) \times K$
as a closed subset.  On the other hand, clearly the subset $\{z' \in \mathbb{C}^{m_1};\,
\operatorname{Im} \varphi(z') \le \epsilon\}$
($\epsilon > 0$) is a neighborhood of $z' = 0$,
the subset $(\Omega \setminus \omega) \times K$
contains $W(G,U',U_2,U_3)$
for a sufficiently small open neighborhood $U'$ of the origin in $\mathbb{C}^{m_1}_{z'}$
as a closed subset. Hence we have a conclusion (\ref{eq:limit_G_omega_equive}).

Then the result follows from (\ref{eq:vanishing_omega_Omega})
and
\[
m_1 + m'_2 = m_1 + m_2 + \# (I^*(\theta_*) \cap I_{\mathbb{C}})
= \#I + \# (I^*(\theta_*) \cap I_{\mathbb{C}}) = N(\theta_*).
\]
This completes the proof.
\end{proof}

We also have the similar results for $\mathcal{O}^t_X$ and $\mathcal{O}_X$
by employing the same argument as that in the above proof, which
is much easier because we can take $G \times Z$ as a cone $G$ in the proof,
\begin{teo}
We have
\[
H^k(\mu_\chi(\mathcal{F}))_{\theta_*} = 0\qquad (k \ne \operatorname{codim}_{\mathbb{C}}N
= \#I),
\]
where $\mathcal{F}$ is either $\mathcal{O}^t_{X_{sa}}$ or $\mathcal{O}_X$.
\end{teo}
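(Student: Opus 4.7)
My plan is to mirror the proof of Theorem \ref{th:vanishing-for-mm}, exploiting the hint that for holomorphic $\mathcal F$ the cone appearing in the stalk formula may be taken as a full product cone, so one never has to pass to the interior inside a partially complex linear subspace. First I would apply the stalk formula of Theorem \ref{th:stalk formula} to write
\[
H^k(\mu_\chi \mathcal F)_{\theta_*} \simeq \lind{G,\,U} H^k_G(U;\mathcal F),
\]
with $G = M \times \sum_k G_k$ where each $G_k$ is a closed subanalytic convex cone satisfying $G_k \setminus \{0\} \subset \gamma_k$, and $U$ ranges over open subanalytic neighborhoods of the basepoint.

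Next, I would use the holomorphicity of $\mathcal F$ to \emph{fill in} the cone $G$ along every complex direction in which the dual microlocal variable vanishes: enlarging $G_k$ by a full complex linear subspace in such directions leaves the local cohomology unchanged, and the enlarged cones still form a cofinal family. In the notation of the proof of Theorem \ref{th:vanishing-for-mm}, this restricts attention to cones of the product form $G = G' \times Z \times \mathbb{C}^{m_3}$, where $G' \subset L$ is a closed linear proper cone in the active complex subspace, $Z = \prod_{j\in J^*(\theta_*)} \mathbb{C}^{n_j}$ is the full complex subspace in the passive directions, and $\mathbb C^{m_3}$ is transverse to $I$. The support $G$ then becomes a genuine closed analytic polyhedron, and the analysis no longer has to track the codimension of $L$ inside $\mathbb{C}^{m_1}$.

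I would then apply the same Oka-type deformation device as before: introduce a holomorphic function $\varphi(z')$ of the form $\langle z',\xi_{G'}\rangle + \sqrt{-1}\sum z_{i_k}^2$ whose imaginary part separates $G'$ from its complement, define $\Omega$ and $\omega$ as relatively compact analytic open polyhedra accordingly, and identify the stalk with an inductive limit of local cohomology groups with support of the form $(\Omega \setminus \omega) \times K$ with $K$ a closed analytic polyhedron times a Stein open. The final vanishing theorem of \S\,4.1, which states $H^k_{(K_1 \setminus K_2)\times U}(X;\mathcal F) = 0$ for $k\ne n$, then yields vanishing outside the single surviving degree $m_1 + m_2 = \#I = \operatorname{codim}_{\mathbb C} N$.

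The main obstacle is the degree bookkeeping. In the Whitney case of Theorem \ref{th:vanishing-for-mm} the shift was $m_2' = m_2 + \#(I^*(\theta_*) \cap I_{\mathbb C})$, the extra summand arising because one had to descend to $\operatorname{Int}_L(G)$ in order for the Whitney tensor product to behave correctly. Here the ability to form $G \times Z$ as a single full cone removes exactly that descent, producing the smaller shift $m_2$ and hence the cleaner non-vanishing degree $\#I$ in place of $N(\theta_*)$; verifying that this cleaner accounting survives the deformation argument is the one point that requires care.
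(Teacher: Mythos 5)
The central step of your plan — replacing the thin partially complex linear subspace $Z=\{z''\in\mathbb{C}^{m_2};\,q''=0\}$ by the full factor $\prod_{j\in J^*(\theta_*)}\mathbb{C}^{n_j}=\mathbb{C}^{m_2}$, on the grounds that "enlarging $G_k$ by a full complex linear subspace \ldots leaves the local cohomology unchanged, and the enlarged cones still form a cofinal family" — is false on both counts, and this gap is not repairable within your plan. First, the enlarged sets are not in the directed system of the stalk formula at all: Corollary \ref{cor:fibers} requires each admissible support to satisfy $C_{N_k}(Z_k)_{q_*}\subset p_k^{\#}\cup\{0\}$, and for $k\in J^*(\theta_*)$ (where $p_k=0$) the set $p_k^{\#}$ is precisely the complement of $\tau_k\bigl((T_{N_k}\iota(N_k))_{q_*}\bigr)$, i.e.\ of the $q^{(k)}$-directions. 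A set which contains the full $\mathbb{C}^{m_2}$-factor has the entire $q^{(k)}$-line in its multi-normal cone, so the constraint fails and "cofinal" cannot even be asked. Second, for $\mathcal{O}_X$ (unlike for $\OW$) replacing the support by a strictly larger one does change the local cohomology: already in one variable $\mathrm{R}\Gamma_{\{0\}}(\mathbb{C};\mathcal{O})$ is concentrated in degree $1$ while $\mathrm{R}\Gamma_{\mathbb{C}}(\mathbb{C};\mathcal{O})=\mathcal{O}(\mathbb{C})$ sits in degree $0$. Third, with your enlarged support $(L^\perp\times G')\times\mathbb{C}^{m_2}\times\mathbb{C}^{m_3}$ the last theorem of \S\,4.1 would be invoked with the "compact polyhedron" factor living only in $\mathbb{C}^{m_1}$ and the Stein open filling $\mathbb{C}^{m_2}\times\mathbb{C}^{m_3}$, giving vanishing outside degree $m_1$, not $m_1+m_2=\#I$; the degree $m_1+m_2$ would only appear if you let $\overline{U_2}$ play the role of the compact factor, but then you are computing a direct limit over a family of sets that is no longer cofinal with the one in the stalk formula.

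The intended reading of the paper's hint is the opposite of an enlargement. One keeps $Z$ thin and, instead of first descending to $\operatorname{Int}_L(G)$ and invoking the Whitney pairing $\rh(\Bbb C_A,\Bbb C_X)\wtens\mathcal{O}_X$ with its shift $m'_2=\operatorname{codim}_{\mathbb{R}}Z$, one treats the closed set $\bigl((L^\perp\times G)\cap\overline{U_1}\bigr)\times(Z\cap\overline{U_2})$ directly as the "compact polyhedral" factor inside $\mathbb{C}^{m_1}\times\mathbb{C}^{m_2}$, with $U_3$ the Stein open factor, and applies the $\mathcal{O}/\mathcal{O}^t$ vanishing theorem (via the same Oka-embedding device, which absorbs the lack of an analytic-polyhedron structure of $Z$). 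The degree is then read off at once: $m_1$ from the polyhedron $L^\perp\times G\subset\mathbb{C}^{m_1}$, and $m_2$ from the Martineau-type contribution of the thin set $Z\subset\mathbb{C}^{m_2}$ (each $i\in I^*(\theta_*)\cap I_{\mathbb{C}}$ contributes a $\{0\}\subset\mathbb{C}$ and each $i\in I^*(\theta_*)\cap I_{\mathbb{R}}$ contributes an $\mathbb{R}\subset\mathbb{C}$, both of degree $1$), for a total of $m_1+m_2=\#I$. This is why the answer for $\mathcal{O}_X$ and $\mathcal{O}^t_X$ is independent of $\theta_*$: there is no $\wtens$-duality shift at all, not a "smaller shift $m_2$" as your last paragraph suggests.
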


\subsection{The typical examples}
As the results given in the previous subsection has been considered in a fairly general situation, we here describe
the corresponding results for typical cases.

\

We first consider the corresponding result for families of complex submanifolds,
i.e., $I = I_\mathbb{C}$.
Let $X$ be a complex manifold and $\chi = \{Z_1,\dots,Z_\ell\}$ a family of closed
complex submanifolds of $X$ which satisfies the conditions H1, H2 and H3.
Set $Z = Z_1 \cap \dots \cap Z_\ell$.
Let $p = (q;\, \zeta) = (q;\, \zeta^{(1)},\dots,\zeta^{(\ell)}) \in S^*_{\chi}$.
Remember that the subset $J^*(p)$ of $\{1,\dots,\ell\}$ was defined by
\[
J^*(p) := \{j \in \{1,\dots,\ell\};\,
\zeta^{(\alpha)} = 0 \text{ for all $\alpha$ with $Z_j \subset Z_\alpha$}\}.
\]
We also define $\widehat{J}^*(p)$ by the subset of $J^*(p)$
that consists of the minimal elements with respect to the order relation
$k \prec j \iff Z_{k} \subsetneq Z_{j}$ for $k, j \in J^*(p)$.
Now we define the integer $N(p)$ by
\begin{equation}
N(p) = \operatorname{codim}_{\mathbb{C}} Z +
\sum_{j \in \widehat{J}^*(p)}
\operatorname{codim}_{\mathbb{C}} Z_j.
\end{equation}
Then the following theorem immediately comes from Theorem \ref{th:vanishing-for-mm}.
\begin{cor}
We have
\[
H^k(\mu_\chi(\OW_{X_{sa}}))_p = 0\quad (k \ne N(p)).
\]
\end{cor}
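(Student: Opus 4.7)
The plan is to deduce this corollary from the preceding vanishing theorem (Theorem \ref{th:vanishing-for-mm}) by specialising to the purely complex case and matching the two integers denoted $N(p)$. Under the hypotheses here we have $I_{\mathbb{R}}=\emptyset$, $I=I_{\mathbb{C}}$, each $q_i$ equals $z_i$, and $Z$ is cut out by $\{z_i=0 : i\in I\}$. Hence $\#I=\operatorname{codim}_{\mathbb{C}} Z$ and $\#(I^*(p)\cap I_{\mathbb{C}})=\#I^*(p)$, so it suffices to establish the cardinality identity
\[
\#I^*(p) \;=\; \sum_{j\in\widehat{J}^*(p)} \#I_j \;=\; \sum_{j\in\widehat{J}^*(p)} \operatorname{codim}_{\mathbb{C}} Z_j.
\]

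For this I would prove $I^*(p)=\bigsqcup_{j\in\widehat{J}^*(p)} I_j$ as a disjoint union, in three steps. Step (a): the set $J^*(p)$ is closed upward under $\prec$, since if $j\in J^*(p)$ and $Z_j\subsetneq Z_{j''}$, then $\{\alpha : I_\alpha\subseteq I_{j''}\}\subseteq\{\alpha : I_\alpha\subseteq I_j\}$, so the vanishing conditions defining membership at $j''$ are entailed by those at $j$. Step (b): any two distinct elements of $\widehat{J}^*(p)$ are $\prec$-incomparable, hence by the trichotomy for the $I_j$'s coming from H2, the corresponding $I_j$'s are pairwise disjoint. Step (c): combine upward closure with the decomposition $I_j=\bigsqcup_{k\,:\,I_k\subseteq I_j}\hat{I}_k$ recalled in Section 1.1 to obtain both inclusions. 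For $j\in\widehat{J}^*(p)$, every $k\succeq j$ lies in $J^*(p)$ by (a), so $\hat{I}_k\subseteq I^*(p)$ and thus $I_j\subseteq I^*(p)$. Conversely, given $\alpha\in I^*(p)$ I would pick a $\prec$-minimal $j\in J^*(p)$ with $\alpha\in I_j$; any $k\prec j$ in $J^*(p)$ would satisfy $I_j\subsetneq I_k$, hence $\alpha\in I_k$, contradicting minimality, so in fact $j\in\widehat{J}^*(p)$.

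The main obstacle is combinatorial rather than analytic. The subtlety is that $I^*(p)$ is defined only using the small strata $\hat{I}_j$ for $j\in J^*(p)$, whereas the formula in the corollary involves the full codimensions of the submanifolds $Z_j$ indexed only by the minimal set $\widehat{J}^*(p)$. Step (c) is the heart of the matter: upward closure of $J^*(p)$ ensures that the entire tower of indices above each minimal $j$ lies in $J^*(p)$, so that the local decomposition of $I_j$ into $\hat{I}_k$'s reassembles inside $I^*(p)$. Once the cardinality identity is in hand, Theorem \ref{th:vanishing-for-mm} applied in the purely complex case yields the corollary immediately.
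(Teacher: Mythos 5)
Your proposal is correct and follows exactly the route the paper intends: the paper states that the corollary ``immediately comes from Theorem~\ref{th:vanishing-for-mm}'' and gives no further detail, while you supply the one nontrivial ingredient, namely the combinatorial identity $I^*(p)=\bigsqcup_{j\in\widehat{J}^*(p)}I_j$ that reconciles $\#I + \#I^*(p)$ with $\operatorname{codim}_{\mathbb{C}}Z+\sum_{j\in\widehat{J}^*(p)}\operatorname{codim}_{\mathbb{C}}Z_j$. Your three-step argument (upward closure of $J^*(p)$, pairwise disjointness of the $I_j$ for minimal $j$ via condition H2, and reassembly of each $I_j$ from the $\hat{I}_k$'s) is sound and is precisely the verification the paper elides.
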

\begin{oss}
	In the complex case,  the result depends on a point $p \in S^*_{\chi}$.
For example,
$N(p) = 2 \operatorname{codim}_{\mathbb{C}} Z$
if $p = (q;\,0,\dots,0)$ and
$N(p) = \operatorname{codim}_{\mathbb{C}} Z$
if all the $\zeta^{(j)}$'s are non-zero.
\end{oss}

\begin{cor}
We have
\[
H^k(\mu_\chi(\mathcal{F}))_p = 0\quad
(k \ne \operatorname{codim}_{\mathbb{C}}Z),
\]
where $\mathcal{F}$ is $\mathcal{O}_X^t$ or $\OO_X$.
\end{cor}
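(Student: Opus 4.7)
The plan is to follow the same scheme as the proof of Theorem \ref{th:vanishing-for-mm}, but feed into it the simpler vanishing statement for $\mathcal{O}^t_{X_\mathrm{sa}}$ and $\mathcal{O}_X$ stated at the end of Section 4.1, namely
\[
H^k_{(K_1\setminus K_2)\times U}(X;\mathcal{F})=0\qquad(k\ne n),
\]
for analytic compact polyhedra $K_1,K_2\subset\mathbb{C}^n$ and a Stein subanalytic open $U\subset\mathbb{C}^m$. The crucial feature is that the bound here is the ambient complex dimension of the polyhedral factor, with no real-codimension correction extracted from the Stein factor.

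First I would apply the stalk formula from Theorem \ref{th:stalk formula},
\[
H^k(\mu_\chi(\mathcal{F}))_p\simeq\lind{G,U}H^k_G(U;\mathcal{F}),
\]
with $G=M\times\sum_{k=1}^\ell G_k$ and $G_k\setminus\{0\}\subset\gamma_k$ closed subanalytic convex cones, and I would set up the coordinates $X=\mathbb{C}^{m_1}_{z'}\times\mathbb{C}^{m_2}_{z''}\times\mathbb{C}^{m_3}_{z'''}$ exactly as in the proof of Theorem \ref{th:vanishing-for-mm}, so that $m_1+m_2=\#I=\operatorname{codim}_{\mathbb{C}}Z$. Since $\gamma_k=\emptyset$ whenever $k\in J^*(p)$, we have $G_k=\{0\}$ for those indices, and hence $\sum_k G_k$ is supported in the $z'$-direction only.

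The simplification, which is what the authors mean by ``we can take $G\times Z$ as a cone $G$'', is that for $\mathcal{F}\in\{\mathcal{O}^t_{X_\mathrm{sa}},\mathcal{O}_X\}$ one does not need Lemma \ref{lem:linearity_of_G}(2) nor the shift by $\operatorname{codim}_{\mathbb{R}}L(\xi)$ that produced the Whitney degree $N(p)$. Instead, I would run Oka's embedding trick as in the Whitney proof but only on the $z'$-factor, converting $\sum_k G_k$ into a difference $K_1\setminus K_2$ of closed analytic polyhedra inside $\mathbb{C}^{m_1+m_2}_{z',z''}$ (the $z''$-slice being $\{0\}$, a degenerate closed polyhedron), and absorb the remaining $z'''$-direction into a Stein polydisk $U_3$. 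The local cohomology then computes cohomology supported on a set of the form $(K_1\setminus K_2)\times U_3$ inside $\mathbb{C}^{m_1+m_2}\times\mathbb{C}^{m_3}$, and the cited theorem yields the vanishing for all $k\ne m_1+m_2=\operatorname{codim}_{\mathbb{C}}Z$.

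I expect the only substantive step to be the cofinality argument, the analogue of \eqref{eq:limit_G_omega_equive} in the Whitney proof, which asserts that as $G$ and $U$ run through the stalk formula, the $(K_1\setminus K_2)\times U_3$-shaped supports are cofinal. This should go through verbatim using the auxiliary holomorphic function $\varphi(z')$ built from the direction $\xi_{\widetilde G}$, the key point being that, unlike in the Whitney case, we no longer have to pass to $\operatorname{Int}_{L(\xi)}(G)$: the vanishing theorem for $\mathcal{O}^t$ and $\mathcal{O}$ already tolerates the closed polyhedral support together with the Stein factor $U_3$ without any codimension penalty, which is exactly the mechanism by which the answer drops from $N(p)$ to $\operatorname{codim}_{\mathbb{C}}Z$.
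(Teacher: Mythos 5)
The proposal is correct and takes essentially the same route as the paper: this corollary is simply the preceding theorem for $\mathcal{O}^t_{X_{\rm sa}}$ and $\mathcal{O}_X$ specialized to $I = I_{\mathbb{C}}$ (so that $N = Z$ and $\operatorname{codim}_{\mathbb{C}} N = \operatorname{codim}_{\mathbb{C}} Z$), and your reconstruction of that theorem --- replay the proof of Theorem \ref{th:vanishing-for-mm}, drop the $\operatorname{Int}_{L(\xi)}$/degree-shift step coming from Lemma \ref{lem:linearity_of_G}(2), and feed in the Section 4.1 local-cohomology vanishing --- is exactly what the authors' one-line remark ``we can take $G \times Z$ as a cone $G$'' indicates. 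One small point worth making explicit in your write-up is that the $z''$-slice $\{z'' = 0\}$ you fold into $K_1 \setminus K_2$ is a genuine closed analytic polyhedron in the paper's sense (e.g.\ $\{0\} = \{|e^{\pm z''_i}| \le 1,\ |e^{\pm\sqrt{-1}\, z''_i}| \le 1 \ \text{for all } i\}$), so the hypotheses of the cited vanishing theorem are honestly met, and that Oka's embedding trick belongs to the proof of that vanishing theorem itself --- the step that actually remains to be checked here is only the cofinality argument built on the auxiliary function $\varphi(z')$, which you correctly flag.
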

\begin{df}
The sheaf of holomorphic microfunctions along $\chi$ in $S^*_{\chi}$ is defined by
\begin{equation}
\C^{\mathbb{R}}_{\chi} :=
\mu_\chi(\OO_{X}) \underset{\mathbb{Z}_{S^*_\chi}}{\otimes} or_{S^*_\chi}[\operatorname{codim}_{\mathbb{C}} Z],
\end{equation}
where $or_{S^*_\chi}$ denotes the orientation sheaf of $S^*_\chi$.
We also define $\C^{\mathbb{R},f}_{\chi}$ and
$\C^{\mathbb{R}, \mathrm{w}}_{\chi}$ by replacing $\OO_X$
in the above definition with
$\mathcal{O}_X^t$ and $\OW_X$ respectively.
\end{df}
Note that
$\C^{\mathbb{R}}_{\chi}$ and
$\C^{\mathbb{R},f}_{\chi}$ are really sheaves on $S^*_\chi$.
We also note that $\C^{\mathbb{R}, \mathrm{w}}_{\chi}$ is a complex.
It is, however, concentrated in degree $0$
outside the zero section, i.e., $\{(z;\zeta^{(1)},\dots, \zeta^{(\ell)})
\in S^*_\chi;\, \zeta^{(j)} \ne 0\}$.

\

Next we consider the corresponding result for the case $I = I_\mathbb{R}$.
Let $M$ be a connected real analytic manifold and $X$ its complexification.
Let $\Theta_1$, $\dots$, $\Theta_\ell$ be real analytic vector subbundles of
$TM$ which are involutive, that is, $[\theta_1,\, \theta_2] \in \Theta_k$
for any vector fields $\theta_1, \theta_2 \in \Theta_k$.
We denote by $\Theta^{\mathbb{C}}_k \subset TX$
the complex vector subbundle over $X$ that is
a complexification of $\Theta_k$ near $M$.
Now we introduce the conditions for $\Theta_k$'s which are counterparts of the ones H1, H2 and H3.
Set, for $1 \le k \le \ell$,
\[
\operatorname{NR}(k) := \{j \in \{1,\dots,\ell\};\; \Theta_j \nsubseteq \Theta_k, \Theta_k \nsubseteq \Theta_j\}
\]
Then we assume that, for any $q \in M$ and any $k$
with $\operatorname{NR}(k) \ne \emptyset$,
\[
(TM)_q = (\Theta_k)_q + \left(\underset{j \in \operatorname{NR}(k)}{\bigcap} (\Theta_j)_q\right).
\]
We also assume that,
for simplicity,
$\Theta_k$'s are mutually distinct,
i.e., $\Theta_{k_1} \ne \Theta_{k_2}$ if $k_1 \ne k_2$.
For a local description of $\Theta_k$'s, we have the lemma below.
\begin{lem}
Under the above situation, there exist subsets $I_j$ ($j=1,\dots,\ell$)
of $\{1,\dots,n\}$ satisfying the condition (\ref{eq:cond-H}) for which
the following holds.
For every $q \in M$, there exist an open neighborhood $U \subset M$ of $q$ and
a real analytic coordinates system $(x_1,\dots,x_n)$ of $U$
such that, in $U$, each $\Theta_k$ is given by
\[
\{(x;\,\nu) \in T\mathbb{R}^n;\, \nu_i = 0\, (i \in I_k)\}.
\]
\end{lem}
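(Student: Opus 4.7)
The plan is to proceed by induction on $\ell$, repeatedly applying Frobenius and using the transversality hypothesis to merge coordinate systems. The base case $\ell = 1$ is the classical Frobenius theorem: since $\Theta_1$ is involutive, local coordinates exist near $q$ in which $\Theta_1$ is the kernel of $(dx_i)_{i\in I_1}$ with $I_1=\{1,\ldots,\mathrm{codim}\,\Theta_1\}$, and the conditions \eqref{eq:cond-H} hold trivially for a single set.

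For the inductive step, I would select a $\Theta_k$ that is minimal in the inclusion poset of $\{\Theta_1,\ldots,\Theta_\ell\}$. By hypothesis, every other $\Theta_j$ then either properly contains $\Theta_k$, or satisfies $(\Theta_j)_q+(\Theta_k)_q=T_qM$. Apply the inductive hypothesis to $\{\Theta_j:j\ne k\}$ to obtain coordinates $(y_1,\ldots,y_n)$ and index sets $\{I_j\}_{j\ne k}$ satisfying \eqref{eq:cond-H}, then apply Frobenius to $\Theta_k$ to produce functions $f_1,\ldots,f_{c_k}$ with $(df_1,\ldots,df_{c_k})$ spanning $\Theta_k^\perp$ near $q$. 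For $j$ with $\Theta_k\subsetneq\Theta_j$, we have $\Theta_j^\perp\subset\Theta_k^\perp$, so each $dy_i$ with $i\in I_j$ lies in $\mathrm{span}(df_1,\ldots,df_{c_k})$; for $j$ with $\Theta_j+\Theta_k=TM$, we have $\Theta_j^\perp\cap\Theta_k^\perp=0$, so the $df_i$'s are linearly independent of $(dy_i)_{i\in I_j}$. After a suitable fiberwise linear change of basis among the $f_i$'s, they replace the $y$-coordinates that should be indexed by $\hat{I}_k$, yielding a single coordinate system adapted to all of $\{\Theta_1,\ldots,\Theta_\ell\}$, with $I_k$ determined by the combinatorial recipe of \eqref{eq:def-hat-I}.

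The main obstacle is carrying out the merging consistently across all pairs simultaneously: one must verify that the new differentials from Frobenius on $\Theta_k$ preserve the combinatorial structure \eqref{eq:cond-H} against \emph{every} other $I_j$, not just pairwise. The transversality condition (H2-analog) is essential here, since it precisely rules out spurious linear dependencies between the $df_i$'s and the differentials of coordinates indexed by any $I_j$ non-comparable to the prospective $I_k$. A careful bookkeeping using the decomposition of each $I_j$ into its constituent blocks $\hat{I}_i$ (for $i$ with $I_i\subseteq I_j$), together with the nesting/transversality dichotomy applied at each block, should close the induction and produce the claimed local coordinates.
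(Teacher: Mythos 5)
Your inductive approach is genuinely different from the paper's. The paper applies Frobenius once to \emph{each} $\Theta_k$, producing defining one-forms $\pi^{(k)}_i$ for all $k$ at once, and then invokes the coordinate-selection argument of Proposition~1.2 of \cite{HP} to extract a single adapted coordinate system from the whole collection. You instead induct on $\ell$: pick a minimal $\Theta_k$, build a coordinate system for $\{\Theta_j\}_{j\ne k}$ by induction (this subfamily does inherit mutual distinctness and the simultaneous transversality hypothesis, since dropping a member only enlarges the intersections $\bigcap_{j\in\operatorname{NR}(i)}(\Theta_j)_q$), run Frobenius on $\Theta_k$ alone, and merge. Both routes reduce to the same underlying linear algebra; yours merely spreads it out over the induction instead of doing it once at the end.

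The gap is in the merging step, which you wave at but do not carry out --- and that step is precisely what the citation in the paper's proof is standing in for. Concretely, with $I_{>k}:=\bigcup_{\Theta_j\supsetneq\Theta_k}I_j$ (so that $(dy_i)_{i\in I_{>k}}\subset\Theta_k^\perp$), one must select $\hat c_k:=c_k-\#I_{>k}$ of the $f_i$'s and a set $\hat I_k$ of $\hat c_k$ fresh indices in $\{1,\dots,n\}\setminus\bigcup_{j\ne k}I_j$ so that $(y_i)_{i\notin\hat I_k}$ together with the chosen $f$'s is again a coordinate system; then $I_k:=\hat I_k\sqcup I_{>k}$, and condition (i) of \eqref{eq:cond-H} has to be rechecked for every pair involving $k$. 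Both the counting inequality $c_k+\#\bigl(\bigcup_{j\ \mathrm{nc}\ k}I_j\bigr)\le n$ and the basis-exchange argument require
\[
\Theta_k^\perp\ \cap\ \operatorname{span}\Bigl\{dy_i\;;\;i\in\bigcup_{j\ \mathrm{nc}\ k}I_j\Bigr\}\ =\ 0,
\]
which is exactly the \emph{simultaneous} transversality $(TM)_q=(\Theta_k)_q+\bigcap_{j\in\operatorname{NR}(k)}(\Theta_j)_q$ read in $T^*_qM$. Your wording --- that H2 ``rules out spurious linear dependencies between the $df_i$'s and the differentials of coordinates indexed by any $I_j$ non-comparable to the prospective $I_k$'' --- reads as a pairwise statement; pairwise transversality is strictly weaker (three pairwise-transverse line subbundles of $T\mathbb{R}^2$ already defeat it) and would not suffice. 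Finally, $\hat I_k$ cannot be read off from the ``combinatorial recipe'' of \eqref{eq:def-hat-I} before the exchange is made; it is \emph{defined} as the set of indices freed up during the exchange. Spelling out these points would complete your argument.
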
{\label{lem:summersive-local-system}}
\begin{proof}
As $\Theta_k$ is involutive, there exist locally real analytic functions
$\pi^{(k)}_i$ ($i=1$, $\dots$, $\operatorname{codim}_{\mathbb{R}}\Theta_k$)
such that
\[
\Theta_k = \{(q;\nu) \in TM;\, d\pi^{(k)}_i(q)(\nu) = 0\,
(i=1,\dots,
\operatorname{codim}_{\mathbb{R}}\Theta_k) \}.
\]
Then, employing the same argument as that in Proposition 1.2 in \cite{HP},
we can choose desired coordinate functions
from these $\pi^{(k)}_i$'s.
\end{proof}

Let $N_{M,\,j} \subset X$ $(j=1,\dots,\ell)$ be the union of the
complex integral submanifolds of the involutive complex vector bundle
$\Theta^{\mathbb{C}}_j \subset TX$ passing through each point
$q \in M$, that is,
\[
N_{M,\,j} := \underset{q \in M}{\bigcup} \mathcal{L}(\Theta^\mathbb{C}_j,\,q)
\]
where $\mathcal{L}(\Theta^{\mathbb{C}}_j,\,q)$ denotes the complex integral
submanifold
of $\Theta_j$
passing through the point $q$.
Set
\[
\chi := \{N_{M,\,1}, \dots, N_{M,\,\ell}\},\quad
N_M := N_{M,\,1} \cap \dots \cap N_{M,\,\ell} \subset X.
\]
These $N_{M,\,j}$'s and $S^*_{\chi}$ are locally described as follows:
Let
\[
(z_1 = x_1 + \sqrt{-1}y_1,\,\dots,\, z_n = x_n + \sqrt{-1}y_n).
\]
be a system of local coordinates of $X$ which are complexification
of the coordinates $(x_1,\dots,x_n)$ given in Lemma
{\ref{lem:summersive-local-system}}.
Then it follows from the lemma that we have
\[
N_{M,\,j} = \{z \in X;\, \operatorname{Im}z_i = 0\, (i \in I_j)\}
\qquad (j=1,\dots,\ell),
\]
where $I_j$'s were given in the lemma. We also set $I_0 = \widehat{I}_0$ by (\ref{eq:def-I_0}).
Denote by $z^{(j)} = x^{(j)} + \sqrt{-1}y^{(j)}$
the coordinates variables $z_i = x_i + \sqrt{-1}y_i$'s with $i \in \widehat{I}_j$
and by $\zeta^{(j)} = \xi^{(j)} + \sqrt{-1}\eta^{(j)}$
the dual variables of $z^{(j)} = x^{(j)} + \sqrt{-1}y^{(j)}$
for $j=0,\dots,\ell$.
Then the coordinates of $S^*_{\chi}$ are locally given by
\[
(z^{(0)}, x^{(1)},\dots, x^{(\ell)};\, \sqrt{-1}\eta^{(1)},\dots,\sqrt{-1}\eta^{(\ell)}).
\]

\begin{cor}
Let $p \in S^*_{\chi}$.  Then we have
\[
H^k(\mu_\chi(\mathcal{F}))_p = 0\quad
(k \ne \operatorname{codim}_{\mathbb{R}} N_M),
\]
where $\mathcal{F}$ is either $\OW_{X_{sa}}$, $\mathcal{O}^t_{X_{sa}}$ or $\OO_X$.
\end{cor}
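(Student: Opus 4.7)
The plan is to reduce the statement to the two vanishing theorems proved in the previous subsection (Theorem \ref{th:vanishing-for-mm} and its $\mathcal{O}^t/\mathcal{O}$-analogue) by recognizing the present setup as the specialization $I_{\mathbb{R}} = I$, $I_{\mathbb{C}} = \emptyset$ of that general framework. Concretely, taking complexified coordinates $(z_1,\dots,z_n)$ coming from the real coordinates of Lemma \ref{lem:summersive-local-system}, the local defining equations $\operatorname{Im} z_i = 0$ ($i \in I_j$) of $N_{M,j}$ are precisely $q_i = 0$ ($i \in I_j$) under the convention $q_i = \sqrt{-1}\operatorname{Im} z_i$ used there. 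Thus the family $\chi = \{N_{M,1},\dots,N_{M,\ell}\}$ is exactly the family $\{N_1,\dots,N_\ell\}$ of Subsection 4.3 in the case where every index of $I = \bigcup_j I_j$ is ``real''.

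Given this identification, I would first handle $\mathcal{F} = \OW_{X_{sa}}$ by invoking Theorem \ref{th:vanishing-for-mm}, which gives the vanishing of $H^k(\mu_\chi(\OW_{X_{sa}}))_p$ for $k \ne N(p) = \#I + \#(I^*(p) \cap I_{\mathbb{C}})$. Since $I_{\mathbb{C}} = \emptyset$ in the present setup, the second summand drops and $N(p) = \#I$ identically. For $\mathcal{F} = \mathcal{O}^t_{X_{sa}}$ and $\mathcal{F} = \mathcal{O}_X$, the corresponding theorem stated just after Theorem \ref{th:vanishing-for-mm} yields the same conclusion with the single value $k = \operatorname{codim}_{\mathbb{C}} N = \#I$.

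The only remaining step is the codimension count: $N_M = \bigcap_j N_{M,j} = \{\operatorname{Im} z_i = 0 : i \in I\}$ is cut out by $\#I$ independent real equations, so $\operatorname{codim}_{\mathbb{R}} N_M = \#I$, matching both $N(p)$ and $\operatorname{codim}_{\mathbb{C}} N$ computed above. There is no substantive obstacle in this corollary; the nontrivial analytic content (the edge-of-the-wedge theorem with bounds and Martineau's injectivity lemma) is already absorbed into Theorem \ref{th:vanishing-for-mm}, and the present statement is essentially a dictionary translation into the language of involutive real-analytic subbundles of $TM$.
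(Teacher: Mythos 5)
Your proposal is correct and follows exactly the route the paper intends: the corollary is the specialization $I_{\mathbb{R}} = I$, $I_{\mathbb{C}} = \emptyset$ of Theorem \ref{th:vanishing-for-mm} (for $\OW$) and of the unnamed theorem immediately after it (for $\mathcal{O}^t$ and $\mathcal{O}_X$), combined with the local coordinates supplied by Lemma \ref{lem:summersive-local-system} and the elementary count $\operatorname{codim}_{\mathbb{R}} N_M = \#I$.
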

\begin{oss}
In this case, the result is independent of a point $p \in S^*_{\chi}$ contrary to
the complex case.
\end{oss}

\newcommand{\CO}{{\mathcal{C}_{N_M,\,\chi}}}
\newcommand{\CWO}{{\mathcal{C}^{\mathrm{w}}_{N_M,\,\chi}}}
\newcommand{\CTO}{{\mathcal{C}^f_{N_M,\,\chi}}}
\begin{df}
The sheaf of microfunctions along $\chi$ with holomorphic parameters is defined by
\begin{equation}
\CO :=
\mu_\chi(\OO_{X}) \underset{\mathbb{Z}_{S^*_\chi}}{\otimes} or_{S^*_\chi}
[\operatorname{codim}_{\mathbb{R}} N_M],
\end{equation}
where $or_{S^*_\chi}$ denotes the orientation sheaf of $S^*_\chi$.
We also define $\CTO$ and
$\CWO$ by replacing $\OO_X$
in the above definition with
$\mathcal{O}_X^t$ and $\OW_X$ respectively.
\end{df}
Note that these are really sheaves in $S^*_\chi$, that is, they are concentrated
in degree $0$ everywhere.

\section{Applications to $\mathcal{D}$-modules}\label{sec:AD}
In this section, we consider applications of multi-microlocalizations to $\mathcal{D}$-module theory.
First, recall  the notation  of  \S\,  \ref{sec:MM}; for example,
let $\tau^{}_i \colon E^{}_i \to Z$ $(1\leq i \leq \ell)$ be vector bundles  over  $Z$,  and let $E^*_i$
be the dual   bundle of $E^{}_i$.
\begin{teo}[cf.\ \cite{Uc95}]\label{FundTri}
Let $F$ be  a multi-conic object on  $E$. Then there exists a natural isomorphism
\[
\tau^!R \tau^{}_{!}  F \earrow
Rp^{}_{1*} \,p^{\,!}_{2}(F^{\wedge^{}_E}),
\]
and the natural morphism
$
F \to \tau^{!}R\tau^{}_{!}\,F$
is embedded  to the following distinguished triangle\textup{:}
\[
F \to  \tau^{!}  R\tau^{}_{!} F \to
Rp^{+}_{1*}p^{+!}_{2}(F^{\wedge^{}_E}) \xrightarrow{+1}.
\]
\end{teo}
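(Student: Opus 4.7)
The plan is to split the proof into two parts: constructing the distinguished triangle from an excision sequence on $E \times_Z E^*$, and then identifying the middle vertex with $\tau^! R\tau_! F$ by induction on $\ell$, using Uchida's theorem as the base case.

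For the triangle, use the decomposition $E \times_Z E^* = P' \sqcup P^+$ with $P'$ closed. Writing $i \colon P' \hookrightarrow E \times_Z E^*$ for the closed embedding and $j \colon P^+ \hookrightarrow E \times_Z E^*$ for the open complementary embedding, the standard excision triangle reads
\[
i_* i^! G \to G \to Rj_* j^{-1} G \xrightarrow{+1}, \qquad G := p_2^{\,!}(F^{\wedge_E}).
\]
Applying $Rp_{1*}$ and using $p_1 \circ i = p'_1$, $p_2 \circ i = p'_2$, $p_1 \circ j = p_1^+$, $p_2 \circ j = p_2^+$, together with $j^! = j^{-1}$ (since $j$ is an open embedding), yields
\[
Rp'_{1*} p^{\prime\,!}_2 (F^{\wedge_E}) \to Rp_{1*} p_2^{\,!}(F^{\wedge_E}) \to Rp^+_{1*} p^{+\,!}_2 (F^{\wedge_E}) \xrightarrow{+1}.
\]
The first term equals $(F^{\wedge_E})^{\vee^*_E}$ by Proposition \ref{propF}(2), and this is isomorphic to $F$ because $\vee^*_E$ inverts $\wedge_E$ on the category of multi-conic objects.

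To identify the middle term $Rp_{1*} p_2^{\,!}(F^{\wedge_E})$ with $\tau^! R\tau_! F$, I proceed by induction on $\ell$. The base case $\ell = 1$ is Uchida's theorem \cite{Uc95} applied to the single vector bundle $E_1 \to Z$. For the inductive step, factor $E = E_1 \ftimes_Z E'$ with $E' := E_2 \ftimes_Z \cdots \ftimes_Z E_\ell$, and analogously for $E^*$; let $\sigma \colon E \to E'$ and $\tau' \colon E' \to Z$ be the projections, so that $\tau = \tau' \circ \sigma$ and hence $\tau^! R\tau_! F = \sigma^! \tau'^! R\tau'_! R\sigma_! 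F$. Applying Uchida's theorem along the $E_1$-bundle $\sigma$ (with the partial Fourier-Sato transform $\wedge_1$) gives a formula for $\sigma^! R\sigma_! F$ in terms of $F^{\wedge_1}$; applying the inductive hypothesis on $E'$ to $R\sigma_! F$, which remains multi-conic for the remaining $\ell - 1$ bundles, then yields the desired expression for $\tau^! R\tau_! F$ after combining via base change along the fiber products $E_i \ftimes_Z E_i^*$ and the commutativity of the Fourier-Sato transforms furnished by Proposition \ref{propF}(1).

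The hardest point will be verifying the naturality of the identification: under the inductively constructed isomorphism $\tau^! R\tau_! F \simeq Rp_{1*} p_2^{\,!}(F^{\wedge_E})$, the canonical unit morphism $F \to \tau^! R\tau_! F$ must correspond precisely to the first morphism in the excision-derived triangle, so that the triangle is the one attached to the unit and not merely an abstract triangle with isomorphic vertices. This naturality check reduces to a diagram chase through the base-change isomorphisms and the two partial Fourier-Sato transforms used in the induction, and is the most delicate piece of bookkeeping in the argument; once it is in hand, both claims of the theorem follow simultaneously.
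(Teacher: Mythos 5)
Your proposal matches the paper's proof in all essentials: both obtain the triangle by applying $Rp_{1*}$ to the excision triangle for the closed/open decomposition $E \ftimes_Z E^* = P' \sqcup P^+$, identify the $P'$-term with $F$ via Proposition \ref{propF}(2) together with Fourier--Sato inversion, and identify $Rp_{1*}p_2^{\,!}(F^{\wedge_E})$ with $\tau^!R\tau_!F$ by induction on $\ell$ with Lemma A.2 of \cite{Uc95} as the base case. The naturality point you flag is exactly what the paper settles by tracking the unit morphism through the explicit commutative ladder \eqref{eq.s.2} (combining base change along $E_1 \ftimes_Z E'$ with the commutation of partial transforms), so your plan would need to reproduce essentially that diagram to close the argument.
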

\begin{proof}
If $\ell=1$, the results follows by Lemma A.2 of \cite{Uc95}. Assume $\ell >1$,  and set
$E':= \smashoperator[r]{\mathop{\times}_{Z, i=2}^\ell}E^{}_i$,
$E'{}^*:= \smashoperator[r]{\mathop{\times}_{Z, i=2}^\ell}E^{*}_i$, and $P'_{E'}:= \smashoperator{\mathop{\times}_{Z, i=2}^\ell}P'_i$.
Moreover, let $\wedge_{E'}$ (resp $\vee^*_{E'}$) be the composition of $\wedge_i$ (resp. $\vee^*_i$) for $i=2,\dots,\ell$.
Consider:
\[
\xymatrix @C=3em @R=2em{
 & E \ftimes_Z E^*  \ar @/_4ex/[ld]_-{p^{}_1} \ar@/^4ex/[rr]^-{p^{}_2}
 \ar[r]_-{p^{}_{E',2} }
\ar[d]^-{p^{}_{1,1}}
\ar@{}[dr] | {\displaystyle\square}
& E^{}_1 \ftimes_Z E^*
\ar[r]_-{p^{}_{1,2}}\ar[d]^-{p^{}_{1,1}}& E^*
\\
E& E^{}_1 \ftimes_Z E' \ftimes_Z E'{}^*  \ar[l]^-{p^{}_{E',1}} \ar[r]_-{p^{}_{E',2}}
& E^{}_1 \ftimes_Z E'{}^* &
}\]
By the commutative diagram
\[
\xymatrix @C=3em{
 E\ar[r]^-{ \tau^{}_1}
\ar[d]^-{\tau^{}_{E'}}
\ar@{}[dr] | {\displaystyle\square}
& E'\ar[d]^-{\tau^{}_{E'}}
\\
E^{}_1\ar[r]_-{ \tau^{}_1 } & Z
}\]
 we have
\begin{align*}
\tau^{!}_{E'}\,&R\tau^{}_{E'!} \,\tau^{!}_1 \,
R\tau^{}_{1!} \, F
 = \tau^{!}_{E'} \,R\tau^{}_{E'!} \,\tau^{-1}_1
R\tau^{}_{1!} \, F \tens \omega^{}_{E/Z}
\\
&
= \tau^{!}_{E'} \, \tau^{-1}_1 R\tau^{}_{E'!} \,
R\tau^{}_{1!} \,F \tens \omega^{}_{E/Z}
= \tau^{!}_{E'} \,\tau^{!}_1 R\tau^{}_{E'!} \,
R\tau^{}_{1!} \, F =\tau^{!}  R\tau^{}_{!} \,F.
\end{align*}
Hence,  by Lemma A.2 of \cite{Uc95} and induction hypothesis, we obtain:
\begin{equation}\label{eq.s.2}
\vcenter{
\xymatrix @!0@C=9em @R=8ex{
F \ar[d]\ar[r]^-{\dsim} &
F^{\wedge_{E'}\vee^*_{E'}} \ar[r] ^-{\dsim}
&F^{\wedge_{E'}\vee^*_{E'}\wedge^{}_1\vee^{*}_{1} } \ar[d]
\\
 \tau^!_{E'}R \tau^{}_{E'!} F \ar[d] \ar[rr] ^-{\dsim}& &
Rp^{}_{E',1*} \,p^{\,!}_{E',2}(F^{\wedge_{E'}}) \ar[d]
\\
\tau^!_{1} R \tau^{}_{1!}\tau^!_{E'} R \tau^{}_{E'!}  F \ar[rr] ^-{\dsim}
 \ar@{=}[d]& & \smash{\tau^!_{1}
R \tau^{}_{1!}Rp^{}_{E',1*} \,p^{\,!}_{E',2}
(F^{\wedge_{E'}}})
\ar[d]^-{\displaystyle\! \wr}
\\
\tau^!R \tau^{}_{!}  F \ar[rr] ^-{\dsim}& &
\smash{Rp^{}_{1,1*} \,p^{\,!}_{1,2}((Rp^{}_{E',1*} \,p^{\,!}_{E',2}
(F^{\wedge_{E'}}) )^{\wedge^{}_1}})
}} \end{equation}
For the same reasoning as in \eqref{eq.s.4},
 for any multi-conic object $H$ on $E^{}_1\ftimes_Z E'{}^*$
we have
\[
H^{\vee^{*}_i\vee ^{}_{1}}=H^{\vee ^{}_{1} \vee^{*}_i} .
\]
Therefore, we have
\begin{equation}
\label{eq.s.5}
H^{\vee ^{*}_{i} \wedge^{}_1}
 =(H^{\vee ^{*}_{i}\vee ^{}_{1}})^{\mathrm{id} \times a} \tens  \omega^{\otimes -1}_{E^{}_1/Z}
= (H^{\vee^{}_1\vee ^{*}_{i} })^{\mathrm{id} \times a} \tens  \omega^{\otimes -1}_{E^{}_1/Z}
= H^{\wedge^{}_1\vee ^{*}_{i}}
\end{equation}
Thus, by Proposition \ref{propF} (2)  and \eqref{eq.s.5} we have
\begin{equation}
\label{eq.s.6}
F^{\wedge_{E'}\vee^*_{E'}\wedge^{}_1\vee^{*}_{1} } =
Rp'_{1*}p^{\prime \,!}_{2} (F^{\wedge_E}).
\end{equation}
Lastly, by Proposition 3.7.13 of \cite{KS90}  we have
\begin{equation}\label{eq.s.7}
\begin{split}
Rp^{}_{1,1*}& \,p^{\,!}_{1,2}((Rp^{}_{E',1*} \,p^{\,!}_{E',2}
(F^{\wedge_{E'}}))^{\wedge^{}_1})
\\
&
 =Rp^{}_{1,1*} \,p^{\,!}_{1,2}\,Rp^{}_{E',1*}((p^{\,!}_{E',2}
(F^{\wedge_{E'}}) )^{\wedge^{}_1})
\\
& =Rp^{}_{1,1*} \,p^{\,!}_{1,2}\,Rp^{}_{E',1*}\,p^{\,!}_{E',2}
(F^{\wedge_{E'}\wedge^{}_1})
= Rp^{}_{1*}p^{!}_2(F^{\wedge_E}).
\end{split}
\end{equation}
Therefore, by \eqref{eq.s.2}, \eqref{eq.s.6},   \eqref{eq.s.7} and the definition of $P^+$,
we have
\[
\xymatrix @C=1em @R=3ex{
F \ar[r] \ar[d]^-{\displaystyle\! \wr}& \tau^{!}\, R\tau_! F
\ar[d]^-{\displaystyle\! \wr}
\\
F^{\wedge_{E'}\vee^*_{E'}\wedge^{}_1\vee^{*}_{1} }  \ar@{-}[d]^-{\displaystyle\! \wr}\ar[r]
&
Rp^{}_{1,1*} \,p^{\,!}_{1,2}((Rp^{}_{E',1*} \,p^{\,!}_{E',2}
(F^{\wedge_{E'}}))^{\wedge^{}_1})  \ar@{-}[d]^-{\displaystyle\! \wr}
\\
Rp'_{1*}\,p^{\prime \,!}_{2} (F^{\wedge_{E'}}) \ar[r] &
Rp^{}_{1*}\,p^{!}_{2} (F^{\wedge_{E'}})
 \ar[r] & Rp^+_{1*}\,p^{+ \,!}_{2} (F^{\wedge_{E'}}) \ar[r]^-{+1} & {}.
}
\]
The commutativity follows from the constructions.
 \end{proof}
Therefore, we obtain the following:
\begin{teo}
Let $X$ be a real analytic manifold, and assume that the family
$\chi = \{M^{}_i\}_{i=1}^\ell$ of submanifolds in $X$ satisfies conditions \textup{H1},  \textup{H2} and \textup{H3}.
Set  $M:= \bigcap_{i=1}^\ell M^{}_i$.
Then for any $F \in  \bDb(k_{X_{\rm sa}})$, there exists the following distinguished triangle\textup{:}
\begin{equation}\label{tri1}
\nu^{}_{\chi}(F) \to \tau^{-1}R\varGamma^{}_Y(F) \tens \omega^{\otimes -1}_{M/X} \to
Rp^{+}_{1*}(p^{+}_{2})^{-1}\mu^{}_{\chi}(F) \tens \omega^{\otimes -1}_{M/X}  \xrightarrow{+1}.
\end{equation}
\end{teo}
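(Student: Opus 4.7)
The approach is to apply Theorem \ref{FundTri} (the fundamental triangle for Fourier--Sato transforms over a base) to the multi-conic object $\nu^{}_{\chi}(F)$ on the vector bundle $\tau \colon S^{}_{\chi} \to M$, and then rewrite each term using the computations already obtained in \S\,1--3. Concretely, taking $E := S^{}_{\chi}$, $E^* := S^*_{\chi}$, $Z := M$ in Theorem \ref{FundTri}, and using that $(\nu^{}_{\chi}(F))^{\wedge^{}_E} = \mu^{}_{\chi}(F)$ by the very definition of multi-microlocalization, one obtains immediately the distinguished triangle
\[
\nu^{}_{\chi}(F) \to \tau^{!} R\tau^{}_! \nu^{}_{\chi}(F) \to Rp^{+}_{1*} p^{+!}_{2} \mu^{}_{\chi}(F) \xrightarrow{+1}.
\]

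Next I would identify the middle term. The proposition of \S\,2 asserting $R\tau^{}_! \nu^{}_{\chi}(F) \simeq R\varGamma^{}_M(F)$ allows me to rewrite it as $\tau^{!} R\varGamma^{}_M(F)$. Since $\tau$ is the projection of a real vector bundle of rank $d := \operatorname{codim}_X M$ (this equality holds by $\sum_{j=1}^\ell |\hat{I}_j| = |\bigcup_j I_j|$ under H1, H2, H3), the standard smooth base-change formula gives $\tau^{!} G \simeq \tau^{-1} G \tens \omega^{}_{S_\chi/M}$. Using that the zero-section embedding $M \hookrightarrow S_\chi$ has normal bundle $S_\chi$ itself, whose orientation sheaf matches that of the normal bundle of $M \hookrightarrow X$ (this is transparent in the coordinate description of \S\,1, since $S_\chi$ is built fibre-wise as $\prod_i T_{M_i}\iota(M_i)$ with fibres matching $\hat{I}_j$-directions), one has a canonical isomorphism $\omega^{}_{S_\chi/M} \simeq \tau^{-1}\omega^{\otimes -1}_{M/X}$. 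Hence $\tau^! R\varGamma^{}_M(F) \simeq \tau^{-1}R\varGamma^{}_M(F) \tens \omega^{\otimes -1}_{M/X}$.

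For the right-hand term, the map $p^{+}_{2} \colon P^+ \to S^*_{\chi}$ is smooth (it is the restriction to an open subset of the projection $S_\chi \ftimes_M S^*_\chi \to S^*_\chi$), so $p^{+!}_{2} G \simeq (p^{+}_{2})^{-1} G \tens \omega^{}_{P^+/S^*_\chi}$. The fibres of $p^+_2$ are open subsets of the rank-$d$ vector bundle $S_\chi \to M$ pulled back to $S^*_\chi$, so $\omega^{}_{P^+/S^*_\chi}$ is pulled back from $M$ along $\tau \circ p^+_1$, and the same orientation-sheaf identification as above gives $\omega^{}_{P^+/S^*_\chi} \simeq (p^+_1)^{-1}\tau^{-1}\omega^{\otimes -1}_{M/X}$. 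Applying the projection formula to $Rp^+_{1*}$ then pulls this twist out of the direct image, producing exactly $Rp^{+}_{1*}(p^{+}_{2})^{-1}\mu^{}_{\chi}(F) \tens \omega^{\otimes -1}_{M/X}$. Assembling these rewritings in the triangle above yields \eqref{tri1}.

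The main obstacle is verifying canonicity of the orientation identification $\omega^{}_{S_\chi/M} \simeq \tau^{-1}\omega^{\otimes -1}_{M/X}$ (and its $P^+$-analogue) independently of local coordinates. This can be handled by invoking Theorem \ref{thm: geometry}, which canonically identifies $S_{\chi^*}$ with $T^*S^{}_{\chi}$ and thereby matches the conormal data of $M \hookrightarrow X$ with the fibrewise cotangent data of $S_\chi \to M$; alternatively, one patches the local trivialisations coming from H1--H3 and checks cocycle compatibility, which is where the combinatorics of the $\hat{I}_j$ stratification of $\bigcup_j I_j$ plays a role.
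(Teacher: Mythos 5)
Your proposal follows the paper's own route: both apply Theorem~\ref{FundTri} to $E = S_\chi$ with $F = \nu_\chi(F)$, identify $R\tau_! \nu_\chi(F)$ with $R\Gamma_M(F)$ via the earlier Proposition, and then convert $\tau^!$ and $p^{+!}_2$ to $\tau^{-1}$ and $(p^+_2)^{-1}$ using the smooth-projection dualizing-complex identity and the identification $\omega_{S_\chi/M} \simeq \tau^{-1}\omega^{\otimes -1}_{M/X}$. You are in fact slightly more explicit than the paper in raising and addressing the canonicity of the orientation isomorphism, which the paper's proof simply asserts.
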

\begin{proof}
By  the deinition of multi-microlocalization and Theorem \ref{FundTri}
we have (see also Remark \ref{!and!!})
\[
\nu^{}_{\chi}(F) \to  \tau^{!}  R\tau^{}_{!}\nu^{}_{\chi}(F)  \to
Rp^{+}_{1*}p^{+!}_{2}\mu^{}_{\chi}(F)  \xrightarrow{+1}.
\]
Since $\tau$ and $p^{+}_{2}$ are projections, we have
\begin{align*}
\tau^{!}& \simeq \tau^{-1}\tens  \omega^{}_{S_\chi/M}\simeq
\tau^{-1}\tens  \omega^{\otimes -1}_{M/X},
\\
p^{+!}_{2} &\simeq  (p^{+}_{2})^{-1}\tens  \omega^{}_{P^+/M}\simeq
 (p^{+}_{2})^{-1}\tens  \omega^{\otimes -1}_{M/X}.
 \end{align*}
Hence we prove the theorem.
\end{proof}
By Theorem \ref{teo: estimate cone},
under the identifications $T^*S^*_\chi = T^*S_\chi  =S_{\chi^*}$, we have
\[
\MS(\mu^{}_{\chi}(F))=\MS(\nu^{}_{\chi}(F))\subset C^{}_{\chi^*}(\MS (F) ).
\]
In particular we obtain
\begin{equation}\label{suppmu}
\supp\, \mu^{}_{\chi}(F) \subset S^*_{\chi} \cap  C^{}_{\chi^*}(\MS (F) ).
\end{equation}
Thus we obtain:
\begin{cor}\label{tri1a}
If $\dot{S}^*_{\chi}\cap  C^{}_{\chi^*}(\MS (F) )=\emptyset$,
then
\[
\nu^{}_{\chi}(F) \earrow\tau^{-1}R\varGamma^{}_Y(F) \tens \omega^{\otimes -1}_{M/X}\,.
\]
\end{cor}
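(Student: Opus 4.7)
The plan is to deduce the conclusion directly from the distinguished triangle \eqref{tri1} by showing that, under the given hypothesis, the rightmost term
\[
Rp^{+}_{1*}(p^{+}_{2})^{-1}\mu^{}_{\chi}(F) \tens \omega^{\otimes -1}_{M/X}
\]
vanishes. Once this is established, the morphism $\nu_{\chi}(F) \to \tau^{-1}R\varGamma^{}_Y(F) \tens \omega^{\otimes -1}_{M/X}$ is forced to be an isomorphism, which is exactly the claim.

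First, I would invoke the support estimate \eqref{suppmu}, which gives
\[
\supp\, \mu^{}_{\chi}(F) \subset S^*_{\chi} \cap  C^{}_{\chi^*}(\MS (F) ).
\]
By the assumption $\dot{S}^*_{\chi}\cap C^{}_{\chi^*}(\MS (F) )=\emptyset$, this support therefore lies entirely in the zero section of $S^*_{\chi}$, identified with $M$.

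Second, I would analyze the projection $p^{+}_{2}\colon P^+ \to S^*_\chi$. By the very construction, a point $(\eta,\xi)$ of $S_\chi \ftimes_M S^*_\chi$ belongs to $P^+ = S_\chi \ftimes_M S^*_\chi \setminus P'$ if and only if there exists some index $i \in \{1,\dots,\ell\}$ with $\langle \eta^{(i)},\xi^{(i)} \rangle > 0$. If $\xi$ lies in the zero section of $S^*_\chi$, then $\xi^{(i)}=0$ for every $i$, so all the pairings vanish and $(\eta,\xi)\in P'$; thus the zero section is not contained in the image of $p^{+}_{2}$. Consequently the pullback $(p^{+}_{2})^{-1}\mu^{}_{\chi}(F)$ has empty support and is zero, whence $Rp^{+}_{1*}(p^{+}_{2})^{-1}\mu^{}_{\chi}(F)=0$.

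There is no genuine obstacle; the argument is purely a geometric translation of the microsupport bound through the definition of $P^+$. The only point that needs care is the strict inequality defining $P^+$, which is precisely what prevents the zero section from appearing in the image of $p^{+}_{2}$ and thereby kills the obstruction term in \eqref{tri1}.
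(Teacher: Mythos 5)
Your argument is correct and is exactly the implicit reasoning behind the paper's "Thus we obtain" (the paper supplies no separate proof of this corollary, but uses precisely this mechanism later, e.g.\ in the proof involving $\mathcal{D}b_M$, where the vanishing of $\operatorname{supp}$ on $\dot{S}^*_\chi$ "entails" that $Rp^{+}_{1*}(p^{+}_{2})^{-1}\mu_\chi(\cdot)=0$). The key point you identified — that the strict inequality defining $P^+$ forces $(p^+_2)^{-1}$ of the zero section to be empty, so a complex supported on the zero section pulls back to zero — is the right one, and combined with \eqref{suppmu} and the distinguished triangle \eqref{tri1} it gives the claim.
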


Now, let $\chi=\{Y^{}_i\}_{i=1}^\ell$, and assume that each $Y^{}_i$  and $Y:= \bigcap_{i=1}^\ell Y^{}_i$ are  complex submanifolds of $X$.
As usual, let $\mathcal{D}_X$ be the sheaf of holomorphic differential operators on $X$.
Let $\M$ be a coherent $\mathcal{D}^{}_X$-module, and  $\ch \M $ the characteristic variety of $\M$.
Then,  for  $F=\rh^{}_{\mathcal{D}^{}_X}\!(\M,\OO^{}_X)$, it is known  that $\MS(F)=\ch \M$.
From   \eqref{tri1},  we have
\begin{multline*}
\rh^{}_{\mathcal{D}^{}_X}\!(\M,\nu^{}_{\chi}(\OO^{}_X))
\to
\tau^{-1}\rh^{}_{\mathcal{D}^{}_X}\!(\M,R\varGamma^{}_Y(\OO^{}_{X}))
\tens \omega^{\otimes -1}_{Y/X}
\\
 \to
Rp^{+}_{1*}(p^{+}_{2})^{-1}
\rh^{}_{\mathcal{D}^{}_X}\!(\M,\mu^{}_{\chi}(\OO^{}_{X}))\tens \omega^{\otimes -1}_{Y/X}  \xrightarrow{+1}.
\end{multline*}
Let $f\colon Y \hookrightarrow X$ be the canonical embedding.
We can define the following  natural mappings
associated with $f$:
\[
T^* Y \xleftarrow{f^{}_d} Y \smashoperator{\ftimes_{X}} T^*X
\xrightarrow{f^{}_\pi} T^*X.
\]
We define the \textit{inverse image} of $\M$ by
\[
\boldsymbol{D}f^* \!
\M  :=
\OO^{}_{Y}\smashoperator{\tens_{\hspace{1em} f^{-1}\OO^{}_X}^{L}}
 f^{\,-1} \! \M.
  \]
Assume  that  $Y$ is
non-characteristic for  $\M$; that is, $T^*_Y X \cap \ch \M\subset  T^*_XX$.
Then, it is known that  $\boldsymbol{D}f^* \!  \M$ is identified with
$Df^* \!  \M
:=H^{0}\boldsymbol{D}f^* \!  \M$, and $Df^* \! \M$ is  a coherent $\mathcal{D}^{}_Y$-module.

\begin{teo}\label{NC}
Assume that  $Y$ is non-characteristic for $\M$. Then
\begin{align*}
\rh^{}_{\mathcal{D}^{}_X}\!(\M,\nu^{}_{\chi}(\OO^{}_X))
& \earrow
\tau^{-1}\rh^{}_{\mathcal{D}^{}_Y}\!(Df^* \!\M,\OO^{}_{Y})
\\
&\simeq \tau^{-1}f^{\,-1}\rh^{}_{\mathcal{D}^{}_X}\!(\M,\OO^{}_{X}).
\end{align*}
\end{teo}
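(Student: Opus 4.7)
The plan is to take $F := \rh^{}_{\mathcal{D}^{}_X}(\M,\OO^{}_X)$, whose microsupport is $\ch\M$, and to apply Corollary \ref{tri1a}: once its geometric hypothesis is verified, the third term of the distinguished triangle displayed just before the statement (obtained by applying $\rh^{}_{\mathcal{D}^{}_X}(\M,-)$ to \eqref{tri1}) vanishes, yielding the first isomorphism. The second isomorphism is then a consequence of the classical Cauchy--Kowalevski--Kashiwara theorem.

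The crux is to verify $\dot{S}^{*}_{\chi} \cap C^{}_{\chi^*}(\ch \M) = \emptyset$. Suppose for contradiction that a point $p$ lies in this intersection. By Lemma \ref{lem: multi-normal cone} applied to the family $\chi^*=\{T^*_{Y_1}X,\dots,T^*_{Y_\ell}X\}$, together with the explicit description of the multi-normal deformation of $T^*X$ given before Theorem \ref{thm: geometry}, there exist sequences of scalings $c^{}_{j,m} \to +\infty$ and points $(z^{}_m;\zeta^{}_m)\in\ch\M$ whose rescaled versions via $\mu^{}_x$ and $\mu^{}_\xi$ converge to $p$. Unpacking this, the positions $z^{}_m$ tend to the base point $q\in Y$; using conicity of $\ch\M$ to normalize $\zeta^{}_m$ and passing to a subsequential limit yields a point $(q;\zeta')\in\ch\M$ with $\zeta'\ne 0$. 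The relative growth rates of the components $\zeta^{(j)}_m$ dictated by $t^{}_{\JJ^c_j}$, combined with the fact that $p\in\dot{S}^*_\chi$, force $\zeta'\in T^*_Y X$. The non-zero element $\zeta'\in T^*_Y X \cap \ch\M$ then contradicts the non-characteristic hypothesis $T^*_Y X \cap \ch\M \subset T^*_X X$.

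Once this emptiness is established, Corollary \ref{tri1a} applied to $F$, together with the commutation of $\rh^{}_{\mathcal{D}^{}_X}(\M,-)$ with both $\tau^{-1}$ and $R\varGamma^{}_Y$, yields the first isomorphism. For the second, the standard Kashiwara-type identification of $R\varGamma^{}_Y(\OO^{}_X)\tens\omega^{\otimes -1}_{Y/X}$ on a closed complex submanifold, combined with the Cauchy--Kowalevski--Kashiwara theorem
\[
f^{-1}\rh^{}_{\mathcal{D}^{}_X}(\M,\OO^{}_X) \iso \rh^{}_{\mathcal{D}^{}_Y}(Df^*\M,\OO^{}_Y),
\]
valid under the non-characteristic hypothesis, gives the result.

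The main obstacle is the geometric emptiness step. The essential delicacy is that only the intersection $Y=\bigcap Y^{}_i$ is assumed non-characteristic, not each individual $Y^{}_i$; a naive reduction to $\ell$ separate classical cases would require the strictly stronger hypothesis of non-characteristicity at each $Y^{}_i$. One must therefore exploit the simultaneous multi-normal scaling $t^{}_{\JJ^c_j}$ and control the relative growth rates of the components of $\zeta^{}_m$ in a unified way in order to land $\zeta'$ in $T^*_Y X$ rather than merely in some $T^*_{Y_i}X$.
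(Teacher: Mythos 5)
Your proposal follows the same strategy as the paper: reduce to Corollary~\ref{tri1a} for $F = \rh_{\mathcal{D}_X}(\M,\OO_X)$, verify $\dot{S}^*_\chi \cap C_{\chi^*}(\ch\M) = \emptyset$ by extracting and renormalizing sequences from the multi-normal cone characterization, and invoke Cauchy--Kowalevski--Kashiwara for the second isomorphism. You correctly identify the delicate point (comparing the growth rates of the $\xi^{(0)}$-component against the $\xi^{(k)}$-component after normalizing by $|\zeta_m|$ in order to force the limit covector into $T^*_Y X$), which is exactly the computation the paper carries out.
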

\begin{proof}
By the  non-characteristic condition and Cauchy-Kovalevskaja-Kashiwara theorem, we obtain the  following isomorphisms:
\begin{align*}
\rh^{}_{\mathcal{D}^{}_X}\!(\M,R\varGamma^{}_Y(\OO^{}_{X})) \tens \omega^{\otimes -1}_{Y/X}
& \simeq
\rh^{}_{\mathcal{D}^{}_Y}\!(Df^* \!\M,\OO^{}_{Y})
\\
&\simeq f^{\,-1}\rh^{}_{\mathcal{D}^{}_X}\!(\M,\OO^{}_{X}).
\end{align*}
Hence by Corollary  \ref{tri1a}, we may show $\dot{S}^*_{\chi}\cap   C^{}_{\chi^*}(\ch \M)=\emptyset$.
Assume that there exists a point
\[
 (x^{(0)};\, \xi^{(1)},\dots,\xi^{(\ell)}) \in
\dot{S}^*_{\chi}\cap   C^{}_{\chi^*}(\ch \M).
\]
 Then by Theorem \ref{thm: estimate sequences} there exist sequences
\begin{align*}
\{(x^{(0)}_{n},x^{(1)}_n,\dots,x^{(\ell)}_{ n};\xi^{(0)}_{n},\xi^{(1)}_n,\dots,\xi^{(\ell)}_{ n})\}_{n=1}^\infty
&\subset \ch \M,
\\
\{(c^{(1)}_{n},\dots,c^{(\ell)}_{n})\} _{n=1}^\infty
&\subset (\RP)^\ell,
\end{align*}
 such that:
\[\left\{
    \begin{aligned}
     \lim_{n\to \infty} & c^{(j)}_{n} = \infty, \qquad (j=1,\dots,\ell), \\
     \lim_{n\to \infty} &(x^{(0)}_{n},x^{(1)}_nc^{(\JJ_1)}_{n},\dots,x^{(\ell)}_{n}c^{(\JJ_\ell)}_n
;\xi^{(0)}_nc_n,\xi^{(1)}_{n}c^{(\JJ_1^c)}_n,\dots,\xi^{(\ell)}_{ n}c^{(\JJ_\ell^c)}_n)
\\
& = (x^{(0)},0,\dots,0;0,\xi^{(1)},\dots,\xi^{(\ell)}),
    \end{aligned}
  \right.
\]
where $c^{}_n :=\prod_{j=1}^\ell c^{(j)}_{n}$, $\JJ^c_j:=\{1,\dots,\ell\}\smallsetminus\JJ_j$,
and $c^{(J)}_n := \prod_{j \in J}c^{(j)}_n$ for any $J \subseteq \{1,\dots,\ell\}$.
In particular we have $\lim\limits_{n\to \infty} (x^{(1)}_n,\dots,x^{(\ell)}_{n},\xi^{(0)}_nc^{}_n) = (0,\dots,0,0)$.
Since $(\xi^{(1)},\dots,\xi^{(\ell)}) \ne 0$, we may assume that
\[t^{}_n:= |(\xi^{(0)}_n,\xi^{(1)}_{n},\dots,
\xi^{(\ell)}_{ n})|>0.
\]  We consider the sequence  $\{(x^{(0)}_{n},x^{(1)}_n,\dots,x^{(\ell)}_{ n};
t^{-1}_n(\xi^{(0)}_n, \xi^{(1)}_{n},\dots,\xi^{(\ell)}_{ n}))\}_{n=1}^\infty \subset \ch \M $.
By extracting subsequence,  we may assume that there exists $\zeta^{}_0\ne 0$ such that
\[
\lim\limits_{n\to \infty} t^{-1}_n(\xi^{(0)}, \xi^{(1)}_{n},\dots,\xi^{(\ell)}_{ n})= \zeta ^{}_0\, .
\]
 Choose $1\leqslant k \leqslant \ell$ with $
\lim\limits_{n\to \infty} \xi^{(k)}_n c^{(\JJ_k^c)}_n =\xi^{(k)}\ne 0$.
Since $\lim\limits_{n\to \infty}  c^{(j)}_{n} = \infty $, we have
\[
\lim\limits_{n\to \infty} \dfrac{\xi^{(0)}_n}{\,\xi^{(k)}_n\,}
= \lim\limits_{n\to \infty} \dfrac{\xi^{(0)}_nc^{}_n}{\,\xi^{(k)}_n c^{(\JJ_k^c)}_n c^{(\JJ_k)}_n\,}=0.
\]
Therefore, we see that  $\zeta ^{}_0 =(0, \zeta^{}_{01},\dots, \zeta^{}_{0\ell})\ne 0$. Hence
\begin{align*}
\lim_{n\to \infty}&(x^{(0)}_{n},x^{(1)}_n,\dots,x^{(\ell)}_{ n}; t^{-1}_n(\xi^{(0)}, \xi^{(1)}_{n},\dots,\xi^{(\ell)}_{ n})
)
\\
&=
(x^{(0)},0,\dots,0; \zeta^{}_0) \in \dot{T}^*_YX \cap \ch \M ,
\end{align*} which  contradicts
the non-characteristic condition. Thus $\dot{S}^*_{\chi}\cap   C^{}_{\chi^*}(\ch \M) =\emptyset$.
Hence we obtain the desired result.
\end{proof}

\begin{es}
Take a  point $p=(q;\,\xi) \in \underset{X,1\leq j\leq \ell}{\times}T_{Y_j}\iota(Y_j)$, and set
$x_0:= \tau(p)\in Y$. Recall the notation in Remark \ref{stalk-nu}.
Then under  the assumption  of Theorem \ref{NC},
every solution  to $\mathcal{M}$ defined on  $\operatorname{Cone}_\chi(p,\,\epsilon)$
 for some $\epsilon > 0$ extends automatically  a  solution   defined on a full neighborhood of $x_0$.
\end{es}
Next, we consider the real cases. Let $M$ be  a real analytic manifold, and $\chi=\{N^{}_i\}_{i =1}^{\ell}\subset M$.
Assume that each $N^{}_i$  and $N:= \bigcap_{i=1}^\ell N^{}_i$ are  real analytic  submanifolds of $M$.
We consider the multi-normal deformation $\widetilde{M}^{}_\chi$ along $\chi$.
Let $X$ be the complexification of $M$,  and $Y$  the complexification of $N$ in $X$.
Let $\iota \colon M \hookrightarrow X$ the canonical embedding.
Let $\B^{}_M$ be the sheaf of hyperfunctions on $M$.
 Then by \eqref{tri1} we obtain
\begin{multline*}
\rh^{}_{\mathcal{D}^{}_X}\!(\M,\nu^{}_{\chi}(\B^{}_M))
\to
\tau^{-1}\rh^{}_{\mathcal{D}^{}_X}\!(\M,R\varGamma^{}_N(\B^{}_{M}))
\tens \omega^{\otimes -1}_{N/M}
\\
 \to
Rp^{+}_{1*}(p^{+}_{2})^{-1}
\rh^{}_{\mathcal{D}^{}_X}\!(\M,\mu^{}_{\chi}(\B^{}_{M}))\tens \omega^{\otimes -1}_{N/M}  \xrightarrow{+1}.
\end{multline*}
For  any conic subset $A \subset T^*X$ we  can define
$\iota^{\#}(A):= T^*M \cap C^{}_{\smash{T_M^*}X}(A)$ (\cite[Definition 6.2.3]{KS90}).
Note that  $(x^{}_0;\xi^{}_0)
\in \iota^{\#}(A) $ if and only if  there exists   a  sequence
$\{(x^{}_\nu+\sqrt{-1}\, y^{}_\nu;\xi^{}_\nu+\sqrt{-1}\, \eta^{}_\nu)\}_{\nu=1}^\infty \subset A$ such that
\[
\smashoperator{\lim_{\nu\to \infty}}\,
(x^{}_\nu+\sqrt{-1}\, y^{}_\nu;\xi^{}_\nu)  = (x^{}_0;\xi^{}_0),\quad  \smashoperator{\lim_{\nu\to \infty}}\,|y^{}_\nu|\,|\eta^{}_\nu| = 0.
\]
\begin{teo}
Assume that   $N\hookrightarrow M$ is hyperbolic for
$\M$\textup{;} that is,
\begin{equation}\label{hypM}
\dot{T}^*_NM \cap \iota^{\#}(\ch \M)=\emptyset.
\end{equation}
 Then
\[
\rh^{}_{\mathcal{D}^{}_X}\!(\M,\nu^{}_{\chi}(\B^{}_M))
\earrow
\tau^{-1}\rh^{}_{\mathcal{D}^{}_Y}\!(Df^* \!\M,\B^{}_{N}).
\]
\end{teo}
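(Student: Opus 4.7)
The plan is to mirror the argument of Theorem \ref{NC}, with $\ch\M$ replaced by its real analytic substitute $\iota^{\#}(\ch\M)$. I would apply Corollary \ref{tri1a} with $F := \rh^{}_{\D^{}_X}(\M, \B^{}_M)$ viewed as a sheaf on $M$: since $\nu^{}_{\chi}$ is a triangulated functor which commutes with $\rh^{}_{\D^{}_X}(\M,-)$ for coherent $\M$ via a local free resolution, the corollary yields
\[
\rh^{}_{\D^{}_X}(\M, \nu^{}_{\chi}(\B^{}_M)) \simeq \tau^{-1}R\varGamma^{}_N(\rh^{}_{\D^{}_X}(\M, \B^{}_M)) \tens \omega^{\otimes -1}_{N/M}
\]
as soon as $\dot{S}^*_\chi \cap C^{}_{\chi^*}(\MS(F)) = \emptyset$. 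The right hand side is then identified with $\tau^{-1}\rh^{}_{\D^{}_Y}(Df^*\M, \B^{}_N)$ by the hyperbolic Cauchy-Kovalevskaja theorem of Kashiwara--Schapira, whose hypothesis is precisely \eqref{hypM}.

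The classical microsupport estimate $\MS(\rh^{}_{\D^{}_X}(\M, \B^{}_M)) \subseteq \iota^{\#}(\ch\M)$ (see \cite{KS90}) reduces the required geometric vanishing to the statement
\[
\dot{S}^*_\chi \cap C^{}_{\chi^*}(\iota^{\#}(\ch\M)) = \emptyset.
\]
Assume for contradiction that a point $p \in \dot{S}^*_\chi$ lies in this intersection. Theorem \ref{thm: estimate sequences}, applied with $\iota^{\#}(\ch\M)$ playing the role of $\MS(F)$, produces sequences $\{q_n\} \subset \iota^{\#}(\ch\M)$ and scaling factors in $\RP$ whose rescalings converge to $p$. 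Unfolding the definition of $\iota^{\#}$, each $q_n$ is itself approximated by a sequence in $\ch\M$ whose imaginary parts $y$ and covector imaginary parts $\eta$ satisfy $|y|\,|\eta| \to 0$.

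A diagonal extraction then yields a single sequence in $\ch\M$ exhibiting the combined limiting behaviour, and the $t_n$-normalization used in Theorem \ref{NC}'s proof (division by $t_n := |(\xi^{(0)}_n,\dots,\xi^{(\ell)}_n)|$) produces a non-zero limit vector $\zeta^{}_0 = (0, \zeta^{}_{01},\dots,\zeta^{}_{0\ell})$ lying in $\dot{T}^*_N M \cap \iota^{\#}(\ch\M)$, contradicting \eqref{hypM}. The main obstacle, essentially bookkeeping, is to perform this diagonal extraction in a way which simultaneously preserves the $|y|\,|\eta| \to 0$ condition defining $\iota^{\#}$ and the scaling structure of the multi-normal cone coming from $C^{}_{\chi^*}$; this is the only substantive point where the real analytic argument departs from the complex one of Theorem \ref{NC}.
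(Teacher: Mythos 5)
Your overall architecture matches the paper: apply the fundamental distinguished triangle, identify the middle term via the hyperbolic restriction theorem, estimate the microsupport of $\rh^{}_{\mathcal{D}^{}_X}(\M,\B^{}_M)$ by $\iota^{\#}(\ch\M)$ via Corollary 6.4.4 of \cite{KS90}, and reduce the vanishing of the third term to $\dot{S}^*_{\chi}\cap C^{}_{\chi^*}(\iota^{\#}(\ch\M))=\emptyset$. The divergence — and the point I want to flag — is in how you establish that last emptiness.

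You propose to unfold the definition of $\iota^{\#}$ into sequences in $\ch\M$ and then perform a diagonal extraction; you yourself identify the ``main obstacle'' of keeping the $|y|\,|\eta|\to 0$ condition compatible with the $c^{(J)}_n$-rescalings and the $t^{}_n$-normalization, and call it bookkeeping. But this unfolding is both unnecessary and not as innocuous as you suggest: after replacing the covector $\zeta^{}_n$ by $t^{-1}_n\zeta^{}_n$, the relevant quantity becomes $t^{-1}_n|y^{}_n|\,|\eta^{}_n|$, and $t^{}_n$ can tend to $0$, so you would need the diagonal choice $\nu(n)$ to beat $t^{}_n$; this is doable but is a genuine extra argument, not mere bookkeeping, and your proposal does not carry it out. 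The paper sidesteps the issue entirely: it applies the sequence argument of Theorem \ref{NC} \emph{verbatim} to the set $\iota^{\#}(\ch\M)$, which is already a closed $\RP$-conic subset of $T^*M$. That argument never uses anything about $\ch\M$ beyond its being closed and conic, so it produces directly a point of $\dot{T}^*_NM\cap\iota^{\#}(\ch\M)$, contradicting \eqref{hypM}, with no descent to $\ch\M$ and no diagonalization. In particular your claim that this step ``is the only substantive point where the real analytic argument departs from the complex one'' is inaccurate — there is no departure at all, once one notices the argument is purely about closed conic sets.
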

\begin{proof}
By \eqref{hypM}, we see that
$Y$ is non-characteristic for $\M$ on a neighborhood of $N$.
By the non-characteristic division theorem, we have
\[
\rh^{}_{\mathcal{D}^{}_X}\!(\M,R\varGamma^{}_N(\B^{}_{M}))
\tens \omega^{\otimes -1}_{N/M}\simeq
\rh^{}_{\mathcal{D}^{}_Y}\!(Df^* \!\M,\B^{}_{N}).
\]
By Corollary 6.4.4 of \cite{KS90},
we have
\[
\MS(\rh^{}_{\mathcal{D}^{}_X}\!(\M,\B^{}_{M})) \subset
 \iota^{\#}\MS(\rh^{}_{\mathcal{D}^{}_X}\!(\M,\OO^{}_{M}))
= \iota^{\#}(\ch \M).
\]
As in the proof of Theorem \ref{NC} we have
\[
\supp(
\rh^{}_{\mathcal{D}^{}_X}\!(\M,\mu^{}_{\chi}(\B^{}_{M}))) \cap \dot{S}^*_{\chi}
\subset
\dot{S}^*_{\chi}\cap   C^{}_{\chi^*}( \iota^{\#}(\ch \M)) =\emptyset.
\]
and we obtain the desired result.
\end{proof}
 We shall give another  result. Let us take $\chi =\{N,M\}$ with $N\subset M \subset X$ (Takeuchi's case).
Then $S_\chi= T_N M \ftimes_M T_MX$ and $S^*_\chi=T^*_N M \ftimes_M T^*_MX \simeq T^*_{N  \ftimes_M T^*_MX}T^*_MX$.
Under the notaion of \cite{Ta96},
we set
\begin{align*}
\nu_{NM}:= \nu_{\chi} \colon \bDb(k_{X})& \to  \bDb(k_{T_N M \ftimes_M T_MX}),
\\
\nu\mu_{NM}:= \nu_{\chi}^{\wedge^{}_{2}} \colon \bDb(k_{X})& \to  \bDb(k_{T_N M \ftimes_M T^*_MX}),
\\
\mu_{NM}:= \mu_{\chi} \colon \bDb(k_{X})& \to  \bDb(k_{T^*_N M \ftimes_M T^*_MX}).
\end{align*}
As usual,  let
$\E^{}_X$ be the sheaf of \textit{ring of microdifferential operators} on $T^*X$ and
$\{\E^{}_X(m)\}_{m \in
\mathbb{Z}}^{}$ the usual \textit{order filtration} on
$\E^{}_X$.
Let    $U$ be  a $\mathbb{C}^\times$-conic   open subset of $\dot{T}^*X$,
and  $\varSigma$ a $\mathbb{C}^{\times}$-conic involutory closed  analytic subset of $U$.
 Set $
\mathcal{I}^{}_\varSigma :=\{P \in \E^{}_X(1)|^{}_U;\,\sigma^{}_1(P)|{}^{}_\varSigma \equiv 0\}$ and
$ \E^{}_\varSigma := \bigcup_{m \in\mathbb{N}^{}_0}\mathcal{I}^{m}_\varSigma$.
 Here $\sigma^{}_m(P)$ denotes the \textit{principal symbol} of
$P \in \E ^{}_X(m) $, and we set $\mathcal{I}^{0}_\varSigma := \E ^{}_X(0)|^{}_U $.
Namely, $\E^{}_\varSigma\subset  \E^{}_X |^{}_U $
is a  subring generated by $\mathcal{I}^{}_\varSigma$.
\begin{df}
(1) Let    $U$ be  a $\mathbb{C}^\times$-conic   open subset of $\dot{T}^*X$,
and  $\varSigma$ a $\mathbb{C}^{\times}$-conic involutory closed  analytic subset of $U$.
Let  $\mathfrak{M}$ be a coherent $\E^{}_X$-module defined on $U$.

(a)  An  $\E^{}_\varSigma$ submodule  $\mathfrak{L}$ of $\mathfrak{M}$
is called  an  \textit{$\E^{}_\varSigma$-lattice} of $\mathfrak{M}$ if  $\mathfrak{L}$
is  $\E ^{}_X(0)$-coherent
 and  $\E^{}_X\mathfrak{L} =\mathfrak{M}$.

(b)  We say that
$\mathfrak{M}$ \textit{has regular singularities along} $\varSigma$  if
for any  $p^*\in U$,  there exist an open neighborhood $V$ of $p^*$ and  an
$\E^{}_\varSigma$-lattice $\mathfrak{L} \subset \mathfrak{M}|{}^{}_{V}$.

(2)
Let $\varSigma$ be a $\mathbb{C}^{\times}$-conic involutory closed analytic subset of
$\dot{T}^*X$, and  $\M$ a coherent $\mathcal{D}^{}_X$-module.
Then we say that $\M$
\textit{has regular singularities along} $\varSigma$  if so does   $\E^{}_X
\smashoperator{\tens_{\pi^{-1} \mathcal{D}^{}_X}}\pi^{-1} \! \M$.
\end{df}
We
impose  the
\begin{con}\label{conRS}
(1) $\varLambda \subset \dot{T}^*X $  is   a    $\mathbb{C}^\times$-conic closed regular involutory  complex
submanifold,

(2) $\M$ has regular singularities
along $\varLambda$,

(3) $\dot{T}^*_NM \cap \iota^{\#}(\varLambda)=\emptyset$.
\end{con}
By Condition \ref{conRS} (2),  we have $\ch \M\subset \varLambda\sqcup \supp \M$.
Hence by virtue of  Condition \ref{conRS} (3), we see that
$Y$ is non-characteristic for $\M$ on neighborhood of $N$.
Let $\db_M$ be  the sheaf of distributions on $M$.
\begin{teo}
Assume Condition \ref{conRS}. Then
\[
\rh^{}_{\mathcal{D}^{}_X}\!(\M,\nu^{}_{\chi}(\db_M))
\earrow
\tau^{-1}\rh^{}_{\mathcal{D}^{}_Y}\!(Df^* \!\M,\db^{}_{N}).
\]
\end{teo}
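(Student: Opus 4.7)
The plan is to mirror the proof of the preceding hyperfunction theorem, with the only essential modification being the replacement of Corollary~6.4.4 of \cite{KS90} by its distribution analogue, which genuinely requires the regularity hypothesis (2) of Condition~\ref{conRS}.

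Apply $\rh_{\mathcal{D}_X}(\M,-)$ to the distinguished triangle \eqref{tri1} with $F=\db_M$:
\begin{align*}
\rh_{\mathcal{D}_X}(\M,\nu_\chi(\db_M))
&\to \tau^{-1}\rh_{\mathcal{D}_X}(\M,R\Gamma_N(\db_M))\otimes\omega^{\otimes-1}_{N/M} \\
&\to Rp^{+}_{1*}(p^{+}_{2})^{-1}\rh_{\mathcal{D}_X}(\M,\mu_\chi(\db_M))\otimes\omega^{\otimes-1}_{N/M}\xrightarrow{+1}.
\end{align*}
By Condition~\ref{conRS}\,(2) we have $\ch\M\subset\Lambda\sqcup\supp\M$, and Condition~\ref{conRS}\,(3) then forces $\dot{T}^*_NM\cap\iota^{\#}(\ch\M)=\emptyset$, so $Y$ is non-characteristic for $\M$ on a neighborhood of $N$. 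The non-characteristic restriction theorem for distributions (Schapira's version of Cauchy-Kovalevskaya with distributional coefficients) therefore identifies the middle term with $\tau^{-1}\rh_{\mathcal{D}_Y}(Df^*\M,\db_N)$, just as in the hyperfunction case.

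For the right term, the key ingredient is the microsupport estimate
\[
\MS(\rh_{\mathcal{D}_X}(\M,\db_M))\subset\iota^{\#}(\Lambda),
\]
which is the tempered analogue of Corollary~6.4.4 of \cite{KS90}. Unlike the hyperfunction bound, this distribution-valued estimate genuinely uses the existence of an $\E_\Lambda$-lattice given by Condition~\ref{conRS}\,(2); the regularity is what allows one to control tempered growth on the solution side. Combining this bound with Theorem~\ref{teo: estimate cone} applied to $F=\rh_{\mathcal{D}_X}(\M,\db_M)$ yields
\[
\supp\rh_{\mathcal{D}_X}(\M,\mu_\chi(\db_M))\cap\dot{S}^*_\chi\subset\dot{S}^*_\chi\cap C_{\chi^*}(\iota^{\#}(\Lambda)).
\]
The extraction-of-sequences argument from the last part of the proof of Theorem~\ref{NC} (using Theorem~\ref{thm: estimate sequences}) then shows that Condition~\ref{conRS}\,(3) forces $\dot{S}^*_\chi\cap C_{\chi^*}(\iota^{\#}(\Lambda))=\emptyset$. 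Hence the right term of the triangle is supported on the zero section of $S^*_\chi$; since $p^+_1$ is defined on the complement of the zero section, $Rp^{+}_{1*}(p^{+}_{2})^{-1}\rh_{\mathcal{D}_X}(\M,\mu_\chi(\db_M))$ vanishes, and Corollary~\ref{tri1a} (applied formally to the $\rh(\M,-)$-image of the triangle) yields the announced isomorphism.

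The main obstacle is the microsupport bound $\MS(\rh_{\mathcal{D}_X}(\M,\db_M))\subset\iota^{\#}(\Lambda)$: unlike the hyperfunction case, where propagation of singularities for $\B_M$ holds with no assumption on $\M$ beyond coherence, the distribution version fails in general and only becomes available once $\M$ is regular along $\Lambda$. Establishing (or locating in the literature) this estimate in the required generality is the one nontrivial input; once it is secured the remainder is a direct transcription of the proof of the hyperfunction theorem above.
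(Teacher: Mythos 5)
Your proposal is correct and follows essentially the same route as the paper: apply $\rh_{\mathcal{D}_X}(\M,-)$ to the triangle \eqref{tri1} with $F=\db_M$, identify the middle term via the non-characteristic restriction for distributions, bound $\MS(\rh_{\mathcal{D}_X}(\M,\db_M))$ using regularity along $\varLambda$, and then kill the third term by the support estimate of Theorem \ref{teo: estimate cone} combined with the sequence argument from Theorem \ref{NC}. Two small imprecisions worth noting: the paper's microsupport bound is $\MS(\rh_{\mathcal{D}_X}(\M,\db_M))\subset\iota^{\#}(\varLambda\sqcup\supp\M)$ — the extra zero-section piece over $\supp\M$ is unavoidable since $\MS$ of a nonzero object always meets the zero section — though this does not affect the conclusion once you intersect with $\dot{S}^*_\chi$; and the vanishing of $Rp^{+}_{1*}(p^{+}_2)^{-1}\rh_{\mathcal{D}_X}(\M,\mu_\chi(\db_M))$ is most cleanly seen from the fact that $p^+_2$ has image in $\dot{S}^*_\chi$ (not $p^+_1$ as you wrote), so the pullback of a zero-section-supported object vanishes.
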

\begin{proof}
Consider
\begin{multline*}
\rh^{}_{\mathcal{D}^{}_X}\!(\M,\nu^{}_{\chi}(\db^{}_M))
\to
\tau^{-1}\rh^{}_{\mathcal{D}^{}_X}\!(\M,R\varGamma^{}_N(\db^{}_{M}))
\tens \omega^{\otimes -1}_{N/M}
\\
 \to
Rp^{+}_{1*}(p^{+}_{2})^{-1}
\rh^{}_{\mathcal{D}^{}_X}\!(\M,\mu^{}_{\chi}(\db^{}_{M}))\tens \omega^{\otimes -1}_{N/M}  \xrightarrow{+1}.
\end{multline*}
Under Condition \ref{conRS}, we have
\begin{align*}
\rh^{}_{\mathcal{D}^{}_X}\!(\M,R\varGamma^{}_N(\db^{}_{M}))
\tens \omega^{\otimes -1}_{N/M} &\simeq
\rh^{}_{\mathcal{D}^{}_X}\!(\M,\varGamma^{}_N(\db^{}_{M}))
\tens \omega^{\otimes -1}_{N/M}
\\
&\simeq
\rh^{}_{\mathcal{D}^{}_Y}\!(Df^*\!\M,\db^{}_{N}),
\end{align*}
and
\[
\MS(\rh^{}_{\mathcal{D}^{}_X}\!(\M,\db^{}_{M})) \subset \iota^{\#}(\varLambda\sqcup \supp \M).
\]
%
%
As in the proof of Theorem \ref{NC} we have
\[
\supp(
\rh^{}_{\mathcal{D}^{}_X}\!(\M,\mu^{}_{\chi}(\db^{}_{M}))) \cap \dot{S}^*_{\chi}
\subset
\dot{S}^*_{\chi}\cap   C^{}_{\chi^*}(\iota^{\#}(\varLambda\sqcup \supp \M)) =\emptyset,
\]
and this entails that $Rp^{+}_{1*}(p^{+}_{2})^{-1}
\rh^{}_{\mathcal{D}^{}_X}\!(\M,\mu^{}_{\chi}(\db^{}_{M}))=0$,
and we obtain the desired result.
\end{proof}

\begin{teo}
Assume Condition \ref{conRS}. Then
\[
\rh^{}_{\mathcal{D}^{}_X}\!(\M,\nu^{}_{\chi}(\mathcal{C}^\infty_M))
\earrow
\tau^{-1}\rh^{}_{\mathcal{D}^{}_Y}\!(Df^* \!\M,\mathcal{C}^\infty_{N}).
\]
\end{teo}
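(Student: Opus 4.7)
The plan is to parallel the preceding theorem for distributions, replacing $\db^{}_M$ by $\mathcal{C}^\infty_M$ throughout. First, I would apply $\rh^{}_{\mathcal{D}^{}_X}(\M,\cdot)$ to the fundamental distinguished triangle \eqref{tri1} with $F=\mathcal{C}^\infty_M$ to obtain
\begin{multline*}
\rh^{}_{\mathcal{D}^{}_X}\!(\M,\nu^{}_{\chi}(\mathcal{C}^\infty_M))
\to
\tau^{-1}\rh^{}_{\mathcal{D}^{}_X}\!(\M,R\varGamma^{}_N(\mathcal{C}^\infty_M))
\tens \omega^{\otimes -1}_{N/M}
\\
\to
Rp^{+}_{1*}(p^{+}_{2})^{-1}
\rh^{}_{\mathcal{D}^{}_X}\!(\M,\mu^{}_{\chi}(\mathcal{C}^\infty_M))\tens \omega^{\otimes -1}_{N/M}  \xrightarrow{+1}.
\end{multline*}
Under Condition \ref{conRS}, regular singularities give $\ch\M\subset \varLambda\sqcup\supp\M$, and condition (3) then makes $Y$ non-characteristic for $\M$ in a neighborhood of $N$.

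Next, I would identify the middle term with the target using the $\mathcal{C}^\infty$ version of the non-characteristic division theorem for $\mathcal{D}$-modules with regular singularities along $\varLambda$:
\[
\rh^{}_{\mathcal{D}^{}_X}\!(\M,R\varGamma^{}_N(\mathcal{C}^\infty_M))\tens \omega^{\otimes -1}_{N/M}
\simeq \rh^{}_{\mathcal{D}^{}_Y}\!(Df^*\!\M,\mathcal{C}^\infty_N).
\]
This mirrors the two-step identification used in the distribution case: regularity first yields $R\varGamma^{}_N\simeq \varGamma^{}_N$ when coupled with $\rh^{}_{\mathcal{D}^{}_X}\!(\M,\cdot)$, and then the Cauchy-Kovalevskaja-Kashiwara theorem supplies the transfer to $Y$.

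To kill the rightmost term, I would use the microsupport estimate
\[
\MS(\rh^{}_{\mathcal{D}^{}_X}\!(\M,\mathcal{C}^\infty_M))\subset \iota^{\#}(\varLambda\sqcup\supp\M),
\]
which is the $\mathcal{C}^\infty$ analog of the hyperfunction estimate of Corollary 6.4.4 in \cite{KS90} and holds under regular singularities. Combining it with Theorem \ref{teo: estimate cone} applied to $F=\rh^{}_{\mathcal{D}^{}_X}\!(\M,\mathcal{C}^\infty_M)$ yields
\[
\supp\bigl(\rh^{}_{\mathcal{D}^{}_X}\!(\M,\mu^{}_{\chi}(\mathcal{C}^\infty_M))\bigr)\cap \dot S^*_\chi
\subset \dot S^*_\chi\cap C^{}_{\chi^*}(\iota^{\#}(\varLambda\sqcup\supp\M))=\emptyset
\]
by Condition \ref{conRS}(3), exactly as in the hyperfunction and distribution cases. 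This forces $Rp^+_{1*}(p^+_2)^{-1}\rh^{}_{\mathcal{D}^{}_X}\!(\M,\mu^{}_{\chi}(\mathcal{C}^\infty_M))=0$, and the first arrow of the triangle becomes the desired isomorphism.

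The main obstacle is supplying the two $\mathcal{C}^\infty$ microlocal inputs under regular singularities---the non-characteristic division theorem and the above microsupport bound---since both are typically phrased in \cite{KS90} for $\B^{}_M$ or $\db^{}_M$. However, both admit direct proofs for $\mathcal{C}^\infty_M$ by invoking an $\mathcal{E}^{}_\varLambda$-lattice for $\M$ combined with the standard microlocal propagation results for $\mathcal{C}^\infty$-solutions, so no new geometric input beyond what is already established in the paper should be required.
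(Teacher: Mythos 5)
Your overall structure (apply the distinguished triangle, identify the middle term via division, kill the third term via the microsupport estimate together with Theorem~\ref{teo: estimate cone}) coincides with the paper's. However, your justification of the middle-term identification contains a genuine error. You claim that, in analogy with the distribution case, ``regularity first yields $R\varGamma_N\simeq \varGamma_N$ when coupled with $\rh_{\mathcal D_X}(\M,\cdot)$.'' That reduction is false for $\mathcal{C}^\infty_M$: a smooth function supported on a submanifold of positive codimension is identically zero, so $\varGamma_N(\mathcal{C}^\infty_M)=0$, and the asserted identification would force the entire middle term to vanish. The distribution analogy hinges on $\varGamma_N(\db_M)$ being the nontrivial sheaf of distributions carried by $N$ (normal derivatives of $\delta_N$); there is no $\mathcal{C}^\infty$ counterpart of this object.

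The paper's proof instead routes the middle-term identification through the sheaf of Whitney functions on $N$,
\[
\rh_{\mathcal D_X}\!\bigl(\M,R\varGamma_N(\mathcal{C}^\infty_M)\bigr)\tens\omega^{\otimes -1}_{N/M}
\simeq
\rh_{\mathcal D_X}\!\bigl(\M,\W^{\infty}_{M,N}\bigr)
\simeq
\rh_{\mathcal D_Y}\!\bigl(Df^*\!\M,\mathcal{C}^\infty_N\bigr),
\]
where $\W^{\infty}_{M,N}$ denotes the sheaf of Whitney functions on $N$, and the two isomorphisms use the regular-singularity hypothesis along $\varLambda$ together with the non-characteristic division theorem. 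This is the $\mathcal{C}^\infty$ substitute for the step you copied from the $\db_M$ case, and it is not a formal port: the intermediate object changes. With that correction made, the remaining steps of your proof --- the microsupport bound $\MS(\rh_{\mathcal D_X}(\M,\mathcal{C}^\infty_M))\subset\iota^{\#}(\varLambda\sqcup\supp\M)$, its combination with Theorem~\ref{teo: estimate cone} via Condition~\ref{conRS}(3), and the vanishing of $Rp^{+}_{1*}(p^{+}_{2})^{-1}\rh_{\mathcal D_X}(\M,\mu_\chi(\mathcal{C}^\infty_M))$ --- agree with the paper.
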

\begin{proof}
Consider
\begin{multline*}
\rh^{}_{\mathcal{D}^{}_X}\!(\M,\nu^{}_{\chi}(\mathcal{C}^\infty_M))
\to
\tau^{-1}\rh^{}_{\mathcal{D}^{}_X}\!(\M,R\varGamma^{}_N(\mathcal{C}^\infty_{M}))
\tens \omega^{\otimes -1}_{N/M}
\\
 \to
Rp^{+}_{1*}(p^{+}_{2})^{-1}
\rh^{}_{\mathcal{D}^{}_X}\!(\M,\mu^{}_{\chi}(\mathcal{C}^\infty_{M}))\tens \omega^{\otimes -1}_{N/M}  \xrightarrow{+1}.
\end{multline*}
Under Condition \ref{conRS}, we have
\begin{align*}
\rh^{}_{\mathcal{D}^{}_X}\!(\M,R\varGamma^{}_N(\mathcal{C}^\infty_{M}))
\tens \omega^{\otimes -1}_{N/M} &\simeq
\rh^{}_{\mathcal{D}^{}_X}\!(\M,\W^{\infty}_{M,N})
\\
&\simeq
\rh^{}_{\mathcal{D}^{}_Y}\!(Df^*\!\M,\mathcal{C}^\infty_{N}),
\end{align*}
and
\[
\MS(\rh^{}_{\mathcal{D}^{}_X}\!(\M,\mathcal{C}^\infty_{M})) \subset \iota^{\#}(\varLambda\sqcup \supp \M).
\]
%
Here $\W^{\infty}_{M,N}$ is  the sheaf of Whitney functions on $N$.
Thus the proof is same as in Theorem \ref{NC}.
\end{proof}

\addcontentsline{toc}{section}{\textbf{References}}

\end{document}